\documentclass[1 leqno,11pt]{amsart}
\usepackage{amssymb, amsmath,amsmath,latexsym,amssymb,amsfonts,amsbsy, amsthm,mathtools,graphicx,CJKutf8,CJKnumb,CJKulem,color}

\setlength{\oddsidemargin}{0mm}
\setlength{\evensidemargin}{0mm} \setlength{\topmargin}{0mm}
\setlength{\textheight}{220mm} \setlength{\textwidth}{155mm}

\numberwithin{equation}{section}

\allowdisplaybreaks


\let\al=\alpha
\let\b=\beta
\let\g=\gamma
\let\d=\delta

\let\s=\sigma
\let\f=\frac

\let\om=\omega

\let\Om=\Omega

\let\na=\nabla
\let\th=\theta
\let\pa=\partial
\let\ep=\epsilon
\let\lto=\longrightarrow

\def\Int{\mathrm{Int}}
\def\In{\mathrm{In}}
\def\rmA{{\mathrm{A}}}
\def\rmB{\mathrm{B}}
\def\rmC{\mathrm{C}}
\def\rmD{\mathrm{D}}
\def\rmE{\mathrm{E}}

\def\cF{\mathcal{F}}
\def\cG{\mathcal{G}}

\def\cL{{\mathcal L}}

\def\bbT{\mathbb{T}}
\def\bbD{\mathbb{D}}


\def\R{\mathbf R}

\def\no{\noindent}

\def\eqdef{\buildrel\hbox{\footnotesize def}\over =}

\newcommand{\beq}{\begin{equation}}
\newcommand{\eeq}{\end{equation}}
\newcommand{\ben}{\begin{eqnarray}}
\newcommand{\een}{\end{eqnarray}}
\newcommand{\beno}{\begin{eqnarray*}}
\newcommand{\eeno}{\end{eqnarray*}}

\newtheorem{theorem}{Theorem}[section]
\newtheorem{definition}[theorem]{Definition}
\newtheorem{lemma}[theorem]{Lemma}
\newtheorem{proposition}[theorem]{Proposition}

\newtheorem{remark}[theorem]{Remark}


\begin{document}
\begin{CJK*}{UTF8}{gkai}
\title[Linear inviscid damping and enhanced dissipation]{Linear inviscid damping and enhanced dissipation for the Kolmogorov flow}

\author{Dongyi Wei}
\address{School of Mathematical Science, Peking University, 100871, Beijing, P. R. China}
\email{jnwdyi@163.com}

\author{Zhifei Zhang}
\address{School of Mathematical Science, Peking University, 100871, Beijing, P. R. China}
\email{zfzhang@math.pku.edu.cn}

\author{Weiren Zhao}
\address{School of Mathematical Science, Peking University, 100871, Beijing, P. R. China}
\email{zjzjzwr@126.com}

\date{\today}

\maketitle

\begin{abstract}
In this paper, we prove the linear inviscid damping and voticity depletion phenomena for the linearized Euler equations around the Kolmogorov flow. These results confirm Bouchet and Morita's predictions based on numerical analysis. By using the wave operator method introduced by Li, Wei and Zhang, we solve Beck and Wayne's conjecture on the optimal enhanced dissipation rate for the 2-D linearized Navier-Stokes equations around the bar state called Kolmogorov flow. The same dissipation rate is proved for the Navier-Stokes equations if the initial velocity is included in a basin of attraction of the Kolmogorov flow with the size of $\nu^{\f 23+}$, here $\nu$ is the viscosity coefficient.

\end{abstract}
\section{Introduction}

In this paper, we study the incompressible Navier-Stokes equations on the torus $\mathbb{T}_{\d}^2=\big\{(x,y): x\in \bbT_{{2\pi}{\d}}, y\in \bbT_{2\pi}\big\}$ with $0<\d\le 1$:
\beq
\label{eq:NS}
\left\{
\begin{array}{l}
\pa_tV-\nu \Delta V+V\cdot\nabla V+\nabla P=0,\\
\na\cdot V=0,\\
V|_{t=0}=V_0(x,y).
\end{array}\right.
\eeq
Here $V=\big(V^1(t,x,y),V^2(t,x,y)\big), P(t,x,y)$ denote the velocity and pressure of the fluid respectively, and $\nu\ge 0$ is the viscosity
coefficient. When $\nu=0$, \eqref{eq:NS} is reduced to the Euler equations.

It is easy to see that $V_{NS}=(-a_0e^{-\nu t}\cos y,0)$ for any constant $a_0$ is a special solution of \eqref{eq:NS} called the bar state or Kolmogorov flow.
The bar states are physically relevant due to the fact that they can be viewed as maximum entropy solutions of the Euler equations,
as well as the fact that they dominate the flow when the aspect ratio of the torus $\delta<1$ \cite{BS}.

The first part of  this paper is devoted to studying the linear inviscid damping for the linearized Euler equations around the Kolmogorov flow $(-\cos y,0)$, which take in
the vorticity formulation:
\ben\label{eq:Euler-L}
\left\{
\begin{array}{l}
\pa_t\omega+\mathcal{L}\omega=0,\\
\om|_{t=0}=\om_0(x,y),
\end{array}\right.
\een
where $\omega=\pa_xV^2-\pa_y V^1$ is the vorticity and
\ben
\mathcal{L}=u(y)\pa_x+u''(y)\pa_x(-\Delta)^{-1},\quad u(y)=-\cos y.
\een

Since the breakthrough work on Landau damping by Mouhot and Villani \cite{MV}, the inviscid damping has been a very
active field as an analogues of Landau damping in hydrodynamics.
Bedrossian and Masmoudi \cite{BM1} proved nonlinear inviscid damping for the 2-D Euler equations around the Couette flow $(y,0) $ for the perturbation in Gevrey class.
On the other hand, Lin and Zeng \cite{LZ} proved that nonlinear inviscid damping is not true for the perturbation in $H^s$ for $s<\f32$. For general shear flows, the linear inviscid damping is also a difficult problem due to the presence of nonlocal part $u''(y)\pa_x(-\Delta)^{-1}$.
In such case, the linear dynamics is associated with the singularities at the critical layers $u=c$ of the solution for the Rayleigh equation
\beno
(u-c)(\phi''-\al^2\phi)-u''\phi=f.
\eeno
Case \cite{Case} gave a first prediction of linear damping for monotone shear flow. His prediction was confirmed by a series of works
\cite{Ros, Ste, Zill1, Zill2}, and finally by our work \cite{WZZ1}. Roughly speaking, if $u(y)\in C^4$ is monotone and $\mathcal{L}$ has no embedding eigenvalues, then
\beno
\|V(t)\|_{L^2}\leq \frac{C}{\langle t\rangle}\|\omega_0\|_{H^{-1}_xH_y^1},\quad\|V^2(t)\|_{L^2}\leq \frac{C}{\langle t\rangle^2}\|\omega_0\|_{H^{-1}_xH_y^2},
\eeno
for the initial vorticity $\om_0$ satisfying $\int_{\mathbb{T}}\om_0(x,y)dx=0$ and $P_{\mathcal{L}}\om_0=0$,
where $P_{\mathcal{L}}$ is the spectral projection to $\sigma_d\big(\mathcal{L}\big)$.

For non-monotone flows such as Poiseuille flow $u(y)=y^2$ and Kolmogorov flow $u(y)=\cos y$, it is even difficult to predict the inviscid damping due to strong
degeneration of the Rayleigh equation at critical points. Based on Laplace tools and numerical computations, Bouchet and Morita \cite{BM} gave a first prediction of
the inviscid damping for non-monotone flows, especially Kolmogorov flow. Instead of vorticity mixing mechanism in the monotone case, they found that the depletion phenomena of the vorticity at the stationary streamlines plays the key mechanism leading to the inviscid damping for non-monotone flows. In our work \cite{WZZ2}, we confirmed the predictions of Bouchet and Morita about the linear damping and the vorticity depletion phenomena for a class of shear flows $(u(y),0)$ in a finite channel denoted by $\mathcal{K}$, which consists of the function $u(y)$ satisfying $u(y)\in H^3(-1,1)$, and  $u''(y)\neq 0$ for critical points(i.e., $u'(y)=0$) and $u'(\pm 1)\neq 0$. Moreover,  for a class of symmetric flows including Poiseuille flow, we obtained the same decay rate of the velocity as in the monotone flow.

In this paper, we extend the linear inviscid damping and the vorticity depletion phenomena to the Kolmogorov flow on the torus.
These results confirm Bouchet and Morita's predictions based on numerical computations.
The first result is the linear inviscid damping.

\begin{theorem}\label{Thm:LinInviDam}
Let $\delta\in (0,1)$ and $\om(t,x,y)$ be the solution of \eqref{eq:Euler-L} with $\int_{\mathbb{T}_{2\pi\d}}\om_0(x,y)dx=0$.
Then it holds that
\begin{itemize}
 \item[1.] if $\om_0(x,y)\in H^{-\f12}_xH^1_y$, then
\beno
\|V(t)\|_{L^2}\leq \frac{C}{\langle t\rangle}\|\omega_0\|_{H^{-\f12}_xH_y^1};
\eeno
\item[2.] if $\om_0(x,y)\in H^{\f12}_xH^2_y$, then
\beno
\|V^2(t)\|_{L^2}\leq \frac{C}{\langle t\rangle^2}\|\omega_0\|_{H^{\f12}_xH_y^2};
\eeno
\item[3.] if $\om_0(x,y)\in H^{\f12}_xH^k_y$ for $k=0,1$, there exists $\om_\infty(x,y)\in H^{\f12}_xH^k_y$ such that
\beno
\|\om(t,x+tu(y),y)-\om_\infty\|_{L^2}\longrightarrow 0\quad \textrm{as}\quad t\rightarrow +\infty.
\eeno
\end{itemize}
\end{theorem}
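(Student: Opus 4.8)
The plan is to pass to Fourier series in $x$, reduce the flow on each frequency to a pure transport by a wave operator, and then read off the decay of the velocity from oscillatory integrals in the transported variable. First I would expand $\om(t,x,y)=\sum_{k\neq 0}\om_\al(t,y)e^{i\al x}$ with $\al=k/\d$; the hypothesis $\int\om_0\,dx=0$ kills the zero mode and is preserved in time, and since $\d<1$ every nonzero frequency satisfies $|\al|\geq 1/\d>1$. On each such mode the equation becomes $\pa_t\om_\al+i\al\big(u+u''(\al^2-\pa_y^2)^{-1}\big)\om_\al=0$, and the velocity is recovered from the stream function $\psi_\al=(\al^2-\pa_y^2)^{-1}\om_\al$ via $V^2_\al=-i\al\psi_\al$ and $V^1_\al=\pa_y\psi_\al$. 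A key structural point, using $|\al|>1$ (this is where $\d<1$ enters), is that $\mathcal{L}_\al$ has \emph{no} discrete eigenvalues and its spectrum is the continuous band $i\al[-1,1]$; this is why no spectral projection of $\om_0$ is needed here, in contrast to the monotone case.

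Next I would set up the spectral representation. Writing the resolvent $(\mathcal{L}_\al-i\al c)^{-1}$ in terms of the inhomogeneous Rayleigh equation $(u-c)(\phi''-\al^2\phi)-u''\phi=f$ and establishing a limiting absorption principle, I obtain boundary values $\phi(y,c\pm i0)$ and hence a representation $\psi_\al(t,y)=\int_{-1}^{1}e^{-i\al c t}\Psi_\al(y,c)\,dc$ with a spectral density $\Psi_\al$ built from those boundary values; equivalently one constructs the Li--Wei--Zhang wave operator $\mathcal{D}_\al$ intertwining $\mathcal{L}_\al$ with multiplication by $i\al u$, so that $\om_\al(t)=\mathcal{D}_\al e^{-i\al u(y)t}\mathcal{D}_\al^{-1}\om_{\al,0}$. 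Either formulation reduces the dynamics to the explicit oscillatory factor $e^{-i\al u(y)t}$, i.e. to non-stationary phase with phase function $\al u$.

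With this in hand the decay follows from integration by parts against the oscillation. For $V^2_\al=-i\al\psi_\al$, two integrations by parts in $c$ against $e^{-i\al c t}$ (using that $\Psi_\al$ is twice differentiable in $c$ up to the endpoints, with the boundary contributions killed by the depletion below) yield a factor $\langle\al t\rangle^{-2}$, while $V^1_\al=\pa_y\psi_\al$ loses one order of regularity at the critical layers and only gains $\langle\al t\rangle^{-1}$; summing these mode-by-mode bounds over $\al=k/\d$ and matching the powers of $|\al|$ against the $L^2_y$ norms of $\om_{\al,0}$ produces exactly the Sobolev exponents $H^{\pm1/2}_x$ appearing in parts~1 and~2. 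For part~3 I would show that the non-transport contribution to $e^{i\al u(y)t}\om_\al(t,y)$ tends to $0$ in $L^2_y$, so this quantity converges to a limit $\om_{\infty,\al}(y)$; reassembling the Fourier series gives $\om(t,x+tu(y),y)\to\om_\infty$, which is the scattering statement, and the wave operator identifies $\om_\infty$ as the image of $\om_0$ under the associated scattering map.

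The hard part will be the analysis at the two stagnation points $y=0,\pi$, where $u'=0$ and the Rayleigh equation degenerates, corresponding to the endpoints $c=\pm1$ of the spectral band. There one must prove the \emph{vorticity depletion} predicted by Bouchet and Morita: although the Green's function of the Rayleigh operator becomes singular as $c\to\pm1$, the spectral density $\Psi_\al(y,c)$ and the transported profile vanish to the right order at the critical streamline, so that the $c$-integrals (and their boundary terms at $c=\pm1$) remain regular enough for the integrations by parts above. Establishing this regularity uniformly in $\al$, and simultaneously ruling out embedded eigenvalues at $c=\pm1$, is the crux of the argument; the remaining decay and Fourier-summation estimates are then comparatively routine.
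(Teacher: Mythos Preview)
Your outline --- Fourier series in $x$, resolvent representation via the limiting absorption principle, then oscillatory-integral decay in the spectral variable $c$ --- matches the paper's overall strategy. Two of your technical choices, however, diverge from the paper in ways that matter.

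First, the wave operator $\bbD$ is \emph{not} used for this theorem. It is constructed in Section~\ref{Operator D} and deployed only for the enhanced dissipation (Theorem~\ref{Thm:enhance}); the inviscid damping proof stays with the resolvent formula of Lemma~\ref{Lem:psi(t)}.

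Second, and this is a genuine gap in your plan: the spectral density $\Psi_\al(y,c)$ (the paper's $\widetilde{\Phi}$) is \emph{not} twice differentiable in $c$ uniformly in $y$. It carries a singularity at the critical layer $y=y_c(c)$, and each $\pa_c$ worsens that singularity, so integrating by parts twice and then taking $L^2_y$ norms does not close. The paper circumvents this by a \emph{dual} method. It first splits $\widehat{\psi}$ into its odd and even parts in $y$ --- this split is essential, since the two parts have different Wronskians (governed by $\rmA,\rmB$ versus $\rmA_1,\rmB_1$) and contribute different powers of $|\al|$, which is precisely the origin of the $H^{-1/2}_x$ versus $H^{+1/2}_x$ weights. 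It then pairs each part against a test function $(\pa_y^2-\al^2)g$ to reduce to \emph{one-variable} kernels $K_o(c,\al)$ and $K_e(c,\al)$ (Lemmas~\ref{Lem: K_o identity} and~\ref{lem:kernel identityeven}). The decay follows from $W^{2,1}_c$ estimates on these kernels (Propositions~\ref{Prop: estimate K_o} and~\ref{Prop: estimate K_e}); their vanishing at $c=\pm1$ is what kills the boundary terms, playing the role you assign to ``depletion''. One small correction: the embedded-eigenvalue issue is at $c=0$ (where $u''=0$), not at the endpoints $c=\pm1$; it is ruled out for $|\al|>1$ in Proposition~\ref{prop:spectral}.
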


\begin{remark}\label{Rmk:LinInviodd}
When $\delta=1$, Proposition \ref{Prop: A^2+B^2}, \ref{Prop: A_1^2+B_1^2}, \ref{prop:spectral} still hold for $|\al|\ge 2$. Thus, we can obtain the following decay result:
\begin{itemize}
 \item[1.] if $\om_0(x,y)\in H^{-\f12}_xH^1_y$, then
\beno
\|P_{\geq 2}V(t)\|_{L^2}\leq \frac{C}{\langle t\rangle}\|\omega_0\|_{H^{-\f12}_xH_y^1};
\eeno
\item[2.] if $\om_0(x,y)\in H^{\f12}_xH^2_y$, then
\beno
\|P_{\geq 2}V^2(t)\|_{L^2}\leq \frac{C}{\langle t\rangle^2}\|\omega_0\|_{H^{\f12}_xH_y^2}.
\eeno
\end{itemize}
Here and in what follows, $P_{\geq 2}$ denotes the orthogonal projection of $L^2(\mathbb{T}^2)$ to the subspace $\big\{f\in L^2(\mathbb{T}^2): f(x,y)=\sum\limits_{|\al|\geq 2}\widehat{f}(\al,y)e^{i\al x}\big\}$.
\end{remark}

The second result is the behavior of the solution at critical points $y=k \pi$ for $k\in \mathbb{Z}$, which in particular confirms
the vorticity depletion phenomena found by Bouchet and Morita \cite{BM}.

\begin{theorem}\label{Thm: critical}
Let $\delta\in (0,1)$ and $\om(t,x,y)$ be  the solution of \eqref{eq:Euler-L} with $\int_{\mathbb{T}_{2\pi \d}}\om_0(x,y)dx=0$.
Then for $s>\f12$, it holds that for all $k\in \mathbb{Z}$,
\begin{itemize}
 \item[1.] if $\om_e(x,y)\in H^{1+s}_xH^3_y$, then for $x\in\mathbb{T}_{2\pi\d},\ |t|>1$,
\beno
|\om(t,x,k\pi)|\leq \frac{C}{| t|}\|\omega_e\|_{H^{1+s}_xH_y^3};
\eeno
\item[2.] if $\om_e(x,y)\in H^{1+s}_xH^3_y$, then for $x\in\mathbb{T}_{2\pi\d},\ |t|>1$,
\beno
|V^2(t,x,k\pi)|\leq \frac{C}{|t|^2}\|\omega_e\|_{H^{1+s}_xH_y^3};
\eeno
\item[3.] if $\om_o(x,y)\in H^{s-\f12}_xH^3_y$, then for $x\in\mathbb{T}_{2\pi\d},\ |t|>1$,
\beno
|V^1(t,x,k\pi)|\leq \frac{C}{| t|^{\frac{3}{2}}}\|\omega_o\|_{H^{s-\f12}_xH_y^3}.
\eeno
\end{itemize}
Here $\om_e(x,y)=\f12(\om_0(x,y)+\om_0(x,-y))$ and $\om_o(x,y)=\f12(\om_0(x,y)-\om_0(x,-y))$.
\end{theorem}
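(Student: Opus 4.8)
The plan is to fix the horizontal frequency $\al$ and the parity in $y$, to represent each mode through the boundary values of the inhomogeneous Rayleigh equation on the continuous spectrum, and then to read off the decay at $y=k\pi$ from an oscillatory integral in the spectral parameter. First I would reduce to a single Fourier mode. Writing $\om(t,x,y)=\sum_{\al}\wh\om_\al(t,y)e^{i\al x}$ with $|\al|\ge 1/\d>1$ (the hypothesis $\int_{\bbT_{2\pi\d}}\om_0\,dx=0$ removes the mode $\al=0$), and using that $u=-\cos y$ and $u''=\cos y$ are even, the flow \eqref{eq:Euler-L} preserves the parity in $y$, so the even part $\om_e$ and the odd part $\om_o$ evolve independently. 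Since a $2\pi$-periodic odd function, as well as the $y$-derivative of an even function, vanishes at every $y=k\pi$, only $\om_e$ contributes to $\om(t,x,k\pi)$ and to $V^2(t,x,k\pi)=\pa_x\psi$, while only $\om_o$ contributes to $V^1(t,x,k\pi)=-\pa_y\psi$. The problem thus reduces to bounding the three pointwise quantities $\wh\om_\al(t,k\pi)$, $\wh\psi_\al(t,k\pi)$ and $\pa_y\wh\psi_\al(t,k\pi)$ for each $\al$ and then summing in $\al$; the norms $H^{1+s}_x$ and $H^{s-\f12}_x$ with $s>\f12$ are chosen precisely to absorb, by Cauchy--Schwarz, the powers of $\al$ produced below and to make these sums converge pointwise in $x$.

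Second, the key structural fact is that the critical points $y=k\pi$ are exactly the extrema of $u$, with $u(2m\pi)=-1$ and $u((2m+1)\pi)=1$, so they map onto the two endpoints of the continuous spectrum $[-1,1]$. For each $\al$ I would use the limiting-absorption representation collapsed onto the spectrum,
\beno
\wh\psi_\al(t,y)=\frac{\al}{2\pi i}\int_{-1}^{1}e^{-i\al ct}\big[\Psi_\al(y,c+i0)-\Psi_\al(y,c-i0)\big]\,dc,
\eeno
where $\Psi_\al(\cdot,c)$ solves the inhomogeneous Rayleigh equation $(u-c)(\Psi''-\al^2\Psi)-u''\Psi=\wh\om_\al(0)$, and the densities for $\wh\om_\al$ and for $V^1$ are obtained by applying $(\pa_y^2-\al^2)$ and $\pa_y$ to the bracket. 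Since $|\al|>1$, the spectral Propositions quoted in Remark \ref{Rmk:LinInviodd} rule out embedded eigenvalues and provide a lower bound for the spectral denominator, so each pointwise value is a genuine oscillatory integral $\int_{-1}^{1}e^{-i\al ct}\rho_\al(c)\,dc$ with a controlled density $\rho_\al$.

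Third, the rate is governed entirely by the regularity and endpoint behavior of $\rho_\al$ at $c=\pm1$, that is, by the local structure of $\Psi_\al$ at the degenerate critical layer, where $u-c$ acquires a coalescing pair of simple zeros since $u'(k\pi)=0$ but $u''(k\pi)=(-1)^k\neq0$. The vorticity depletion is exactly the statement that the self-critical-layer singularity of the density, which at a generic streamline produces a non-decaying term $\om_\infty(x-tu(y),y)$, is suppressed at $y=k\pi$: there $\om_\infty(\cdot,k\pi)=0$ and the remaining density is regular at the endpoint, so one integration by parts gives $\wh\om_\al(t,k\pi)=O((\al t)^{-1})$. The extra two derivatives gained from the Biot--Savart law render the stream-function density $C^1$ and vanishing at $c=\pm1$, giving $\wh\psi_\al(t,k\pi)=O((\al t)^{-2})$, whence parts 1 and 2. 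For $V^1$, the $y$-derivative $\pa_y\Psi_\al(k\pi,c\pm i0)$ across the coalescing turning points carries a square-root singularity $\sim(1\mp c)^{1/2}$, and the elementary bound $\int_{-1}^{1}e^{-i\al ct}(1\mp c)^{1/2}\chi(c)\,dc=O((\al t)^{-3/2})$ yields the intermediate rate $|t|^{-3/2}$ of part 3. Three $y$-derivatives of the data ($H^3_y$) are what make $\rho_\al$ smooth enough for the integrations by parts to be justified, and the powers of $\al$ so produced are matched against the $x$-Sobolev norms in the final summation.

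The hard part will be the uniform-in-$\al$ turning-point analysis of the Rayleigh equation at the critical points, that is, the precise description of $\Psi_\al(k\pi,c\pm i0)$ and $\pa_y\Psi_\al(k\pi,c\pm i0)$ near the endpoints $c=\pm1$ where the two critical layers collide. One must establish the exact order of vanishing of the stream-function density together with the exact $(1\mp c)^{1/2}$ singularity of its $y$-derivative, with constants uniform in $\al$, and this rests on the quantitative lower bounds for the spectral denominator furnished by Propositions \ref{Prop: A^2+B^2}, \ref{Prop: A_1^2+B_1^2} and \ref{prop:spectral}. Extracting the sharp exponents $1$, $2$ and $\f32$, rather than merely some positive power of $t$, is the delicate point that forces the careful endpoint asymptotics.
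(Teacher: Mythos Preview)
Your overall framework matches the paper's: Fourier decomposition in $x$, parity splitting in $y$, limiting-absorption representation as oscillatory integrals over $c\in[-1,1]$, and the Cauchy--Schwarz summation in $\al$. Your account of $V^2$ (two integrations by parts on a density vanishing at $c=\pm1$ with $\partial_c^2K_e^0\in L^1_c$; this is Lemma~\ref{prop:K_e^0}) and of $V^1$ (half-integer endpoint behaviour yielding $|t|^{-3/2}$, which the paper implements via a split-and-optimize argument on $\partial_cK_o^0$, $\partial_c^2K_o^0$ in Lemma~\ref{prop:K_o^0}) is essentially what the paper does.

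The gap is in your treatment of the vorticity. Your claim that, once the scattering profile is removed, the remaining spectral density at $y=k\pi$ is ``regular at the endpoint'' is not correct. Comparing the representations in Lemma~\ref{Lem:repre of vorticity} and Lemma~\ref{lem:psi-c} one sees that the vorticity density is $\rho(c)=(c-u(0))^{-1}K_e^0(c,\al)$; since the stream-function density satisfies only $K_e^0=O\big((c-u(0))^{1/2}\big)$ near the endpoint, this gives $\rho(c)\sim(c-u(0))^{-1/2}$, which is in $L^1_c$ (this is exactly the content of Lemma~\ref{Lem:repre of vorticity}) but is unbounded. Hence a single integration by parts on $\int_{-1}^{1}\rho(c)e^{-i\al ct}\,dc$ is not available, and a bare oscillatory-integral estimate with a $(c-u(0))^{-1/2}$ endpoint only produces $|t|^{-1/2}$, not $|t|^{-1}$. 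The paper avoids this obstruction entirely: it first proves the $|t|^{-2}$ bound for $\widehat\psi(t,\al,0)$, then evaluates the linearized equation at $y=0$, $i\al u''(0)\widehat\psi=-\partial_t\widehat\omega-i\al u(0)\widehat\omega$, to write
\[
\widehat\omega(t,\al,0)=\int_t^{\infty} i\al\,u''(0)\,e^{i\al u(0)(s-t)}\,\widehat\psi(s,\al,0)\,ds,
\]
and integrates the $|s|^{-2}$ decay to obtain $|t|^{-1}$. Note also that this produces a prefactor $|\al|$, not $|\al|^{-1}$ as your $O((\al t)^{-1})$ would suggest; that factor of $|\al|$ is precisely why $H^{1+s}_x$ (rather than a weaker norm) appears in part~1.
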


Recently, Lin and Xu \cite{LX} proved the linear inviscid damping for shear flows including Kolmogorov flow in
the following sense:
\beno
\lim_{T\to \infty}\f 1 T\int_0^T\|V(t)\|_{L^2}^2dt=0.
\eeno
Their proof is based on the Hamiltonian structure of the linearized Euler equation and an instability index theory recently developed by Lin and Zeng \cite{LZ-R}.
\medskip

The second part of this paper is devoted to studying the 2-D incompressible Navier-Stokes equation with small viscosity on the torus. The vorticity formulation takes
\ben\label{eq:NS-vorticity}
\left\{
\begin{array}{l}
\omega_t-\nu\Delta\omega+V\cdot\nabla \omega=0,\\
\om|_{t=0}=\om_0(x,y).
\end{array}\right.
\een
Experiments and numerical studies \cite{Cou, MS} have shown that the solutions of the nearly inviscid 2-D incompressible Navier-Stokes equations will approach to
some quasi-stationary(metastable) states such as the bar stats and dipole states in a much shorter time than the diffusive time $O(\f 1\nu)$.
Then they dominate the dynamics for very long time intervals. Afterwards they decay on the diffusive time scale.
See \cite{BW-B} for a similar phenomena for Burgers equation with small viscosity.

To explain the metastability phenomena,
Beck and Wayne \cite{BW} considered the linearized Navier-Stokes equations around the bar states, which are relatively easy to analyse from a mathematical view of point. Moreover, the bar states are more physically relevant in the case when the aspect ratio of the torus $\delta<1$ \cite{BS}. The linearized Navier-Stokes equations around the Kolmogorov flow $V_{NS}=(-a_0e^{-\nu t}\cos y,0)$ take as follows
\ben\label{eq:NS-L}
\left\{
\begin{array}{l}
\pa_t\omega+\mathcal{L}_{\nu}(t)\omega=0,\\
\om|_{t=0}=\om_0(x,y),
\end{array}\right.
\een
where
\ben
\mathcal{L}_{\nu}(t)=-\nu \Delta-a_0e^{-\nu t}\cos y\pa_x(1+\Delta^{-1}).
\een

To study the dynamic of \eqref{eq:NS-L}, the main difficulty is that the linearized operator $\mathcal{L}_{\nu}(t)$ is nonself-adjoint, nonlocal and time-dependent.
In \cite{BW}, the authors dropped nonlocal part of $\mathcal{L}_{\nu}(t)$ and considered the following model equation
\beno
\pa_t\omega-\nu \Delta-a_0e^{-\nu t}\cos y\pa_xw=0.
\eeno
Using the hypocoercive method introduced by Villani \cite{Vill},
they proved the enhanced dissipation rate of the solution in some Banach space $X$(see (3.7) in \cite{BW}):
\beno
\|\om(t)\|_X\le Ce^{-M\sqrt{\nu}t}\|w_0\|_X
\eeno
for any $t\in [0,\tau/\nu]$. This decay rate is much faster than the diffusive decay of $e^{-\nu t}$. The mechanism leading to the enhanced dissipation
is due to mixing. See \cite{BZ, CKR, KX, BMV, BWV} and references therein for more relevant works.
Let us mention recent important progress \cite{BGM1,BGM2,BGM3} on the enhanced dissipation for Couette flow $(y,0)$ in $\mathbb{T}\times \R$ applied to the subcritical transition.

For the full linearized operator $\mathcal{L}_{\nu}(t)$,
Beck and Wayne numerically computed the eigenvalues of $-\mathcal{L}_{\nu}(t)$ for a fixed time. The real parts of the eigenvalues are negative and scale like
$\sqrt{\nu}$. Based on these analysis, Beck and Wayne conjectured that
the same result should hold for the full linearized equation. The following theorem gives an affirmative answer to Beck and Wayne's conjecture.

\begin{theorem}
\label{Thm:enhance}
Given $\d\in (0,1)$ and $\tau>0$, there exist constants $c_1>0,\ C>0,$ such that if $\omega$ satisfies \eqref{eq:NS-L} with $\omega_0\in L^2$ and $\int_{\bbT_{2\pi\d}}\omega_0(x,y)dx=0$, then it holds that for $0\leq t\leq \tau/\nu$,
\begin{align*}
&\|{\omega}(t)\|_{L^2}\leq Ce^{-c_1\sqrt{\nu}t}\|{\omega}_0\|_{L^2},\\
&\|V(t)\|_{\dot{H}_x^1L_y^2}\leq \frac{Ce^{-c_1\sqrt{\nu}t}}{\sqrt{1+\nu t^3}}\|{\omega}_0\|_{L^2}.
\end{align*}
When $\delta=1$, it holds that for $0\leq t\leq \tau/\nu$,
\begin{align*}
&\|P_{\geq 2}\om(t)\|_{L^2}\leq Ce^{-c_1\sqrt{\nu}t}\|P_{\geq 2}\om_0\|_{L^2},\\
&\|P_{\geq 2}V(t)\|_{\dot{H}_x^1L_y^2}\leq \frac{Ce^{-c_1\sqrt{\nu}t}}{\sqrt{1+\nu t^3}}\|P_{\geq 2}{\omega}_0\|_{L^2}.
\end{align*}
\end{theorem}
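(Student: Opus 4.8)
The plan is to pass to Fourier modes in $x$ and reduce the full time-dependent, nonlocal evolution to a scalar advection--diffusion model whose enhanced dissipation rate is governed solely by the shear $u=-\cos y$. Writing $\om(t,x,y)=\sum_{\al}\wh\om_\al(t,y)e^{i\al x}$ with $\al\in\f1\d\Z$, the hypothesis kills the mode $\al=0$ and each coefficient solves
\beno
\pa_t\wh\om_\al+\nu(\al^2-\pa_y^2)\wh\om_\al+i\al a_0e^{-\nu t}u(y)\big(1-(\al^2-\pa_y^2)^{-1}\big)\wh\om_\al=0,\qquad u=-\cos y.
\eeno
Since $\d<1$, every nonzero mode has $|\al|\ge 1/\d>1$, so the neutral frequency $|\al|=1$, at which the inviscid operator lacks a spectral gap, never occurs; for $\d=1$ the projection $P_{\ge2}$ discards $|\al|\le1$ and plays exactly this role. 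It then suffices to prove $\|\wh\om_\al(t)\|_{L^2_y}\le Ce^{-c_1\sqrt\nu t}\|\wh\om_\al(0)\|_{L^2_y}$ with constants uniform in $\al$ (for $|\al|\ge1/\d$) and in $\nu$, and to sum by Plancherel.

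First I would neutralize the two structural nuisances: the factor $e^{-\nu t}$ and the nonlocal term. On $[0,\tau/\nu]$ the coefficient $a_0e^{-\nu t}$ stays between $a_0e^{-\tau}$ and $a_0$ and varies logarithmically at rate $O(\nu)$, so after the time change $s=\int_0^ta_0e^{-\nu t'}\,dt'$ the transport coefficient becomes autonomous while the diffusion only carries a uniformly comparable, slowly varying factor. The nonlocal part is removed by the wave operator $\cD_\al$ of the Rayleigh operator for $u=-\cos y$, which intertwines $u\big(1-(\al^2-\pa_y^2)^{-1}\big)$ with pure multiplication by $u(y)$ (the content of the inviscid propositions used earlier). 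Conjugating by $\cD_\al$ turns the mode equation for $g=\cD_\al\wh\om_\al$ into
\beno
\pa_tg+\nu(\al^2-\pa_y^2)g+i\al a_0e^{-\nu t}u(y)g=\nu[\cD_\al,\pa_y^2]\cD_\al^{-1}g.
\eeno
The commutator on the right is the crucial object: I would show it is a genuinely lower-order perturbation, uniformly in $\al$ and $\nu$, using the boundedness of $\cD_\al^{\pm1}$ and of $[\cD_\al,\pa_y^2]$ on the relevant spaces, so that it cannot degrade the dissipation rate.

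This reduces the matter to the model operator $H_\al=\nu(\al^2-\pa_y^2)+i\al a_0u(y)$ on $L^2(\bbT_{2\pi})$, whose heart is the pseudospectral bound
\beno
\inf_{\la\in\R}\big\|(H_\al-i\la)f\big\|_{L^2_y}\ge c\,\sqrt{\nu|\al|}\,\|f\|_{L^2_y}.
\eeno
The rate $\sqrt{\nu|\al|}$, and hence the uniform floor $c_1\sqrt\nu$ after minimizing over $|\al|\ge1/\d$, is forced by the quadratic vanishing $u'(y)=\sin y=0$, $u''\ne0$ at the critical points $y=k\pi$: taking $\la=\al u(k\pi)$ and reading off the real and imaginary parts of $\langle(H_\al-i\la)f,f\rangle$ pits $\nu\|\pa_yf\|^2$ against $|\al|\,\|(y-k\pi)f\|^2$, and the balance at the critical-layer width $|y-k\pi|\sim(\nu/|\al|)^{1/4}$ yields precisely this gap; away from the critical points $u'$ is bounded below and the nondegenerate $\nu^{1/3}$ mechanism gives even more, so the quadratic points are the bottleneck. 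From the resolvent bound I would deduce the semigroup decay $\|e^{-tH_\al}\|\le Ce^{-c_1\sqrt{\nu|\al|}t}$; because the equation is genuinely non-autonomous I would not invoke Gearhart--Pr\"uss directly but instead run an equivalent hypocoercive functional adapted to $H_\al$ (or, equivalently, freeze $e^{-\nu t}$ on short subintervals and patch the frozen-time bounds), the slow variation of $e^{-\nu t}$ contributing only an absorbable $O(\nu)$ error.

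The hardest point will be making the wave-operator conjugation quantitative and uniform: one must control $\cD_\al^{\pm1}$ and $[\cD_\al,\pa_y^2]$ with constants independent of $\al$ and $\nu$, including near the singular critical layers, so that the model estimate transfers back to $\wh\om_\al$ with no loss of rate. Granting this, summation in $\al$ gives the vorticity bound $\|\om(t)\|_{L^2}\le Ce^{-c_1\sqrt\nu t}\|\om_0\|_{L^2}$. For the velocity, $V$ is recovered from $\wh\om_\al$ through the inverse Laplacian $(\al^2-\pa_y^2)^{-1}$; in the transport-trivialized frame the $y$-frequency grows linearly in $t$ while being damped, so combining the enhanced-dissipation factor $e^{-c_1\sqrt\nu t}$ with the inviscid-damping/vorticity-depletion decay of the conjugated variable and cutting off at the diffusive time $t\sim\nu^{-1/3}$ produces the stated weight $(1+\nu t^3)^{-1/2}$. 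Finally, the $\d=1$ statements are identical once $P_{\ge2}$ restores, at $|\al|\le1$, the spectral gap used throughout.
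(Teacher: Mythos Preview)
Your overall architecture matches the paper's: Fourier in $x$, conjugate by a wave operator to turn the nonlocal transport into multiplication by $\cos y$, then study a local model. But the analysis of the model and the handling of the commutator are quite different from what the paper does, and your commutator step has a real gap.

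The paper does not use a resolvent or pseudospectral bound. For the local model $\pa_t g-\nu\pa_y^2 g-iae^{-\nu t}\cos y\,g=0$ it introduces the twist $e^{i\gamma(t,s)u}g$ with $\gamma(t,s)=a(e^{-\nu s}-e^{-\nu t})/\nu$ and computes $\pa_t\|\pa_y(e^{i\gamma u}g)\|^2$ directly; integrating in $s$ yields decay of $\|\sin y\,g(t)\|_{L^2}$ (not of $\|g\|_{L^2}$) at rate $(\nu a^2 t^3)^{-1/2}$ on $0<t\le(\nu|a|)^{-1/2}$. This is fed into the functionals $K_1=\|\bbD\widehat\om\|^2=\langle\widehat\om,\widehat\om-\widehat\psi\rangle$, $K_2=(\al^2-1)\|\widehat\om\|^2+\|\pa_y\widehat\om\|^2$, $K_3=(\al^2-1)\|\widehat\psi\|^2+\|\pa_y\widehat\psi\|^2$, for which one has the exact identity $\pa_tK_1=-2\nu K_2$ and the algebraic constraint $K_2K_3\ge K_1^2$; a short maximum-point argument on $e^{c_0t/t_1}K_1(t)$ with $t_1=(\nu|\al|)^{-1/2}$ then upgrades the polynomial weighted decay to exponential decay of $K_1$ at rate $\sqrt{\nu|\al|}$. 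Two features of this route are decisive: the time dependence $e^{-\nu t}$ is absorbed into $\gamma$, so no freezing or Gearhart--Pr\"uss step is needed; and the $(1+\nu t^3)^{-1/2}$ velocity factor drops out because the model estimate controls $K_3$ through $\|\sin y\,\bbD\widehat\om\|^2\ge K_3$, and $\al^2K_3$ dominates the $\dot H^1_xL^2_y$ norm of the velocity.

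The gap in your proposal is at the commutator. You want $[\mathcal{D}_\al,\pa_y^2]$ to act as a bounded lower-order perturbation of $H_\al$ on $L^2$, but it is not: what is actually available is only the weighted bound $\|\sin y\,(\bbD\pa_y^2\om-\pa_y^2\bbD\om)\|_{L^2}\le C(|\al|\|\om\|_{L^2}+\|\pa_y\om\|_{L^2})$. The weight $\sin y$ vanishes at the critical points and the right side carries $\|\pa_y\om\|_{L^2}$. In the paper's scheme both features are exactly matched---the model output is itself $\|\sin y\,\bbD\om\|$, and the $\pa_y\om$ contribution is handled in $L^2_t$ through the energy $\int_0^t 2\nu K_2=K_1(0)-K_1(t)$. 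In a pure resolvent framework neither accommodation is available: $\nu[\mathcal{D}_\al,\pa_y^2]\mathcal{D}_\al^{-1}$ cannot be absorbed into the $\sqrt{\nu|\al|}$ gap as a bounded $L^2$ operator. This is precisely why the paper remarks that the pseudospectral route (which does work for the frozen operator, cf.\ Ibrahim--Maekawa--Masmoudi) is hard to adapt to the time-dependent problem, and why it opts for the time-domain functionals $K_j$ instead. Your derivation of the $(1+\nu t^3)^{-1/2}$ velocity factor is correspondingly vague: it is a consequence of the weighted estimate on $\sin y\,\bbD\widehat\om$, not of any semigroup norm bound for $e^{-tH_\al}$.
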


\begin{remark}
When $\delta=1$, the operator $1+\Delta^{-1}$ has two additional kernels $\{\sin x, \cos x\}$, which yield two
family of exact solutions $\{e^{-\nu t}\sin x, e^{-\nu t}\cos x\}$ of the linearized Navier-Stokes equations.
Thus, we have to make the projection $P_{\geq 2}$ for the solution in order to obtain the enhanced dissipation.
\end{remark}

In \cite{LX},  Lin and Xu  proved the enhanced dissipation in the sense: if $\int_{\bbT_{2\pi\d}}\omega_0(x,y)dx=0$, then for any $\tau>0$ and $\epsilon>0$, if $\nu$
is small enough,
\begin{align*}
&\|P_{\geq 2}\om(\tau/\nu)\|_{L^2}\leq \epsilon\|P_{\geq 2}\om_0\|_{L^2}\quad\text{for}\quad \d=1,\\
&\|\om(\tau/\nu)\|_{L^2}\leq \epsilon\|\om_0\|_{L^2}\quad\text{for}\quad \d<1.
\end{align*}
The proof is based on the Hamiltonian structure of the linearized Euler equation and RAGE theorem as in \cite{CKR}.

Very recently, Ibrahim, Maekawa and Masmoudi \cite{IMM} proved
the same decay rate of the vorticity for the following equation:
\beno
\pa_t\omega+\mathcal{L}_{\nu}w=0,\quad \mathcal{L}_{\nu}=-\nu \Delta-a\cos y\pa_x(1+\Delta^{-1}),
\eeno
which is the linearized equation of the 2-D Navier-Stokes equation with a force $(-a\nu\cos y,0)$ around the Kolmogorov flow $(-a\cos y,0)$.
Their proof is based on the pseudospectral bound of $\mathcal{L}_{\nu}$.
Pseudospectra is an important concept in understanding the hydrodynamic stability \cite{Tre, TRD} due to the non-normality of the linearized operators. Gallay et al. applied the pseudospectra method to study the stability of the Lamb-Oseen vortex in the regime of high circulation Reynolds number \cite{GGN, DW, Mae, GM, LWZ, Ga}. Recently, McQuighan and Wayne \cite{MW} applied a similar method to revisit the metastability problem of the viscous Burgers equation.  However, it seems difficult to apply the pseudospectra method to the time-dependent operators such as $\mathcal{L}_{\nu}(t)$.

As announced in a work by Li and the first two authors \cite{LWZ}, the wave operator method  could be used to solve Beck and Wayne's conjecture. In \cite{LWZ}, we solved Gallay's conjecture on pseudospectral and spectral bounds on the Oseen vortices operator. One of the key ideas is to transform a nonlocal operator into a local one by constructing a wave operator. In this work, the analysis on linear inviscid damping naturally gives rise to our desired wave operator. Moveover, we also use the completely different method to handle the local operator.

Let us point out that the extra decay factor $\frac 1{\sqrt{1+\nu t^3}}$ of the velocity in Theorem \ref{Thm:enhance}
comes from the inviscid damping mechanism of  Kolmogorov flow.
Even for the time-independent operator $\mathcal{L}_{\nu}$, it seems difficult to deduce this decay estimate from the resolvent estimate.
This decay estimate of velocity plays an important role in the following nonlinear enhanced dissipation result.

\begin{theorem}
\label{Thm:NS}
Given $\d\in (0,1), \tau>0, \gamma>\f23$ and $C_1>1$, there exist constants $c_2,\ c_3\in(0,1),$ such that if $0<\nu<c_2$,  $ \omega_0\in L^2$, $\|(I-P_2)\omega_0\|_{L^2}\leq \nu^{\gamma},$ and $C_1^{-1}\leq\|P_2\omega_0\|_{L^2}\leq C_1$,
then the solution to \eqref{eq:NS-vorticity} satisfies
\begin{align*}
\|P_{\neq0}{\omega}(t)\|_{L^2}\leq Ce^{-c_3\sqrt{\nu}t}\|P_{\neq0}{\omega}_0\|_{L^2}
\end{align*}
for all $0<t<\tau/\nu$. Here $P_2$ denotes the orthogonal projection of $L^2(\mathbb{T}_{\d}^2)$ to the subspace $W_2=\text{span}\{\sin y,\cos y\}$.
\end{theorem}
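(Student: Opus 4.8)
The plan is to run a continuity (bootstrap) argument that decouples the flow into its $x$-average and its nonzero horizontal frequencies. Write $P_0$ for the orthogonal projection onto $x$-independent functions (so that $P_2\le P_0$ and $P_{\neq0}=I-P_0$) and set $\overline\om=P_0\om$, $\om_{\neq}=P_{\neq0}\om$, with associated velocities $\overline V=(\overline u,0)$ and $V_{\neq}$. Taking the $x$-average of \eqref{eq:NS-vorticity} and using $\na\cdot V_{\neq}=0$ gives a forced one–dimensional heat equation for the mean,
\begin{gather*}
\pa_t\overline\om-\nu\pa_y^2\overline\om=-\pa_y\,\overline{V^2_{\neq}\,\om_{\neq}},
\end{gather*}
whose forcing is quadratic in the (small) nonzero modes, while $\om_{\neq}$ obeys
\begin{gather*}
\pa_t\om_{\neq}+\mathcal{L}_\nu^{\,\overline u}(t)\om_{\neq}=-P_{\neq0}\big(V_{\neq}\cdot\na\om_{\neq}\big),\\
\mathcal{L}_\nu^{\,\overline u}(t)=-\nu\D+\overline u\,\pa_x+\overline u''\,\pa_x(-\D)^{-1}.
\end{gather*}
The crucial point is that when $\overline u$ equals the Kolmogorov shear $u_K(t,y)=-a_0e^{-\nu t}\cos(y-y_0)$, the operator $\mathcal{L}_\nu^{\,\overline u}$ coincides exactly with the $\mathcal{L}_\nu(t)$ of Theorem \ref{Thm:enhance}, so the propagator it generates enjoys both the enhanced dissipation $e^{-c_1\sqrt\nu t}$ and the velocity decay $(1+\nu t^3)^{-1/2}$.

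First I would fix the bar state: the hypotheses say the amplitude $\|P_2\om_0\|_{L^2}$ is of order one (determining $a_0,y_0$), while $\|\om_{\neq}(0)\|_{L^2}\le\|(I-P_2)\om_0\|_{L^2}\le\nu^\gamma$ since $P_2\le P_0$. The bootstrap assumptions on a maximal interval $[0,T]$ would be an enhanced–dissipation bound $\|\om_{\neq}(t)\|_{L^2}\le 2C\nu^\gamma e^{-c_3\sqrt\nu t}$ (together with the corresponding velocity and gradient information) and closeness of the mean shear, $\|\overline u(t)-u_K(t)\|$ small in a $W^{2,\infty}_y$–type norm. To improve the latter I would split $\overline\om=\om_K+\overline\om_{err}$ with $\om_K=-a_0e^{-\nu t}\sin(y-y_0)$ reproduced exactly by the heat flow on $W_2$; the remainder $\overline\om_{err}$ starts at size $\nu^\gamma$, dissipates on the $(I-P_2)$ modes, and is driven only by the quadratic Reynolds stress. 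Using the bootstrap bounds and the velocity decay, the forcing is of size $\nu^{2\gamma}(1+\nu s^3)^{-1/2}e^{-2c_3\sqrt\nu s}$, and since $\int_0^\infty(1+\nu s^3)^{-1/2}\,ds\sim\nu^{-1/3}$ the shear distortion stays of order $\nu^{2\gamma-1/3}$ throughout $[0,\tau/\nu]$.

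Next I would close the enhanced dissipation for $\om_{\neq}$ by Duhamel against the propagator $S(t,s)$ of $\mathcal{L}_\nu^{\,u_K}$, rewriting its equation as
\begin{gather*}
\pa_t\om_{\neq}+\mathcal{L}_\nu^{\,u_K}(t)\om_{\neq}=R_{lin}+R_{nl},\\
R_{lin}=\big(\mathcal{L}_\nu^{\,u_K}-\mathcal{L}_\nu^{\,\overline u}\big)\om_{\neq},\qquad
R_{nl}=-P_{\neq0}\big(V_{\neq}\cdot\na\om_{\neq}\big),
\end{gather*}
and applying Theorem \ref{Thm:enhance} to $S(t,s)$. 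The linear error obeys $\|R_{lin}\|\lesssim\|\overline u-u_K\|_{W^{2,\infty}}\big(\|\om_{\neq}\|+\|V_{\neq}\|\big)$; feeding in the distortion $\nu^{2\gamma-1/3}$ from the previous step, its Duhamel contribution stays comfortably below the bootstrap level (it would only demand $\gamma>5/12$), so it is not the binding term.

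The hard part is the nonlinear self-interaction $R_{nl}$, which loses one $y$-derivative through $\pa_y\om_{\neq}$. Here both mechanisms of Theorem \ref{Thm:enhance} must be used in tandem and anisotropically: the dangerous factor $V^2_{\neq}\pa_y\om_{\neq}$ pairs the normal velocity $V^2_{\neq}$, which carries the inviscid–damping decay $(1+\nu s^3)^{-1/2}$, against the filament gradient $\pa_y\om_{\neq}$, which saturates at the mixing scale $\|\pa_y\om_{\neq}\|\lesssim\nu^{-1/3}\|\om_{\neq}\|$ coming from the balance $\nu s^3\sim1$, while the $x$-derivative term is harmless since horizontal frequencies are bounded below. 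Combining the two $\nu^{-1/3}$ factors — one from the gradient scale, one from the time integral $\int_0^\infty(1+\nu s^3)^{-1/2}\,ds$ — against the quadratic prefactor $\nu^{2\gamma}$ yields a contribution of relative size $\nu^{2\gamma-2/3}$, admissible precisely when $\gamma>2/3$. This is the term that fixes the threshold in the statement, and making the gradient/velocity-decay bookkeeping rigorous — in particular propagating the requisite $\nu$-dependent higher regularity from data that is merely $L^2$, via the instantaneous smoothing of the dissipation — is the crux. Once $R_{lin}$ and $R_{nl}$ are both shown to contribute strictly below the bootstrap level for $\gamma>2/3$ and $\nu<c_2$ small, the continuity argument closes and yields the asserted decay with some $c_3<c_1$.
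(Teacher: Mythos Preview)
Your outline has the right skeleton—split into mean and nonzero modes, freeze a Kolmogorov bar state, and close by Duhamel against the semigroup of Theorem~\ref{Thm:enhance}—but the step you yourself flag as ``the crux'' is a genuine gap, and the paper closes it by a device you do not invoke. You propose to control $V^2_{\neq}\pa_y\om_{\neq}$ via a pointwise bound $\|\pa_y\om_{\neq}\|\lesssim\nu^{-1/3}\|\om_{\neq}\|$; for $L^2$ data there is no mechanism to make this rigorous uniformly in time (parabolic smoothing gives $(\nu t)^{-1/2}$, worse for $t\ll\nu^{-1/3}$, and the transport term spoils even that). The paper never needs a pointwise gradient bound. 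It exploits instead the identity
\[
\partial_t\bigl(\|\om\|_{L^2}^2-\|\nabla\psi\|_{L^2}^2\bigr)=-2\nu\bigl(\|\nabla\om\|_{L^2}^2-\|\om\|_{L^2}^2\bigr)
\]
for the \emph{full nonlinear} equation, together with the Fourier coercivity (valid for $\d<1$) $\|\om\|^2-\|\nabla\psi\|^2\ge C_0\|(I-P_2)\om\|^2$ and $\|\nabla\om\|^2-\|\om\|^2\ge C_0\|\nabla(I-P_2)\om\|^2$. These yield \emph{a priori}, with no bootstrap,
\[
\|(I-P_2)\om(t)\|_{L^2}\le C\nu^{\gamma},\qquad
\int_0^{\tau/\nu}\|\nabla(I-P_2)\om(s)\|_{L^2}^2\,ds\le C\nu^{2\gamma-1}.
\]
The second supplies $\int\|\nabla\om_n\|^2$ directly; paired by Cauchy--Schwarz with a semigroup space--time estimate $\int\|V_n\|_{L^\infty}^2\lesssim\nu^{-\gamma_1}A_1^2$ (any $\gamma_1\in(\tfrac13,2\gamma-1)$, from Lemma~\ref{Lem: VL2Linfty}), the self-interaction contributes $\nu^{\gamma-(\gamma_1+1)/2}A_1$, small exactly when $\gamma>\tfrac23$.

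Two smaller corrections. First, your shear-distortion size $\nu^{2\gamma-1/3}$ accounts only for the Reynolds forcing; the initial $(I-P_2)P_0$ part is of size $\nu^{\gamma}$ and decays only like $e^{-4\nu t}$, so the correct size is $\nu^{\gamma}$ (still sufficient). Second, writing $R_{lin}$ with $\|\overline u-u_K\|_{W^{2,\infty}_y}$ demands $\pa_y\overline\om_{err}\in L^\infty$, regularity you do not have; the paper avoids this by writing the error as $\widetilde V_s^1\pa_x\om_n+V_n\cdot\nabla\widetilde\om_s$, so only $\|\widetilde V_s^1\|_{L^\infty}\le C\nu^{\gamma}$ and $\|\nabla\widetilde\om_s\|_{L^2_tL^2_y}$ are needed—both furnished by the a priori energy above. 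Finally, the paper runs a discrete iteration over steps $t_0\sim\nu^{-1/2}$ (chosen so $Ce^{-c_1\sqrt\nu t_0}=\tfrac14$) rather than a continuous bootstrap, combined with the dedicated semigroup bounds $\int\|\pa_x S(t,0)f\|_{L^2}dt\lesssim\nu^{-2/3}\|f\|_{L^2}$ and $\int\|\nabla\Delta^{-1}S(t,0)f\|_{L^\infty}^2dt\lesssim\nu^{-\gamma_1}\|f\|_{L^2}^2$; these are what turn your heuristic $\nu$-counting into a proof.
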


\begin{remark}
The stability threshold $\nu^\gamma$ for $\gamma>\f23$ may be not optimal. We can only achieve the stability threshold $\nu^\f34$
if we do not use the enhanced dissipation with an extra decay factor of the velocity.
\end{remark}

\begin{remark}
The case of $\delta=1$ is a challenging problem. In this case, we need to consider the linearized Navier-Stokes equations around the dipole states such as
\beno
e^{-\nu t}(-\sin y, \sin x),\quad  e^{-\nu t}(-\cos y, \cos x).
\eeno
\end{remark}

\section{Sketch of the proof}

In this section, we present a sketch of the proof of main theorems.

\subsection{Linear inviscid damping}
The basic ideas are similar to \cite{WZZ1, WZZ2}. We introduce the stream function $\psi$ so that $V=(\pa_y\psi,-\pa_x\psi)$ and $-\Delta \psi=\om$.
In terms of $\psi$, the linearized Euler equation \eqref{eq:Euler-L} takes
\beno
\partial_t\Delta{\psi}+u(y)\partial_x\Delta{\psi}-u''(y)\partial_x{\psi}=0,
\eeno
here $u(y)=-\cos y$. Taking the Fourier transform in $x$, we get
\beno
(\partial_y^2-\al^2)\partial_t\widehat{\psi}=i\al\big(u''(y)-u(y)(\partial_y^2-\al^2)\big)\widehat{\psi}.
\eeno
Inverting the operator $(\partial_y^2-\al^2)$, we obtain
\ben\label{eq:Euler-L-operator}
-\frac{1}{i\al}\partial_t\widehat{\psi}=\mathcal{R}_\al\widehat{\psi},
\een
where
\ben\label{def:ray ope}
\mathcal{R}_\al\widehat{\psi}=-(\partial_y^2-\al^2)^{-1}\big(u''(y)-u(\partial_y^2-\al^2)\big)\widehat{\psi}.
\een
Let $\Omega$ be a simple connected domain including the spectrum $\sigma(\mathcal{R}_\al)$ of $\mathcal{R}_\al$.
We have the following representation formula of the solution to (\ref{eq:Euler-L-operator}):
\ben\label{eq:stream formula}
\widehat{\psi}(t,\al,y)=\frac{1}{2\pi i}\int_{\partial\Omega}
e^{-i\al tc}(c-\mathcal{R}_\al)^{-1}\widehat{\psi}(0,\al,y)dc.
\een
In this way, the large time behaviour of the solution $\widehat{\psi}(t,\al,y)$ is reduced to the study of the resolvent $(c-\mathcal{R}_\al)^{-1}$.

Let $\Phi$ be a solution of the inhomogeneous Rayleigh equation:
\ben\label{eq:Ray-ihom}
\left\{
\begin{aligned}
&\pa_y^2\Phi-\al^2\Phi-\frac{u''}{u-c}\Phi=f,\\
&\pa_y\Phi(-\pi)=\pa_y\Phi(\pi),\quad \Phi(-\pi)=\Phi(\pi),
\end{aligned}
\right.
\een
with $f(\al,y,c)=\f{\widehat{\om}_0(\al,y)}{i\al (u(y)-c)}$.
Then we find that
\ben
(c-\mathcal{R}_\al)^{-1}\widehat{\psi}(0,\al,y)=i\al\Phi.
\een

To study the large time behaviour of $\widehat{\psi}$, one of key ingredients is to establish the limiting absorption principle:
\beno
\Phi(\al,y,c\pm i\varepsilon)\to \Phi_\pm(\al,y,c)\quad \text{for}\,\, c\in \text{Ran}\,u,
\eeno
as $\varepsilon\to 0$. In \cite{WZZ2}, we established the limiting absorption principle for shear flows in the class $\mathcal{K}$, and proved that $\Phi_\pm$
is bounded in $H_y^1$.
These information is enough to show that the velocity decays to 0 as the time tends to infinity in $L^2$ sense.
These results can be easily extended to the Kolmogorov flow.

To obtain the explicit decay estimates of the velocity, we need to know more precise behaviour of the limit function $\Phi_\pm$.
As in the case of symmetric flows, we decompose the solution of \eqref{eq:Ray-ihom} into the odd part and even part.
Each part can be handled as in the monotonic flow. Main difference is that $\f 1 {u(y)-c}$ is more singular for $c=u(0)$ and $c=u(\pi)$.
For the symmetric flow considered in \cite{WZZ2}, $\f 1 {u(y)-c}$ is  singular only at $c=u(0)$. Due to the singularity of $\f 1 {u(y)-c}$, we can only establish the uniform estimates
of the solution of the homogeneous Rayleigh equation in the weighted space:
\ben\label{eq:Rayleigh-H}
(u-c)(\phi''-\al^2\phi)-u''\phi=0.
\een
The details will be presented in section 3.

In section 4, we solve the inhomogeneous Rayleigh equation. In section 5, we prove the limiting absorption principle and give the solution formula  of the linearized Euler equations. Again, the decay estimates of the velocity will be proved by using the dual method. More precisely, for $f=(\pa_y^2-\al^2)g$ with $g\in H^2(0,\pi)\cap H_0^1(0,\pi)$,
\begin{align*}
&\int_0^{\pi}\widehat{\psi}_o(t,\al,y)f(y)dy
=-\int_{u(0)}^{u(1)}K_o(c,\al)e^{-i\al ct}dc,
\end{align*}
and for $f=(\pa_y^2-\al^2)g$ with $g\in H^2(0,\pi)$ and $g'(0)=g'(\pi)=0$,
\begin{align*}
&\int_0^{\pi}\widehat{\psi}_e(t,\al,y)f(y)dy=-\int_{u(0)}^{u(1)}K_e(c,\al)e^{-i\al ct}dc,
\end{align*}
where $\widehat{\psi}_o$ and $\widehat{\psi}_o$ are the odd part and even part of $\widehat{\psi}$ respectively.
Thus, the decay estimates are reduced to the regularity of the kernels $K_o$ and $K_e$.
In section 6, we prove the decay estimates of the velocity under the regularity assumptions on the kernels. Section 7 and section 8
are devoted to the regularity of the kernels.

In section \ref{critical points}, we prove the decay estimates of the vorticity and velocity at critical points.

\subsection{Enhanced dissipation}

Motivated by \cite{LWZ}, we will use the wave operator method to study the enhanced dissipation.
We will construct a wave operator $\bbD$, which satisfies the following basic properties:
\begin{itemize}

\item[1.] $\bbD\big(\cos y(1+(\pa_y^2-\al^2)^{-1})\om\big)=\cos y\bbD(\om);$

\item[2.] $(1-\al^{-2})\|\om\|_{L^2}^2\leq \|\bbD(\om)\|_{L^2}^2\leq \|\om\|_{L^2}^2$;

\item[3.] $\|\sin y(\bbD(\pa_y^2\omega)-\pa_y^2\bbD(\omega))\|_{L^2}\leq C(|\al|\|\omega\|_{L^2}+\|\pa_y\omega\|_{L^2})$.

\end{itemize}
See section 10 for more properties.

In section 11, we establish the linear enhanced dissipation.
First of all, we study the decay estimates for the model without nonlocal term in a short time scale $\nu^{-\f12}$. Our method is based on the change of unknown, which is very different from the hypocoercive method used by Beck and Wayne. Then we use the wave operator method to establish the decay estimates for the model with nonlocal term. Finally, the linear enhanced dissipation was obtained by using some kind of iteration argument.

In section 12, we establish the nonlinear enhanced dissipation. The proof
is based on the linear enhanced dissipation, especially the decay estimate
of the velocity.  Another key observation is that the basic energy dissipation ensures that if $C_1^{-1}\leq \|P_2\omega(0)\|_{L^2}\leq C_1$, then
\beno
{C}^{-1}\leq \|P_2\omega(t)\|_{L^2}\leq C\quad \text{for}\,\, 0<t<\tau/\nu.
\eeno
The proof used the iteration argument again.

\section{The homogeneous Rayleigh equation}\label{Rayleigh equations}
To solve \eqref{eq:Ray-ihom}, we first construct a smooth solution of the homogeneous Rayleigh equation on $[-\pi,\pi]$ with $u(y)=-\cos y$:
\beno
(u-c)(\phi''-\al^2\phi)-u''\phi=0,
\eeno
where the complex constant $c$ will be taken in four kinds of domains given by
\beno
&&D_0\triangleq\big\{c\in (-1,1)\big\},\\
&&D_{\epsilon_0}\triangleq\big\{c=c_r+i\epsilon,~c_r\in [-1,1], 0<|\epsilon|<\epsilon_0\big\},\\
&&B_{\epsilon_0}^l\triangleq\big\{c=-1+\epsilon e^{i\theta},~0<\epsilon<\epsilon_0,~\frac{\pi}{2}\leq \theta\leq\frac{3\pi}{2}\big\},\\
&&B_{\epsilon_0}^r\triangleq\big\{c=1-\epsilon e^{i\theta},~0<\epsilon<\epsilon_0,~\frac{\pi}{2}\leq \theta\leq\frac{3\pi}{2}\big\},
\eeno
for some $\epsilon_0\in (0,1)$. We denote
\ben\label{eq:Omega}
\Om_{\epsilon_0}\triangleq \overline{D}_0\cup D_{\epsilon_0}\cup B_{\epsilon_0}^l\cup B^r_{\epsilon_0}.
\een

\subsection{Rayleigh integral operator}

Given $|\al|\ge 1$, let $A$ be a constant larger than $C|\al|$ with $C\ge 1$ independent of $\al$.
\begin{definition}\label{Def: 1}
For a function $f(y,c)$ defined on $[0,\pi]\times \Om_{\epsilon_0}$, for $k=0,1,2$ and $j=0,1$, we define
\beno
&&\|f\|_{X^k_0}\eqdef\sup_{(y,c)\in [0,\pi]\times D_0}\bigg|\frac{u'(y_c)^kf(y,c)}{\cosh(A(y-y_c))}\bigg|,\\
&&\|f\|_{X_j}\eqdef\sup_{(y,c)\in [0,\pi]\times \Om_{\epsilon_0}}\bigg|\frac{u'(y_c)^jf(y,c)}{\cosh(A(y-y_c))}\bigg|,
\eeno
where $y_c\in [0,\pi]$ satisfies $u(y_c)=c_r$ with
\ben\label{def:cr}
c_r=\mathrm{Re}\, c \quad \textrm{for }c\in D_{\epsilon_0}\cup \overline{D}_0,\quad c_r=-1\quad \textrm{for }c\in B_{\epsilon_0}^l,
\quad c_r=1\quad \textrm{for }c\in B_{\epsilon_0}^r.
\een
\end{definition}

\begin{definition}
For a function $f(y,c)$ defined on $[0,\pi]\times \Om_{\epsilon_0}$, we define
\beno
&&\|f\|_{Y_C}\eqdef\sum_{k=0}^2\sum_{\b+\gamma=k}A^{-k}
\|\partial_y^{\b}\partial_c^{\gamma}f\|_{X^{\gamma}_0}+A^{-3}\|\pa_c^2\pa_yf\|_{X^3_1}.
\eeno
where
\beno
\|f\|_{X^3_1}\eqdef\sup_{(y,c)\in [0,\pi]\times D_0}\bigg|\frac{Au'(y_c)^2f(y,c)}{\cosh(A(y-y_c))(A+u'(y_c)/u_1(y,c)^2)}\bigg|
\eeno
with $u_1(y,c)=\f{\cos y_c-\cos y}{y-y_c}$.
\end{definition}

\begin{definition}
Let $y_c$ be as in Definition \ref{Def: 1} with $c_r$ defined by (\ref{def:cr}).
The Rayleigh integral operator $T$ is defined by
\beno
T(f)\eqdef T_0\circ T_{2,2}(f)=\int_{y_c}^y\f 1 {(u(y')-c)^2}\int_{y_c}^{y'}f(z,c)(u(z)-c)^2dzdy',
\eeno
where
\beno
&&T_0f(y,c)\eqdef\int_{y_c}^yf(z,c)dz,\\
&&T_{k,j}f(y,c)\eqdef\frac{1}{(u(y)-c)^j}\int_{y_c}^yf(z,c)(u(z)-c)^kdz,\quad j\le k+1.
\eeno
\end{definition}

Let us first list some basic properties of $\cos y$ and $\sin y$, which will be used frequently.
The proof is easy and we omit it.

\begin{lemma}\label{Rmk:u_1}
There exists a constant $C$ so that for any $y,y_c\in [0,\pi]$,
\begin{align*}
&C^{-1}\max_{|y'-y_c|+|y'-y|=|y-y_c|}\sin y'\leq \f{\cos y_c-\cos y}{y-y_c}\leq C\max_{|y'-y_c|+|y'-y|=|y-y_c|}\sin y',\\
&C^{-1}\sin \f{y+y_c}{2}\leq \f{\cos y_c-\cos y}{y-y_c}\leq C\sin \f{y+y_c}{2},\quad
\f{1}{\pi}y(\pi-y)\leq \sin y \leq \f{4}{\pi^2}y(\pi-y).
\end{align*}
In particular, we have
\beno
&&\Big|\f{(y-y_c)(\sin y+\sin y_c)}{\cos y_c-\cos y}\Big|\leq C,\\
&&\Big|\int_{y}^{y_c}\f{(y-y_c)^2}{(\cos y_c-\cos y)^2}dy'\Big|\leq \f{C}{\sin y_c}.
\eeno
\end{lemma}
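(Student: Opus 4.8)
The plan is to reduce every estimate to the single product-to-sum identity
\[
\cos y_c-\cos y=2\sin\tfrac{y+y_c}{2}\,\sin\tfrac{y-y_c}{2},
\]
combined with the elementary Jordan bound $\tfrac{2}{\pi}t\le\sin t\le t$ on $[0,\tfrac{\pi}{2}]$. Writing $m=\tfrac{y+y_c}{2}$ and $t=\tfrac{y-y_c}{2}\in[-\tfrac{\pi}{2},\tfrac{\pi}{2}]$, dividing the identity by $y-y_c$ gives $\tfrac{\cos y_c-\cos y}{y-y_c}=\sin m\cdot\tfrac{\sin t}{t}$; since $\tfrac{\sin t}{t}\in[\tfrac{2}{\pi},1]$ on this range, the quotient is two-sidedly comparable to $\sin\tfrac{y+y_c}{2}$. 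This is precisely the second displayed inequality of the lemma, and it is the workhorse for everything that follows. For the parabola bound $\tfrac1\pi y(\pi-y)\le\sin y\le\tfrac{4}{\pi^2}y(\pi-y)$ on $[0,\pi]$ I would argue by elementary calculus: the two differences vanish at the endpoints and, using concavity of $\sin$ and symmetry about $y=\tfrac{\pi}{2}$, a short sign analysis of $\tfrac{\sin y}{y(\pi-y)}$ locates its global minimum at the endpoints (value $\tfrac1\pi$) and its maximum at $\tfrac{\pi}{2}$ (value $\tfrac{4}{\pi^2}$). The form I actually use downstream is the clean consequence $\sin y\approx\mathrm{dist}(y,\{0,\pi\})=\min(y,\pi-y)$ with universal constants.

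The first displayed inequality, the comparison with the interval maximum, is the only step that is not a pure identity, and is where I expect the small amount of genuine work. The constraint $|y'-y_c|+|y'-y|=|y-y_c|$ just says $y'$ ranges over the closed interval $I$ with endpoints $y$ and $y_c$, so $\sin m\le\max_{y'\in I}\sin y'$ is immediate since $m\in I$. For the reverse bound I would combine $\sin\approx\mathrm{dist}(\cdot,\{0,\pi\})$ with a geometric observation: after the symmetry $y\mapsto\pi-y$ reduces matters to $m\le\tfrac{\pi}{2}$, the containment $I\subseteq[0,\pi]$ forces $\tfrac{|y-y_c|}{2}\le m$, whence $y'\le 2m$ for every $y'\in I$, and therefore $\max_{y'\in I}\sin y'\le\max_{y'\in I}\min(y',\pi-y')\le 2m\le C\sin m$. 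This closes the comparison and, combined with the first paragraph, establishes both lines of the lemma.

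The same circle of ideas handles the two auxiliary estimates. Claim (a) is an exact simplification: by the sum formulas $\sin y+\sin y_c=2\sin m\cos t$ and $\cos y_c-\cos y=2\sin m\sin t$, so
\[
\frac{(y-y_c)(\sin y+\sin y_c)}{\cos y_c-\cos y}=\frac{y-y_c}{\tan t}=2t\cot t,
\]
which is bounded by $2$ on $[-\tfrac{\pi}{2},\tfrac{\pi}{2}]$ because $\tan t\ge t$ gives $0\le t\cot t\le 1$. For the integral estimate (b), replacing $(\cos y_c-\cos y')^2$ by its comparable $\sin^2\tfrac{y'+y_c}{2}\,(y'-y_c)^2$ reduces the integrand to $\sin^{-2}\tfrac{y'+y_c}{2}$, whose primitive is $-\cot$; the substitution $s=\tfrac{y'+y_c}{2}$ then leaves
\[
\big|\cot m-\cot y_c\big|=\frac{\big|\sin\tfrac{y-y_c}{2}\big|}{\sin m\,\sin y_c}\le\frac{|y-y_c|}{2\sin m\,\sin y_c}.
\]
I finish with the two elementary facts $|y-y_c|\le 4\sin m$ and $\sin y_c\le 4\sin m$, both immediate from the parabola bound (since $\sin m\ge\tfrac12\min(m,\pi-m)$ and $m\ge\tfrac{y_c}{2}$), which together bound the whole expression by $C/\sin y_c$; the same two inequalities control the expression equally well if one instead freezes the integrand at its $y'=y$ value, so the conclusion is insensitive to that reading.

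The main obstacle, such as it is, is keeping all constants uniform across the degenerate regimes where $y$, $y_c$, or both approach the endpoints $0,\pi$. The single uniform fact $\sin\tfrac{y+y_c}{2}\ge\tfrac14\sin y_c$, valid in every case and following directly from $\sin m\ge\tfrac12\min(m,\pi-m)$ together with $m\ge\tfrac{y_c}{2}$ (and its $y\mapsto\pi-y$ mirror), is what prevents any loss in those limits and ties the whole argument together.
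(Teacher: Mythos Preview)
The paper explicitly omits the proof of this lemma (``The proof is easy and we omit it''), so there is nothing to compare against. Your argument is correct and is exactly the natural one: the product-to-sum identity $\cos y_c-\cos y=2\sin\tfrac{y+y_c}{2}\sin\tfrac{y-y_c}{2}$ together with $\tfrac{2}{\pi}\le\tfrac{\sin t}{t}\le 1$ on $[-\tfrac{\pi}{2},\tfrac{\pi}{2}]$ immediately gives the comparison with $\sin\tfrac{y+y_c}{2}$, and everything else follows from this and the parabola bound. Your handling of the interval-maximum inequality (reducing to $m\le\tfrac{\pi}{2}$, then using $|t|\le m$ to trap $I\subset[0,2m]$) is clean, and your reading of the integral estimate---including the remark that the intended integrand should carry $y'$ rather than $y$, as confirmed by how the lemma is invoked later in the paper---is the right one. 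The specific constant $4$ in ``$|y-y_c|\le 4\sin m$'' and ``$\sin y_c\le 4\sin m$'' could be sharpened (in fact $\pi$ and $2$ respectively suffice), but this is immaterial.
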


Throughout this paper, we denote
\ben\label{eq:u_1}
u_1(y,c)=\f{\cos y_c-\cos y}{y-y_c}=\int_0^1\sin(y_c+t(y-y_c))dt.
\een

\begin{lemma}\label{lem:T-bound}
There exists a constant $C$ independent of $A$ such that
\beno
&&\|Tf\|_{Y_C}\leq \frac{C}{A^2}\|f\|_{Y_C},\quad\|Tf\|_{X_0}\leq \frac{C}{A^2}\|f\|_{X_0}.
\eeno
Moreover, if $f\in C\big([0,\pi]\times \Omega_{\epsilon_0}\big)$, then
\beno
T_0f,\,\,T_{2,2}f,\,\,Tf\in C\big([0,\pi]\times \Omega_{\epsilon_0}\big).
\eeno
\end{lemma}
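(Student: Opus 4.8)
The plan is to prove the estimate $\|Tf\|_{Y_C}\le \frac{C}{A^2}\|f\|_{Y_C}$ by reducing it to boundedness of the two elementary building blocks $T_0$ and $T_{2,2}$, which compose to give $T$. The key structural fact is that $T$ is a double integral with the singular weight $(u(y')-c)^{-2}$ in the outer integral and $(u(z)-c)^2$ in the inner integral, and the weights are designed precisely to cancel the $u''=-\cos y$ term in the Rayleigh operator. First I would establish a kernel estimate for $T_0$ and $T_{2,2}$ acting against the weight $\cosh(A(y-y_c))$ that appears in the definition of the $X$-norms. The crucial elementary inequality is that for the half-line integral $\int_{y_c}^y \cosh(A(z-y_c))\,dz$ one gains a factor $A^{-1}$ relative to $\cosh(A(y-y_c))$, so each single integration $T_0$ contributes a gain of $A^{-1}$; applying this twice formally yields the $A^{-2}$ gain. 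The main work is to show that the singular weights in $T_{2,2}$ do not destroy this gain.

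The second step is to control the singular weights. Near the critical point $y_c$ one has $u(z)-c \approx u'(y_c)(z-y_c)$ when $c$ is real, and more generally the denominator $(u(y')-c)^2$ is controlled from below using the structure recorded in Lemma \ref{Rmk:u_1}, namely the comparison $\frac{\cos y_c-\cos y}{y-y_c}=u_1(y,c)\approx \sin\frac{y+y_c}{2}$ and the bound on $\int \frac{(y-y_c)^2}{(\cos y_c-\cos y)^2}\,dy'$. These are exactly the ingredients that prevent the double pole at $y=y_c$ from producing a divergent integral: the inner factor $(u(z)-c)^2$ vanishes to the same order as the outer $(u(y')-c)^{-2}$ blows up, so $T_{2,2}f$ remains bounded. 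I would carry out the estimate separately for the $X_0^k$ pieces (where $c\in D_0$ is real and $y_c$ is genuine) and track the powers $u'(y_c)^k$ carrying through the integration by parts or direct estimation; the weighted norm $Y_C$ is built so that each term $A^{-k}\|\partial_y^\beta\partial_c^\gamma f\|_{X_0^\gamma}$ transforms into a comparable term after applying $T$, using that $\partial_y$ and $\partial_c$ commute with the integral operators up to boundary and weight contributions.

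The third step handles the derivative and the worst term $A^{-3}\|\partial_c^2\partial_y f\|_{X_1^3}$ in the $Y_C$ norm. Here I would differentiate the integral formula for $T$ in $y$ and $c$, noting that $\partial_c$ hitting the weights $(u-c)^{\pm 2}$ raises the order of the singularity, and that $\partial_c y_c = 1/u'(y_c)$ injects the factors $u'(y_c)^{-1}$ that the norms $X_j$ and $X_1^3$ are designed to absorb. The special weight in $X_1^3$, involving $\big(A+u'(y_c)/u_1(y,c)^2\big)$, is tailored so that the most singular contribution from $\partial_c^2$ (which produces a $u_1^{-2}$ type factor through the vanishing of $u'(y_c)$ at the endpoints $y_c=0,\pi$) is exactly accounted for. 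I expect this to be the main obstacle: keeping the bookkeeping of all the $u'(y_c)$ powers and the cosh-weights consistent across the mixed derivatives $\partial_y^\beta\partial_c^\gamma$ is delicate, and the degeneracy of $u'(y_c)=\sin y_c$ at the two critical points $y_c=0,\pi$ is precisely where the Kolmogorov (non-monotone) case differs from the earlier monotone and symmetric-flow analyses.

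Finally, for the $X_0$ estimate $\|Tf\|_{X_0}\le \frac{C}{A^2}\|f\|_{X_0}$ the same cosh-weight gain applies directly without derivatives, so it is the simplest case and follows from the kernel estimate of the first step. For the continuity statement, since $f$ is continuous on the compact set $[0,\pi]\times\Omega_{\epsilon_0}$, I would argue that $T_0 f$, $T_{2,2}f$ and their composition are continuous by dominated convergence, using the weight bounds from Lemma \ref{Rmk:u_1} to produce an integrable dominating function uniformly in $(y,c)$; the only subtlety is continuity across $c$ passing through the real axis and near the degenerate endpoints, where the uniform integrability granted by the weighted estimates again suffices.
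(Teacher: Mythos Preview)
Your proposal is correct and follows essentially the same approach as the paper: decompose $T=T_0\circ T_{2,2}$, extract an $A^{-1}$ gain from each factor via the $\cosh$-weight inequality, control the singular $(u-c)^{-2}$ factors by the monotonicity $|u(y')-c|\le |u(y)-c|$ and Lemma~\ref{Rmk:u_1}, then compute the derivatives $\partial_y^\beta\partial_c^\gamma Tf$ explicitly and estimate them using auxiliary operators $T_{k,k+1}$, with the $X_1^3$ weight absorbing the $u_1^{-2}$ factor produced by $\partial_c T_{k,k+1}$; continuity follows by dominated convergence exactly as you describe. The only minor difference is that the paper does not use integration by parts but proceeds by direct estimation of the explicit derivative formulas for $\partial_c Tf$, $\partial_c^2 Tf$, $\partial_y\partial_c^2 Tf$, etc.
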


\begin{proof}
Let us prove the first inequality.
A direct calculation shows that
\begin{align}
\|T_0f\|_{X^k_0}
=&\sup_{(y,c)\in[0,\pi]\times D_0}\bigg|
\frac{1}{\cosh A(y-y_c)}\int_{y_c}^y
\frac{u'(y_c)^kf(z,c)}{\cosh A(z-y_c)}\cosh A(z-y_c)dz\bigg|\nonumber\\
\leq &\sup_{(y,c)\in[0,\pi]\times D_0}\bigg|
\frac{1}{\cosh A(y-y_c)}\int_{y_c}^y
\cosh A(z-y_c)dz\bigg|\|f\|_{X^k_0}\nonumber\\
\leq &\frac{1}{A}\|f\|_{X^k_0}.\label{eq:T0-est}
\end{align}

Using the fact that for any $0<y<y'<y_c$ or $y_c<y'<y<\pi$,
\ben\label{eq:u-mono}
|u(y')-u(y_c)|\leq |u(y)-u(y_c)|,
\een
we deduce that
\begin{align}
\|T_{2,2}f\|_{X^k_0}
\leq &\sup_{(y,c)\in[0,\pi]\times D_0}
\bigg|\frac{y-y_c}{\cosh A(y-y_c)}
\int_0^1
\cosh tA(y-y_c)dt
\bigg|\|f\|_{X^k_0}\nonumber\\
\leq& \frac{C}{A}\|f\|_{X^k_0},\label{eq:T22-est}
\end{align}
which along with \eqref{eq:T0-est} shows that for $k\geq 0$,
\ben\label{eq:T-bound-1}
\|Tf\|_{X^k_0}\le \frac C {A^2}\|f\|_{X^k_0}.
\een

Using the fact that for any $0<y<y'<y_c$ or $y_c<y'<y<\pi$, $|u(y')-u(y_c)|\leq |u(y)-u(y_c)|$, thus $|u(y')-c|\leq |u(y)-c|$, we get by Lemma \ref{Rmk:u_1} that
\begin{align}
\nonumber&\|T_{k,k+1}f\|_{X^{j+1}_0}+\|u'(y)T_{k,k+1}f\|_{X^j_0}\\
\nonumber&\leq C\sup_{(y,c)\in[0,\pi]\times D_0}
\bigg|\frac{1}{\cosh A(y-y_c)}\int_0^1
\cosh tA(y-y_c)dt
\bigg|\|f\|_{X^j_0}\\
&\leq  C\|f\|_{X^j_0}.\label{eq:Tk-est}
\end{align}

It is easy to see that
\beno
&&\partial_yTf(y,c)=T_{2,2}f(y,c),\\
&&\partial_cTf(y,c)=2T_0\circ T_{2,3}f(y,c)-2T_0\circ T_{1,2}f(y,c)+T\partial_cf(y,c),\\
&&\partial_y^2Tf(y,c)=-2u'(y)T_{2,3}f(y,c)+f(y,c).
\eeno
Then it follows from \eqref{eq:T0-est}, \eqref{eq:T22-est} and  \eqref{eq:Tk-est} that
\ben\label{eq:T-bound-2}
&\|\partial_{y}Tf\|_{X_0^0}+\frac{1}{A}\|\partial_{y}^2Tf\|_{X_0^0}+\|\partial_{c}Tf\|_{X_0^1}\leq \frac{C}{A}\|f\|_{X_0^0}
+\frac C {A^2}\|\partial_{c}f\|_{X_0^1}.
\een

It is easy to see that
\begin{align*}
T_{k,k+1}f(y,c)
&=\int_0^1f(y_c+t(y-y_c),c)\frac{t^{k}u_1(y_c+t(y-y_c),c)^{k}}{u_1(y,c)^{k+1}}dt,
\end{align*}
from which and the fact that $|\pa_c u_1(y,c)|\leq \f{C}{u'(y_c)}$, we can deduce that
\begin{align}
\left|\frac{u'(y_c)^j\pa_cT_{k,k+1}f(y,c)}{\cosh A(y-y_c)}\right|
\leq C\left(\frac{\|f\|_{X_0^{j-1}}}{u_1(y,c)^2}+\frac{\|\pa_{y}f\|_{X_0^{j-1}}+\|\pa_{c}f\|_{X_0^{j}}}{u_1(y,c)}\right).\label{eq:Tk-est2}
\end{align}
Direct calculations show that
\begin{align*}
\partial_y\partial_cTf(y,c)
&=2T_{2,3}f(y,c)-2T_{1,2}f(y,c)+T_{2,2}\partial_cf(y,c),\\
\partial_y\partial_c^2Tf(y,c)
&=2\partial_cT_{2,3}f(y,c)-2\partial_cT_{1,2}f(y,c)\\
&\quad+2T_{2,3}\partial_cf(y,c)
-2T_{1,2}\partial_cf(y,c)+T_{2,2}\partial_c^2f(y,c),
\end{align*}
and
\begin{align*}
\partial_c^2Tf(y,c)=&2\pa_cT_0T_{2,3}f(y,c)-2\pa_cT_0T_{1,2}f(y,c)+\partial_cT\partial_cf(y,c)\\
=&2T_0\partial_cT_{2,3}f(y,c)-2T_0\partial_cT_{1,2}f(y,c)+\frac{f(y_c,c)}{3u'(y_c)^2}\\
&+2T_0T_{2,3}\partial_cf(y,c)-2T_0T_{1,2}\partial_cf(y,c)+T\partial_c^2f(y,c).
\end{align*}
It follows from \eqref{eq:Tk-est} and \eqref{eq:T22-est} that
\ben
\|\partial_y\partial_cTf\|_{X_0^1}\leq C\|f\|_{X_0^0}+\frac C {A}\|\partial_{c}f\|_{X_0^1}.\label{eq:T-est3}
\een

By \eqref{eq:Tk-est2}, we have
\begin{align*}
\left|\frac{u'(y_c)^2T_0\pa_cT_{k,k+1}f(y,c)}{\cosh A(y-y_c)}\right|
\le& \f {u'(y_c)} {\cosh A(y-y_c)}\int_{y_c}^y\cosh A(y'-y_c)\left|\frac{u'(y_c)\pa_cT_{k,k+1}f(y',c)}{\cosh A(y'-y_c)}\right|dy'\\
\le& Cu'(y_c)\left|\int_{y_c}^y\f 1 {u_1(y',c)^2}dy'\right|\|f\|_{X_0^0}+\f C A\big(\|\partial_{y}f\|_{X_0^0}+\|\partial_{c}f\|_{X_0^1}\big)\\
\le& C\|f\|_{X_0^0}+\f C A\big(\|\partial_{y}f\|_{X_0^0}+\|\partial_{c}f\|_{X_0^1}\big),
\end{align*}
from which, \eqref{eq:T0-est} and \eqref{eq:Tk-est}, we infer that
\ben\label{eq:T-bound-3}
\|\partial_c^2Tf\|_{X_0^{2}}\leq C\|f\|_{X_0^0}+\frac C {A}\|\partial_{y}f\|_{X_0^0}+\frac C {A}\|\partial_{c}f\|_{X_0^1}+\frac C {A^2}\|\partial_{c}^2f\|_{X_0^2},
\een

By \eqref{eq:Tk-est} and \eqref{eq:Tk-est2}, we get
\beno
\left|\frac{u'(y_c)\partial_y\partial_c^2Tf(y,c)}{\cosh A(y-y_c)}\right|\leq \frac {C \|f\|_{X^0_0}}{u_1(y,c)^2}+ \frac{C \|\partial_{y}f\|_{X^0_0}}{u'(y_c)}+ \frac{C \|\partial_{c}f\|_{X^1_0}}{u'(y_c)}+\frac {C \|\partial_{c}^2f\|_{X^2_0}}{Au'(y_c)},
\eeno
which implies that
\ben \label{eq:T-est5}
\|\pa_y\pa_c^2Tf\|_{X^{3}_1}\le CA\|f\|_{X^0_0}+ C\|\partial_{y}f\|_{X^0_0}+ C \|\partial_{c}f\|_{X^1_0}+\frac C {A}\|\partial_{c}^2f\|_{X^2_0}.
\een

Putting (\ref{eq:T-bound-1}), \eqref{eq:T-bound-2} and (\ref{eq:T-est3})-\eqref{eq:T-est5} together, we conclude the first inequality.
The estimate of $T$ in ${X}_0$ norm is similar to \eqref{eq:T-bound-1}.

Now we check the continuity. We have
\beno
T_{2,2}(f)=\int_0^1\f{(y-y_c)(u(y_c+t(y-y_c))-c)^2}{(u(y)-c)^2}f(y_c+t(y-y_c),c)dt.
\eeno
Using the fact that for any $0<y<y'<y_c$ or $y_c<y'<y<\pi$, $\left|\frac{(u(y')-c)^2}{(u(y)-c)^2}\right|\leq 1$ and the continuity of $\f{(y-y_c)(u(y_c+t(y-y_c))-c)^2}{(u(y)-c)^2}$, and Lebesgue's dominated convergence theorem, we conclude the continuity of $T_{2,2}f$.
The continuity of $Tf$ follows from $Tf=T_0\circ T_{2,2}f$.
\end{proof}

\subsection{Existence of the solution}

We solve the homogeneous Rayleigh equation on $[0,\pi]$:
\beq\label{eq:homRay}
\left\{
\begin{array}{l}
\phi''-\al^2\phi-\frac{u''}{u-c}\phi=0,\\
\phi(y_c,c)=u(y_c)-c,\quad \phi'(y,c)\big|_{y=y_c}=u'(y_c).
\end{array}\right.
\eeq

\begin{proposition}\label{prop:Rayleigh-Hom}
There exists  $\phi_1(y,c)\in C\big([0,\pi]\times \Omega_{\epsilon_0}\big)$ such that $\phi(y,c)=(u(y)-c)\phi_1(y,c) $ is a solution
of the Rayleigh equation \eqref{eq:homRay}. Moreover, $\pa_y\phi_1(y,c)\in C\big([0,\pi]\times \Omega_{\epsilon_0}\big)$, and  there exists $\epsilon_1>0, C>0$ such that for any $\epsilon_0\in[0,\epsilon_1)$ and $(y,c)\in [0,\pi]\times \Om_{\epsilon_0}$,
\beno
&&|\phi_{1}(y,c)|\ge \f12,\quad |\phi_{1}(y,c)-1|\leq C |y-y_c|^2,
\eeno where the constants $\epsilon_1, C$ may depend on $\al$.
 \end{proposition}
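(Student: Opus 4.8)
The plan is to remove the apparent singularity by the substitution $\phi=(u-c)\phi_1$ and to recognize the resulting equation for $\phi_1$ as a fixed-point problem governed exactly by the Rayleigh integral operator $T$. Inserting $\phi=(u-c)\phi_1$ into \eqref{eq:homRay} and noting that the singular term $\frac{u''}{u-c}\phi=u''\phi_1$ cancels the $u''\phi_1$ produced by $\phi''$, I get
\beno
(u-c)\phi_1''+2u'\phi_1'-\al^2(u-c)\phi_1=0,
\eeno
which I rewrite in divergence form as $\pa_y\big((u-c)^2\phi_1'\big)=\al^2(u-c)^2\phi_1$. The initial data in \eqref{eq:homRay} translate into $\phi_1(y_c,c)=1$ and $(u(y_c)-c)\phi_1'(y_c,c)=0$, so that the boundary term in the first integration from $y_c$ vanishes and a second integration yields
\beno
\phi_1=1+\al^2T(\phi_1).
\eeno
Conversely, differentiating this identity (using $\pa_yTf=T_{2,2}f$ and $\pa_y^2Tf=-2u'T_{2,3}f+f$ from the proof of Lemma \ref{lem:T-bound}) recovers the ODE, and evaluating at $y=y_c$, where $T(\phi_1)$ and $T_{2,2}(\phi_1)$ both vanish, recovers the initial conditions; thus the integral equation is equivalent to \eqref{eq:homRay}.

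Next I would solve $\phi_1=1+\al^2T(\phi_1)$ by contraction. By Lemma \ref{lem:T-bound}, $\|Tf\|_{Y_C}\le CA^{-2}\|f\|_{Y_C}$ with $C$ independent of $A$, and since $A$ is taken larger than $C|\al|$ with the free constant chosen large enough, $\al^2\|T\|_{Y_C\to Y_C}<1$. As $\|1\|_{Y_C}=1<\infty$, the Neumann series $\phi_1=\sum_{n\ge0}\al^{2n}T^n1$ converges in $Y_C$ (and likewise in $X_0$), giving a solution with finite $Y_C$-norm. The continuity part of Lemma \ref{lem:T-bound} shows that $T$ preserves $C\big([0,\pi]\times\Omega_{\epsilon_0}\big)$, so each partial sum is continuous; since convergence in $X_0$ forces uniform convergence on $[0,\pi]\times\Omega_{\epsilon_0}$ (the weight $\cosh A(y-y_c)$ is bounded by $\cosh(A\pi)$ there), the limit $\phi_1$ is continuous, and $\pa_y\phi_1=\al^2T_{2,2}\phi_1$ is continuous as well.

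For the pointwise bounds I would argue directly from $\phi_1-1=\al^2T(\phi_1)$. Bounding the inner integral of $T$ by $\|\phi_1\|_\infty\,|u(y')-c|^2|y'-y_c|$ through the monotonicity \eqref{eq:u-mono} of $|u-c|$ away from $y_c$, the factor $|u(y')-c|^{-2}$ cancels and a single integration gives $|T(\phi_1)(y,c)|\le \tfrac12\|\phi_1\|_\infty|y-y_c|^2$; since $\|\phi_1\|_\infty\le\|\phi_1\|_{X_0}\cosh(A\pi)<\infty$, this is the claimed estimate $|\phi_1-1|\le C|y-y_c|^2$ with $C$ depending on $\al$. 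For the lower bound, the key observation is that for real $c\in\overline{D}_0$ the kernel of $T$ is nonnegative (both for $y>y_c$ and $y<y_c$ the two nested integrations contribute with the same sign), so $T$ preserves positivity; iterating from the positive seed $1$ then gives $\phi_1=\sum_{n\ge0}\al^{2n}T^n1\ge1$ on the whole real slice. The full bound $|\phi_1|\ge\tfrac12$ follows by perturbation: $\phi_1$ is jointly continuous, hence uniformly continuous, on the compact set $[0,\pi]\times\overline{\Omega_{\epsilon_1}}$, so choosing $\epsilon_1$ small enough that $\phi_1$ stays within $\tfrac12$ of its values on the real slice guarantees $|\phi_1|\ge\tfrac12$ for every $\epsilon_0<\epsilon_1$.

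I expect the genuinely delicate points to be, first, justifying the two-fold integration of the divergence-form identity uniformly in $c$ across the real axis (where $(u(y_c)-c)^2$ degenerates) and in the corner regions $B^l_{\epsilon_0},B^r_{\epsilon_0}$ near $c=\pm1$, and second, upgrading the positivity bound $\phi_1\ge1$ from real $c$ to the quantitative bound $|\phi_1|\ge\frac12$ on all of $\Omega_{\epsilon_0}$. The estimate $|\phi_1-1|\le C|y-y_c|^2$ alone controls $\phi_1$ only near $y_c$, so the global lower bound genuinely requires the positivity-plus-continuity mechanism rather than the Taylor-type estimate.
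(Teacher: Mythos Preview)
Your proposal is correct and follows essentially the same route as the paper: reduce \eqref{eq:homRay} via $\phi=(u-c)\phi_1$ to the fixed-point equation $\phi_1=1+\al^2T\phi_1$, solve it by the Neumann series using the contraction bound from Lemma~\ref{lem:T-bound}, read off continuity of $\phi_1$ and $\pa_y\phi_1=\al^2T_{2,2}\phi_1$, use positivity of $T$ on $D_0$ to get $\phi_1\ge1$ there, and then pass to $|\phi_1|\ge\tfrac12$ on $\Omega_{\epsilon_0}$ by continuity for $\epsilon_0$ small. The only cosmetic difference is the order of presentation: the paper defines $\phi_1$ by the series first and then checks it solves the ODE, which sidesteps your ``delicate point'' about justifying the two-fold integration across the singular layer, since that step becomes a straightforward differentiation of the absolutely convergent series rather than an integration of a hypothetical solution.
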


 \begin{proof}
 By Lemma \ref{lem:T-bound}, the operator $1-\al^2T$ is invertible in the spaces $Y_C$ and $X_0\cap C\big([0,\pi]\times \Omega_{\epsilon_0}\big).$ Therefore, we may take $$\phi_1(y,c)=(1-\al^2T)^{-1}1=\sum_{k=0}^{+\infty}\al^{2k}T^k1, $$
 which converges in both $Y_C$ and $X_0\cap C\big([0,\pi]\times \Omega_{\epsilon_0}\big)$. Then $\phi_1(y,c)\in C\big([0,\pi]\times \Omega_{\epsilon_0}\big)$. Moreover,
 for $c\in D_0$, we have
 \beno
 T^k1(y,c)\geq0,\quad \phi_1(y,c)\geq 1,
 \eeno
which ensures that there exists $\epsilon_1>0$ so that for any $\epsilon_0\in[0,\epsilon_1)$ and $(y,c)\in [0,\pi]\times \Om_{\epsilon_0}$,
\beno
|\phi_1(y,c)|\ge \f12,\ |\phi_1(y,c)|\le C.
\eeno

Thanks to $\phi_1=1+\al^2T\phi_1$, we have
\begin{align}
\phi_1(y,c)=&1+\int_{y_c}^y\frac{\al^2}{(u(y')-c)^2}\int_{y_c}^{y'}\phi_{1}(z,c)(u(z)-c)^2dzdy',
\label{eq:phi1}
\end{align}
from which, it follows that
\beno
\big((u-c)^2\phi_{1}'\big)'=\al^2\phi_{1}(u-c)^2,\quad \phi_1(y_c,c)=1.
\eeno
Let $\phi(y,c)=(u(y)-c)\phi_1(y,c)$. Then $\phi(y,c)$ satisfies the Rayleigh equation \eqref{eq:homRay}.

It follows from \eqref{eq:phi1} that
\begin{align*}
|\phi_{1}(y,c)-1|\leq &\al^2\int_{y_c}^y\int_{y_c}^{z}|\phi_{1}(y',c)|\left|\frac{u(y')-c}{u(z)-c}\right|^2dy'dz
\leq C|y-y_c|^2.
\end{align*}
As $\pa_y\phi_1(y,c)=\al^2T_{2,2}\phi_1(y,c),$ by Lemma \ref{lem:T-bound}, we have $\pa_y\phi_1(y,c)\in C\big([0,1]\times \Omega_{\epsilon_0}\big)$.
\end{proof}

We need the following extra properties of $\phi_1(y,c)$.

\begin{lemma}\label{lem:phi1}
For $c\in D_0$, it holds that
\begin{itemize}
\item[1.] $\phi_1(y,c)\geq 1$;
\item[2.] $\pa_y\phi_1(y,c)>0$ for $y\in (y_c,\pi]$ and $\pa_y\phi_1(y,c)<0$ for $y\in [0,y_c)$;
\item[3.] for any given $M_0> 0$, there exists a constant $C$ only depending on $M_0$ such that for
$\b+\g\leq 2$, $|y-y_c|\leq M_0/|\al|$
\beno
&&|u'(y_c)^\gamma\partial_y^{\b}\partial_c^{\gamma}\phi_1(y,c)|\leq C|\al|^{\beta+\gamma},\\
&&|u'(y_c)^2\pa_y\pa_c^2\phi_1(y,c)|\leq C\left(|\al|^3+\f{\al^2u'(y_c)}{u_1(y,c)^2}\right).
\eeno
\end{itemize}
\end{lemma}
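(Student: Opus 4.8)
The plan is to establish all three parts of Lemma \ref{lem:phi1} by differentiating the integral representation \eqref{eq:phi1} for $\phi_1$ and the closely related formula $\pa_y\phi_1=\al^2 T_{2,2}\phi_1$ obtained in Proposition \ref{prop:Rayleigh-Hom}, then reading off sign and size information from the explicit kernels. Since $c\in D_0$ means $c$ is real in $(-1,1)$, the quantities $u(y)-c$, $u(z)-c$ are real, and the fractions $\big(\tfrac{u(y')-c}{u(z)-c}\big)^2$ appearing in $T$ are nonnegative; combined with $\phi_1\ge 1$ this will give the positivity statements directly.

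For part 1, I would simply invoke the already-proved bound $\phi_1(y,c)=\sum_{k\ge 0}\al^{2k}T^k 1\ge 1$, since each $T^k 1\ge 0$ for $c\in D_0$, as noted in the proof of Proposition \ref{prop:Rayleigh-Hom}. For part 2, I would use the representation
\[
\pa_y\phi_1(y,c)=\al^2 T_{2,2}\phi_1(y,c)=\frac{\al^2}{(u(y)-c)^2}\int_{y_c}^y \phi_1(z,c)(u(z)-c)^2\,dz.
\]
For $y\in(y_c,\pi]$ the integral runs over an interval where the integrand $\phi_1(z,c)(u(z)-c)^2$ is strictly positive (using $\phi_1\ge 1>0$ and $(u(z)-c)^2>0$ away from $z=y_c$), so $\pa_y\phi_1>0$; for $y\in[0,y_c)$ the orientation of the integral from $y_c$ down to $y$ flips the sign, giving $\pa_y\phi_1<0$. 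One must check the endpoints $z=y_c$ contribute nothing pathological, but the factor $(u(z)-c)^2$ vanishing there is harmless since the prefactor $(u(y)-c)^{-2}$ is finite for $y\neq y_c$.

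The substance of the lemma is part 3, the weighted derivative bounds for $\beta+\gamma\le 2$ together with the borderline estimate on $\pa_y\pa_c^2\phi_1$. My approach is to bootstrap off Lemma \ref{lem:T-bound}: writing $\phi_1=1+\al^2 T\phi_1$ and differentiating, each application of $\pa_y,\pa_c$ produces operators of the type $T_{k,k+1}$, $T_0 T_{k,k+1}$ and their $c$-derivatives, whose weighted $X_0^j$ and $X_1^3$ norms are controlled by the estimates \eqref{eq:Tk-est}, \eqref{eq:Tk-est2}, \eqref{eq:T-est3}, \eqref{eq:T-bound-3}, \eqref{eq:T-est5} established there. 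The key point is that these estimates already encode the correct powers of $u'(y_c)$ and $A\sim|\al|$, so restricting to the region $|y-y_c|\le M_0/|\al|$ converts the $\cosh A(y-y_c)$ weights into constants bounded by $\cosh M_0$, turning the norm bounds into the pointwise bounds $|u'(y_c)^\gamma\pa_y^\beta\pa_c^\gamma\phi_1|\le C|\al|^{\beta+\gamma}$. For the borderline term, the appearance of $\tfrac{\al^2 u'(y_c)}{u_1(y,c)^2}$ on the right-hand side is exactly the singular contribution recorded in \eqref{eq:Tk-est2} and \eqref{eq:T-est5} through the $X_1^3$ norm, which is designed to absorb the loss near $y=y_c$ where $u_1(y,c)\to \sin y_c$ can be small.

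The main obstacle I anticipate is bookkeeping the $c$-derivatives cleanly: differentiating in $c$ moves the base point $y_c=y_c(c)$, producing boundary terms like $f(y_c,c)/(3u'(y_c)^2)$ seen in the formula for $\pa_c^2 Tf$, and one must verify that these boundary contributions carry the right weight $u'(y_c)^{-\gamma}$ rather than a worse singularity. The delicate case is $\gamma=2$, where the $u'(y_c)^{-2}$ weighting is sharp and the term $\al^2 u'(y_c)/u_1(y,c)^2$ is genuinely needed because $u_1(y,c)^{-2}$ is not integrable against a fixed power of $u'(y_c)$ near $y_c=0,\pi$. I expect the argument to reduce, after these substitutions, to a finite number of the already-proved operator estimates plunged into the region $|y-y_c|\le M_0/|\al|$; the only real work is confirming that no derivative combination falls outside the list of bounds in Lemma \ref{lem:T-bound} and that the constants depend only on $M_0$ (and $\al$, as the statement allows), not on $\epsilon_0$ or the particular $c$.
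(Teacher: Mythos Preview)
Your proposal is correct and follows essentially the same route as the paper: parts 1 and 2 are read off directly from the integral representation \eqref{eq:phi1} and $\pa_y\phi_1=\al^2 T_{2,2}\phi_1$, and part 3 is obtained by combining the contraction estimate of Lemma \ref{lem:T-bound} (which the paper packages as $\|\phi_1\|_{Y_C}\le 2$ for $A=C|\al|$) with the observation that on $|y-y_c|\le M_0/|\al|$ the weight $\cosh(A(y-y_c))$ is bounded by a constant depending only on $M_0$. One small correction: the statement requires $C$ to depend only on $M_0$, not on $\al$; your parenthetical ``(and $\al$, as the statement allows)'' is a misreading, but the argument you outline in fact produces $\al$-independent constants, so this is harmless.
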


\begin{proof}
The first two properties are a direct consequence of \eqref{eq:phi1}. By Lemma \ref{lem:T-bound}, we have $\|\phi_1\|_{Y_C}\leq 2 $ if we take $A=C|\al|$ for some constant $C$ independent of $\al$. The third property follows from the definition of $Y_C$ norm.
\end{proof}
\subsection{Uniform estimates of  the solution}

The goal of this subsection is to establish some uniform estimates in wave number $\al$ for the solution $\phi(y,c)$ for $c\in D_0$ of the Rayleigh equation given by Proposition \ref{prop:Rayleigh-Hom}.

Without loss of generality, we always assume $\al\ge 1$ in the sequel.
We introduce
\beno
&&\cF(y,c)=\f{\pa_y\phi_1(y,c)}{\phi_1(y,c)},\quad \cG(y,c)=\f{\pa_c\phi_1(y,c)}{\phi_1(y,c)},\\
&&\cG_1(y,c)=\f{\cF}{u'(y_c)}+\cG=\f{1}{\phi_1}\Big(\f{\pa_y}{u'(y_c)}+\pa_c\Big)\phi_1,
\eeno
where $\phi_1(y,c)=\frac {\phi(y,c)} {u(y)-c}$ and $y_c\in [0,\pi]$ satisfying $c=u(y_c)$ for $c\in D_0$.

\begin{proposition}\label{prop:phi1}
There exists a constant $C$ independent of $\al$ such that
\begin{align*}
&\phi_1(y,c)-1\leq C\min\{\al^2|y-y_c|^2,1\}\phi_1(y,c),\\
&C^{-1}\al\min\{\al|y-y_c|,1\}\leq |\cF(y,c)|\leq C\al\min\{\al|y-y_c|,1\},\\
&C^{-1}e^{C^{-1}\al |y-y_c|}\leq \phi_1(y,c)\leq e^{C\al|y-y_c|},\\
&|\partial_y^{\b}\partial_c^{\gamma}\phi_1(y,c)|\leq C \al^{\b+\g}\phi_1(y,c)/u'(y_c)^{\gamma},\quad \beta+\gamma\le 2,\\
&|\pa_c\phi_1(y,c)|\leq C\phi_1(y,c)\f{\al\min\{1,\al|y-y_c|\}}{u'(y_c)},
\end{align*}
and
\begin{align*}
&\Big|\Big(\frac{\partial_y}{u'(y_c)}+\partial_c\Big)\phi_1(y,c)\Big|\leq C\frac{\min\{1,\al^2|y-y_c|^2\}\phi_1}{u'(y_c)^2},\\
&\Big|\Big(\frac{\partial_y}{u'(y_c)}+\partial_c\Big)^2\phi_1(y,c)\Big|\leq C\frac{\min\{1,\al^2|y-y_c|^2\}\phi_1}{u'(y_c)^4}.
\end{align*}
\end{proposition}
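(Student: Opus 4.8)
The plan is to fix $\al\ge 1$ and a real $c\in D_0$, so that $y_c\in[0,\pi]$ is the unique point with $u(y_c)=c$, and to distinguish the \emph{near region} $\al|y-y_c|\le M_0$ from the \emph{far region} $\al|y-y_c|\ge M_0$ for a fixed $M_0$. In the near region $\phi_1$ stays comparable to $1$, so every asserted bound reduces to the derivative estimates already recorded in Lemma \ref{lem:phi1} together with the operator bounds of Lemma \ref{lem:T-bound}; the content of the proposition lies in the far region, where $\min\{\al|y-y_c|,1\}=\min\{\al^2|y-y_c|^2,1\}=1$ and $\phi_1$ grows exponentially. I would begin with the estimate for $\phi_1-1$, reading it straight off the integral identity \eqref{eq:phi1}: for $y>y_c$ the sign property $\pa_y\phi_1>0$ from Lemma \ref{lem:phi1} gives $\phi_1(z)\le\phi_1(y')$, while the monotonicity \eqref{eq:u-mono} gives $|u(z)-c|\le|u(y')-c|$, and inserting both collapses the double integral to yield $\phi_1(y)-1\le\f{\al^2}{2}|y-y_c|^2\phi_1(y)$; the competing bound by $\phi_1$ itself is immediate from $\phi_1\ge1$, which together produce the factor $\min\{\al^2|y-y_c|^2,1\}$.

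The heart of the matter is the two-sided control of $\cF=\pa_y\phi_1/\phi_1$. Since $\pa_y\phi_1=\al^2T_{2,2}\phi_1$, one has $\cF(y)=\f{\al^2}{\phi_1(y)(u(y)-c)^2}\int_{y_c}^y\phi_1(z)(u(z)-c)^2\,dz$, and feeding in $\phi_1\ge1$ and the estimates on $u_1$ from Lemma \ref{Rmk:u_1} produces the lower bound $\cF\gtrsim\al^2|y-y_c|$ in the near region. For the upper bound and the far-region behaviour I would pass to the Riccati equation $\cF'+\cF^2+\f{2u'}{u-c}\cF=\al^2$ with $\cF(y_c)=0$, which follows by differentiating the identity $\big((u-c)^2\phi_1'\big)'=\al^2(u-c)^2\phi_1$ of Proposition \ref{prop:Rayleigh-Hom}. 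Its equilibrium is the positive root of $\cF^2+\f{2u'}{u-c}\cF=\al^2$, which is comparable to $\al^2|y-y_c|$ when $\al|y-y_c|\lesssim1$ (there $\f{2u'}{u-c}\approx\f2{y-y_c}\gg\al$) and comparable to $\al$ once $\al|y-y_c|\gtrsim1$ (there $\f{2u'}{u-c}=O(1)\lesssim\al$); a comparison argument shows $\cF$ tracks this equilibrium, which gives simultaneously $\cF\le C\al\min\{\al|y-y_c|,1\}$ and the matching lower bound. Integrating $\log\phi_1(y)=\int_{y_c}^y\cF(y')\,dy'$ then yields the two-sided exponential estimate.

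For the derivative estimates I would differentiate the fixed-point equation $\phi_1=1+\al^2T\phi_1$ in $y$ and in $c$, using the explicit formulas for $\pa_cT$ established in the proof of Lemma \ref{lem:T-bound}; the resulting equations are again of the form $(1-\al^2T)(\text{derivative})=(\text{source})$, so they can be solved by $(1-\al^2T)^{-1}$ and bounded through Lemma \ref{lem:T-bound} together with the already proved control of $\phi_1$ and $\cF$. The weights $u'(y_c)^{-\gamma}$ are exactly the cost of each $\pa_c$ hitting the $c$-dependence of $y_c$ through $\pa_cy_c=1/u'(y_c)$, and the refinement in the estimate for $\pa_c\phi_1$ comes from the fact that $\pa_c\phi_1$ inherits the vanishing of $\cF$ at $y=y_c$. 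The last two estimates exploit the structure of the good derivative $\cD\eqdef\f{\pa_y}{u'(y_c)}+\pa_c$: since $\cD$ annihilates constants and sends $u(y)-c$ to $\f{u'(y)}{u'(y_c)}-1$, which vanishes at $y=y_c$, the source term produced when $\cD$ or $\cD^2$ is applied to \eqref{eq:phi1} carries the same factor $\min\{1,\al^2|y-y_c|^2\}$ as in the estimate for $\phi_1-1$, while the powers $u'(y_c)^{-2}$ and $u'(y_c)^{-4}$ come out of the same $\pa_c$ bookkeeping.

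The step I expect to be the main obstacle is the sharp two-sided estimate of $\cF$: one must reconcile the near-region growth $\cF\sim\al^2|y-y_c|$ with the far-region plateau $\cF\sim\al$ \emph{uniformly} in $\al$, while handling the coefficient $\f{2u'}{u-c}$ of the Riccati equation, which is singular at $y_c$, and arranging the argument so that the exponential estimate is deduced from the bound on $\cF$ rather than assumed. A secondary difficulty will be keeping the $c$-derivative bounds uniform as $u'(y_c)=\sin y_c\to0$ near $y_c=0,\pi$, where the weights $u'(y_c)^{-\gamma}$ degenerate.
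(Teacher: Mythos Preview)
Your treatment of $\phi_1-1$ is fine and matches the paper. For the two–sided bound on $\cF$ you are in the right neighbourhood, but note that the paper does not run a comparison-to-equilibrium argument on the Riccati equation. The upper bound $|\cF|\le\al$ is obtained by a one-line extremum argument: at an interior maximum of $\cF$ on $[y_c,\pi]$ one has $\cF'\ge0$ and $\f{2u'}{u-c}\cF\ge0$, so $\al^2\ge\cF^2$; the finer upper bound $|\cF|\le C\al^2|y-y_c|$ and all the lower bounds come from the integral representation $\cF=\al^2(u-c)^{-2}\int_{y_c}^y\phi_1(z)(u(z)-c)^2\,dz/\phi_1(y)$, using only $e^{-\al|y-y'|}\le\phi_1(y')/\phi_1(y)\le e^{\al|y-y'|}$ together with Lemma~\ref{Rmk:u_1}. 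Your equilibrium heuristic could perhaps be made rigorous, but it is not what is done.

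The real gap is in your plan for the $c$-derivatives. Differentiating $\phi_1=1+\al^2T\phi_1$ and inverting $(1-\al^2T)$ through Lemma~\ref{lem:T-bound} gives control only in the $X_0^k$ or $Y_C$ norms, i.e.\ pointwise bounds by $\cosh(A(y-y_c))$ with $A=C|\al|$ for some large $C$. But the proposition asks for bounds by $\phi_1$ itself, and you only know $\phi_1\ge C^{-1}e^{C^{-1}\al|y-y_c|}$ with a \emph{small} constant in the exponent; the two exponential scales do not match in the far region, so the conclusion does not follow. This is exactly why Lemma~\ref{lem:phi1}(3) is stated only for $|y-y_c|\le M_0/|\al|$.

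The paper avoids this mismatch by never inverting $(1-\al^2T)$ for the $c$-derivatives. Instead it works directly with the logarithmic derivatives $\cG=\pa_c\phi_1/\phi_1$ and $\cG_1=\big(\f{\pa_y}{u'(y_c)}+\pa_c\big)\phi_1/\phi_1$, which are automatically $\phi_1$-weighted. Differentiating the ODE $\big((u-c)^2\phi_1'\big)'=\al^2(u-c)^2\phi_1$ in $c$ yields closed formulas
\[
\pa_y\cG=\pa_c\cF=-\frac{2}{\phi^2}\int_{y_c}^y\phi_1^2\,u'\,\cF\,dz,\qquad
\pa_y\cG_1=-\frac{2}{\phi^2}\int_{y_c}^y\phi_1^2\,\frac{a_1}{u'(y_c)}\,\cF\,dz,
\]
with $a_1(y,c)=u'(y)u'(y_c)+u''(y)(u(y)-c)-u'(y)^2$. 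The crucial cancellation is $|a_1|\le C|y-y_c|^2$, which is what generates the factor $\min\{1,\al^2|y-y_c|^2\}$ and the extra $u'(y_c)^{-2}$ in the good-derivative estimates; your proposal alludes to this cancellation but does not locate it in these formulas. Because $\phi_1^2$ sits inside the integral and $\phi^2=(u-c)^2\phi_1^2$ sits outside, the already-established bounds on $\phi_1$ and $\cF$ feed back and the right $\phi_1$-weight comes out for free. Integrating from $y_c$ then gives $\cG$ and $\cG_1$, and the second-order estimates follow by applying $\big(\f{\pa_y}{u'(y_c)}+\pa_c\big)$ once more to the formula for $\pa_y\cG_1$. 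You should restructure your argument around these identities rather than around $(1-\al^2T)^{-1}$.
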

\begin{proof}
{\bf Step 1.} Estimates of $\cF$ and $\pa_y^k\phi_1$ for $k=0,1,2$.
\smallskip

Recall that $\phi_1=1+\al^2T(\phi_1)$, which implies that
\ben\label{eq:phi-est1}
\phi_1(y,c)-1\leq C\min\{\al^2|y-y_c|^2,1\}\phi_1(y,c).
\een

It is easy to check that $\cF$ satisfies
\ben
\cF'+\cF^2+\f{2u'}{u-c}\cF=\al^2,\quad \cF(y_c,c)=0.\label{eq:cF}
\een
Thanks to $\cF(y,c)\geq 0$ for $y\geq y_c$ and $\cF(y,c)\leq 0$ for $y\leq y_c$, we have  $\f{2u'}{u-c}\cF\ge 0$.
Due to $\phi_1'(y_c,c)=0$, we get
\beno
\lim_{y\to y_c}\f{u'(y)\mathcal{F}(y,c)}{u-c}=\lim_{y\to y_c}\f{u'(y_c)\phi_1'(y,c)}{u-c}=\phi_1''(y_c,c)=\mathcal{F}'(y_c,c),
\eeno
which along with \eqref{eq:cF} implies that $\mathcal{F}'(y_c,c)=\f{\al^2}{3}>0$.
Take $y_0\in [y_c,\pi]$ such that $\mathcal{F}(y_0,c)=\sup\limits_{y\in [y_c,\pi]}\mathcal{F}(y,c)$. Then $y_0>y_c,\ \mathcal{F}'(y_0,c)\geq0$ and $\al^2\geq (\cF'+\cF^2)(y_0,c)\geq \cF^2(y_0,c),$ which implies that  $\mathcal{F}(y,c)\leq\cF(y_0,c)\leq \al$ for $y\in [y_c,\pi]$. For $y\in [0,y_c]$,  we consider the minimal point of $\mathcal{F}(y,c)$, and obtain $-\al\leq \mathcal{F}(y,c)\leq 0$.
This shows that $|\cF(y,c)|\leq \al$, which in particular gives
\ben
e^{-\al|y-y'|}\leq \f{\phi_1(y',c)}{\phi_1(y,c)}\leq e^{\al|y-y'|}.\label{eq:cF-2}
\een

Using the fact that
\begin{align*}
\pa_y\phi_1(y,c)=\f{\al^2}{(u(y)-c)^2}\int_{y_c}^y\phi_1(y',c)(u(y')-c)^2\,dy',
\end{align*}
and $(u(y')-c)^2\leq (u(y)-c)^2$ and $\phi_1(y',c)\leq \phi_1(y,c)$  for $y_c\leq y'\leq y$ or $y\leq y'\leq y_c$,  we infer that
\beno
|\cF(y,c)|\leq C\al^2|y-y_c|.
\eeno

On the other hand, we deduce from \eqref{eq:cF-2} and Lemma \ref{Rmk:u_1} that for $0\leq y_c\leq y\leq \min\{\f{1}{\al}+y_c, \pi\}$,
\begin{align*}
\cF(y,c)&=\f{\al^2}{(u(y)-c)^2}\int_{y_c}^{y}\f{\phi_1(y',c)}{\phi_1(y,c)}(u(y')-c)^2\,dy'\\
&\geq \f{\al^2}{(u(y)-c)^2}\int_{y_c}^{y}e^{-\al|y-y'|}(u(y')-c)^2\,dy'\\
&\geq C^{-1} \f{\al^2}{\sin^2\f{y+y_c}{2}(y-y_c)^2}\int_{\f{y_c+y}{2}}^y\sin^2\f{y'+y_c}{2}(y'-y_c)^2\,dy'\\
&\geq C^{-1}\al^2|y-y_c|.
\end{align*}
Similarly, for $\max\{0, y_c-\f{1}{\al}\}\leq y\leq y_c\leq \pi$,  we have
\begin{align*}
-\cF(y,c)&\geq \f{\al^2}{(u(y)-c)^2}\int_{y}^{y_c}e^{-\al|y-y'|}(u(y')-c)^2\,dy'\\
&\geq C^{-1} \f{\al^2}{\sin^2\f{y+y_c}{2}(y-y_c)^2}\int_{y}^{\f{y_c+y}{2}}\sin^2\f{y'+y_c}{2}(y'-y_c)^2\,dy'\\
&\geq C^{-1}\al^2|y-y_c|.
\end{align*}
For $0\leq \f{1}{\al}+y_c\leq y\leq \pi$, we have
\begin{align*}
\cF(y,c)&=\f{\al^2}{(u(y)-c)^2}\int_{y_c}^{y}\f{\phi_1(y',c)}{\phi_1(y,c)}(u(y')-c)^2\,dy'\\
&\geq \f{\al^2}{(u(y)-c)^2}\int_{y_c}^{y}e^{-\al|y-y'|}(u(y')-c)^2\,dy'\\
&\geq C^{-1}\f{\al^2}{\sin^2\f{y+y_c}{2}(y-y_c)^2}\int_{\max\big\{\f{y_c+y}{2},y-\f{1}{\al}\big\}}^ye^{-\al|y-y'|}\sin^2\f{y'+y_c}{2}(y'-y_c)^2\,dy'\\
&\geq C^{-1}\al.
\end{align*}
For $0\leq y\leq y_c-\f{1}{\al}\leq \pi$, we have
\begin{align*}
-\cF(y,c)&=\f{\al^2}{(u(y)-c)^2}\int_{y}^{y_c}\f{\phi_1(y',c)}{\phi_1(y,c)}(u(y')-c)^2\,dy'\\
&\geq C^{-1} \f{\al^2}{(u(y)-c)^2}\int_{y}^{y_c}e^{-\al|y-y'|}(u(y')-c)^2\,dy'\\
&\geq C^{-1}\f{\al^2}{\sin^2\f{y+y_c}{2}(y-y_c)^2}\int^{\min\big\{\f{y_c+y}{2},y+\f{1}{\al}\big\}}_ye^{-\al|y-y'|}\sin^2\f{y'+y_c}{2}(y'-y_c)^2\,dy'\\
&\geq C^{-1}\al.
\end{align*}
This shows that
\ben
C^{-1}\al\min\{\al|y-y_c|,1\}\leq |\cF(y,c)|\leq C\al\min\{\al|y-y_c|,1\},\label{eq:F-est1}
\een
which along with $\phi_1(y_c,c)=1$ implies that
\ben\label{eq: phi_1_up_low}
&&C^{-1}e^{C^{-1}\al |y-y_c|}\leq \phi_1(y,c)\leq e^{C\al|y-y_c|},\\
&&|\pa_y\phi_1|\leq C\al\min\{\al|y-y_c|,1\}\phi_1.
\een
Using $\phi_1''+\f{2u'}{u-c}\phi_1'=\al^2\phi_1$ and $\big(u'(y)+u'(y_c)\big)|y-y_c|\leq C|u-c|$(by Lemma \ref{Rmk:u_1}), we obtain
\beno
|\pa_y^2\phi_1|\leq C\al^2\phi_1.
\eeno

{\bf Step 2}. Estimates of $\pa_c \phi_1, \pa_y\pa_c\phi_1$ and $\Big(\frac{\partial_y}{u'(y_c)}+\partial_c\Big)\phi_1$.

It is easy to check that
\beno
&&\partial_c\cF=\partial_y\cG,\quad \Big(\frac{\partial_y}{u'(y_c)}+\partial_c\Big)\cF=\partial_y\cG_1,\\
&&(\phi^2\partial_c\cF)'+2\phi_1^2u'\cF=0,\quad (\phi^2\partial_y\cF)'+2\phi_1^2(u''(u-c)-u'^2)\cF=0,
\eeno
 and
\beno
&&\cF(y_c,c)=\cG(y_c,c)=\cG_1(y_c,c)=0,\\
&&\Big(\frac{\partial_y}{u'(y_c)}+\partial_c\Big)\cG_1(y_c,c)=0.
\eeno
Therefore, we obtain
\ben
&&\partial_c\cF(y,c)=\frac{-2}{\phi(y,c)^2}\int_{y_c}^y\phi_1(y',c)^2u'(y')\cF(y',c)dy'=\frac{-2}{\phi(y,c)^2}T_0(\phi_1^2u'\cF),\label{eq:pcF}\\
&&\partial_y\cF(y,c)=\frac{-2}{\phi(y,c)^2}T_0(\phi_1^2(u''(u-c)-u'^2)\cF),\label{eq:pyF}
\een
which also show that
\ben
&&\Big(\frac{\partial_y}{u'(y_c)}+\partial_c\Big)\cF=\partial_y\cG_1(y,c)=\frac{-2}{\phi(y,c)^2}T_0\left(\phi_1^2\frac{a_1(y,c)}{u'(y_c)}\cF\right),\label{eq:pycF}
\een
where
\beno
a_1(y,c)=u'(y)u'(y_c)+u''(y)(u(y)-c)-u'(y)^2.
\eeno
It is easy to see that
\beno
&&a_1(y_c,c)=0,\quad
\partial_ya_1(y,c)=u''(y)(u'(y_c)-u'(y))+u'''(y)(u(y)-c),\\
&&\partial_ca_1(y,c)=u'(y)\frac{u''(y_c)}{u'(y_c)}-u''(y),
\eeno
therefore,
\ben
|\partial_ya_1(y,c)|\leq C|y-y_c|,\quad |a_1(y,c)|\leq C|y-y_c|^2.\label{eq:a1}
\een

By Lemma \ref{lem:phi1},  we know that $\partial_y\phi_1(y,c)$ has the same sign as $y-y_c$. For $y_c\leq y'\leq y$ or $y\leq y'\leq y_c$, $u'(y')\leq C\sin \f{y+y_c}{2}$. So, we infer from \eqref{eq:pcF} and \eqref{eq:phi-est1} that
\begin{align}\label{eq: pa_cf}
&|\partial_y\cG(y,c)|=|\partial_c\cF(y,c)|\nonumber\\
&\leq \frac{C\sin \f{y+y_c}{2}}{\phi(y,c)^2}\int_{y_c}^y\phi_1(y',c)\partial_{y'}\phi_1(y',c)dy'\nonumber\\
&\leq C\frac{\phi_1(y,c)^2-1}{\phi(y,c)^2}\sin \f{y+y_c}{2}\nonumber\\
&\leq C\frac{\phi_1(y,c)-1}{\phi_1(y,c)}\f{1}{|y-y_c|^2\sin \f{y+y_c}{2}}
\leq \frac{C\min\{1,\al^2|y-y_c|^2\}}{|y-y_c|^2\sin y_c},
\end{align}
and by \eqref{eq:pycF} and \eqref{eq:a1},
\begin{align}\label{eq: good_1F}
&\Big|\Big(\frac{\partial_y}{u'(y_c)}+\partial_c\Big)\cF\Big|
=|\partial_y\cG_1(y,c)|\nonumber\\
&\leq \frac{C|y-y_c|^2}{u'(y_c)\phi(y,c)^2}\int_{y_c}^y\phi_1(y',c)\partial_{y'}\phi_1(y',c)dy'\nonumber\\
&\leq C\frac{\phi_1(y,c)^2-1}{u'(y_c)\phi(y,c)^2}|y-y_c|^2\nonumber\\
&\leq \frac{C\min\{1,\al^2|y-y_c|^2\}}{u'(y_c)}\frac{|y-y_c|^2}{(u(y)-c)^2},
\end{align}
from which and $\cG(y_c,c)=\cG_1(y_c,c)=0$,  it follows that
\beq\label{eq:g}
|\cG(y,c)|\leq C\left|\int_{y_c}^y\frac{\min\{1,\al^2|y'-y_c|^2\}}{|y'-y_c|^2u'(y_c)}dy'\right|\leq \frac{C\al{\min\{1,\al|y-y_c|\}}}{u'(y_c)},
\eeq
and by Lemma \ref{Rmk:u_1},
\beno
|\cG_1(y,c)|\leq C\frac{\min\{1,\al^2|y-y_c|^2\}}{u'(y_c)}\left|\int_{y_c}^y\frac{|y'-y_c|^2}{(u(y')-c)^2}dy'\right|\leq \frac{C\min\{1,\al^2|y-y_c|^2\}}{u'(y_c)^2}.
\eeno
Thus, we deduce that
\ben
|\partial_c\phi_1|\leq \f{C\al \phi_1}{u'(y_c)}\min(1,\al|y-y_c|),\quad |\partial_y\partial_c\phi_1|\leq \f{C\al^2 \phi_1}{u'(y_c)},\label{eq:phi-pc}
\een
and
\ben\label{eq: good_1phi_1}
\left|\Big(\frac{\partial_y}{u'(y_c)}+\partial_c\Big)\phi_1(y,c)\right|\leq C\frac{\min\{1,\al^2|y-y_c|^2\}\phi_1}{u'(y_c)^2}.
\een

{\bf Step 3.} Estimates of $\pa_c^2\phi_1$ and $\Big(\frac{\partial_y}{u'(y_c)}+\partial_c\Big)^2\phi_1$.\smallskip

By Lemma \ref{lem:phi1}, we get for $|y-y_c|\leq \f{M_0}{\al}$,
\begin{align*}
|\pa_c^2\cF(y,c)|&\leq \f{C|\pa_c^2\pa_y\phi_1|}{\phi_1}+\f{C|\pa_c\pa_y\phi_1\pa_c\phi_1|}{\phi_1^2}+\f{C|\pa_c^2\phi_1\pa_y\phi_1|}{\phi_1^2}+\f{C|\pa_c\phi_1|^2|\pa_y\phi_1|}{\phi_1^3}\\
&\leq \f{C\al^3}{u'(y_c)^2}+\f{C\al^2}{u'(y_c)\sin^2\f{y+y_c}{2}}.
\end{align*}
It follows from \eqref{eq:pcF} that
\beno
\partial_c^2\cF(y,c)=\partial_y\partial_c\cG(y,c)
=-2\int_{y_c}^y\partial_c\Big(\frac{\phi_1(z,c)^2}{\phi(y,c)^2}\cF(z,c)\Big)u'(z)dz.
\eeno
Then by \eqref{eq:phi-pc}, we have
\begin{align*}
|\partial_c^2\cF(y,c)|\le& 4\f {|\pa_c\phi(y,c)|} {|\phi(y,c)|^3}\sin(\f {y+y_c} 2)\int_{y_c}^y\phi_1(z,c)\pa_z\phi_1(z,c)dz\\
&+\f {C\al} {|\phi(y,c)|^2u'(y_c)}\sin(\f {y+y_c} 2)\int_{y_c}^y\phi_1(z,c)\pa_z\phi_1(z,c)dz\\
&+\f {C\al} {(u(y)-c)^2}\sin(\f {y+y_c} 2)\Big|\int_{y_c}^y|\pa_c\cF(z,c)|dz\Big|\end{align*}
which along with the fact $\int_{y_c}^y2\phi_1(z,c)\pa_z\phi_1(z,c)dz=\phi_1(y,c)^2-1$, \eqref{eq:phi-est1} and \eqref{eq: pa_cf}  implies that for $|y-y_c|\geq \f{M_0}{\al}$,
\begin{align*}
|\partial_c^2\cF(y,c)|\leq \frac{C\al}{|y-y_c|^2u'(y_c)^2}.
\end{align*}
Thanks to $\pa_c\cG(y_c,c)=\pa_c^2\phi_1(y_c,c)=\al^2\pa_c^2T\phi_1=\f{\al^2}{3u'(y_c)^2}$, we get
\beno
|\partial_c\cG(y,c)|\le |\pa_c\cG(y_c,c)|+\Big|\int_{y_c}^y\pa_g\pa_c\cG(z,c)dz\Big|\leq  \frac{C\al^2}{u'(y_c)^2},
\eeno
which implies that $|\partial_c^2\phi_1|\leq \frac{C\al^2\phi_1}{u'(y_c)^2}.$

Using \eqref{eq:pycF} and the formula
$$\Big(\frac{\partial_y}{u'(y_c)}+\partial_c\Big)(T_0a)=\frac{a(y,c)}{u'(y_c)}-\frac{a(y_c,c)}{u'(y_c)}+T_0(\partial_ca)
=T_0\Big(\Big(\frac{\partial_y}{u'(y_c)}+\partial_c\Big)a\Big),$$
we deduce that
\begin{align}
\Big(\frac{\partial_y}{u'(y_c)}+\partial_c\Big)\partial_y\cG_1(y,c)=&\left(\Big(\frac{\partial_y}{u'(y_c)}+\partial_c\Big)\frac{-2}{\phi(y,c)^2}\right)T_0\Big(\phi_1^2\frac{a_1(y,c)}{u'(y_c)}\cF\Big)\nonumber\\
&+\frac{-2}{\phi(y,c)^2}T_0\left(\Big(\frac{\partial_y}{u'(y_c)}+\partial_c\Big)\Big(\phi_1^2\frac{a_1(y,c)}{u'(y_c)}\cF\Big)\right).\label{eq:G-pcy}
\end{align}
Using \eqref{eq: good_1phi_1} and the fact that
\ben\label{eq:u-gd}
\left|\Big(\frac{\partial_y}{u'(y_c)}+\partial_c\Big)(u(y)-c)\right|=\Big|\frac{u'(y)}{u'(y_c)}-1\Big|\leq C\frac{|y-y_c|}{u'(y_c)}\leq C\frac{|u(y)-c|}{u'(y_c)^2},
\een
we infer that
\ben\label{eq:phi-pcy-1}
\left|\left(\frac{\partial_y}{u'(y_c)}+\partial_c\right)\frac{-2}{\phi(y,c)^2}\right|\leq \frac{C}{\phi(y,c)^2u'(y_c)^2}.
\een
On the other hand, we have
\begin{align*}
&\left(\frac{\partial_y}{u'(y_c)}+\partial_c\right)\left(\phi_1^2\frac{a_1(y,c)}{u'(y_c)}\cF\right)\\
&=2\phi_1\frac{a_1(y,c)}{u'(y_c)}\cF\left(\frac{\partial_y}{u'(y_c)}+\partial_c\right)\phi_1+\phi_1^2\frac{a_2(y,c)}{u'(y_c)^2}\cF\\
&\quad-\phi_1^2\frac{a_1(y,c)}{u'(y_c)^3}u''(y_c)\cF+\phi_1^2\frac{a_1(y,c)}{u'(y_c)}\left(\frac{\partial_y}{u'(y_c)}+\partial_c\right)\cF,
\end{align*}
where
\beno
a_2(y,c)=u'(y)(u''(y_c)-u''(y))+u'''(y)(u(y)-c)=0.
\eeno
Then we get by \eqref{eq:a1},  \eqref{eq: good_1phi_1} and \eqref{eq: good_1F}  that
\begin{align*}
&\left|\Big(\frac{\partial_y}{u'(y_c)}+\partial_c\Big)\left(\phi_1^2\frac{a_1(y,c)}{u'(y_c)}\cF\right)\right|\\
&\leq
C\phi_1^2\frac{|y-y_c|^2}{u'(y_c)^3}|\cF|+C\phi_1^2\frac{|y-y_c|^2}{u'(y_c)}
\frac{\phi_1(y,c)^2-1}{u'(y_c)\phi_1(y,c)^2}\frac{|y-y_c|^2}{(u(y)-c)^2}\\
&\leq C\phi_1^2\frac{|y-y_c|^2}{u'(y_c)^3}|\cF|+C|y-y_c|
\frac{\phi_1(y,c)^2-1}{u'(y_c)^3},
\end{align*}
which gives
\begin{align}
&\left|T_0\left(\Big(\frac{\partial_y}{u'(y_c)}+\partial_c\Big)\Big(\phi_1^2\frac{a_1(y,c)}{u'(y_c)}\cF\Big)\right)\right|\nonumber\\
&\leq
C \frac{|y-y_c|^2}{u'(y_c)^3}|T_0(\phi_1^2|\cF|)|+C\left|T_0\left(|y-y_c|
\frac{\phi_1(y,c)^2-1}{u'(y_c)^3}\right)\right|\nonumber\\
&\leq C|y-y_c|^2
\frac{\phi_1(y,c)^2-1}{u'(y_c)^3}.\label{eq:T-F-pc}
\end{align}
It follows from \eqref{eq:G-pcy}, \eqref{eq:phi-pcy-1} and \eqref{eq:T-F-pc} that
\begin{align*}
&\left|\Big(\frac{\partial_y}{u'(y_c)}+\partial_c\Big)\partial_y\cG_1(y,c)\right|\\
&\leq C \frac{|\partial_y\cG_1(y,c)|}{u'(y_c)^2}+C\frac{|y-y_c|^2}{\phi(y,c)^2}
\frac{\phi_1(y,c)^2-1}{u'(y_c)^3}\\
&\leq C\frac{\phi_1(y,c)^2-1}{u'(y_c)^3\phi_1(y,c)^2}\frac{|y-y_c|^2}{(u(y)-c)^2}.
\end{align*}
This together with $\Big(\frac{\partial_y}{u'(y_c)}+\partial_c\Big)\cG_1(y_c,c)=0$ gives
\beno
\left|\Big(\frac{\partial_y}{u'(y_c)}+\partial_c\Big)\cG_1(y,c)\right|\leq C\frac{\phi_1(y,c)^2-1}{u'(y_c)^3\phi_1(y,c)^2}\left|\int_{y_c}^y\frac{|y'-y_c|^2}{(u(y')-c)^2}dy'\right|\leq \frac{C(\phi_1(y,c)^2-1)}{u'(y_c)^4\phi_1(y,c)^2},
\eeno
from which, \eqref{eq: good_1phi_1} and \eqref{eq:phi-est1}, we infer that
\beno
\left|\Big(\frac{\partial_y}{u'(y_c)}+\partial_c\Big)^2\phi_1(y,c)\right|\leq C\frac{\min\{1,\al^2|y-y_c|^2\}\phi_1}{u'(y_c)^4}.
\eeno

The proposition follows by collecting the estimates in Step 1-Step 3.
\end{proof}

It is easy to see from the proof of Proposition \ref{prop:phi1} that

\begin{lemma}\label{Rmk: fg_periodic}
We have following estimates for $\cF$ and $\cG$ at $y=0,\pi$:
\begin{align*}
&C^{-1}\al\min\{\al y_c,1\}\leq |\cF(0,c)|\leq C\al\min\{\al y_c,1\},\\
&C^{-1}\al\min\{\al (\pi-y_c),1\}\leq |\cF(\pi,c)|\leq C\al\min\{\al (\pi-y_c),1\},\\
&|\pa_c\cF(0,c)|\leq \f{C|\cF(0,c)|^2}{\al^2y_c^2\sin y_c},\quad
|\pa_c^2\cF(0,c)|\leq \f{C\al\min\{\al y_c,1\}}{y_c^2\sin^2 y_c},\\
&|\pa_c\cF(\pi,c)|\leq \f{C|\cF(\pi,c)|^2}{\al^2(\pi-y_c)^2\sin y_c},\quad
|\pa_c^2\cF(\pi,c)|\leq \f{C\al\min\{\al (\pi-y_c),1\}}{(\pi-y_c)^2\sin^2y_c},\\
&|\cG(0,c)|\leq \f{C\al\min\{1,\al y_c\}}{\sin y_c},\quad
|\pa_c\cG(0,c)|\leq \f{C\al^2}{\sin^2y_c},\\
&|\cG(\pi,c)|\leq \f{C\al\min\{1,\al (\pi-y_c)\}}{\sin y_c},\quad
|\pa_c\cG(\pi,c)|\leq \f{C\al^2}{\sin^2y_c}.
\end{align*}
\end{lemma}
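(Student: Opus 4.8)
The plan is to derive each estimate by evaluating, at the two endpoints $y=0$ and $y=\pi$, the pointwise bounds already established in Proposition \ref{prop:phi1} and its proof. Throughout I use $u'(y_c)=\sin y_c$ together with the elementary evaluations $|y-y_c|=y_c$, $\sin\f{y+y_c}{2}=\sin\f{y_c}{2}$ at $y=0$, and $|y-y_c|=\pi-y_c$, $\sin\f{y+y_c}{2}=\cos\f{y_c}{2}$ at $y=\pi$. The computations at the two endpoints are entirely parallel (one passes from $y=0$ to $y=\pi$ by $y_c\mapsto\pi-y_c$, $\sin\f{y_c}{2}\leftrightarrow\cos\f{y_c}{2}$, using $\sin y_c=\sin(\pi-y_c)$), so I would carry out $y=0$ in detail and simply record the $y=\pi$ versions by this replacement.

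Most of the estimates are immediate restatements. The bounds on $\cF(0,c)$ and $\cF(\pi,c)$ are nothing but \eqref{eq:F-est1} at the endpoints; the bounds on $\cG(0,c)$, $\cG(\pi,c)$ follow from \eqref{eq:g} with $u'(y_c)=\sin y_c$; and the bounds on $\pa_c\cG(0,c)$, $\pa_c\cG(\pi,c)$ follow from the inequality $|\pa_c\cG(y,c)|\le C\al^2/u'(y_c)^2$ proved in Step 3 of Proposition \ref{prop:phi1}. For $\pa_c\cF$ I would start from \eqref{eq: pa_cf}, which at $y=0$ reads $|\pa_c\cF(0,c)|\le C\min\{1,\al^2 y_c^2\}/(y_c^2\sin y_c)$; rewriting $\min\{1,\al^2y_c^2\}=\al^{-2}|\cF(0,c)|^2$ via \eqref{eq:F-est1} (and $\min\{\al y_c,1\}^2=\min\{\al^2y_c^2,1\}$) gives exactly the claimed form $C|\cF(0,c)|^2/(\al^2y_c^2\sin y_c)$, and $y=\pi$ is identical.

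The only estimates needing genuine case analysis are those for $\pa_c^2\cF$, for which Step 3 supplies two bounds: a near-diagonal bound $|\pa_c^2\cF|\le C\al^3/u'(y_c)^2+C\al^2/(u'(y_c)\sin^2\f{y+y_c}{2})$ valid for $|y-y_c|\le M_0/\al$, and a far bound $|\pa_c^2\cF|\le C\al/(|y-y_c|^2u'(y_c)^2)$ valid for $|y-y_c|\ge M_0/\al$. At $y=0$ I would split according to $\al y_c\gtrless 1$. When $\al y_c\gtrsim 1$ the far bound applies and gives $C\al/(y_c^2\sin^2 y_c)$, which is the claim in the regime $\min\{\al y_c,1\}=1$. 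When $\al y_c\lesssim 1$ the constraint forces $y_c\lesssim 1/\al$ to be small, so $\sin\f{y_c}{2}\asymp y_c\asymp\sin y_c$ and the near bound becomes $C\al^3/\sin^2 y_c+C\al^2/(y_c^2\sin y_c)$; using $\al\le C/y_c$ in the first term one checks both terms are bounded by $C\al^2/(y_c\sin^2 y_c)=C\al\cdot\al y_c/(y_c^2\sin^2 y_c)$, which is the claim in the regime $\min\{\al y_c,1\}=\al y_c$. Combining the two regimes yields the stated $\pa_c^2\cF(0,c)$ bound, and $y_c\mapsto\pi-y_c$, $\sin\f{y_c}{2}\mapsto\cos\f{y_c}{2}$ gives the one at $y=\pi$.

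The main (mild) obstacle is the bookkeeping in this last step: one must reconcile the regime cut $M_0/\al$ of Proposition \ref{prop:phi1} with the cut $1/\al$ implicit in $\min\{\al y_c,1\}$ (harmless up to the fixed constant $M_0$), and verify that the half-angle factors $\sin\f{y_c}{2}$, $\cos\f{y_c}{2}$ may be traded for $\sin y_c$. The latter is automatic: whenever $y_c$ is bounded away from $0$ and $\pi$ the relevant $\min$ equals $1$ and the far bound already matches, while whenever the near bound is used the smallness of $y_c$ (or $\pi-y_c$) collapses all trigonometric factors to their arguments. No analytic input beyond Proposition \ref{prop:phi1} is required.
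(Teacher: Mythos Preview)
Your proposal is correct and takes essentially the same approach as the paper, which states only that the lemma ``is easy to see from the proof of Proposition \ref{prop:phi1}'' without further detail. You have accurately identified and specialized the relevant pointwise bounds \eqref{eq:F-est1}, \eqref{eq: pa_cf}, \eqref{eq:g}, and the near/far dichotomy for $\partial_c^2\cF$ from Step~3, and your handling of the trigonometric comparisons and the regime cut $M_0/\al$ versus $1/\al$ is sound.
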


To obtain better estimates near critical points, we introduce the following functions:
for $0\leq y_c<\f{1}{\al}$ and $0\leq y\leq 1$, let
\beno
\widetilde{\phi_1}(y,c)=\phi_1(yy_c,c),
\eeno
and for $\pi-\f{1}{\al}< y_c\leq \pi$ and $0\leq y\leq 1$, let
\beno
\widetilde{\widetilde{\phi_1}}(y,c)=\phi_1\big((1-y)\pi+yy_c,c\big).
\eeno

\begin{proposition}\label{Prop:tphi_1}
It holds that for any $y\in [0,1]$ and $0\leq y_c<\f{1}{\al}$,
\begin{align*}
&1\leq \widetilde{\phi_1}\leq 2,\quad
|\pa_c\widetilde{\phi_1}|\leq C\al^2,\quad
|\pa_c^2\widetilde{\phi_1}|\leq C\al^4,\\
&\left|\partial_c^k\Big(\frac{\pa_y\widetilde{\phi_1}(y,c)}{y_c^2}\Big)\right|\leq C\al^{2+2k},\ \ \ k=0,1,2,
\end{align*}
and for any $y\in [0,1]$ and $\pi-\f{1}{\al}< y_c\leq \pi$, it holds that
\begin{align*}
&1\leq \widetilde{\widetilde{\phi_1}}\leq 2,\quad
|\pa_c\widetilde{\widetilde{\phi_1}}|\leq C\al^2,\quad
|\pa_c^2\widetilde{\widetilde{\phi_1}}|\leq C\al^4,\\
&\left|\partial_c^k\Big(\frac{\pa_y\widetilde{\widetilde{\phi_1}}(y,c)}{(\pi-y_c)^2}\Big)\right|\leq C\al^{2+2k},\ \ \ k=0,1,2.
\end{align*}
\end{proposition}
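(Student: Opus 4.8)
The plan is to prove Proposition \ref{Prop:tphi_1} by translating the already-established estimates from Proposition \ref{prop:phi1} and Lemma \ref{lem:phi1} through the rescaling, being careful that each derivative $\pa_c$ acting on the rescaled function produces extra factors that must be controlled. Consider first the case $0\le y_c<\f{1}{\al}$ with $\widetilde{\phi_1}(y,c)=\phi_1(yy_c,c)$. Since $0\le y\le 1$ forces $|yy_c-y_c|=(1-y)y_c\le y_c<\f1\al$, every point at which we evaluate $\phi_1$ lies in the regime $|y'-y_c|\le 1/\al$, where the sharp bounds of Proposition \ref{prop:phi1} simplify dramatically: $\min\{1,\al^2|y'-y_c|^2\}=\al^2|y'-y_c|^2$ and $\phi_1$ is comparable to $1$. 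This is the key reduction and I expect it to make the size bounds nearly immediate.

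For the zeroth-order bounds, I would argue $1\le\widetilde{\phi_1}\le2$ directly: the lower bound $\phi_1\ge1$ is item 1 of Lemma \ref{lem:phi1}, and the upper bound follows from $\phi_1-1\le C\al^2|y'-y_c|^2\phi_1\le C\al^2 y_c^2\phi_1\le C\phi_1$ together with the local comparability $\phi_1\le e^{C\al|y'-y_c|}\le e^{C}$, shrinking $\epsilon_1$ (or the constant $C$ in $y_c<1/\al$) if needed to get the clean constant $2$. For the $\pa_c$-derivatives I would use the chain rule carefully: $\pa_c\widetilde{\phi_1}(y,c)=y\,y_c'(c)\,\pa_y\phi_1(yy_c,c)+\pa_c\phi_1(yy_c,c)$, where $y_c'(c)=1/u'(y_c)$. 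From Proposition \ref{prop:phi1} one has $|\pa_y\phi_1|\le C\al\min\{\al|y'-y_c|,1\}\phi_1\le C\al^2|y'-y_c|\phi_1$ and $|\pa_c\phi_1|\le C\al\min\{1,\al|y'-y_c|\}\phi_1/u'(y_c)\le C\al^2|y'-y_c|\phi_1/u'(y_c)$; using $u'(y_c)=\sin y_c\ge c\,y_c$ near $0$ and $|y'-y_c|\le y_c$ yields $|\pa_c\widetilde{\phi_1}|\le C\al^2$. The second derivative $\pa_c^2\widetilde{\phi_1}$ expands into terms involving $\pa_y^2\phi_1$, $\pa_y\pa_c\phi_1$, $\pa_c^2\phi_1$, and $\pa_c(1/u'(y_c))$; each is bounded using the corresponding estimates $|\pa_y^2\phi_1|\le C\al^2\phi_1$, $|\pa_y\pa_c\phi_1|\le C\al^2\phi_1/u'(y_c)$, $|\pa_c^2\phi_1|\le C\al^2\phi_1/u'(y_c)^2$, and again $u'(y_c)\sim y_c$, producing the claimed $C\al^4$.

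The genuinely delicate estimate is the last line, the bound on $\pa_c^k\big(\pa_y\widetilde{\phi_1}/y_c^2\big)$. Here $\pa_y\widetilde{\phi_1}(y,c)=y_c\,\pa_y\phi_1(yy_c,c)$, so $\pa_y\widetilde{\phi_1}/y_c^2=\pa_y\phi_1(yy_c,c)/y_c$. The potential danger is a spurious blow-up from the $1/y_c$ factor as $y_c\to0$, but Proposition \ref{prop:phi1} gives exactly the cancelling gain $|\pa_y\phi_1|\le C\al^2|y'-y_c|\phi_1\le C\al^2 y_c\phi_1$ in this regime, so $|\pa_y\phi_1(yy_c,c)/y_c|\le C\al^2$, giving $k=0$. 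For $k=1,2$ I would differentiate $\pa_y\phi_1(yy_c,c)/y_c$ in $c$, again via the chain rule. The main obstacle, and the step deserving the most care, is verifying that after each $\pa_c$ the mixed derivatives of $\pa_y\phi_1$ (namely $\pa_y^2\phi_1$, $\pa_y\pa_c\phi_1$, $\pa_y\pa_c^2\phi_1$) together with the negative powers of $y_c$ and of $u'(y_c)\sim y_c$ do not overwhelm the gain; the sharp estimate $|u'(y_c)^2\pa_y\pa_c^2\phi_1|\le C\big(\al^3+\al^2 u'(y_c)/u_1(y,c)^2\big)$ from Lemma \ref{lem:phi1} is precisely what is needed to control the worst term, where one uses $u_1(yy_c,c)\ge c\,y_c$ (by Lemma \ref{Rmk:u_1}) so that $u'(y_c)/u_1^2\le C/y_c$, and every factor of $1/y_c$ is matched by a compensating $y_c$ from the rescaling Jacobian, yielding the uniform bound $C\al^{2+2k}$. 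The case $\pi-\f1\al<y_c\le\pi$ for $\widetilde{\widetilde{\phi_1}}$ is entirely symmetric under $y\mapsto\pi-y$, using $u'(\pi-\cdot)=-u'(\cdot)$, so I would simply remark that it follows by the same argument with $y_c$ replaced by $\pi-y_c$.
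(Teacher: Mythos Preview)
Your approach has a genuine gap at the second $c$-derivative. The chain-rule reduction to Proposition~\ref{prop:phi1} and Lemma~\ref{lem:phi1} works for $1\le\widetilde{\phi_1}\le C$ and for $|\pa_c\widetilde{\phi_1}|\le C\al^2$, but it cannot deliver $|\pa_c^2\widetilde{\phi_1}|\le C\al^4$ (nor the bounds on $\pa_c^k(\pa_y\widetilde{\phi_1}/y_c^2)$ for $k\ge1$). To see this cleanly, evaluate at $y=0$: there is no rescaling at all, $\widetilde{\phi_1}(0,c)=\phi_1(0,c)$, and the only bound available from Proposition~\ref{prop:phi1} or Lemma~\ref{lem:phi1} is $|\pa_c^2\phi_1(0,c)|\le C\al^2/u'(y_c)^2\sim C\al^2/y_c^2$, which for $y_c\ll 1/\al$ is strictly larger than $\al^4$. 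The same obstruction appears term by term in your expansion of $\pa_c^2\widetilde{\phi_1}$: each of $\pa_c^2\phi_1$, $\pa_y\pa_c\phi_1/u'(y_c)$, $\pa_y^2\phi_1/u'(y_c)^2$, and $u''(y_c)\pa_y\phi_1/u'(y_c)^3$ is of size $\al^2/y_c^2$; even the good-derivative combination $|(\pa_y/u'(y_c)+\pa_c)^2\phi_1|\le C\al^2|y'-y_c|^2/u'(y_c)^4$ gives, at $y'=yy_c$, only $C\al^2(1-y)^2/y_c^2$. Your claim that ``every factor of $1/y_c$ is matched by a compensating $y_c$ from the rescaling Jacobian'' is therefore false: the compensation works once, but not twice.

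What the paper does instead is the missing idea: rewrite the \emph{rescaled} function as the fixed point of its own integral equation, $\widetilde{\phi_1}=1+\al^2 T_1\widetilde{\phi_1}$, where the kernel of $T_1$ is built from $F(z,c)=(u(zy_c)-c)/y_c^2=\tfrac{1-z^2}{2}m_1(\tfrac{(1-z)y_c}{2})m_1(\tfrac{(1+z)y_c}{2})$. The point is that $F$ and the ratio $F(z,c)^2/F(y',c)^2$ are smooth in $c$ with bounds \emph{uniform in $y_c$} (they have nonsingular limits as $y_c\to0$), and $T_1$ itself carries an explicit factor $y_c^2$. One then bootstraps: the rough chain-rule bound $|\pa_c^2\widetilde{\phi_1}|\le C\al^2/y_c^2$ from Proposition~\ref{prop:phi1} feeds into $|T_1(\pa_c^2\widetilde{\phi_1})|\le Cy_c^2\cdot\al^2/y_c^2=C\al^2$, while $\|[\pa_c^2,T_1]\widetilde{\phi_1}\|\le C(\|\widetilde{\phi_1}\|+\|\pa_c\widetilde{\phi_1}\|)\le C\al^2$; then $\pa_c^2\widetilde{\phi_1}=\al^2\pa_c^2 T_1\widetilde{\phi_1}$ yields $|\pa_c^2\widetilde{\phi_1}|\le C\al^4$. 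The estimates on $\pa_c^k(\pa_y\widetilde{\phi_1}/y_c^2)$ follow the same way from the explicit formula $\pa_y\widetilde{\phi_1}/y_c^2=\al^2\int_1^y\widetilde{\phi_1}(z,c)\,F(z,c)^2/F(y,c)^2\,dz$.
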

\begin{proof}
For $0\leq y_c<\frac{1}{\al}$ and $0\leq y\leq 1$, by \eqref{eq:phi1}, we have $\widetilde{\phi_1}=1+\al^2T_1\widetilde{\phi_1}$, where
\begin{align*}
(T_1f)(y,c)&=\int_{1}^y\f {y_c^2} {(u(y'y_c)-c)^2}\int_{1}^{y'}f(z,c)(u(zy_c)-c)^2dzdy'\\
&=\int_{1}^y\int_{1}^{y'}f(z,c)\f{F(z,y_c)^2}{F(y',y_c)^2}y_c^2dzdy'
\end{align*}
with
\begin{align*}
F(z,c)=&\f{u(zy_c)-c}{y_c^2}=\f{\cos y_c-\cos zy_c}{y_c^2}\\
=&\f{1-z^2}{2}m_1\left(\f{(1-z)y_c}{2}\right)m_1\left(\f{(1+z)y_c}{2}\right),
\end{align*}
and $m_1(y)=\sin y/y\in C^4([-1,1]).$ One can check that $1/C\leq m_1(-y)=m_1(y)\leq C,$ and $|\pa_c^km_1(ay_c)|\leq C$ for $y_c<\f1 \al, k=1,2$. Then we can deduce that for $y_c<\f{1}{\al}$,
\beno
\f{|z^2-1|}{C}\leq|F(z,c)|\leq C|z^2-1|,\ |\partial_cF(z,c)|\leq C|z^4-1|,\ |\partial_c^2F(z,c)|\leq C{|z^4-1|},
\eeno
and for $y_c<\f{1}{\al}$ and $0<y'<z<1$,
\beno
0\leq{\f{F(z,y_c)^2}{F(y',y_c)^2}}\leq C,\
\left|\partial_c\left({\f{F(z,y_c)^2}{F(y',y_c)^2}}\right)\right|\leq C,\ \left|\partial_c^2\left({\f{F(z,y_c)^2}{F(y',y_c)^2}}\right)\right|\leq C.
\eeno

Let $\Om_{\al}=\{(y,c):~y\in [0,1], \ c\in [u(0),u(1/\al)]\}$. Then we can infer that
\ben
&&\|T_1f\|_{L^{\infty}(\Omega_{\al})}\leq \frac{1}{2\al^2}\|f\|_{L^{\infty}(\Omega_{\al})},\label{eq:T1-1}\\
&&\|\partial_cT_1f-T_1\partial_cf\|_{L^{\infty}(\Omega_{\al})}\leq C\|f\|_{L^{\infty}(\Omega_{\al})},\label{eq:T1-2}\\
&&\|\partial_c^2T_1f-T_1\partial_c^2f\|_{L^{\infty}(\Omega_{\al})}\leq C(\|f\|_{L^{\infty}(\Omega_{\al})}+\|\partial_cf\|_{L^{\infty}(\Omega_{\al})}).\label{eq:T1-3}
\een

We infer from \eqref{eq:T1-1} that $1\leq \widetilde{\phi_1}\leq 2$ for $(y,c)\in \Om_{\al}$.
By Proposition \ref{prop:phi1}, we have for $(y,c)\in \Om_{\al}$,
\beno
|\pa_c\widetilde{\phi_1}|=\Big|(\pa_c\phi_1)(yy_c,c)+\f{y}{u'(y_c)}(\pa_y\phi_1)(yy_c,c)\Big|\leq \f{C\al\widetilde{\phi_1}}{u'(y_c)}\leq \f{C\al}{y_c}.
\eeno
Thus, we obtain
\beno
\|T_1(\pa_c\widetilde{\phi_1})\|_{L^{\infty}(\Om_{\al})}\leq C,
\eeno
which along with \eqref{eq:T1-2} shows that for $(y,c)\in \Om_{\al}$,
\ben
|\pa_c\widetilde{\phi_1}|\leq C\al^2.\label{eq:phi1-pa1}
\een

By Proposition \ref{prop:phi1}, we have for $(y,c)\in \Om_{\al}$,
\beno
|\pa_c^2\widetilde{\phi_1}|\leq \f{C\al^2\widetilde{\phi_1}}{u'(y_c)^2}\leq \f{C\al^2}{y_c^2}.
\eeno
Thus, we have
\beno
\|T_1(\pa_c^2\widetilde{\phi_1})\|_{L^{\infty}(\Om_{\al})}\leq C\al^2,
\eeno
which along with \eqref{eq:T1-2} shows that  for $(y,c)\in \Om_{\al}$,
\ben\label{eq:phi1-pa2}
|\pa_c^2\widetilde{\phi_1}|\leq C\al^4.
\een

Using the fact that $\frac{\pa_y\widetilde{\phi_1}(y,c)}{y_c^2}=\al^2\int_{1}^{y}\widetilde{\phi_1}(z,c)\f{F(z,y_c)^2}{F(y,y_c)^2}dz,$
we can deduce that for $(y,c)\in \Om_{\al}$,
\beno
\left|\partial_c^k\Big(\frac{\pa_y\widetilde{\phi_1}(y,c)}{y_c^2}\Big)\right|\leq C\al^{2+2k},\ \ \ k=0,1,2.
\eeno
Due to $\phi_1(0,c)=\widetilde{\phi_1}(0,c)$, we have by Proposition \ref{prop:phi1},
\ben\label{eq:tphi'/y_c^2_1}
-\frac{\pa_y\widetilde{\phi_1}(0,c)}{y_c^2}\geq C^{-1}\al^2.
\een

For $\pi-\f{1}{\al}<y_c<\pi$ and $0\leq y\leq 1$, we have $\widetilde{\widetilde{\phi_1}}=1+\al^2T_2\widetilde{\widetilde{\phi_1}}$, where
\beno
(T_2f)(y,c)=\int_{1}^y\f {(\pi-y_c)^2} {(u((1-y')\pi+y'y_c)-c)^2}\int_{1}^{y'}f(z,c)(u((1-z)\pi+zy_c)-c)^2dzdy',
\eeno
and we also have
\begin{align*}
\f{u((1-y')\pi+y'y_c)-c}{(\pi-y_c)^2}&=\cos (\pi-y_c)-\cos(y'(\pi-y_c))\\
&=\f{1-y'^2}{2}m_1\Big(\f{(1-y'){\pi-y_c}}{2}\Big)m_1\Big(\f{(1+y'){\pi-y_c}}{2}\Big).
\end{align*}
Using similar arguments as above, we can deduce that for $y\in [0,1]$ and $c\in [u(\pi-1/\al),u(\pi)]$,
\begin{align*}
&1\leq \widetilde{\widetilde{\phi_1}}\leq 2,\quad
|\pa_c\widetilde{\widetilde{\phi_1}}|\leq C\al^2,\quad
|\pa_c^2\widetilde{\widetilde{\phi_1}}|\leq C\al^4,\\
&\left|\partial_c^k\Big(\frac{\pa_y\widetilde{\widetilde{\phi_1}}(y,c)}{(\pi-y_c)^2}\Big)\right|\leq C\al^{2+2k},\ \ \ k=0,1,2,
\end{align*}
and
\ben\label{eq:tphi'/y_c^2_2}
\frac{\pa_y\widetilde{\widetilde{\phi_1}}(0,c)}{(\pi-y_c)^2}\geq C^{-1}\al^2.
\een
\end{proof}

\section{The inhomogeneous Rayleigh equation}\label{InhoRayequ}
In this section, we solve the inhomogeneous Rayleigh equation:
\beq\label{eq:Rayleigh-Ihom-periodic}
\left\{
\begin{aligned}
&\Phi''-\al^2\Phi-\frac{u''}{u-c}\Phi=f,\\
&\Phi'(-\pi)=\Phi'(\pi),\quad \Phi(-\pi)=\Phi(\pi),
\end{aligned}
\right.
\eeq
where $u(y)=-\cos y$.
We split the equation \eqref{eq:Rayleigh-Ihom-periodic} into two parts:
\beq\label{eq:Rayleigh-Ihom-odd}
\left\{
\begin{aligned}
&\Phi_{o}''-\al^2\Phi_{o}-\frac{u''}{u-c}\Phi_{o}=f_{o},\\
&\Phi_{o}(0)=\Phi_{o}(\pi)=0,
\end{aligned}
\right.
\eeq
and
\beq\label{eq:Rayleigh-Ihom-even-half1}
\left\{
\begin{aligned}
&\Phi_{e}''-\al^2\Phi_{e}-\frac{u''}{u-c}\Phi_{e}=f_e,\\
&\Phi_{e}'(0)=\Phi_{e}'(\pi)=0,
\end{aligned}
\right.
\eeq
where $f_o(y,c)=\frac{f(y,c)-f(-y,c)}{2}$ and $f_e(y,c)=\frac{f(y,c)+f(-y,c)}{2}$. Thus, $\Phi=\Phi_o+\Phi_e$ is a solution of \eqref{eq:Rayleigh-Ihom-periodic} with $\Phi_o$ being an odd function and $\Phi_e$ being an even function.

Throughout this section, we denote by $\phi(y,c)$ a solution of \eqref{eq:Rayleigh-H} obtained by Proposition \ref{prop:Rayleigh-Hom} and let $\phi_1(y,c)=\f{\phi(y,c)}{u(y)-c}$ for $c\in \Om_{\ep}$.
Here and in what follows, we denote $\phi'=\pa_y\phi$ and $\phi_1'=\pa_y\phi_1$.

\subsection{The Wronskian}
Before we present the solution formulation, let us first calculate the Wronskian of \eqref{eq:Rayleigh-Ihom-odd} and \eqref{eq:Rayleigh-Ihom-even-half1}.

\begin{lemma}\label{Rmk:Wronskian}
For any $c\in \Om_{\ep_0}\setminus \overline{D}_0$, the Wronskian of \eqref{eq:Rayleigh-Ihom-odd} has the form
\beno
W_o(c)=\phi(0,c)\phi(\pi,c)\int_0^{\pi}\f{1}{\phi(y,c)^2}dy,
\eeno
and the Wronskian of \eqref{eq:Rayleigh-Ihom-even-half1} has the form
\beno
W_e(c)=\f{\pa_y\phi(\pi,c)}{\phi(0,c)}-\f{\pa_y\phi(0,c)}{\phi(\pi,c)}-\phi'(\pi,c)\phi'(0,c)\int_0^{\pi}\f{1}{\phi(y',c)^2}dy'.
\eeno
\end{lemma}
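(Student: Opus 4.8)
The plan is to realise $W_o(c)$ and $W_e(c)$ as the boundary determinants of the two boundary value problems \eqref{eq:Rayleigh-Ihom-odd} and \eqref{eq:Rayleigh-Ihom-even-half1}, computed against a fundamental system of the homogeneous Rayleigh equation \eqref{eq:Rayleigh-H}. First I would note that for $c\in\Omega_{\epsilon_0}\setminus\overline{D}_0$ the coefficient $u(y)-c=-\cos y-c$ does not vanish for any $y\in[0,\pi]$: on $D_{\epsilon_0}$ we have $\mathrm{Im}\,c\neq0$, while on $B^l_{\epsilon_0}\cup B^r_{\epsilon_0}$ the number $c$ either has nonzero imaginary part or is real with $|c|>1$, whereas $u(y)\in[-1,1]$. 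Together with the bound $|\phi_1(y,c)|\ge\frac12$ from Proposition \ref{prop:Rayleigh-Hom}, this yields $\phi(y,c)=(u(y)-c)\phi_1(y,c)\neq0$ on all of $[0,\pi]$; in particular $\phi(0,c)\neq0$ and $\phi(\pi,c)\neq0$, so every term in the asserted formulas is well defined.

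Since \eqref{eq:Rayleigh-H} contains no first-order term, reduction of order furnishes a second homogeneous solution
\[
\psi(y,c)=\phi(y,c)\int_0^y\frac{dy'}{\phi(y',c)^2},
\]
the integral being finite because $\phi$ is bounded away from $0$. Differentiating gives $\psi'=\phi'\int_0^y\phi^{-2}\,dy'+\phi^{-1}$, whence the (constant) Wronskian $\phi\psi'-\phi'\psi\equiv1$ and $\{\phi,\psi\}$ is a fundamental system. I would then record the four boundary values $\psi(0,c)=0$, $\psi'(0,c)=\frac{1}{\phi(0,c)}$, $\psi(\pi,c)=\phi(\pi,c)\int_0^\pi\phi^{-2}\,dy$, and $\psi'(\pi,c)=\phi'(\pi,c)\int_0^\pi\phi^{-2}\,dy+\frac{1}{\phi(\pi,c)}$.

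With this in hand the two identities are immediate. For the Dirichlet problem \eqref{eq:Rayleigh-Ihom-odd} the Wronskian is, up to an irrelevant overall sign, the determinant of $\{\phi,\psi\}$ against the functionals $\Phi\mapsto\Phi(0)$ and $\Phi\mapsto\Phi(\pi)$, that is $\phi(0,c)\psi(\pi,c)-\psi(0,c)\phi(\pi,c)$; since $\psi(0,c)=0$ this collapses to $\phi(0,c)\phi(\pi,c)\int_0^\pi\phi^{-2}\,dy=W_o(c)$. For the Neumann problem \eqref{eq:Rayleigh-Ihom-even-half1} the corresponding determinant against $\Phi\mapsto\Phi'(0)$ and $\Phi\mapsto\Phi'(\pi)$ is $\psi'(0,c)\phi'(\pi,c)-\phi'(0,c)\psi'(\pi,c)$; inserting the values of $\psi'(0,c)$ and $\psi'(\pi,c)$ and using $\phi'=\partial_y\phi$ reproduces exactly $W_e(c)$. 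The only genuine obstacle is the one already dispatched in the first step, namely the non-vanishing of $\phi$ throughout $[0,\pi]$ needed to define $\psi$ — this is precisely what forces the hypothesis $c\notin\overline{D}_0$; the remaining manipulations are routine and, as a consistency check, the determinant is unchanged if the lower limit in the integral defining $\psi$ is moved, since that only adds a multiple of $\phi$ to $\psi$.
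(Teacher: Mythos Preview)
Your proof is correct and follows essentially the same route as the paper: both use reduction of order on $\phi$ to produce a second solution $\phi(y,c)\int_0^y\phi^{-2}\,dy'$ and then read off the Wronskian from the boundary data. The paper packages this slightly differently, normalizing the second solution as $\varphi_o(y,c)=\phi(0,c)\phi(y,c)\int_0^y\phi^{-2}\,dy'$ (so that $\varphi_o(0,c)=0$, $\varphi_o'(0,c)=1$, giving $W_o(c)=\varphi_o(\pi,c)$) and as $\varphi_e(y,c)=\phi(y,c)/\phi(0,c)-\phi'(0,c)\,\phi(y,c)\int_0^y\phi^{-2}\,dy'$ (so that $\varphi_e(0,c)=1$, $\varphi_e'(0,c)=0$, giving $W_e(c)=\varphi_e'(\pi,c)$), but your boundary-determinant computation against $\{\phi,\psi\}$ is exactly the same calculation.
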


\begin{proof}
For $c\notin \overline{D}_0$, the integration $\int_0^{y}\f{1}{\phi(y',c)^2}dy'$ is well defined.
It is easy to check that
\beno
\varphi_o(y,c)=\phi(0,c)\phi(y,c)\int_0^{y}\f{1}{\phi(y',c)^2}dy',
\eeno
is a solution to \eqref{eq:Rayleigh-H} with boundary conditions $\varphi(0,c)=0$ and $\varphi'(0,c)=1$.
So, $W_o(c)=\varphi_o(\pi,c)$.

One can also check that
\beno
\varphi_e(y,c)=\f{\phi(y,c)}{\phi(0,c)}-\phi(y,c)\pa_y\phi(0,c)\int_0^y\f{1}{\phi(y',c)^2}dy'
\eeno
is a solution of \eqref{eq:Rayleigh-H} with boundary conditions $\varphi(0,c)=1$ and $\pa_y\varphi(0,c)=0$. So, $W_e(c)=\pa_y\varphi_e(\pi,c).$
\end{proof}

By Proposition \ref{prop:Rayleigh-Hom}, $\phi(y,c)$ and $\pa_y\phi(y,c)$ are continuous. Thus, both $W_o(c)$ and $W_e(c)$ are continuous for $c\in \Om_{\ep_0}\setminus \overline{D}_0$. The following lemma will show that $\sin y_cW_o(c)$ is continuous up to the boundary. To this end, we introduce some notations.
For $c=u(y_c)\in D_0$ and $j=0,1$, $k=1,2,$ let
\begin{align}
&\mathrm{II}(c)=\int_0^{\pi}\frac{1}{(u(y)-c)^2}\Big(\frac{1}{\phi_{1}(y,c)^2}-1\Big)dy,\label{eq:Pi}\\
&\rmA(c)=\sin^3 y_c\mathrm{II}(c),\quad \rmB(c)=\pi\cos y_c,\label{eq:defAB}\\
&J_j^k(c)=\f{-u'(y_c)(u(j\pi)-c)^k}{\phi_1((1-j)\pi,c)^{k-1}\phi_1'((1-j)\pi,c)},\ \ J_j(c)=J_j^2(c),\label{eq:defJ}\\
&\rmA_1(c)= J_1(c)-J_0(c)+\sin^2 y_c\rmA(c),\quad \rmB_1(c)=\sin^2 y_c \rmB.\label{eq:defA_1B_1},\\
&\rho(c)=(u(\pi)-c)(c-u(0))=1-c^2.
\end{align}

\begin{lemma}\label{Lem:W(c)converge}
For $c_{\ep}=c+ i\ep=u(y_c)+ i\ep\in D_{\ep_0}$, $c\in(-1,1),$ we have
\beno
&&\lim_{\ep\to 0+}\rho(c_{\ep})^{1/2}W_o(c_{\ep})=-\phi_1(0,c)\phi_1(\pi,c)\big(\rmA(c)-i\rmB(c)\big),\\
&&\lim_{\ep\to 0-}\rho(c_{\ep})^{1/2}W_o(c_{\ep})=-\phi_1(0,c)\phi_1(\pi,c)\big(\rmA(c)+i\rmB(c)\big),\\
&&\lim_{\ep\to 0+}\phi(0,c_{\ep})\phi(\pi,c_{\ep})W_e(c_{\ep})=-\f{(\phi_1\phi_1')(\pi,c)(\phi_1\phi_1')(0,c)}{u'(y_c)}(\rmA_1(c)-i\rmB_1(c)),\\
&&\lim_{\ep\to 0-}\phi(0,c_{\ep})\phi(\pi,c_{\ep})W_e(c_{\ep})=-\f{(\phi_1\phi_1')(\pi,c)(\phi_1\phi_1')(0,c)}{u'(y_c)}(\rmA_1(c)+i\rmB_1(c)).
\eeno
Here $\rho(c_\ep)=1-c_\ep^2$ and $\rho(c_{\ep})^{1/2}$ is taken so that $\mathrm{Re}\, \rho(c_{\ep})^{1/2}>0$.
\end{lemma}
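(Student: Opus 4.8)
The strategy is to reduce both Wronskians, via the formulas of Lemma \ref{Rmk:Wronskian}, to the single singular integral $\mathcal I(c)=\int_0^{\pi}\phi(y,c)^{-2}\,dy$ together with the boundary data of $\phi$ and $\phi'$ at $y=0,\pi$. Since $u'(0)=u'(\pi)=0$ and $\phi=(u-c)\phi_1$, these boundary values are $\phi(0,c)=(-1-c)\phi_1(0,c)$, $\phi(\pi,c)=(1-c)\phi_1(\pi,c)$, $\phi'(0,c)=(-1-c)\phi_1'(0,c)$ and $\phi'(\pi,c)=(1-c)\phi_1'(\pi,c)$, all continuous up to $\overline{D}_0$ by Proposition \ref{prop:Rayleigh-Hom}. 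For $c\in(-1,1)$ one has $\sin y_c=u'(y_c)=\sqrt{1-c^2}=\rho(c)^{1/2}$ and $\cos y_c=-c$, so the whole statement will follow once $\lim_{\ep\to0^{\pm}}\mathcal I(c_\ep)$ is known.

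The core of the proof is this limit, and I expect the branch bookkeeping in it to be the main obstacle. I would split $\mathcal I=\int_0^{\pi}\frac{1}{(u-c)^2}\big(\frac{1}{\phi_1^2}-1\big)\,dy+\int_0^{\pi}\frac{dy}{(u-c)^2}$. In the first integral the bounds $|\phi_1(y,c)-1|\le C|y-y_c|^2$ and $|\phi_1|\ge\f12$ from Proposition \ref{prop:Rayleigh-Hom} make the integrand continuous and dominated by $\frac{C|y-y_c|^2}{(u(y)-c)^2}\in L^1(0,\pi)$, uniformly in $\ep$ since $|u(y)-c_\ep|^2\ge(u(y)-c)^2$; dominated convergence and continuity of $\phi_1$ then give the limit $\mathrm{II}(c)$ of \eqref{eq:Pi}. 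The second, genuinely singular, integral I would evaluate explicitly: differentiating the classical identity $\int_0^{\pi}(u(y)-c)^{-1}\,dy=-\pi(c^2-1)^{-1/2}$ in $c$ gives $\int_0^{\pi}(u(y)-c)^{-2}\,dy=\pi c\,(c^2-1)^{-3/2}$. The delicate point is the branch of the root: tracking $\arg(c_\ep\mp1)$ as $c_\ep$ descends to the segment $(-1,1)$ shows $\sqrt{c_\ep^2-1}\to\pm i\,\rho(c)^{1/2}$ as $\ep\to0^{\pm}$, hence $(c_\ep^2-1)^{3/2}\to\mp i\,\rho(c)^{3/2}$ and $\int_0^{\pi}(u-c_\ep)^{-2}\,dy\to\pm i\pi c\,\rho(c)^{-3/2}$. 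I would cross-check this sign against the Plemelj formula applied to the simple-pole integral $\int_0^{\pi}(\cos y+c_\ep)^{-1}\,dy$, whose imaginary part $\mp\pi/\sin y_c$ is consistent. Thus $\mathcal I(c_\ep)\to\mathrm{II}(c)\pm i\pi c\,\rho(c)^{-3/2}$.

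For the odd Wronskian I then multiply $W_o=\phi(0,c)\phi(\pi,c)\mathcal I$ by $\rho(c_\ep)^{1/2}\to\rho(c)^{1/2}$. Using $\phi(0,c)\phi(\pi,c)\to-\rho(c)\,\phi_1(0,c)\phi_1(\pi,c)$ and the limit of $\mathcal I$, and noting $\rho^{3/2}\mathrm{II}=\sin^3y_c\,\mathrm{II}=\rmA(c)$ together with $-\pi c=\pi\cos y_c=\rmB(c)$, I obtain $\rho(c_\ep)^{1/2}W_o(c_\ep)\to-\phi_1(0,c)\phi_1(\pi,c)\big(\rmA(c)\mp i\rmB(c)\big)$, the asserted formula, with the sign $\mp$ matching $\ep\to0^{\pm}$.

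For the even Wronskian I would expand $\phi(0,c)\phi(\pi,c)W_e=\phi(\pi)\phi'(\pi)-\phi(0)\phi'(0)-\phi(0)\phi(\pi)\phi'(0)\phi'(\pi)\,\mathcal I$. The first two terms equal $(1-c)^2(\phi_1\phi_1')(\pi,c)-(1+c)^2(\phi_1\phi_1')(0,c)$, which is exactly the rewriting of $-\frac{(\phi_1\phi_1')(\pi,c)(\phi_1\phi_1')(0,c)}{u'(y_c)}\big(J_1(c)-J_0(c)\big)$ after substituting the definitions \eqref{eq:defJ} of $J_0,J_1$. For the last term, $\phi(0)\phi(\pi)\phi'(0)\phi'(\pi)=\rho(c)^2(\phi_1\phi_1')(\pi,c)(\phi_1\phi_1')(0,c)$; combining with the limit of $\mathcal I$ and the identity $\sin^2y_c/u'(y_c)=\rho(c)^{1/2}$ produces precisely $-\frac{(\phi_1\phi_1')(\pi,c)(\phi_1\phi_1')(0,c)}{u'(y_c)}\sin^2y_c\,(\rmA(c)\mp i\rmB(c))$. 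Adding the two contributions and recalling $\rmA_1=J_1-J_0+\sin^2y_c\,\rmA$ and $\rmB_1=\sin^2y_c\,\rmB$ yields the stated limits of $\phi(0,c_\ep)\phi(\pi,c_\ep)W_e(c_\ep)$, again with $\mp$ for $\ep\to0^{\pm}$.
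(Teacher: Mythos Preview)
Your strategy is essentially the one the paper uses: split $\int_0^\pi\phi^{-2}\,dy$ into the regular piece $\mathrm{II}(c)$ (handled by dominated convergence via $|\phi_1-1|\le C|y-y_c|^2$) plus the explicit singular integral $\int_0^\pi(u-c_\ep)^{-2}\,dy$, then feed the limit into the Wronskian formulas and identify $\rmA,\rmB,\rmA_1,\rmB_1$ exactly as you do. The even case is also handled identically in the paper.

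The only substantive difference is in evaluating the singular integral. The paper avoids your branch issue by computing directly: with the Weierstrass substitution it writes $\int_0^\pi(u-c_\ep)^{-1}\,dy=\pi i/\rho(c_\ep)^{1/2}$ (using the stated convention $\mathrm{Re}\,\rho(c_\ep)^{1/2}>0$), differentiates in $c$, and obtains $\rho(c_\ep)^{3/2}\int_0^\pi(u-c_\ep)^{-2}\,dy=-\tfrac{\pi i}{2}\partial_c\rho(c_\ep)\to-\pi i\cos y_c$ for $\ep\to0^+$; no separate branch tracking of $(c^2-1)^{1/2}$ is needed. Your route via $\pi c\,(c^2-1)^{-3/2}$ reaches the same answer, but note that the principal square root of $c_\ep^2-1$ does \emph{not} give $+i\rho^{1/2}$ uniformly in the sign of $c$ (for $c<0$, $c_\ep^2-1$ lies in the third quadrant), so the phrase ``tracking $\arg(c_\ep\mp1)$'' really means you are fixing the branch by analytic continuation from $c>1$; your Plemelj cross-check is what actually pins down the sign correctly. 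With that caveat your argument is sound.
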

\begin{proof}
Let us prove the case of $\ep>0$. Let $c_{\ep}=u(y_c)+i\ep\in D_{\ep_0}$. Then
\begin{align*}
\rho(c_{\ep})^{3/2}\int_0^{\pi}\f{1}{(u(y)-c_{\ep})^2}dy
&=\rho(c_{\ep})^{3/2}\pa_{c}\left(\int_0^{\pi}\f{1}{u(y)-c_{\ep}}dy\right)\\
&=-\rho(c_{\ep})^{3/2}\pa_{c}\left(\int_0^{\pi}\f{1}{\cos y+c_{\ep}}dy\right)\\
&=-\rho(c_{\ep})^{3/2}\pa_{c}\left(\int_0^{\infty}\f{2}{1-t^2+c_{\ep}(1+t^2)}dt\right)\\
&=-\rho(c_{\ep})^{3/2}\pa_{c}\left(\f{1}{(1+c_{\ep})d_{\ep}}\ln\Big(\f{1+d_{\ep}t}{1-d_{\ep}t}\Big)\bigg|_{t=0}^{\infty}\right),
\end{align*}
here $d_{\ep}$ satisfies $d_{\ep}^2=\f{1-c_{\ep}}{1+c_{\ep}}$ with $\text{Re}\, d_{\ep}>0$. Then $\text{Im}\, d_{\ep}<0$, $\rho(c_{\ep})^{1/2}=(1+c_{\ep})d_{\ep}$ and
\begin{align*}
\rho(c_{\ep})^{3/2}\int_0^{\pi}\f{1}{(u(y)-c_{\ep})^2}dy
&=-\rho(c_{\ep})^{3/2}\pa_{c}\left(\f{-1}{(1+c_{\ep})d_{\ep}}\pi i\right)\\
&=-\pa_{c}\rho(c_{\ep})\pi i/2
\to -\cos y_c\pi i, \  \textrm{as}\ \ep\to 0+.
\end{align*}
By Proposition \ref{prop:Rayleigh-Hom} and Lemma \ref{Rmk:u_1}, we get
\begin{align*}
\left|\f{\rho(c_{\ep})}{(u(y)-c_{\ep})^2}\Big(\f{1}{\phi_1(y,c_{\ep})^2}-1\Big)\right|
\leq C\left|\f{(\rho(c)+\ep)|y-y_{c}|^2}{(u(y)-c)^2+\ep^2}\right|
\leq C,
\end{align*}
with $C$  a constant independent of $\ep$, which ensures that as $\ep\to 0+$,
\begin{align*}
\rho(c_{\ep})\int_{0}^{\pi}\f{1}{(u(y)-c_{\ep})^2}\Big(\f{1}{\phi_1(y,c_{\ep})^2}-1\Big)dy
\to
\sin^2 y_c\int_{0}^{\pi}\f{1}{(u(y)-c)^2}\Big(\f{1}{\phi_1(y,c)^2}-1\Big)dy.
\end{align*}
Thus, we deduce that as $\ep\to 0+$,
\beno
\rho(c_{\ep})^{3/2}\int_{0}^{\pi}\f{1}{\phi(y,c_{\ep})^2}dy
\to \sin^3y_c \mathrm{II}(c)-\cos y_c\pi i.
\eeno
This together with the continuity of $\phi_1(y,c)$ gives
\beno
\lim_{\ep\to 0+}\rho(c_{\ep})^{1/2}W_o(c_{\ep})=-\phi_1(0,c)\phi_1(\pi,c)\big(\rmA(c)-i\rmB(c)\big).
\eeno

Using the fact that $u'(0)=u'(\pi)=0$, we get $\phi'(j\pi,c)=(u(j\pi)-c)\phi_1'(j\pi,c)$ for $j=0,1$. Then we infer that
\begin{align*}
\phi(0,c_{\ep})\phi(\pi,c_{\ep})W_e(c_{\ep})
&=(\phi\phi')(\pi,c_{\ep})-(\phi\phi')(0,c_{\ep})-(\phi\phi')(\pi,c_{\ep})(\phi\phi')(0,c_{\ep})\int_0^{\pi}\f{1}{\phi(y',c_{\ep})^2}dy'\\
&\to (\phi\phi')(\pi,c)-(\phi\phi')(0,c)-\f{(\phi\phi')(\pi,c)(\phi\phi')(0,c)}{\sin^3 y_c}(\rmA(c)-i\rmB(c))\\
&=(1+\cos y_c)^2(\phi_1\phi_1')(\pi,c)-(1-\cos y_c)^2(\phi_1\phi_1')(0,c)\\
&\quad-\f{(\phi_1\phi_1')(\pi,c)(\phi_1\phi_1')(0,c)}{u'(y_c)}\sin^2 y_c(\rmA(c)-i\rmB(c))\\
&=-\f{(\phi_1\phi_1')(\pi,c)(\phi_1\phi_1')(0,c)}{u'(y_c)}(\rmA_1(c)-i\rmB_1(c)).
\end{align*}

The case of $\ep<0$ is similar, here we omit the details.
\end{proof}

\subsection{Uniform estimates of $\rmA(c),\rmB(c)$}

\begin{proposition}\label{Prop: A^2+B^2}
Let $\rmA(c), \rmB(c)$ be defined by \eqref{eq:defAB}. Then there exists a constant $C$ independent of $\al$, such that
\begin{align*}
&C^{-1}(1+\al\sin y_c)^2\leq \rmA(c)^2+\rmB(c)^2\leq C(1+\al\sin y_c)^2,\\
&\pa_c\Big(\f{1}{\rmA(c)^2+\rmB(c)^2}\Big)\leq \f{C}{(1+\al\sin y_c)^2\sin^2y_c},\\
&\pa_c^2\Big(\f{1}{\rmA(c)^2+\rmB(c)^2}\Big)\leq \f{C}{(1+\al\sin y_c)^2\sin^4y_c}.
\end{align*}
\end{proposition}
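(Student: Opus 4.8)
The proposition reduces to estimating the integral $\mathrm{II}(c)$ from \eqref{eq:Pi} and its first two $c$-derivatives, since $\rmB(c)=\pi\cos y_c=-\pi c$ is explicit (so $\pa_c\rmB=-\pi$, $\pa_c^2\rmB=0$) while $\rmA(c)=\sin^3y_c\,\mathrm{II}(c)$, where $u'(y_c)=\sin y_c$ and $dy_c/dc=1/\sin y_c$. The plan is to prove
\[
|\mathrm{II}(c)|\le \f{C\al}{\sin^2y_c},\qquad |\pa_c\mathrm{II}(c)|\le\f{C\al}{\sin^4y_c},\qquad |\pa_c^2\mathrm{II}(c)|\le\f{C\al}{\sin^6y_c},
\]
together with the matching lower bound $|\mathrm{II}(c)|\ge C^{-1}\al/\sin^2y_c$ in the range $\al\sin y_c\ge1$. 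By the Leibniz rule these give $|\rmA|\le C\al\sin y_c$, $|\pa_c\rmA|\le C\al/\sin y_c$, $|\pa_c^2\rmA|\le C\al/\sin^3y_c$, and $|\rmA|\ge C^{-1}\al\sin y_c$ when $\al\sin y_c\ge1$.

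For the upper bound on $\mathrm{II}$ I would use $\big|\tfrac1{\phi_1^2}-1\big|=\tfrac{\phi_1^2-1}{\phi_1^2}\le C\min\{\al^2|y-y_c|^2,1\}$ (from the first estimate of Proposition \ref{prop:phi1}) and $|u(y)-c|=|y-y_c|\,u_1$ with $u_1\sim\sin\frac{y+y_c}2$ (Lemma \ref{Rmk:u_1}), so that
\[
|\mathrm{II}(c)|\le\int_0^\pi\f{\min\{\al^2|y-y_c|^2,1\}}{|y-y_c|^2u_1^2}\,dy,
\]
which, split at $|y-y_c|=1/\al$, is $\le C\al/\sin^2y_c$. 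The integrand of $-\mathrm{II}$ is pointwise nonnegative because $\phi_1\ge1$, and from $\phi_1=1+\al^2T\phi_1\ge1+\al^2T1$ with $T1\ge0$ one obtains the matching lower bound $\phi_1-1\ge C^{-1}\al^2|y-y_c|^2$ for $|y-y_c|\le1/\al$; hence, when $\al\sin y_c\ge1$ (so $y_c,\pi-y_c\ge\sin y_c\ge1/\al$ and there is room), restricting the integral to $|y-y_c|\in[\tfrac1{2\al},\tfrac1\al]$ gives $|\mathrm{II}|\ge C^{-1}\al/\sin^2y_c$.

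The derivatives are the delicate point, since differentiating in $c$ worsens the singularity at the moving point $y=y_c$. The device I would use is to trade $\pa_c$ for the good derivative $\big(\tfrac{\pa_y}{u'(y_c)}+\pa_c\big)$: writing $\pa_c=\big(\tfrac{\pa_y}{u'(y_c)}+\pa_c\big)-\tfrac1{u'(y_c)}\pa_y$ and integrating the last term by parts gives
\[
\pa_c\mathrm{II}(c)=\int_0^\pi\Big(\f{\pa_y}{u'(y_c)}+\pa_c\Big)\Big[\f{\tfrac1{\phi_1^2}-1}{(u-c)^2}\Big]dy-\f1{u'(y_c)}\Big[\f{\tfrac1{\phi_1^2}-1}{(u-c)^2}\Big]_{y=0}^{y=\pi}.
\]
The good derivative annihilates the leading singular behaviour: applied to $u-c$ it produces $\tfrac{u'(y)}{u'(y_c)}-1=O(|y-y_c|)$ by \eqref{eq:u-gd}, and applied to $\phi_1$ it gains two orders of vanishing at $y_c$ by \eqref{eq: good_1phi_1}. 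Hence the new integrand is again absolutely integrable and obeys the same type of bound as the integrand of $\mathrm{II}$ with two extra powers of $1/\sin y_c$, yielding $|\pa_c\mathrm{II}|\le C\al/\sin^4y_c$; the boundary terms at $y=0,\pi$ are controlled through the values $\phi_1(0,c),\phi_1(\pi,c)$ estimated via Lemma \ref{Rmk: fg_periodic} and Proposition \ref{Prop:tphi_1}. Iterating the same identity once more gives $|\pa_c^2\mathrm{II}|\le C\al/\sin^6y_c$.

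Finally I would assemble the proposition. The upper bound follows from $\rmA^2+\rmB^2\le C\al^2\sin^2y_c+\pi^2\le C(1+\al\sin y_c)^2$. For the lower bound I split cases: if $\al\sin y_c\ge1$ then $\rmA^2\ge C^{-1}\al^2\sin^2y_c\ge C^{-1}(1+\al\sin y_c)^2$; if $\al\sin y_c<1$ then, since $\al\ge1/\d$ for $\d<1$, one has $\sin y_c<\d$ and thus $\rmB^2=\pi^2\cos^2y_c\ge\pi^2(1-\d^2)\ge C^{-1}(1+\al\sin y_c)^2$, the constant being allowed to depend on $\d$. The bounds for $\pa_c\big(\tfrac1{\rmA^2+\rmB^2}\big)$ and $\pa_c^2\big(\tfrac1{\rmA^2+\rmB^2}\big)$ then follow from the quotient rule with $Q:=\rmA^2+\rmB^2\sim(1+\al\sin y_c)^2$, $|Q'|\le C\al^2$ and $|Q''|\le C(\al^2/\sin^2y_c+1)$, a short computation matching the claimed weights $\tfrac1{(1+\al\sin y_c)^2\sin^2y_c}$ and $\tfrac1{(1+\al\sin y_c)^2\sin^4y_c}$. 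The main obstacle throughout is the uniform-in-$\al$ control near the degenerate critical points $y_c\to0,\pi$, where $\sin y_c\to0$: there the pointwise estimates of Proposition \ref{prop:phi1} degenerate and must be replaced by the rescaled bounds of Proposition \ref{Prop:tphi_1}, and the endpoint contributions $y=0,\pi$ — exactly where the boundary terms of the $\pa_c$-identity live — have to be shown not to cost more than the advertised powers of $1/\sin y_c$.
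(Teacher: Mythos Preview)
Your proposal is correct and follows essentially the same route as the paper. The paper isolates the estimates on $\mathrm{II}(c)$ and its derivatives as a separate lemma (Lemma~\ref{Lem: II}), proving the two-sided bound $|\mathrm{II}(c)|\sim\min\{\al^2/\sin y_c,\al/\sin^2y_c\}$ and $|\rho^k\pa_c^k\mathrm{II}|\le C\min\{\al^2/\sin y_c,\al/\sin^2y_c\}$; the derivation of $\pa_c\mathrm{II}$ via the good derivative $\big(\tfrac{\pa_y}{u'(y_c)}+\pa_c\big)$ and integration by parts is exactly your identity. The only notable difference is the lower bound on $\rmA^2+\rmB^2$: the paper uses both branches of the $\min$ to get $\rmA^2+\rmB^2\ge C^{-1}\big(\min\{\al^2\sin^2y_c,\al^4\sin^4y_c\}+\cos^2y_c\big)$, which stays uniformly positive for all $|\al|>1$ without reference to $\d$, whereas your case split for $\al\sin y_c<1$ invokes $\al\ge1/\d$ to force $\sin y_c<\d$ and hence $\cos^2y_c\ge1-\d^2$. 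Both arguments are valid; yours is slightly more economical (you never need the $\al^2/\sin y_c$ branch of $\mathrm{II}$) at the cost of a $\d$-dependent constant, which is harmless here.
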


We need the following lemma.

\begin{lemma}\label{Lem: II}
Let $\mathrm{II}(c)$ be defined by \eqref{eq:Pi}. Then there exists a constant $C$ independent of $\al$, such that
\begin{align*}
&C^{-1}\min\left\{\f{\al^2}{\sin y_c},\f{|\al|}{\sin^2 y_c}\right\}\leq -\mathrm{II}(c)\leq C\min\left\{\f{\al^2}{\sin y_c},\f{|\al|}{\sin^2 y_c}\right\},\\
&\big|\rho(c)^k\pa_c^k\mathrm{II}(c)\big|\leq C\min\left\{\f{\al^2}{\sin y_c},\f{|\al|}{\sin^2 y_c}\right\},\quad k=1,2.
\end{align*}
\end{lemma}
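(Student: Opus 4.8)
The plan is to first read off the pointwise size of the integrand in \eqref{eq:Pi}, integrate it by splitting at the critical layer, and then obtain the $c$-derivative bounds through a \emph{good-derivative} integration by parts that does not worsen the singularity of $(u-c)^{-2}$ at $y=y_c$. Throughout I use $u'(y_c)=\sin y_c$ and $\rho(c)=1-c^2=\sin^2 y_c$.

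\textbf{Size of $\mathrm{II}$.} First I would record that, by Proposition \ref{prop:phi1}, $\phi_1\geq 1$ and $\phi_1-1\leq C\min\{\al^2|y-y_c|^2,1\}\phi_1$, so that
\[
0\leq 1-\f{1}{\phi_1^2}\leq C\min\{\al^2|y-y_c|^2,1\},
\]
while the lower bounds $\phi_1\geq C^{-1}e^{C^{-1}\al|y-y_c|}$ and $|\cF|\geq C^{-1}\al\min\{\al|y-y_c|,1\}$ from the same proposition give the matching lower bound $1-\phi_1^{-2}\geq C^{-1}\min\{\al^2|y-y_c|^2,1\}$. Combined with $(u(y)-c)^2=u_1(y,c)^2(y-y_c)^2$ and Lemma \ref{Rmk:u_1} (so $u_1\simeq\sin\f{y+y_c}2$), this shows that $-\mathrm{II}(c)>0$ is comparable to
\[
\int_0^\pi\f{\min\{\al^2|y-y_c|^2,1\}}{\sin^2\f{y+y_c}2\,(y-y_c)^2}\,dy .
\]
I would estimate this by splitting into $|y-y_c|\leq 1/\al$ (numerator $\al^2|y-y_c|^2$, integrand $\simeq\al^2/\sin^2\f{y+y_c}2$) and $|y-y_c|>1/\al$ (numerator $1$, the weight $(y-y_c)^{-2}$ concentrating the mass at $|y-y_c|\simeq 1/\al$). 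If $\al\sin y_c\geq 1$ then $\sin\f{y+y_c}2\simeq\sin y_c$ on $|y-y_c|\lesssim 1/\al$ and both pieces give $\simeq\al/\sin^2 y_c$. If $\al\sin y_c<1$, i.e. $y_c\lesssim 1/\al$ or $\pi-y_c\lesssim 1/\al$, the weight is no longer comparable to $\sin y_c$ and I would pass to the rescaled functions $\widetilde{\phi_1},\widetilde{\widetilde{\phi_1}}$ of Proposition \ref{Prop:tphi_1}; the dominant contribution then comes from $y\lesssim 1/\al$ and equals $\simeq\al^2/\sin y_c$. This produces $-\mathrm{II}(c)\simeq\min\{\al^2/\sin y_c,\ \al/\sin^2 y_c\}$.

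\textbf{Derivative bounds.} Here the naive $\pa_c$ applied to $(u-c)^{-2}$ raises the singularity to $(u-c)^{-3}$, so instead I would differentiate through the good derivative $D_c\eqdef\f{\pa_y}{u'(y_c)}+\pa_c$, using $\pa_c=D_c-\f{1}{u'(y_c)}\pa_y$ and integrating the last term by parts:
\[
\pa_c\mathrm{II}=\int_0^\pi D_c g\,dy-\f{1}{u'(y_c)}\big(g(\pi,c)-g(0,c)\big),\qquad g\eqdef\f{1}{(u-c)^2}\Big(\f{1}{\phi_1^2}-1\Big).
\]
The key is that $D_c$ preserves the singular structure of $g$ at the cost of one factor $u'(y_c)^{-2}=\sin^{-2}y_c$: by \eqref{eq:u-gd} one has $|D_c(u-c)^{-2}|\leq C\sin^{-2}y_c\,(u-c)^{-2}$, and by \eqref{eq: good_1phi_1} the same factor is gained when $D_c$ hits $\phi_1^{-2}$, so $|D_c g|\leq C\sin^{-2}y_c$ times the integrand bounding $-\mathrm{II}(c)$. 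Multiplying by $\rho=\sin^2 y_c$ cancels this gain exactly, giving $\big|\rho\int_0^\pi D_c g\,dy\big|\leq C(-\mathrm{II})$. Iterating, and using the second good-derivative estimate of Proposition \ref{prop:phi1} for $|D_c^2\phi_1|$, yields $\big|\rho^2\int_0^\pi D_c^2 g\,dy\big|\leq C(-\mathrm{II})$, which handles $k=2$.

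\textbf{Main obstacle.} The delicate part is the family of boundary terms at $y=0,\pi$ generated by the integrations by parts (and, for $k=2$, their further $c$-derivatives). These involve $u'(y_c)^{-1}$ multiplying $\phi_1,\pa_y\phi_1,\pa_c\phi_1$ at the endpoints, and they are singular exactly when $y_c\to 0,\pi$, i.e. in the regime $\al\sin y_c<1$. To control them I would use the endpoint estimates of Lemma \ref{Rmk: fg_periodic} for $\cF,\cG$ and their $c$-derivatives at $0,\pi$, together with the rescaled bounds of Proposition \ref{Prop:tphi_1}, and verify that after multiplication by $\rho^k=\sin^{2k}y_c$ each boundary term is again bounded by $C\min\{\al^2/\sin y_c,\ \al/\sin^2 y_c\}$. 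Tracking these boundary contributions uniformly across both regimes $\al\sin y_c\geq1$ and $\al\sin y_c<1$, along with the bookkeeping of the iterated operator $D_c$ for $k=2$, is the main difficulty; once the pointwise $D_c$-estimates are in hand the interior integrals are comparatively routine.
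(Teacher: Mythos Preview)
Your proposal is correct and follows essentially the same strategy as the paper: the pointwise bound $1-\phi_1^{-2}\simeq\min\{\al^2|y-y_c|^2,1\}$ for the size of $\mathrm{II}$, and the good derivative $D_c=\pa_y/u'(y_c)+\pa_c$ together with integration by parts for the $c$-derivatives. Two minor differences are worth noting. First, the paper never invokes the rescaled functions $\widetilde{\phi_1},\widetilde{\widetilde{\phi_1}}$ here; for the lower bound in the regime $\al\sin y_c<1$ it simply integrates the pointwise lower bound over the short interval $[0,y_c]$ (resp.\ $[y_c,\pi]$) and reads off $\al^2/\sin y_c$ from Lemma~\ref{Rmk:u_1}. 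Second, for the boundary terms the paper avoids Lemma~\ref{Rmk: fg_periodic} and Proposition~\ref{Prop:tphi_1} entirely: at $y=0,\pi$ one has $|u(y)-c|\sim|y-y_c|^2\geq C^{-1}u'(y_c)^2$ and $\rho(c)\leq|u(y)-c|$, and these elementary facts together with $|1-\phi_1^{-2}|\leq C\min\{\al^2|y-y_c|^2,1\}$ and the bound on $\cG$ from Proposition~\ref{prop:phi1} already yield $\rho^k$ times the boundary term $\leq C\min\{\al^2/\sin y_c,\al/\sin^2 y_c\}$ uniformly in both regimes. So the obstacle you flag is real but is resolved by a simpler observation than the one you propose.
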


\begin{proof}
Let us first prove the first inequality of the lemma. Due to $\phi_1(y,c)\geq 1$, we get by Proposition \ref{prop:phi1} and Lemma \ref{Rmk:u_1} that
\begin{align*}
-\mathrm{II}(c)=&\int_0^{\pi}\frac{1}{(u(y)-c)^2}\Big(1-\frac{1}{\phi_{1}(y,c)^2}\Big)dy\\
\geq&C^{-1}\int_0^{\pi}\frac{\al^2|y-y_c|^2}{(u(y)-c)^2}\chi_{\{|y-y_c|\leq\frac{1}{\al}\}}dy\\
\geq&
\left\{
\begin{aligned}
C^{-1}\int_{0}^{y_c}\dfrac{\al^2|y-y_c|^2}{\sin^{2}\f{y+y_c}{2}|y-y_c|^2}dy\geq \dfrac{\al^2}{Cu'(y_c)},&\  (0< y_c\leq\frac{1}{\al}),\\
C^{-1}\int_{y_c-\frac{1}{\al}}^{y_c}\dfrac{\al^2|y-y_c|^2}{u'(y_c)^2|y-y_c|^2}dy\geq \dfrac{\al}{Cu'(y_c)^2},&\  (\frac{1}{\al}\leq y_c<\pi-\f{1}{\al}),\\
C^{-1}\int_{y_c}^{\pi}\dfrac{\al^2|y-y_c|^2}{\sin^{2}\f{y+y_c}{2}|y-y_c|^2}dy\geq \dfrac{\al^2}{Cu'(y_c)},&\  (\pi-\f{1}{\al}< y_c\leq\pi).
\end{aligned}
\right.
\end{align*}
On the other hand, by Proposition \ref{prop:phi1} and Lemma \ref{Rmk:u_1} again, we have
\begin{align*}
-\mathrm{II}(c)=&\int_0^{\pi}\frac{1}{(u(y)-c)^2}\Big(1-\frac{1}{\phi_{1}(y,c)^2}\Big)dy\\
\leq&C\int_0^{\pi}\frac{\al^2|y-y_c|^2}{(u(y)-c)^2}dy
\leq C\frac{\al^2}{u'(y_c)},
\end{align*}
or
\begin{align*}
-\mathrm{II}(c)=&\int_0^{\pi}\frac{1}{(u(y)-c)^2}\Big(1-\frac{1}{\phi_{1}(y,c)^2}\Big)dy\\
\leq&C\int_0^{\pi}\frac{\min\{\al^2|y-y_c|^2,1\}}{u'(y_c)^2|y-y_c|^2}dy
\leq C\frac{\al}{u'(y_c)^2}.
\end{align*}

Next we prove the derivative estimates of $\mathrm{II}$. Direct calculations show that
\begin{align*}
\pa_c\mathrm{II}(c)=&\pa_c\int_0^{\pi}\frac{1}{(u(y)-c)^2}\Big(\frac{1}{\phi_{1}(y,c)^2}-1\Big)dy\\
=&\int_0^{\pi}\left(\frac{\partial_y}{u'(y_c)}+\partial_c\right)\Big(\frac{1}{(u(y)-c)^2}\Big(\frac{1}{\phi_{1}(y,c)^2}-1\Big)\Big)dy\\
&-\frac{1}{u'(y_c)(u(y)-c)^2}\Big(\frac{1}{\phi_{1}(y,c)^2}-1\Big)\Big|_{y=0}^{\pi}.
\end{align*}
and
\begin{align*}
\pa_c^2\mathrm{II}(c)
=&\int_0^{\pi}\left(\frac{\partial_y}{u'(y_c)}+\partial_c\right)^2\Big(\frac{1}{(u(y)-c)^2}\Big(\frac{1}{\phi_{1}(y,c)^2}-1\Big)\Big)dy\\
&-\frac{1}{u'(y_c)}\left(\frac{\partial_y}{u'(y_c)}+\partial_c\right)
\Big(\frac{1}{(u(y)-c)^2}\Big(\frac{1}{\phi_{1}(y,c)^2}-1\Big)\Big)\Big|_{y=0}^{\pi},\\
&-\partial_c\left(\frac{1}{u'(y_c)(u(y)-c)^2}\Big(\frac{1}{\phi_{1}(y,c)^2}-1\Big)\Big|_{y=0}^{\pi}\right).
\end{align*}
By \eqref{eq:u-gd} and Proposition \ref{prop:phi1}, we get
\beno
\left|\left(\frac{\partial_y}{u'(y_c)}+\partial_c\right)\Big(\frac{1}{(u(y)-c)^2}\Big(\frac{1}{\phi_{1}(y,c)^2}-1\Big)\Big)\right|\leq \frac{C\min\{\al^2|y-y_c|^2,1\}}{u'(y_c)^2(u(y)-c)^2},
\eeno
and
\beno
\left|\left(\frac{\partial_y}{u'(y_c)}+\partial_c\right)^2\Big(\frac{1}{(u(y)-c)^2}\Big(\frac{1}{\phi_{1}(y,c)^2}-1\Big)\Big)\right|\leq \frac{C\min\{\al^2|y-y_c|^2,1\}}{u'(y_c)^4(u(y)-c)^2}.
\eeno
Here we used the fact that
\begin{align*}
\left|\big(\frac{\partial_y}{u'(y_c)}+\partial_c\big)^2(u(y)-c)\right|=\f{|u''(y)u'(y_c)-u''(y_c)u'(y)|}{u'(y_c)^3}=\f{\sin|y-y_c|}{\sin^3y_c}\leq \f{C|u(y)-c|}{\sin^4y_c}.
\end{align*}

Using Proposition \ref{prop:Rayleigh-Hom} and the facts that $\rho(c)\le |u(y)-c|$ and $|u(y)-c|\sim |y-y_c|^2\ge C^{-1}u'(y_c)^2$
for $y=0,\pi$, we deduce that for $y=0,\pi$:
\begin{align*}
\left|\frac{1}{u'(y_c)(u(y)-c)^2}\Big(\frac{1}{\phi_{1}(y,c)^2}-1\Big)\right|\leq C\f{\min\{\al^2|y-y_c|^2,1\}}{u'(y_c)(u(y)-c)^2}\leq C\f{\min\{\al^2,\al/u'(y_c)\}}{u'(y_c)\rho(c)},
\end{align*}
and
\begin{align*}
&\left|\partial_c\left(\frac{1}{u'(y_c)(u(y)-c)^2}\Big(\frac{1}{\phi_{1}(y,c)^2}-1\Big)\right)\right|\\
&\leq
C\left(\frac{1}{u'(y_c)^3(u(y)-c)^2}+\frac{1}{u'(y_c)|u(y)-c|^3}\right)\Big|\frac{1}{\phi_{1}(y,c)^2}-1\Big|\\
&\quad+2\frac{1}{u'(y_c)(u(y)-c)^2}\f{|\cG(y,c)|}{\phi_{1}(y,c)^2}\\
&\leq
C\left(\frac{\min\{\al^2|y-y_c|^2,1\}}{u'(y_c)^3(u(y)-c)^2}+\frac{\min\{\al^2|y-y_c|^2,1\}}{u'(y_c)|u(y)-c|^3}
+\frac{\al\min\{\al|y-y_c|,1\}}{u'(y_c)^2(u(y)-c)^2\phi_{1}(y,c)^2}\right)\\
&\leq
C\left(\frac{\min\{\al^2|y-y_c|^2,1\}}{u'(y_c)|u(y)-c|\rho(c)^2}+
\frac{\al\min\{\al|y-y_c|,1\}}{u'(y_c)^2(u(y)-c)^2\phi_{1}(y,c)^2}\right)\\
&\leq \f{C}{\rho(c)^2}\min\left\{\frac{\al^2}{u'(y_c)},\frac{\al}{u'(y_c)^2}\right\},
\end{align*}
here $\cG(y,c)=\f{\pa_c\phi_1}{\phi_1}(y,c)$.

With the estimates above and Lemma \ref{Rmk:u_1}, we can deduce the high order derivative estimates of $\mathrm{II}(c)$.
\end{proof}

Let us begin with the proof of Proposition \ref{Prop: A^2+B^2}.

\begin{proof}
It follow from Lemma \ref{Lem: II} that
\begin{align*}
\rmA(c)^2+\rmB(c)^2\geq C^{-1}(2\min\{\al^2 \sin^2 y_c,\al^4 \sin^4 y_c\}+\cos^2 y_c)
\geq C^{-1}(1+\al\sin y_c)^2,
\end{align*}
and the upper bound of $\rmA(c)^2+\rmB(c)^2$.

By Lemma \ref{Lem: II} again, we have
\begin{align*}
&|\mathrm{A}(c)|=|\sin^3y_c\mathrm{II}(c)|\leq C\al \sin y_c,\\
&|\pa_{c}\mathrm{A}(c)|=|\pa_{c}(\sin^3y_c\mathrm{II}(c))|
\leq C\min\{\al^2, \al/\sin y_c\},\\
&|\pa_{c}^2\mathrm{A}(c)|=|\pa_{c}^2(u'(y_c)\rho\mathrm{II}(c))|
\leq C\min\{\al^2/\sin^2y_c,\al/\sin^3y_c\},
\end{align*}
which together with the fact $\pa_c\rmB(c)=-\pi$ and $\pa_c^2\rmB(c)=0$ show that
\begin{align*}
\left|\partial_c\left(\f{1}{\mathrm{A}(c)^2+\mathrm{B}(c)^2}\right)\right|
&\leq \f{C}{(1+\al\sin y_c)^2{\sin^2 y_c}},\\
\left|\partial_c^2\left(\f{1}{\mathrm{A}(c)^2+\mathrm{B}(c)^2}\right)\right|
&\leq \f{C}{(\mathrm{A}(c)^2+\mathrm{B}(c)^2){\sin^4y_c}}+
\f{C|\partial_c\mathrm{A}(c)|^2}{(\mathrm{A}(c)^2+\mathrm{B}(c)^2)^2}
+\f{C|\mathrm{A}(c)\partial_c^2\mathrm{A}(c)|}{(\mathrm{A}(c)^2+\mathrm{B}(c)^2)^2}\\
&\leq \f{C}{(1+\al\sin y_c)^2{\sin^4y_c}}
+\f{C(|\al/\sin y_c|^2)}{(1+\al\sin y_c)^4}
+\f{C\al^2/\sin^2 y_c}{(1+\al\sin y_c)^4}\\
&\leq \f{C}{(1+\al\sin y_c)^2\sin^4y_c}.
\end{align*}
\end{proof}

\subsection{Uniform estimates of $\rmA_1(c),\rmB_1(c)$}

\begin{proposition}\label{Prop: A_1^2+B_1^2}
Let $\rmA_1(c), \rmB_1(c)$ be defined by \eqref{eq:defA_1B_1}. Then there exists a constant $C$ independent of $\al$, such that
\begin{align*}
&\f{(1+\al\sin  y_c)^6}{C\al^4}\leq \rmA_1(c)^2+\rmB_1(c)^2\leq \f{C(1+\al\sin  y_c)^6}{\al^4},\\
&\left|\partial_c\Big(\f{1}{\mathrm{A}_1(c)^2+\mathrm{B}_1(c)^2}\Big)\right|\leq \f{C\al^6}{(1+\al\sin y_c)^8},\\
&\left|\partial_c^2\Big(\f{1}{\mathrm{A}_1(c)^2+\mathrm{B}_1(c)^2}\Big)\right|\leq \f{C\al^8}{(1+\al\sin y_c)^{10}}.
\end{align*}
\end{proposition}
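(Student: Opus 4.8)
The plan is to reduce everything to sharp estimates for the three building blocks of $\rmA_1$, namely $J_0(c)$, $J_1(c)$ and $\sin^2 y_c\rmA(c)=\sin^5 y_c\,\mathrm{II}(c)$, together with the explicit imaginary part $\rmB_1(c)=\sin^2 y_c\rmB(c)=\pi\sin^2 y_c\cos y_c=-\pi c\,\rho(c)$. Using $u'=\sin$ and $\phi_1'=\cF\phi_1$, I would first rewrite
\beno
J_1(c)=\f{-\sin y_c(1-c)^2}{\cF(0,c)\phi_1(0,c)^2},\qquad
J_0(c)=\f{-\sin y_c(1+c)^2}{\cF(\pi,c)\phi_1(\pi,c)^2},
\eeno
so that each $J_j$ is governed by the boundary data $\cF,\cG,\phi_1$ at $y=0,\pi$. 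Since $\cF(\pi,c)>0$ and $\cF(0,c)<0$ (Proposition \ref{prop:phi1}, part 2), one has $J_1>0$ and $J_0<0$, while $\mathrm{II}(c)<0$ forces $\sin^2 y_c\rmA(c)<0$. Thus $\rmA_1=(J_1-J_0)+\sin^2 y_c\rmA$ is a sum of a positive and a negative piece, and the possibility of cancellation between them is precisely the difficulty in the lower bound.

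The first step is to record two-sided and $\pa_c,\pa_c^2$ bounds for each block. For $\sin^2 y_c\rmA$ I would invoke Lemma \ref{Lem: II} directly, giving $|\sin^2 y_c\rmA|\approx\min\{\al^2\sin^4 y_c,\al\sin^3 y_c\}$ and the matching derivative bounds through $\rho^k\pa_c^k\mathrm{II}$. For $J_0,J_1$ I would combine Proposition \ref{prop:phi1} (so that $C^{-1}e^{C^{-1}\al|y-y_c|}\le\phi_1\le e^{C\al|y-y_c|}$), Lemma \ref{Rmk: fg_periodic} (estimates of $\cF,\cG$ and their $c$-derivatives at $0,\pi$), and near the critical points the rescaled Proposition \ref{Prop:tphi_1}, which yields $\phi_1(0,c)\approx 1$ and $|\cF(0,c)|\approx\al^2 y_c$ for $y_c\lesssim 1/\al$. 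These give $|J_1|\approx\al^{-2}$ in the range $\al y_c\lesssim 1$ and $|J_1|\lesssim\al^{-1}e^{-C^{-1}\al y_c}$ once $\al y_c\gtrsim 1$ (hence exponentially small in the bulk), and symmetrically for $J_0$ near $y_c=\pi$.

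The heart of the proof is the lower bound $\rmA_1^2+\rmB_1^2\ge C^{-1}\al^{-4}(1+\al\sin y_c)^6$, which I would prove by splitting $[0,\pi]$ into three overlapping regions governed by a large constant $M$ and a small constant $\delta$. In the bulk $\al\sin y_c\ge M$, the inequalities $\sin y_c\le y_c$ and $\sin y_c\le\pi-y_c$ give $\al y_c\ge M$ and $\al(\pi-y_c)\ge M$, so both $\phi_1(0,c)^2$ and $\phi_1(\pi,c)^2$ are exponentially large; hence $|J_0|+|J_1|\le\tfrac12|\sin^2 y_c\rmA|$ for $M$ large, and $|\rmA_1|\gtrsim|\sin^2 y_c\rmA|\approx\al\sin^3 y_c$, matching $\al^{-4}(1+\al\sin y_c)^6$. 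Near $c=-1$ (say $\al y_c\le 2M$) the term $J_0$ is super-exponentially small; for $\al y_c\le\delta$ the ratio bound $|\sin^2 y_c\rmA|/J_1\lesssim(\al y_c)^4\le\tfrac12$ shows the negative piece cannot cancel $J_1$, so $\rmA_1\ge\tfrac12 J_1\gtrsim\al^{-2}$, whereas for $\delta\le\al y_c\le 2M$ the real part may cancel but there $\cos y_c\ge\tfrac12$ forces $|\rmB_1|\gtrsim\sin^2 y_c\gtrsim\delta^2\al^{-2}$, so the imaginary part alone supplies the bound. The region near $c=1$ is handled symmetrically with $J_0$ replacing $J_1$. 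The upper bound is then immediate from the triangle inequality, since each of $|J_0|,|J_1|,|\sin^2 y_c\rmA|,|\rmB_1|$ is $\lesssim\al^{-2}(1+\al\sin y_c)^3$.

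Finally, for the derivative estimates I would expand $\pa_c^m(\rmA_1^2+\rmB_1^2)^{-1}$ in terms of $\pa_c^{\,j}\rmA_1,\pa_c^{\,j}\rmB_1$ ($j\le m$) and the lower bound just established, exactly as in the proof of Proposition \ref{Prop: A^2+B^2}. Here $\rmB_1=-\pi c\,\rho(c)$ is a polynomial in $c$, so its derivatives are harmless, and $\pa_c\rmA_1,\pa_c^2\rmA_1$ are assembled from the block derivative bounds of the second step. I expect the only genuine obstacle to be the lower bound, namely the sign cancellation in $\rmA_1=(J_1-J_0)+\sin^2 y_c\rmA$; the device that resolves it is the systematic use of $\rmB_1$ in the transition zones $\al y_c\sim 1$ and $\al(\pi-y_c)\sim 1$, where $\rmB_1$ is comparable to $\al^{-2}$ and cannot be cancelled.
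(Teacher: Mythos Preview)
Your region-splitting strategy is essentially the paper's own approach for the lower bound when $\al$ is large: dominate by $|\sin^2 y_c\rmA|$ in the bulk $\al\sin y_c\ge M$, by $J_1$ (resp.\ $-J_0$) very near $y_c=0$ (resp.\ $y_c=\pi$), and by $|\rmB_1|$ in the transition zones. The upper bound and the derivative estimates are likewise organized exactly as in the paper (via Lemma~\ref{Lem: J_j high} for $\pa_c^mJ_j$ and Lemma~\ref{Lem: II} for $\pa_c^m(\rho\,\mathrm{II})$, then the algebraic expansion of $\pa_c^m(\rmA_1^2+\rmB_1^2)^{-1}$).

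There is, however, a genuine gap in the lower bound. Your transition-zone device relies on $|\rmB_1|=\pi|\cos y_c|\sin^2 y_c\gtrsim\sin^2 y_c$, which you justify by $\cos y_c\ge\tfrac12$. That inequality is only available once the transition zone $\delta\le\al y_c\le 2M$ (and its mirror near $\pi$) stays away from $y_c=\pi/2$; this forces $\al$ to exceed a constant of order $M$. For $\al$ between the fixed lower threshold $c_0>1$ and this constant, the zones can contain $y_c=\pi/2$, where $\rmB_1(0)=0$. At that point the bulk mechanism has not yet taken over (since $\al\sin y_c<M$ is possible), and your argument gives no control on $\rmA_1(0)=J_1(0)-J_0(0)+\mathrm{II}(0)$; the positive piece $J_1-J_0$ and the negative piece $\mathrm{II}$ could in principle cancel.

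The paper closes this gap with Proposition~\ref{prop:spectral}: $\rmA_1(c)^2+\rmB_1(c)^2=0$ would force $c=0$ to be an embedding eigenvalue of $\mathcal{R}_\al$, which is excluded for $|\al|>1$. By continuity and compactness in $c$ this yields a strictly positive lower bound $C_\al>0$ for each fixed $\al$, and hence a uniform bound on any compact interval $\al\in[c_0,C(M)]$; the large-$\al$ region analysis then finishes the job. You should either invoke this spectral criterion for bounded $\al$, or supply a direct proof that $\rmA_1(0)\neq0$ for $|\al|>1$ (which amounts to the same Rayleigh-equation computation carried out in the proof of Proposition~\ref{prop:spectral}).
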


To prove Proposition \ref{Prop: A_1^2+B_1^2}, we need a criterion on embedding eigenvalues.\begin{definition}\label{embed}
We say that $c\in \text{Ran}\ u$ is an embedding eigenvalue of $\mathcal{R}_\al$ if there exists $0\neq \psi\in H^1(\mathbb{T})$ such that  for all $\varphi\in H^1(\mathbb{T})$,
\begin{align}\label{u1}
\int_{\mathbb{T}}(\psi'\varphi'+\al^2\psi\varphi)dy+p.v.\int_{\mathbb{T}}\frac{u''\psi\varphi}{u-c}dy+i\pi\sum_{y\in u^{-1}\{c\},u'(y)\neq 0}\frac{(u''\psi\varphi)(y)}{|u'(y)|}=0.
\end{align}
\end{definition}

For $u(y)=-\cos y$, one can check that if \eqref{u1} holds, then $u''(y)=0$ for some point where $c=u(y)$(see Lemma 5.2 in \cite{WZZ2}),
which means that $c=0$. Thus, $\varphi\in H^2(\mathbb{T})$ is a classical solution to
\begin{align}\label{u2}
-\psi''+\al^2\psi+\frac{u''\psi}{u-c}=0\Leftrightarrow -\psi''+(\al^2-1)\psi=0,
\end{align} which implies that $|\al|=0$ or $1$. On the torus $\mathbb{T}_{\d}^2$ with $0<\d<1$, we have $\al\d\in\mathbb{Z}$. If $\al\neq0$, then $|\al|>1$, which implies that $\mathcal{R}_\al$  has no embedding eigenvalues.

\begin{proposition}\label{prop:spectral}
If $\rmA_1(c)^2+\rmB_1(c)^2=0 $, then $c\in D_0$ is an embedding eigenvalue of $\mathcal{R}_\al$. Thus, if $|\al|>1$, then $\rmA_1(c)^2+\rmB_1(c)^2>0. $
\end{proposition}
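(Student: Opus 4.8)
The plan is to establish the first implication --- that $\rmA_1(c)^2+\rmB_1(c)^2=0$ forces $c$ to be an embedding eigenvalue of $\mathcal{R}_\al$ --- since the concluding sentence then follows immediately from the discussion preceding the statement, where it is shown that an embedding eigenvalue can occur only for $|\al|=0$ or $1$. First I would exploit reality: for $c\in D_0=(-1,1)$ the quantities $\mathrm{II}(c),\rmA(c),\rmB(c)$, and hence $\rmA_1(c)$ and $\rmB_1(c)$, are real (the integrand of $\mathrm{II}(c)$ is bounded near $y_c$ by Proposition \ref{prop:Rayleigh-Hom}), so $\rmA_1(c)^2+\rmB_1(c)^2=0$ is equivalent to $\rmA_1(c)=\rmB_1(c)=0$. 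Since $\rmB_1(c)=\sin^2 y_c\,\rmB(c)=\pi\sin^2 y_c\cos y_c$ with $\sin y_c>0$ on $(0,\pi)$, the relation $\rmB_1(c)=0$ forces $\cos y_c=0$, i.e. $y_c=\tfrac\pi2$ and $c=u(\tfrac\pi2)=0$. This is the decisive simplification: the only candidate is $c=0$, and there the apparently singular coefficient is regular, because $\frac{u''}{u-0}=\frac{\cos y}{-\cos y}\equiv-1$ while $u''(\pm\tfrac\pi2)=\cos(\pm\tfrac\pi2)=0$ vanishes exactly at the critical points $y=\pm\tfrac\pi2$.

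Next I would turn $\rmA_1(0)=0$ into the vanishing of the limiting even Wronskian. Applying Lemma \ref{Lem:W(c)converge} along $c_\ep=i\ep$ and using $\rmA_1(0)=\rmB_1(0)=0$ gives $\lim_{\ep\to0+}\phi(0,c_\ep)\phi(\pi,c_\ep)W_e(c_\ep)=0$; since $c=0\in(-1,1)$ we have $\phi(0,0)=-\phi_1(0,0)\neq0$ and $\phi(\pi,0)=\phi_1(\pi,0)\neq0$ (by $\phi_1\ge\tfrac12$ from Proposition \ref{prop:Rayleigh-Hom}), whence $W_{e,+}(0):=\lim_{\ep\to0+}W_e(c_\ep)=0$. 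To read this as a solvability condition I would use continuous dependence on the coefficient: let $\varphi_e(\cdot,c_\ep)$ be the solution of the Rayleigh equation at $c_\ep$ normalized by $\varphi_e(0,c_\ep)=1,\ \varphi_e'(0,c_\ep)=0$ (Lemma \ref{Rmk:Wronskian}), so that $W_e(c_\ep)=\varphi_e'(\pi,c_\ep)$. The coefficient $\frac{u''}{u-c_\ep}=\frac{\cos y}{-\cos y-i\ep}$ is bounded by $1$ and converges a.e. to $-1$, hence in $L^1(0,\pi)$; therefore $\varphi_e(\cdot,c_\ep)\to\psi$ in $C^1([0,\pi])$, where $\psi$ solves the regular problem $-\psi''+(\al^2-1)\psi=0$, $\psi(0)=1$, $\psi'(0)=0$. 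Passing to the limit in $W_e(c_\ep)=\varphi_e'(\pi,c_\ep)$ and using $W_{e,+}(0)=0$ yields $\psi'(\pi)=0$.

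I would then extend $\psi$ evenly and $2\pi$-periodically to the torus; the Neumann conditions $\psi'(0)=\psi'(\pi)=0$ make the extension $C^1$ across $y=0,\pi$, and since $\psi''=(\al^2-1)\psi$ the extension is in fact $C^2$, so $\psi\in H^2(\mathbb{T})\subset H^1(\mathbb{T})$, while $\psi(0)=1$ ensures $\psi\not\equiv0$. It remains to verify the weak formulation \eqref{u1}. At $c=0$ the Plemelj sum vanishes because $(u''\psi\varphi)(\pm\tfrac\pi2)=0$, and the principal-value integral is an ordinary one since $\frac{u''}{u-0}\equiv-1$; thus \eqref{u1} collapses to $\int_{\mathbb{T}}\big(\psi'\varphi'+(\al^2-1)\psi\varphi\big)dy=0$ for all $\varphi\in H^1(\mathbb{T})$, which is precisely the weak form of the equation satisfied by $\psi$. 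Hence $c=0$ is an embedding eigenvalue of $\mathcal{R}_\al$, and the final claim $\rmA_1(c)^2+\rmB_1(c)^2>0$ for $|\al|>1$ follows from the nonexistence of embedding eigenvalues in that range.

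The step I expect to be the main obstacle is passing from the vanishing limiting Wronskian $W_{e,+}(0)=0$ to a genuine eigenfunction, i.e. justifying the $C^1$ convergence $\varphi_e(\cdot,c_\ep)\to\psi$ together with the fact that $\psi$ solves the regular limiting equation. What makes this tractable, and what must be used with care, is exactly the feature that at the distinguished value $c=0$ the critical-layer singularity is removable because $u''$ vanishes at $y=\pm\tfrac\pi2$: this simultaneously renders the limiting coefficient $L^1$-convergent and annihilates the imaginary $i\pi$ contribution in \eqref{u1}, so that no limiting-absorption machinery beyond continuous dependence is required.
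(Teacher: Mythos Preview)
Your argument is correct and reaches the same conclusion as the paper, but by a genuinely different route. The paper does not pass to a limit: working directly at $c=0$ it writes down an explicit even solution $\phi_e(y,0)$ of the homogeneous Rayleigh equation (built from $\phi$, $\phi_1$ and a regularized antiderivative), verifies $\partial_y\phi_e(\pi,0)=0$ by inspection, and then shows by a direct computation that $\partial_y\phi_e(0,0)=\phi(\pi,0)\phi_1'(\pi,0)\phi_1'(0,0)\,\rmA_1(0)$, so that $\rmA_1(0)=0$ forces the second Neumann condition. You instead invoke Lemma~\ref{Lem:W(c)converge} to get $W_e(c_\ep)\to 0$, and then use continuous dependence of ODE solutions on an $L^1$-convergent coefficient (the key being that $\tfrac{u''}{u-i\ep}$ is uniformly bounded and converges a.e.\ to $-1$ because the critical-layer singularity at $c=0$ is removable) to deduce $\psi'(\pi)=0$ for the limiting solution of $-\psi''+(\al^2-1)\psi=0$. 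Your approach is more conceptual and avoids the explicit formula and the computation of $\partial_y\phi_e(0,c)$; the paper's approach, on the other hand, displays the exact algebraic link between $\rmA_1$ and the Neumann defect, which is informative in its own right. A minor remark: the paper also checks and discards the endpoint cases $c=\pm 1$ (treating $\rmA_1,\rmB_1$ there as limits), whereas you correctly observe that for $c\in D_0=(-1,1)$ one has $\sin y_c>0$, so $\rmB_1(c)=0$ already pins down $c=0$.
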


\begin{proof}
If $\rmA_1(c)^2+\rmB_1(c)^2=0$, then $\rmB_1(c)=0.$
As $\rmB_1(c)=\sin^2y_c\rmB(c)=(1-c^2)(-\pi c),$ we have $c=0$ or $\pm1.$
Due to $\sin y_c^2\rmA(c)=0$ at $c=\pm1,$ using $|J_j(c)|=\dfrac{-u'(y_c)(u(j\pi)-c)^2}{\phi_1((1-j)\pi,c)^2\mathcal{F}((1-j)\pi,c)}$ and Lemma \ref{Rmk: fg_periodic},
we find that
\beno
&&J_1(c)\neq 0,\,\,J_0(c)=0 \quad\text{for}\,\,c=-1,\\
&&J_1(c)= 0,\,\,J_0(c)\neq0\quad\text{for}\,\,c=1,
\eeno
here we define a function at $c=\pm1$ as its limit as $c\to\pm1.$
Therefore, $\rmA_1(c)\neq 0$ for $c=\pm1,$ and $c=0.$

Now we may assume that $c=0,\ \rmA_1(c)=0,$ then $y_c=\pi/2$. Let
\beno
{\phi}_e(y,c)={\phi(\pi,c)\phi_1'(\pi,c)}{\widetilde{\varphi}}(y,c)-\phi(y,c),
\eeno
where ${\widetilde{\varphi}}(y,c)$ is given by
\begin{align*}
{\widetilde{\varphi}}(y,c)=
&\frac{\phi_1(y,c)}{u'(y_c)}(u(y)-u(\pi))\\
&+\f{\phi_1(y,c)}{u'(y_c)}(u(y)-c)(u(\pi)-c)\int_{\pi}^y\frac{u'(y_c)-u'(z)}{(u(z)-c)^2}dz\\
&+\phi_1(y,c)(u(y)-c)(u(\pi)-c)\int_{\pi}^y\frac{1}{(u(z)-c)^2}\Big(\frac{1}{\phi_1(z,c)^2}-1\Big)dz.
\end{align*}
Then ${\phi}_e(y,c)$ satisfies the homogeneous Rayleigh equation for $y\in (0,\pi).$ Moreover,
\begin{align*}
\partial_y{{\phi}_e}(0,c)=&\frac{\phi(\pi,c)\phi_1'(\pi,c)\phi_1'(0,c)}{u'(y_c)}(u(0)-u(\pi))\\
&+\frac{\phi(\pi,c)\phi_1'(\pi,c)\phi_1'(0,c)}{u'(y_c)}(u(0)-c)(u(\pi)-c)\int_{\pi}^0\frac{u'(y_c)-u'(z)}{(u(z)-c)^2}dz\\
&+\phi(\pi,c)\phi_1'(\pi,c)\phi_1'(0,c)(u(0)-c)(u(\pi)-c)\int_{\pi}^0\frac{1}{(u(z)-c)^2}\Big(\frac{1}{\phi_1(z,c)^2}-1\Big)dz\\
&-\frac{\phi(\pi,c)\phi_1'(\pi,c)}{\phi_1(0,c)}-\phi'(0,c).
\end{align*}
Using the facts that $u(0)-u(\pi)=-2,\ (u(0)-c)(u(\pi)-c)=-1 $, and
\begin{align*}
&\int_{\pi}^0\frac{u'(y_c)-u'(z)}{(u(z)-c)^2}dz=\int_{\pi}^0\frac{1-\sin z}{(\cos z)^2}dz=\int_{\pi}^0\frac{dz}{1+\sin z}=\tan\Big(\frac{z}{2}-\frac{\pi}{4}\Big)\Big|_{\pi}^0=-2,
\end{align*}
we deduce that
\begin{align*}
\partial_y{{\phi}_e}(0,c)=
&\phi(\pi,c)\phi_1'(\pi,c)\phi_1'(0,c)\mathrm{II}(c)
-\frac{\phi(\pi,c)\phi_1'(\pi,c)}{\phi_1(0,c)}-\phi'(0,c).
\end{align*}
Using the facts that $u(0)-c=-1,\ u(\pi)-c=1,\ u'(0)=u'(\pi)=0,\ u'(y_c)=\sin y_c=1,$ we infer that $\phi_1(\pi,c)=\phi(\pi,c),\ -\phi'(0,c)=\phi_1'(0,c)$
and
\begin{align*}
\partial_y{{\phi}_e}(0,c)
=&\phi(\pi,c)\phi_1'(\pi,c)\phi_1'(0,c)(\mathrm{II}(c)+J_1(c)-J_0(c))\\
=&\phi(\pi,c)\phi_1'(\pi,c)\phi_1'(0,c)\rmA_1(c)=0.
\end{align*}
It is easy to verify that $\partial_y{\phi}_e(\pi,c)=0$.

Now we extend ${\phi}_e $ to be an even function with periodic $2\pi$. Then ${\phi}_e\in H^2(\mathbb{T})$ satisfies \eqref{u1}.
This means that $c=0$ is an embedding eigenvalue of $\mathcal{R}_\al$.
\end{proof}

\begin{lemma}\label{Lem: J_j high}
Let $j=0,1$, and $J_j^k(c)$ be defined by \eqref{eq:defJ}. Then there exists a constant $C$  independent of $\al$, such that for $k=1,2,\ m=0,1,2,$
\begin{align*}
&|\partial_c^m J_{1-j}^k(c)|
\leq C\min\Big\{\f{|(1-j)\pi-y_c|^{2k+1}}{\al^2|\sin y_c|^{2m}\phi_1(j\pi,c)^{k-1}},
\frac{\al^{2m-2}}{|(1-j)\pi-y_c|^{s}}
\Big\},
\end{align*}
where $s=1$ for $k=1,m=2,$ and $s=0$ otherwise.
\end{lemma}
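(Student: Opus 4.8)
The plan is to reduce $J_{1-j}^k$ to an explicit expression in $\phi_1$ and $\cF=\pa_y\phi_1/\phi_1$ evaluated at the endpoint $j\pi$, and then to estimate it by splitting the range of $y_c$ according to whether $y_c$ lies within $O(1/\al)$ of $j\pi$. Using $\phi_1'=\cF\phi_1$, the definition \eqref{eq:defJ} becomes
\beno
J_{1-j}^k(c)=\f{-u'(y_c)\big(u((1-j)\pi)-c\big)^k}{\phi_1(j\pi,c)^{k}\,\cF(j\pi,c)}.
\eeno
Write $d=|j\pi-y_c|$, so that $|(1-j)\pi-y_c|=\pi-d$ and, by Lemma \ref{Rmk:u_1}, $\sin y_c\sim d(\pi-d)$. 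The elementary factors are harmless: $u'(y_c)=\sqrt{1-c^2}=\sin y_c$ gives $|\pa_c^m u'(y_c)|\le C(\sin y_c)^{1-2m}$, while $u((1-j)\pi)-c$ is an affine function of $c$ with $|u((1-j)\pi)-c|\sim(\pi-d)^2$, so its $c$-derivatives are trivial and its second derivative vanishes. Since the Rayleigh equation is invariant under $y\mapsto\pi-y,\ c\mapsto-c$ (which exchanges $j$ and $1-j$), it suffices to treat $j=0$, i.e. $y_*=0$, $d=y_c$.

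I would then observe that it is enough, in each regime, to prove the \emph{smaller} of the two quantities in the $\min$, because dominating $|\pa_c^m J_{1-j}^k|$ by that one automatically gives both. In the near-endpoint regime $\al d=\al y_c\le1$ one has $\pi-d\sim1$ and the second bound $\al^{2m-2}/(\pi-d)^s$ is the minimum; here naive Leibniz differentiation is misleading, since differentiating $u'(y_c)/y_c$ or $\phi_1(0,c)$ directly produces spurious $d^{-1}$ singularities that in fact cancel. I resolve this with the rescaled solution $\widetilde{\phi_1}$ of Proposition \ref{Prop:tphi_1}: since $\phi_1(0,c)=\widetilde{\phi_1}(0,c)$ and $\pa_y\phi_1(0,c)=y_c\,g(c)$ with $g(c)=\pa_y\widetilde{\phi_1}(0,c)/y_c^2$ satisfying $|g|\sim\al^2$ and $|\pa_c^m g|\le C\al^{2+2m}$, the formula reads
\beno
J_1^k(c)=-\,\f{m_1(y_c)\,(1-c)^k\,\widetilde{\phi_1}(0,c)^{\,1-k}}{g(c)},\qquad m_1(y)=\f{\sin y}{y}.
\eeno
The crucial point is that $m_1(y_c)$ is a smooth function of $y_c^2$, hence of $c$, near $c=-1$, with $\al$-independent derivative bounds; $(1-c)^k$ is polynomial; $\widetilde{\phi_1}(0,c)^{1-k}$ contributes $\al^{2m}$ per two derivatives (using $\widetilde{\phi_1}\ge1$); and $1/g$ contributes $\al^{2m-2}$. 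Leibniz then yields $|\pa_c^m J_1^k|\le C\al^{2m-2}$, the desired second bound, and since here the second bound is $\le$ the first, both follow.

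In the remaining regime $\al d\ge1$ the first bound is the minimum, and I prove $|\pa_c^m J_1^k|\le C(\pi-d)^{2k+1}/\big(\al^2\sin^{2m}y_c\,\phi_1(0,c)^{k-1}\big)$ by Leibniz on the product $-u'(y_c)(1-c)^k\phi_1(0,c)^{-k}\cF(0,c)^{-1}$. The inputs are Proposition \ref{prop:phi1} for $\phi_1^{-k}$, namely $|\pa_c(\phi_1^{-k})|\lesssim\al/(\phi_1^k\sin y_c)$ and $|\pa_c^2(\phi_1^{-k})|\lesssim\al^2/(\phi_1^k\sin^2 y_c)$, and Lemma \ref{Rmk: fg_periodic} for $\cF^{-1}$, giving $|\cF^{-1}|\sim\al^{-1}$, $|\pa_c\cF^{-1}|\lesssim(\al^2 d^2\sin y_c)^{-1}$ and $|\pa_c^2\cF^{-1}|\lesssim(\al d^2\sin^2 y_c)^{-1}$. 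Every resulting term carries a factor $\phi_1(0,c)^{-1}$ beyond the $\phi_1^{-(k-1)}$ in the target, and each extra polynomial factor of $\al d$ is absorbed using the exponential lower bound $\phi_1(0,c)\ge C^{-1}e^{C^{-1}\al d}$ from Proposition \ref{prop:phi1} together with $\al d\ge1$ (so $\al d/\phi_1(0,c)\le C$). Finally one checks that, with the stated exponent $s$, the first bound is dominated by the second in this regime; the comparison reduces to $(\pi-d)^{2k+1+s-2m}\le C(\al d)^{2m}e^{(k-1)C^{-1}\al d}$, whose left-hand side stays bounded as $d\to\pi$ only when $2k+1+s-2m\ge0$, forcing $s=1$ precisely at $k=1,m=2$.

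The main obstacle is the near-endpoint regime. There the cancellation of the spurious $d^{-1}$ singularities produced by term-by-term differentiation is exactly what obliges one to work with the rescaled function $\widetilde{\phi_1}$ and with the smoothness of $m_1(y_c)=\sin(y_c)/y_c$ in $c$, rather than with the pointwise bounds of Proposition \ref{prop:phi1} and Lemma \ref{Rmk: fg_periodic}; and the exceptional value $s=1$ for $k=1,m=2$, reflecting the extra $\sin^{-4}y_c$ growth in the first bound against only $(\pi-d)^3$ in its numerator, must be tracked carefully through the second-order comparison above.
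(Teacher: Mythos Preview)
Your proposal is correct and follows essentially the same approach as the paper. Both arguments split according to whether $y_c$ is within $O(1/\al)$ of the endpoint $j\pi$: in the near regime the rescaled function $\widetilde{\phi_1}$ from Proposition \ref{Prop:tphi_1} together with the smoothness of $m_1(y_c)=\sin y_c/y_c$ in $c$ yields the bound $\al^{2m-2}$, while in the far regime Leibniz on the factorization in terms of $\phi_1$ and $\cF$, combined with Lemma \ref{Rmk: fg_periodic} and the exponential lower bound on $\phi_1(j\pi,c)$, gives the first bound. The paper writes the far-regime computation through explicit formulas for $\pa_c^m\big(1/(\phi_1^{k-1}\phi_1')\big)$ in terms of $\cF$ and $\cG$ rather than your separate Leibniz on $\phi_1^{-k}$ and $\cF^{-1}$, and it leaves implicit the verification that the first bound dominates the second when $\al y_c\ge1$, which you spell out; but these are presentational differences only.
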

\begin{proof}
Recall that $\cF(y,c)=\frac{\partial_y\phi_1}{\phi_1}(y,c)$ and $\cG(y,c)=\frac{\partial_c\phi_1}{\phi_1}(y,c)$.
 A direct calculation gives
\begin{align*}
&\f{1}{\phi_1^{k-1}\pa_y\phi_1}(0,c)=\f{1}{\phi_1(0,c)^k\cF(0,c)},\\
&\pa_c\Big(\f{1}{\phi_1^{k-1}\pa_y\phi_1}\Big)(0,c)=-\f{\pa_c\cF}{\phi_1^k\cF^2}(0,c)-\f{k\cG}{\phi_1^k\cF}(0,c),\\
&\pa_c^2\Big(\f{1}{\phi_1^{k-1}\pa_y\phi_1}\Big)(0,c)
=\f{-k\partial_c\cG+k^2\cG^2}{\phi_1^k\cF}(0,c)
+\f{2k\cG\partial_c\cF}{\phi_1^k\cF^2}(0,c)
-\f{\partial_c^2 \cF}{\phi_1^k\cF^2}(0,c)
+\f{2|\partial_c \cF|^2}{\phi_1^k\cF^3}(0,c),
\end{align*}
from which and Lemma \ref{Rmk: fg_periodic}, we infer that
\begin{align*}
&\left|\f{1}{\phi_1(0,c)^{k-1}\phi_1'(0,c)}\right|\sim \f{1+\al y_c}{\al^2\phi_1(0,c)^2 y_c},\\
&\left|\partial_c\Big(\f{1}{\phi_1^{k-1}\phi_1'}\Big)(0,c)\right|
\leq \f{C(1+\al y_c)^2}{\al^2\phi_1(0,c)^ky_c^2\sin y_c},\\
&\left|\partial_c^2\Big(\f{1}{\phi_1^{k-1}\phi_1'}\Big)(0,c)\right|
\leq \f{C(1+\al y_c)^3}{\al^2\phi_1(0,c)^ky_c^3\sin^2 y_c}.
\end{align*}
Recall that
\beno
J_1^k(c)=\f{-u'(y_c)(u(\pi)-c)^k}{\phi_1(0,c)^{k-1}\phi_1'(0,c)}.
\eeno
Thus, we have
\begin{align*}
\left|\frac{J_{1}^k}{(\pi-y_c)^{2k}}\right|
\leq \f{Cu'(y_c)}{\phi_1(0,c)^k|\cF(0,c)|}
\leq \f{C\sin y_c(1+\al y_c)}{\phi_1(0,c)^k\al^2y_c}
\leq \f{C|\pi-y_c|}{\al^2\phi_1(0,c)^{k-1}},
\end{align*}
here we used $\sin y_c\sim y_c(\pi-y_c)$ and $\phi_1(0,c)\ge C^{-1}e^{C^{-1}\al y_c}$. For $m=1,2$, we have
\begin{align*}
\left|\frac{\partial_c^m J_{1}^k}{\big(u(\pi)-c\big)^k}\right|
&\leq \sum_{n=0}^m{C}{u'(y_c)^{2n-2m+1}}\left|\partial_c^n\f{1}{\phi_1^{k-1}\phi_1'(0,c)}\right|\\
&\leq \f{C(1+\al y_c)^{m+1}}{\al^2\phi_1(0,c)^ky_cu'(y_c)^{2m-1}}\leq \f{C(\pi-y_c)}{\al^2(\sin y_c)^{2m}\phi_1(0,c)^{k-1}}.
\end{align*}

To complete the proof, it suffices to improve the estimate for $y_c< \f{1}{\al}$. In this case,
thanks to $y_c\pa_y\phi_1(0,c)=\pa_y\widetilde{\phi_1}(0,c),$ we get
\beno
J_1^k=\f{-u'(y_c)(u(\pi)-c)^2}{\phi_1(0,c)^{k-1}\phi_1'(0,c)}
=\f{-m_1(y_c)(u(\pi)-c)^ky_c^2}{\widetilde{\phi_1}(0,c)^{k-1}\widetilde{\phi_1}'(0,c)},
\eeno
where $m_1(y)=\f{\sin y}{y}$. Since $m_1\in C^{\infty}([-1,1])$ is even, we have $|\pa_c^mm_1(y_c)|\leq C$ for $y_c<1,\ m=0,1,2.$
Thus, for $0<y_c<\f{1}{\al}$, by Proposition \ref{Prop:tphi_1} and \eqref{eq:tphi'/y_c^2_1},
\beno
\left|J_1^k\right|\leq C\al^{-2},\quad
\left|\partial_cJ_1^k\right|\leq C,\quad
\left|\partial_c^2J_1^k\right|\leq C\al^2.
\eeno
This completes the proof of the case of $j=0$. The case of $j=1$ can be proved similarly.
\end{proof}

Now we are in a position to prove Proposition \ref{Prop: A_1^2+B_1^2}.

\begin{proof}
By Lemma \ref{Lem: J_j high} and Lemma \ref{Lem: II}, we get
\begin{align*}
|\rmA_1(c)|+|\rmB_1(c)|
&\leq |J_1|+|J_0|+C\al \sin^3y_c+\pi|\sin^2y_c\cos y_c|\\
&\leq C(\f{1}{\al}+\al \sin^3y_c+\sin^2y_c)
\leq C\frac{(1+\al \sin y_c)^3}{\al^2}.
\end{align*}

Due to $\phi_1'(y,c)\leq 0$ for $y\leq y_c$, we get $J_1(c)\geq 0$. Similarly, we have $J_0(c)\leq 0$.
Proposition \ref{prop:spectral} ensures that $|\rmA_1(c)|+|\rmB_1(c)|>C_{\al}>0$. Thus, we only need to consider the case of the large $\al$.
Let $M$ be a large number determined later.
Then for $y_c\leq \f{1}{M\al}$, we choose $M$ large enough so that $1\leq \phi_1(0,c)\leq 2$. Then
\beno
|J_1(c)|\geq \f{u'(y_c)}{C|\mathcal{F}(0,c)|}\geq \f{y_c}{C\al\min\{\al y_c,1\}}\geq\f{1}{C\al^2},
\eeno
here $C$ is a constant independent of $M$ and $\al$. Thus, for $y_c\leq \f{1}{M\al}$,
\beno
|\rmA_1(c)|\geq |J_1(c)|+|J_0(c)|-|\sin^2y_c\rmA(c)|\geq \f{1}{C\al^2}-C\al \sin^3y_c\geq \f{(1+\al\sin y_c)^3}{C\al^2}.
\eeno
For $\f{1}{M\al}\leq y_c\leq \f{M}{\al}<1$, we have $|\rmB(c)|\geq |\rmB(0)/2|\geq 1,$ then
$$
|\rmB_1(c)|=|\rho(c)\rmB(c)|
\geq {\rho(c)}\geq \frac{(1+\al \sin y_c)^3}{C\al^2}.
$$
This shows that for $y_c\leq \f{M}{\al}<1$, we have
\beno
|\rmA_1(c)|+|\rmB_1(c)|\geq \dfrac{(1+\al \sin y_c)^3}{C\al^2}.
\eeno
Similarly, for $\pi-y_c\leq \f{M}{\al}<1$, we have
\beno
|\rmA_1(c)|+|\rmB_1(c)|\geq \dfrac{(1+\al \sin y_c)^3}{C\al^2}.
\eeno
While, for $y_c\in [\f{M}{\al},\pi-\f{M}{\al}],\ \al>M,$ and $M$ large enough, we have
\begin{align*}
|\rmA_1(c)|\geq |\sin^2y_cA(c)|-|J_1(c)|-|J_0(c)|\geq\f{|\al\sin^3y_c|}{C}-\f{C}{\al^2}\geq \f{1+\al^3 \sin^3y_c}{C\al^2}.
\end{align*}
Therefore, we obtain
\beno
\f{(1+\al\sin  y_c)^6}{C\al^4}\leq \rmA_1(c)^2+\rmB_1(c)^2\leq \f{C(1+\al\sin  y_c)^6}{\al^4}.
\eeno

By Lemma \ref{eq:defJ} and Lemma \ref{Lem: II}, we have
\begin{align*}
&|\mathrm{A_1}(c)|\leq C(1+\al \sin y_c)^3/\al^2,\quad
|\pa_{c}\mathrm{A}_1(c)|\leq C(1+\al \sin y_c),\\
&|\pa_{c}^2\mathrm{A}_1(c)|\leq C\min\{\al^2,\f{\al}{\sin y_c}\},\\
&|\rmB_1(c)|\leq C\sin^2 y_c,\ |\pa_c\rmB_1(c)|\leq C,\ |\pa_c^2\rmB_1(c)|\leq C.
\end{align*}
Then we can deduce that
\begin{align*}
&\left|\partial_c\Big(\f{1}{\mathrm{A}_1(c)^2+\mathrm{B}_1(c)^2}\Big)\right|
\leq \f{C\al^6}{(1+\al\sin y_c)^8},\\
&\left|\partial_c^2\Big(\f{1}{\mathrm{A}_1(c)^2+\mathrm{B}_1(c)^2}\Big)\right|\\
&\leq \f{C}{(\mathrm{A}_1(c)^2+\mathrm{B}_1(c)^2)^2}
+\f{C|\partial_c\mathrm{A}_1(c)|^2}{(\mathrm{A}_1(c)^2+\mathrm{B}_1(c)^2)^2}
+\f{C|\mathrm{A}_1(c)\partial_c^2\mathrm{A}_1(c)|}{(\mathrm{A}_1(c)^2+\mathrm{B}_1(c)^2)^2}\\
&\leq \f{C\al^8}{(1+\al\sin y_c)^{12}}+\f{C\al^8}{(1+\al\sin y_c)^{10}}+\f{C\al^8}{(1+\al\sin y_c)^{10}}\leq \f{C\al^8}{(1+\al\sin y_c)^{10}}.
\end{align*}
\end{proof}
\begin{remark}
Since the functions $\mathrm{A},\mathrm{B},\mathrm{A}_1,\mathrm{B}_1 $ are continuous with respect to $\al,$ Proposition \ref{Prop: A^2+B^2} and \ref{Prop: A_1^2+B_1^2} are true for every $\al\geq c_0>1$ with the constant $C$ depending only on $c_0.$
\end{remark}

\subsection{Solution formula of the inhomogeneous Rayleigh equation}
The fact that  the Kolmogorov flow is stable for the case of $\al>1$,  implies that $W_o(c)\neq 0$ and $W_e(c)\neq 0$ for all $c$ with $\text{Im}\, c\neq 0$.
\smallskip

Now we  define for $y\in [0,\pi]$ and $c\notin [-1,1]$,
\begin{align}\label{eq: Phi_o}
\Phi_{o}(y,c)=&\phi(y,c)\int_{0}^y\frac{1}{\phi(y',c)^2}
\int_{y_c}^{y'}f_o\phi(y'',c)dy''dy'
+\mu^{o}(c)\phi(y,c)\int_{0}^y\frac{1}{\phi(y',c)^2}dy'\nonumber\\
=&\phi(y,c)\int_{\pi}^y\frac{1}{\phi(y',c)^2}
\int_{y_c}^{y'}f_o\phi(y'',c)dy''dy'
+\mu^{o}(c)\phi(y,c)\int_{\pi}^y\frac{1}{\phi(y',c)^2}dy',
\end{align}
where
\ben\label{eq:def mu^o}
\mu^o(c)=\frac{-\phi(0,c)\phi(\pi,c)\int_{0}^{\pi}\frac{1}{\phi(y',c)^2}
\int_{y_c}^{y'}f_o\phi(y'',c)dy''dy'}{W_o(c)}.
\een
For $y\in [-\pi,0]$, let $\Phi_o(y,c)=-\Phi_o(-y,c)$.\smallskip

We define for $y\in [0,\pi]$ and $c\notin [-1,1]$,
\begin{align}\label{eq: Phi_e1}
\Phi_{e}(y,c)
&=\phi(y,c)\int_{0}^y\frac{1}{\phi(y',c)^2}
\int_{y_c}^{y'}f_e\phi(y'',c)dy''dy'\nonumber\\
&\quad+\mu_*^e(c)\phi(y,c)\int_{0}^y\frac{1}{\phi(y',c)^2}dy'
+\nu_0^e(c)\phi(y,c)\nonumber\\
&=\phi(y,c)\int_{\pi}^y\frac{1}{\phi(y',c)^2}
\int_{y_c}^{y'}f_e\phi(y'',c)dy''dy'\nonumber\\
&\quad+\mu_*^e(c)\phi(y,c)\int_{\pi}^y\frac{1}{\phi(y',c)^2}dy'+\nu_1^e(c)\phi(y,c),
\end{align}
where $\mu_{*}^e$ and $\nu_0^e,\ \nu_1^e$ are determined by
\begin{align*}
&\int_{0}^{\pi}\frac{1}{\phi(y',c)^2}
\int_{y_c}^{y'}f_e\phi(y'',c)dy''dy'
+\mu_*^e(c)\int_{0}^{\pi}\frac{1}{\phi(y',c)^2}dy'
+\nu_0^e(c)-\nu_1^e(c)=0,\\
&\nu_j^e(c)\phi(j\pi,c)\phi'(j\pi,c)+\int_{y_c}^{j\pi}f_e\phi(y'',c)dy''
+\mu_*^e(c)=0,\ \ j=0,1.
\end{align*}
For $y\in [-\pi,0]$, let $\Phi_{e}(y,c)=\Phi_{e}(-y,c)$. We have
\begin{align}
&\nu_{1}^e(c)
=\f{\int_{0}^{\pi}f_e\phi(y',c)dy'+(\phi\phi')\big(0,c\big)\bigg(\int_0^{\pi}\f{\int_{y_c}^{y'}f_e\phi(y'',c)\,dy''}{\phi(y',c)^2}dy'
-\int_0^{\pi}\f{\int_{y_c}^{\pi}f_e\phi(y'',c)dy''}{\phi(y',c)^2}dy'\bigg)}{-\phi(0,c)\phi(\pi,c)W_e(c)},\label{eq:defnu_1^e}\\
&\nu_{0}^e(c)
=\f{\int_{0}^{\pi}f_e\phi(y',c)dy'+(\phi\phi')\big(\pi,c\big)\bigg(\int_0^{\pi}\f{\int_{y_c}^{y'}f_e\phi(y'',c)\,dy''}{\phi(y',c)^2}dy'-\int_0^{\pi}\f{\int_{y_c}^{0}f_e\phi(y'',c)dy''}{\phi(y',c)^2}dy'\bigg)}{-\phi(0,c)\phi(\pi,c)W_e(c)},\label{eq:defnu_0^e}\\
&\mu_*^e(c)
=\f{(\phi\phi')(\pi,c)(\phi\phi')(0,c)\int_{0}^{\pi}\frac{\int_{y_c}^{y'}f_e\phi(y'',c)dy'}{\phi(y',c)^2}dy'-(\phi\phi')(j\pi,c)\int_{y_c}^{(1-j)\pi}f_e\phi(y'',c)dy''\big|_{j=0}^1}{\phi(0,c)\phi(\pi,c)W_e(c)}.\label{eq:defmu_0^e}
\end{align}
\begin{proposition}\label{prop:Rayleigh-IHperiodic}
Let $c\in D_{\epsilon_0}$. Then the solution of \eqref{eq:Rayleigh-Ihom-periodic} takes as follows
\begin{align*}
\Phi(y,c)=\Phi_o(y,c)+\Phi_e(y,c),
\end{align*}
where $\Phi_o$ and $\Phi_e$ are defined by \eqref{eq: Phi_o} and \eqref{eq: Phi_e1} respectively.
\end{proposition}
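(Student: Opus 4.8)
The plan is to treat the odd problem \eqref{eq:Rayleigh-Ihom-odd} and the even problem \eqref{eq:Rayleigh-Ihom-even-half1} separately by variation of parameters built on the homogeneous solution $\phi(y,c)$ from Proposition \ref{prop:Rayleigh-Hom}, and then to verify that $\Phi=\Phi_o+\Phi_e$, extended oddly and evenly respectively, satisfies the periodic conditions in \eqref{eq:Rayleigh-Ihom-periodic}. Since $c\in D_{\epsilon_0}$ has $\mathrm{Im}\,c\neq 0$, we have $u(y)-c\neq 0$ and hence $\phi=(u-c)\phi_1\neq 0$ on $[0,\pi]$, so all the integrals $\int\phi^{-2}dy'$ are well defined (as already noted in the proof of Lemma \ref{Rmk:Wronskian}).

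First I would record the building blocks. Writing $L\Phi=\Phi''-\al^2\Phi-\frac{u''}{u-c}\Phi$, the function $\phi$ solves $L\phi=0$, and reduction of order produces the second independent solution $\phi(y,c)\int_0^y\phi(y',c)^{-2}dy'$, the pair having Wronskian identically $1$. Setting $\Psi(y')=\int_{y_c}^{y'}f\phi\,dy''$, a direct computation --- differentiate $\phi(y,c)\int_0^y\phi^{-2}\Psi\,dy'$ twice, then use $\Psi'=f\phi$ together with $L\phi=0$ --- shows that $\phi(y,c)\int_0^y\phi(y',c)^{-2}\Psi(y')\,dy'$ solves $L\Phi=f$. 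Thus the leading terms of \eqref{eq: Phi_o} and \eqref{eq: Phi_e1} are particular solutions, and the remaining terms, being multiples of the two homogeneous solutions, preserve the inhomogeneous equation.

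It remains to fix the free constants from the boundary data. For the odd problem, $\Phi_o(0,c)=0$ is automatic because every $\int_0^y$ vanishes at $y=0$; the constant $\mu^o(c)$ is then forced by $\Phi_o(\pi,c)=0$, which after dividing by $\phi(\pi,c)$ and invoking $W_o(c)=\phi(0,c)\phi(\pi,c)\int_0^\pi\phi^{-2}dy'$ from Lemma \ref{Rmk:Wronskian} gives precisely \eqref{eq:def mu^o}; the equality of the two representations in \eqref{eq: Phi_o} is merely a rephrasing of this vanishing. For the even problem I would differentiate \eqref{eq: Phi_e1} and evaluate at the endpoints: the conditions $\Phi_e'(0,c)=\Phi_e'(\pi,c)=0$ collapse to $\nu_j^e\,\phi(j\pi,c)\phi'(j\pi,c)+\int_{y_c}^{j\pi}f_e\phi\,dy''+\mu_*^e=0$ for $j=0,1$, while matching the two representations supplies the third relation among $\mu_*^e,\nu_0^e,\nu_1^e$. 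Solving this $3\times 3$ linear system, which is invertible because $W_e(c)\neq 0$, yields the explicit formulas \eqref{eq:defnu_1^e}--\eqref{eq:defmu_0^e}.

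Finally I would check the periodic matching of $\Phi=\Phi_o+\Phi_e$. With $\Phi_o$ odd and $\Phi_e$ even, the values $\Phi_o(\pm\pi,c)=0$ give $\Phi(-\pi,c)=\Phi_e(\pi,c)=\Phi(\pi,c)$, and the oddness of $\Phi_e'$ with $\Phi_e'(\pi,c)=0$, together with the evenness of $\Phi_o'$, give $\Phi'(-\pi,c)=\Phi_o'(\pi,c)=\Phi'(\pi,c)$; hence $\Phi$ solves \eqref{eq:Rayleigh-Ihom-periodic}. The one genuinely delicate input is the non-vanishing of the Wronskians $W_o(c),W_e(c)$ for $\mathrm{Im}\,c\neq 0$, which guarantees that $\mu^o,\mu_*^e,\nu_0^e,\nu_1^e$ are well defined; this is exactly the stability of the Kolmogorov flow for $|\al|>1$ (cf. Proposition \ref{prop:spectral}), and once it is in hand the passage from the linear system to \eqref{eq:defnu_1^e}--\eqref{eq:defmu_0^e} is routine algebra.
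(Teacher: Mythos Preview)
Your proposal is correct and follows essentially the same route as the paper: the paper writes the equation in the self-adjoint form $\big(\phi^2(\Phi/\phi)'\big)'=f\phi$ and integrates twice, which is exactly your reduction-of-order/variation-of-parameters construction with the pair $\phi$ and $\phi\int\phi^{-2}$. One small remark: the non-vanishing of $W_o(c),W_e(c)$ for $\mathrm{Im}\,c\neq 0$ is the classical spectral stability of the Kolmogorov flow (no eigenvalues off the real axis), which the paper simply asserts just before the solution formulas; Proposition~\ref{prop:spectral} instead concerns \emph{embedding} eigenvalues on the real axis, so your ``cf.'' is apt but not a direct citation.
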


\begin{proof}
It is easy to check that the inhomogeneous Rayleigh equations \eqref{eq:Rayleigh-Ihom-odd} and \eqref{eq:Rayleigh-Ihom-even-half1} are equivalent to
\ben\label{eq:Ray-ihom-o}
\left\{
\begin{aligned}
&\Big(\phi^2\big(\frac{\Phi_o}{\phi}\big)'\Big)'=f_o\phi,\\
&\Phi_o(0)=\Phi_o(\pi)=0,
\end{aligned}
\right.
\een
and
\ben\label{eq:Ray-ihom-e}
\left\{
\begin{aligned}
&\Big(\phi^2\big(\frac{\Phi_e}{\phi}\big)'\Big)'=f_e\phi,\\
&\Phi_e'(0)=\Phi_e'(\pi)=0.
\end{aligned}
\right.
\een
As $\phi(y,c)\neq 0$ for $c\in D_{\epsilon_0}$, we conclude the proposition by integration twice, and matching the boundary conditions
by using the fact that $\phi\int^y\f 1 {\phi^2}dy'$ and $\phi$ are two independent solutions of the homogeneous Rayleigh equation.
\end{proof}

For $c\in B_{\ep_0}^l\cup B_{\ep_0}^r$, Howard's semi-circle theorem tells us that $c\notin \s(\mathcal{R}_{\al})$. Thus, there exists a unique solution of \eqref{eq:Rayleigh-Ihom-periodic}.

\section{The linearized Euler equations }\label{formulation}

\subsection{The limiting absorption principle}
In this subsection, we establish the limiting absorption principle for the inhomogeneous Rayleigh equation
\beq\label{u3}
(u-c)(\Phi''-\al^2\Phi)-u''\Phi=\om,
\eeq
when $c\in \big\{z\in \mathbb{C}:~0<\inf\limits_{y\in \mathbb{T}}|u(y)-z|<\ep_0\big\}$ for $\ep_0>0$ small enough.

In \cite{WZZ2}, we established the limiting absorption principle
for a class of shear flows in $\mathcal{K}$ by using blow-up analysis and compactness argument.
This result can be easily extended to the following  periodic shear flows $(u(y),0)$:
 \begin{itemize}
\item[(a)] Periodic: $u(y+2\pi)=u(y)$;

\item[(b)] Regularity: $u\in H^3(\mathbb{T})$, where $\mathbb{T}=\mathbb{T}_{2\pi}$;

\item[(c)] Spectrum: $\mathcal{R}_\al$ has no embedding eigenvalues for $\al\neq 0$;

\item[(d)] Curvature: $u''(y)\neq 0$ at critical points(i.e., $u'(y)=0$).
\end{itemize}
We denote by $\mathcal{P}$ the set of periodic flows satisfying (a)-(d). When $\al>1$,  $u(y)=-\cos y\in \mathcal{P}$.

\begin{proposition}\label{prop:ub}
Assume that $u\in \mathcal{P}$. There exists $\epsilon_0$ such that for $c\in \big\{z\in \mathbb{C}:~0<\inf\limits_{y\in\mathbb{T}}|u(y)-z|<\ep_0\big\}$, the solution to \eqref{u3} has the following uniform bound
\beno
\|\Phi\|_{H^1(\mathbb{T})}\leq C\|\omega\|_{H^1(\mathbb{T})}.
\eeno
Here $C$ is a constant independent of $\epsilon_0$.  Moreover,
there exists $\Phi_{\pm}(\al,y,c)\in H_0^1(\mathbb{T}) $ for $c\in \mathrm{Ran}\, u,$ such that $\Phi(\al,\cdot,c\pm i\epsilon)\to \Phi_{\pm}(\al,\cdot,c) $ in $C(\mathbb{T})$ as $\epsilon\to0+ $ and
\beno
\|\Phi_{\pm}(\al,\cdot,c)\|_{H^1(\mathbb{T})}\leq C\|\om\|_{H^1(\mathbb{T})}.
\eeno
\end{proposition}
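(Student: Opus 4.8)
The plan is to prove the uniform $H^1$ bound by a compactness (blow-up) argument, exactly as for the class $\mathcal{K}$ in \cite{WZZ2}, and then to produce the boundary limits $\Phi_\pm$ by extracting convergent subsequences and pinning down the limit with the explicit solution formula of Section \ref{InhoRayequ}. Since $\al>1$ forces $u(y)=-\cos y\in\mathcal{P}$, conditions (a)--(d) are available, and the spectral hypothesis (c) will enter only at the single point where it is needed, namely to kill the weak limit of the blow-up sequence.

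For the uniform bound I would argue by contradiction. Suppose it fails; then there exist $\al>1$, a sequence $c_n$ with $0<\inf_y|u(y)-c_n|\to0$, and solutions $\Phi_n$ of \eqref{u3} with data $\om_n$ satisfying $\|\Phi_n\|_{H^1(\mathbb{T})}=1$ and $\|\om_n\|_{H^1(\mathbb{T})}\to0$. Passing to a subsequence, $c_n\to c_*\in[-1,1]$ with $\mathrm{Im}\,c_n\to0$. The compact embedding $H^1(\mathbb{T})\hookrightarrow C(\mathbb{T})$ lets me assume $\Phi_n\rightharpoonup\Phi_*$ weakly in $H^1$ and uniformly on $\mathbb{T}$. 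Testing $\Phi_n''-\al^2\Phi_n-\f{u''}{u-c_n}\Phi_n=\f{\om_n}{u-c_n}$ against any $\varphi\in H^1(\mathbb{T})$ and using the Sokhotski--Plemelj formula for $\f{1}{u-c_n}$ as $\mathrm{Im}\,c_n\to0$, the limit $\Phi_*$ satisfies the weak identity \eqref{u1} with $c=c_*$; that is, $\Phi_*$ would be an embedding eigenfunction of $\mathcal{R}_\al$. By hypothesis (c) there are no such eigenvalues for $\al>1$ (this is precisely what Proposition \ref{prop:spectral}, via Propositions \ref{Prop: A^2+B^2} and \ref{Prop: A_1^2+B_1^2}, guarantees, including at the degenerate values $c_*=\pm1$), so $\Phi_*=0$.

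The crux is to upgrade this to $\|\Phi_n\|_{H^1}\to0$. Multiplying \eqref{u3} by $\overline{\Phi_n}/(u-c_n)$ and integrating over $\mathbb{T}$ gives the energy identity
$\int_{\mathbb{T}}\!\big(|\Phi_n'|^2+\al^2|\Phi_n|^2\big)\,dy=-\mathrm{Re}\!\int_{\mathbb{T}}\f{u''|\Phi_n|^2}{u-c_n}\,dy-\mathrm{Re}\!\int_{\mathbb{T}}\f{\om_n\overline{\Phi_n}}{u-c_n}\,dy$,
and I must show the right-hand side tends to zero. The $\om_n$-term vanishes since $\om_n\to0$ and $\Phi_n$ is bounded; the real part of the first term converges to $\mathrm{p.v.}\!\int u''|\Phi_n|^2/(u-c_*)$, while its imaginary part is $\mp\int u''|\Phi_n|^2\,\f{|\mathrm{Im}\,c_n|}{(u-\mathrm{Re}\,c_n)^2+(\mathrm{Im}\,c_n)^2}\,dy$, which concentrates at the critical layer $y_{c_n}$ determined by $u(y_{c_n})=\mathrm{Re}\,c_n$. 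At a \emph{regular} critical layer ($u'(y_{c_n})\neq0$) this mass is $O(\|\Phi_n\|_{C}^2/|u'(y_{c_n})|)\to0$, so the soft argument of \cite{WZZ2} applies verbatim. \textbf{The genuinely new difficulty}, absent in the monotone or single-critical-point settings, is the \emph{degenerate endpoints} $c_*=\pm1$, where the critical layer merges with a critical point $u'=0$ and $\f{u''}{u-c}$ is maximally singular: the concentration kernel is no longer uniformly integrable. Here I would perform a blow-up rescaling $y=y_{c_n}+\delta_n z$ near $y=0$ or $\pi$ and invoke the sharp weighted estimates of the homogeneous solution from Propositions \ref{prop:phi1} and \ref{Prop:tphi_1} (which control $\phi=(u-c)\phi_1$ and its derivatives near the degenerate points) to show that $\Phi_n$, built from $\phi$, vanishes at the critical layer at the rate needed to make the concentrated mass disappear. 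This yields the contradiction and the bound with $C$ independent of $\ep_0$.

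Finally, for the boundary limits I would fix $c=a\in\mathrm{Ran}\,u$ and any sequence $\ep\to0+$. By the uniform bound the family $\{\Phi(\al,\cdot,a+i\ep)\}$ is bounded in $H^1(\mathbb{T})$, hence equicontinuous, so Arzel\`a--Ascoli yields a subsequence converging in $C(\mathbb{T})$ to some $\Phi_+\in H_0^1(\mathbb{T})$ inheriting $\|\Phi_+\|_{H^1}\le C\|\om\|_{H^1}$. To promote this to convergence of the full family, I would use the explicit representation of Proposition \ref{prop:Rayleigh-IHperiodic} together with the boundary values of the Wronskians in Lemma \ref{Lem:W(c)converge}, whose denominators $\rmA^2+\rmB^2$ and $\rmA_1^2+\rmB_1^2$ stay bounded below by Propositions \ref{Prop: A^2+B^2} and \ref{Prop: A_1^2+B_1^2}; this determines $\Phi_+$ uniquely as the solution selected by the $+i\pi$ Plemelj convention, so every subsequence has the same limit. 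The case $\ep\to0-$ is identical and produces $\Phi_-$. The main obstacle throughout remains the no-concentration estimate at the degenerate layers $c=\pm1$, which is where the refined homogeneous-solution analysis of Section \ref{Rayleigh equations} is indispensable.
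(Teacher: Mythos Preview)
Your approach is correct and is precisely what the paper does: it does not give a self-contained proof of this proposition but simply refers to Section 6 of \cite{WZZ2}, where the blow-up/compactness argument you outline is carried out for the class $\mathcal{K}$, and remarks that the extension to periodic flows in $\mathcal{P}$ is routine. A couple of small points: the absence of embedding eigenvalues is not deduced from Proposition~\ref{prop:spectral} (that proposition goes in the opposite direction, showing that a vanishing denominator would force an embedding eigenvalue); it is hypothesis~(c) itself, verified for $u=-\cos y$ directly after Definition~\ref{embed}. Also, the degenerate endpoints $c_*=\pm1$ are not a new obstacle specific to this paper---critical points with $u''\neq0$ were already treated in \cite{WZZ2}, and the abstract compactness argument there does not require the explicit homogeneous-solution estimates of Propositions~\ref{prop:phi1} and~\ref{Prop:tphi_1} that you invoke; those are used elsewhere in the paper for the quantitative decay rates, not for the qualitative limiting absorption principle.
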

One can refer to section 6 in \cite{WZZ2} for more details.

\subsection{Explicit formulas of the limits}
In this subsection, we give the explicit formulas of the limits $\Phi_\pm$,
which are important to obtain the explicit decay estimates of the velocity.
In what follows, we denote by $\phi(y,c)$ a solution of \eqref{eq:Rayleigh-H}
obtained by Proposition \ref{prop:Rayleigh-Hom} and let $\phi_1(y,c)=\f{\phi(y,c)}{u(y)-c}$ for $c\in \Om_{\ep}$.

We will use the notations $\rmA, \rmB, \rmA_1,\rmB_1$ and $\rm{II}$ defined in section 4.1.
Let us also introduce some new notations. We define $\Int(\varphi) $ to be a $2\pi$ periodic function so that
\begin{align}
\Int(\varphi)(y)=\int_{0}^y\varphi(y')\,dy' \quad\textrm{for}\, y\in [0,\pi],\quad \Int(\varphi)(y)=\Int(\varphi)(-y)\quad\textrm{for}\, y\in [-\pi,0].\label{def:int}
\end{align}
We introduce
\begin{align}
&\textrm{II}_{1,1}(\varphi)(y_c)
=p.v.\int_0^{\pi}\f{\Int(\varphi)(y)-\Int(\varphi)(y_c)}{(u(y)-u(y_c))^2}\,dy,\\
\label{eq:defII_1}
&\mathrm{II}_1(\varphi)(y_c)=p.v.\int_{0}^{\pi}\f{\int_{y_c}^{y}\varphi(y')\phi_1(y',c)dy'}{\phi(y,c)^2}dy,\\
&\rmE_j(\varphi)(y_c)=\int_{y_c}^{j\pi}\varphi(y)\phi_1(y,c)dy\quad \textrm{for}\ j=0,1,\label{eq:defE_j},
\end{align}
and for $k=0,1,2,$
\beq\label{eq:defL_k}
\mathcal{L}_k(\varphi)(y_c)=\int_0^{\pi}\int_{y_c}^{z}{\varphi}(y)\left(\frac{\partial_z+\partial_y}{u'(y_c)}+\partial_c\right)^k
\left(\f{1}{(u(z)-c)^2}\left(\f{\phi_1(y,c)\,}{\phi_1(z,c)^2}-1\right)\right)\,dydz.
\eeq
It is easy to see that
\beno
\mathrm{II}_{1}(\varphi)=\mathrm{II}_{1,1}(\varphi)+\mathcal{L}_{0}
(\varphi).
\eeno

\no{\bf Note.}\, Since the map from $c\in [-1,1]$ to $y_c\in [0,\pi]$ is one-to-one, we do not make a difference between $c$ and $y_c$ in some places.

\begin{remark}\label{rem:Pi-11}
We have the following formulation:
\begin{align}
\mathrm{II}_{1,1}(\varphi)(y_c)
&=-\partial_c\Big(\frac{H(\Int(\varphi))(y_c)}{2\sin y_c}\Big),\label{eq:Pi11-new}
\end{align}
where $H$ is the Hilbert transform on $\mathbb{T}$ defined by
\beno
H(\varphi)(y_c)=p.v.\int_{-\pi}^{\pi}\cot\f{y_c-y}{2}\varphi(y)dy.
\eeno

Indeed, we have
\begin{align*}
\mathrm{II}_{1,1}(\varphi)
&=p.v.\int_0^{\pi}\f{\Int(\varphi)(y)-\Int(\varphi)(y_c)}{(u(y)-u(y_c))^2}\,dy\nonumber\\
&=\partial_c\Big(\int_0^{\pi}\frac{\Int(\varphi)(y)-\Int(\varphi)(y_c)}{u(y)-c}\,dy\Big)
+\frac{\varphi(y_c)}{u'(y_c)}p.v.\int_0^{\pi}\frac{1}{u(y)-c}\, dy\nonumber\\
&=\partial_c\Big(p.v.\int_0^{\pi}\frac{\Int(\varphi)(y)}{u(y)-c}dy\Big)
-\Int(\varphi)(y_c)\partial_c\Big(p.v.\int_0^{\pi}\frac{dy}{u(y)-c}\Big).
\end{align*}
For an even function $f$, we have
\begin{align*}
p.v.\int_0^{\pi}\frac{f(y)}{u(y)-{c}}\,dy
&=\f{1}{2}p.v.\int_{-\pi}^{\pi}\frac{f(y)}{u(y)-{c}}\,dy\\
&=\f{1}{2\sin y_c}p.v.\int_{-\pi}^{\pi}\frac{(\sin y_c+\sin y)f(y)}{\cos y_c-\cos y}\,dy
=-\f{(Hf)(y_c)}{2\sin y_c},
\end{align*}
which along with the facts that $H1=0,\ p.v.\int_0^{\pi}\frac{dy}{u(y)-c}=0$, shows \eqref{eq:Pi11-new}.
\end{remark}

We denote
\beno
&&\widehat{\om}_o(y)=\f{\widehat{\om}_{0}(\al,y)-\widehat{\om}_{0}(\al, -y)}{2},\quad \widehat{\om}_{e}(y)=\f{\widehat{\om}_{0}(\al, y)+\widehat{\om}_{0}(\al, -y)}{2},\\
&&\ f_o(y,c)=\f{\widehat{\om}_{o}(\al,y)}{i\al (u(y)-c)},\quad  f_e(y,c)=\f{\widehat{\om}_{e}(\al,y)}{i\al (u(y)-c)}.
\eeno
For $c\in (-1,1)$, we define
\begin{align*}
\Phi_\pm^{o}(y,c)
\eqdef\left\{
\begin{aligned}
&\phi(y,c)\int_0^y\frac{1}{\phi(z,c)^2}\int_{y_c}^z\phi f_o(y',c)dy'dz\\
&\quad+\mu_{\pm}^o(\widehat{\om}_{o})(c)\phi(y,c)\int_0^y\frac{1}{\phi(y',c)^2}dy'\quad 0\leq y< y_c,\\
&\phi(y,c)\int_{\pi}^y\frac{1}{\phi(z,c)^2}\int_{y_c}^z\phi f_o(y',c)dy'dz\\
&\quad+\mu_{\pm}^o(\widehat{\om}_{o})(c)\phi(y,c)\int_{\pi}^y\frac{1}{\phi(y',c)^2}dy'\quad y_c< y\leq \pi,
\end{aligned}
\right.
\end{align*}
where
\begin{align}
\mu_{+}^o(\widehat{\om}_{o})(c)&=\frac{1}{\al}\frac{i\sin^3y_c\mathrm{II}_1(\widehat{\om}_{o})(y_c)
-\sin y_c\widehat{\om}_{o}(\al,y_c)\pi}
{-i\pi\cos y_c+\sin^3 y_c\mathrm{II}(c)}
=\f{1}{\al}\f{-\rmC_o(\widehat{\om}_{o})(c)+i\rmD_{o}(\widehat{\om}_{o})(c)}{\rmA(c)-i\rmB(c)},
\label{eq:defmu_+^o}\\
\mu_{-}^o(\widehat{\om}_{o})(c)&=\frac{1}{\al}\frac{i\sin^3y_c\mathrm{II}_1(\widehat{\om}_{o})(y_c)
+\sin y_c\widehat{\om}_{o}(\al,y_c)\pi}
{i\pi\cos y_c+\sin^3 y_c\mathrm{II}(c)}
=\f{1}{\al}\f{\rmC_o(\widehat{\om}_{o})(c)+i\rmD_{o}(\widehat{\om}_{o})(c)}{\rmA(c)+i\rmB(c)},
\label{eq:defmu_-^o}
\end{align}
with
\begin{align}
\rmC_o(\varphi)(c)=\pi\sin y_c\varphi(y_c),\ \ \ \rmD_{o}(\varphi)(c)
&=u'(y_c)\rho(c)\mathrm{II}_1(\varphi)(y_c).\label{eq:defD}
\end{align}
For $y\in [-\pi,0]$, let  $\Phi_{\pm}^{o}(y,c)=-\Phi_{\pm}^{o}(-y,c)$.\smallskip

For $c\in (-1,1)$, we define
\beno
\Phi_{\pm}^{e}(y,c)
\eqdef
\left\{
\begin{aligned}
&\phi(y,c)\int_0^y\f{\int_{y_c}^{y'}\phi f_e(y'',c)dy''}{\phi(y',c)^2}dy'\\
&\quad+\mu_{*\pm}^e(\widehat{\om}_{e})(c)\phi(y,c)\int_0^y\f{1}{\phi(y',c)^2}dy'
+\nu_{0\pm}^e(\widehat{\om}_{e})(c)\phi(y,c),
\ 0\leq y< y_c,\\
&\phi(y,c)\int_{\pi}^y\f{\int_{y_c}^{y'}\phi f_e(y'',c)dy''}{\phi(y',c)^2}dy'\\
&\quad+\mu_{*\pm}^e(\widehat{\om}_{e})(c)\phi(y,c)\int_{\pi}^y\f{1}{\phi(y',c)^2}dy'+\nu_{1\pm}^e(\widehat{\om}_{e})(c)\phi(y,c), \  y_c< y\leq \pi,
\end{aligned}
\right.
\eeno
where
\begin{align}
\label{eq:defmu_*+^e}
\mu_{*+}^e(\varphi)(c)&=\f{1}{\al}\f{I(c)\big(i\rmD_{o}(\varphi)(c)-\rmC_{o}(\varphi)(c)\big)-i(\phi\phi')(j\pi,c)\rmE_{1-j}(\varphi)(c)\big|_{j=0}^1 }{I(c)(\rmA_1(c)-i\rmB_1(c))/\rho(c)},\\
\mu_{*-}^e(\varphi)(c)&=\f{1}{\al}\f{I(c)\big(i\rmD_{o}(\varphi)(c)+\rmC_{o}(\varphi)(c)\big)-i(\phi\phi')(j\pi,c)\rmE_{1-j}(\varphi)(c)\big|_{j=0}^1}
{I(c)(\rmA_1(c)+i\rmB_1(c))/\rho(c)},\\
\nu_{k+}^e(\varphi)(c)&=\f{1}{\al}\f{i\rmE(\varphi)+(\phi\phi')\big((1-k)\pi,c\big)\left(\rmD_{+}(\varphi)-i\rmE_k(\varphi)(\rmA-i\rmB)\right)/\sin^3y_c}
{I(c)(\rmA_1(c)-i\rmB_1(c))/\rho(c)},\\
\label{eq:defnu_k+^e}
\nu_{k-}^e(\varphi)(c)&=\f{1}{\al}\f{i\rmE(\varphi)+(\phi\phi')\big((1-k)\pi,c\big)\left(\rmD_{-}(\varphi)-i\rmE_k(\varphi)(\rmA+i\rmB)\right)/\sin^3y_c}
{I(c)(\rmA_1(c)+i\rmB_1(c))/\rho(c)},
\end{align}
with
\beno
\rmE(\varphi)=\rmE_1(\varphi)(y_c)-\rmE_0(\varphi)(y_c),\quad \rmD_\pm(\varphi)=i\rmD_{o}(\varphi)(c)\mp\rmC_{o}(\varphi)(c),
\eeno
and
\begin{align}
I(c)&=\f{\phi_1(\pi,c)\phi'(\pi,c)\phi_1(0,c)\phi'(0,c)}{u'(y_c)}\nonumber\\
&=-\sin y_c(\phi_1\phi_1')(\pi,c)(\phi_1\phi_1')(0,c)
=-\f{(\phi\phi')(\pi,c)(\phi\phi')(0,c)}{u'(y_c)\rho(c)}.\nonumber
\end{align}

\begin{proposition}\label{prop:psi-conv-periodic}
Let $\Phi(y,c)$ be a solution of \eqref{eq:Rayleigh-Ihom-periodic} given by Proposition \ref{prop:Rayleigh-IHperiodic}
with $f=\f {\widehat{\om_0}(\al,y)} {i\al(u-c)}$ for $\widehat{\om}_0\in H^1(\mathbb{T})$.
Then it holds that for any $y\in[-\pi,\pi]$, $y_c\in(-\pi,\pi)\setminus\{0\}$ and $y\neq \pm y_c$,
\beno
\lim_{\epsilon\to 0+}\Phi(y,c_\epsilon)=\Phi_+(y,u(y_c)),\quad
\lim_{\epsilon\to 0-}\Phi(y,c_\epsilon)=\Phi_-(y,u(y_c)),
\eeno
where $\Phi_\pm=\Phi^o_\pm+\Phi^e_\pm$, and $c_\epsilon=c+i\epsilon\in D_{\ep_0}$ and $c=u(y_c)$.
\end{proposition}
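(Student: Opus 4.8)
The plan is to pass to the limit $\epsilon\to 0\pm$ directly in the explicit solution formula of Proposition \ref{prop:Rayleigh-IHperiodic}, using the limiting absorption principle of Proposition \ref{prop:ub} to guarantee a priori that $\Phi(\cdot,c_\epsilon)$ converges in $C(\mathbb{T})$; thus the limits $\Phi_\pm$ exist and it only remains to identify them explicitly. Since $\Phi=\Phi_o+\Phi_e$ splits into the odd and even solutions of the decoupled problems \eqref{eq:Rayleigh-Ihom-odd} and \eqref{eq:Rayleigh-Ihom-even-half1}, I would treat the two parts separately. Because $\phi(y,c)=(u(y)-c)\phi_1(y,c)$ and $\phi_1$ are continuous up to $\{\mathrm{Im}\,c=0\}$ by Proposition \ref{prop:Rayleigh-Hom}, for each fixed $y\neq\pm y_c$ the only obstruction to taking the limit sits in the singular integrals $\int_0^\pi\phi^{-2}(\cdots)\,dy'$ and in the Wronskians $W_o,W_e$ appearing in the denominators of the coefficients.

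For the odd part, the key simplification is that $f_o\phi=\widehat{\om}_o\phi_1/(i\al)$ is regular, so the inner integral $\int_{y_c}^{y'}f_o\phi\,dy''$ is smooth and vanishes linearly at $y'=y_c$; hence the integrand of $\int_0^\pi\phi^{-2}\int_{y_c}^{y'}f_o\phi\,dy''\,dy'$ has only a simple pole at $y'=y_c$, with residue controlled by $\phi_1(y_c,c)=1$ and $u'(y_c)=\sin y_c$. I would run a Plemelj--Sokhotski analysis of this integral: its principal value produces $\mathrm{II}_1(\widehat{\om}_o)$ from \eqref{eq:defII_1}, and hence $\rmD_o$ from \eqref{eq:defD}, while the $\pm i\pi$ residue at $y'=y_c$ produces $\rmC_o(\widehat{\om}_o)=\pi\sin y_c\,\widehat{\om}_o(\al,y_c)$. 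Feeding in the Wronskian limit $\rho(c_\epsilon)^{1/2}W_o(c_\epsilon)\to-\phi_1(0,c)\phi_1(\pi,c)(\rmA(c)\mp i\rmB(c))$ from Lemma \ref{Lem:W(c)converge}, together with the factorization $\phi(0,c)\phi(\pi,c)=-\rho(c)\phi_1(0,c)\phi_1(\pi,c)$ and the identities $u'(y_c)=\sin y_c$, $\rho=\sin^2 y_c$, I expect the $\rho$-powers to balance and the coefficient $\mu^o(c)$ to converge to $\mu_\pm^o$ as in \eqref{eq:defmu_+^o}--\eqref{eq:defmu_-^o}; this yields $\Phi_\pm^o$.

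The even part is analogous but heavier, since the three coefficients $\mu_*^e,\nu_0^e,\nu_1^e$ are coupled through the matching conditions of \eqref{eq: Phi_e1}, with explicit solutions \eqref{eq:defnu_1^e}--\eqref{eq:defmu_0^e}. Again $f_e\phi=\widehat{\om}_e\phi_1/(i\al)$ is regular, so the boundary data enter through $\int_{y_c}^{j\pi}f_e\phi\,dy''=\rmE_{j}(\widehat{\om}_e)(y_c)/(i\al)$ from \eqref{eq:defE_j}, and the relevant Wronskian limit is $\phi(0,c_\epsilon)\phi(\pi,c_\epsilon)W_e(c_\epsilon)\to-u'(y_c)^{-1}(\phi_1\phi_1')(\pi,c)(\phi_1\phi_1')(0,c)(\rmA_1(c)\mp i\rmB_1(c))$ from Lemma \ref{Lem:W(c)converge}. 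Taking limits of the same singular integrals (which once more yield $\rmC_o$, $\rmD_o$ and $\mathrm{II}_1$) and regrouping with the factor $I(c)$ that packages the boundary values $(\phi\phi')(j\pi,c)$, I would verify \eqref{eq:defmu_*+^e}--\eqref{eq:defnu_k+^e} and obtain $\Phi_\pm^e$.

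The main obstacle is the Plemelj analysis of these singular integrals. One must check that the principal value genuinely exists (which follows from the even-function Hilbert-transform representation of $\mathrm{II}_{1,1}$ in Remark \ref{rem:Pi-11} together with the regular correction $\mathcal{L}_0$), extract the residue with the exact constant, and, most importantly, make these limits uniform in $y$ so that they are consistent with the $C(\mathbb{T})$ convergence furnished by Proposition \ref{prop:ub}; this uniformity relies on the pointwise and derivative bounds for $\phi_1$ in Proposition \ref{prop:phi1}, which control the regular factors both near and away from $y_c$. A secondary delicacy is bookkeeping the branch of $\rho(c_\epsilon)^{1/2}$ (taken with positive real part) and the paired signs $\mp i\rmB$, $\mp i\rmB_1$ for the two one-sided limits, so that the residue sign and the Wronskian sign combine consistently into the stated formulas.
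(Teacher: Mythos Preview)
Your proposal is correct and follows essentially the same route as the paper: the paper packages exactly this argument into Lemma~\ref{Lem: II_1 convergence} (the Plemelj step for the singular integral, yielding the $\rmD_o\pm i\rmC_o$ split), Lemma~\ref{Lem:convergemu} (convergence of the coefficients $\mu^o,\mu_*^e,\nu_k^e$ using this together with Lemma~\ref{Lem:W(c)converge} for the Wronskians and dominated convergence for the $\mathcal{L}_0$ correction), and Lemma~\ref{lem:5.5} (the elementary dominated-convergence limit of the pieces $\phi(y,c_\epsilon)\int_{0}^{y}\phi^{-2}$ and $\phi(y,c_\epsilon)\int_{0}^{y}\phi^{-2}\int_{y_c}^{y'}f\phi$ for $y\neq y_c$). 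The only cosmetic difference is that you invoke Proposition~\ref{prop:ub} as an a~priori existence guarantee before identifying the limit, whereas the paper simply computes the limit directly from these lemmas and does not need the abstract limiting absorption principle here.
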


The proposition is an immediate consequence of Lemma \ref{Lem:convergemu} and Lemma \ref{lem:5.5}.

\begin{lemma}\label{Lem: II_1 convergence}
Let $c_{\epsilon}=c+i\ep\in D_{\epsilon_0},\ c\in(-1,1)$. Then for any $\varphi\in H^1(0,\pi)$,
\begin{align*}
&\lim_{\ep\to 0+}
\rho(c_{\epsilon})\int_{0}^{\pi}\f{\int_{y_{c}}^y\varphi(y')dy'}{(u(y)-c_{\epsilon})^2}dy
=\rho(c)\mathrm{II}_{1,1}(\varphi)(c)
+i\pi {\varphi(y_c)},\\
&\lim_{\ep\to 0-}
\rho(c_{\epsilon})\int_{0}^{\pi}\f{\int_{y_{c}}^y\varphi(y')dy'}{(u(y)-c_{\epsilon})^2}dy
=\rho(c)\mathrm{II}_{1,1}(\varphi)(c)
-i\pi {\varphi(y_c)}.
\end{align*}
\end{lemma}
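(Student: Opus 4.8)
The plan is to fix $c\in(-1,1)$, so that $y_c\in(0,\pi)$ is the unique point with $u(y_c)=c$ and $u'(y_c)=\sin y_c\neq 0$, and to isolate the contribution of the pole at $y=y_c$. The key algebraic fact driving the statement is that $\rho(c)=1-c^2=\sin^2 y_c=u'(y_c)^2$, which is exactly what will convert the local residue into the clean factor $\pm i\pi\varphi(y_c)$. Writing $g(y)=\int_{y_c}^y\varphi(y')dy'=\Int(\varphi)(y)-\Int(\varphi)(y_c)$, we have $g(y_c)=0$ and, since $\varphi\in H^1(0,\pi)\hookrightarrow C^{0,1/2}$, the bound $|g(y)-\varphi(y_c)(y-y_c)|\le C|y-y_c|^{3/2}$. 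Our goal is then to compute $\lim_{\epsilon\to0\pm}\rho(c_\epsilon)\int_0^\pi g(y)(u(y)-c_\epsilon)^{-2}dy$.

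First I would localize: fix a cutoff $\chi\in C_c^\infty((0,\pi))$ with $\chi\equiv1$ on a neighborhood of $y_c$ contained in an interval on which $u$ is a diffeomorphism. On the support of $1-\chi$ the denominator $|u(y)-c_\epsilon|^2$ stays bounded below uniformly in $\epsilon$, so dominated convergence gives convergence of that piece to the corresponding $\epsilon=0$ integral. For the inner piece I would change variables $t=u(y)-c$ (legitimate on $\mathrm{supp}\,\chi$), reducing it to a model integral $\int_{\mathbb{R}}\Psi(t)(t\mp i\epsilon)^{-2}dt$, where $\Psi(t)=(g\chi)/u'$ expressed in $t$ is compactly supported, vanishes at $t=0$, and is differentiable there with $\Psi'(0)=\varphi(y_c)/u'(y_c)^2$; note that $c_\epsilon=c+i\epsilon$ yields the shift $t-i\epsilon$ for $\epsilon>0$ and $t+i\epsilon$ for $\epsilon<0$.

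The heart of the argument is the double-pole Sokhotski--Plemelj identity $\lim_{\epsilon\to0+}\int\Psi(t)(t-i\epsilon)^{-2}dt=\mathrm{p.v.}\int\Psi(t)t^{-2}dt+i\pi\Psi'(0)$, with the opposite sign of the residue for $\epsilon\to0-$. Because $\Psi(0)=0$, the singularity of $\Psi(t)/t^2$ is only simple, so the principal value is well defined; writing $\Psi(t)=\Psi'(0)t+R(t)$ with $R(t)=O(|t|^{3/2})$, the remainder $\int R(t)(t\mp i\epsilon)^{-2}dt$ converges by dominated convergence (uniform bound $|t|^{-1/2}$) to an absolutely convergent integral, while the explicit linear term $\Psi'(0)\int t(t\mp i\epsilon)^{-2}dt$ produces the residue $\pm i\pi\Psi'(0)$, via the elementary computation $\int (t\mp i\epsilon)^{-1}dt\to(\text{principal value})\pm i\pi$ together with $\int i\epsilon(t-i\epsilon)^{-2}dt\to0$.

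Finally I would recombine the pieces. The principal-value parts of the inner and outer integrals add up to $\mathrm{p.v.}\int_0^\pi g(y)(u(y)-c)^{-2}dy=\mathrm{II}_{1,1}(\varphi)(y_c)$, and multiplying by $\rho(c_\epsilon)\to\rho(c)$ gives the term $\rho(c)\mathrm{II}_{1,1}(\varphi)(c)$. The residue contributes $\rho(c)\cdot(\pm i\pi)\Psi'(0)=\rho(c)(\pm i\pi)\varphi(y_c)/u'(y_c)^2=\pm i\pi\varphi(y_c)$, using $\rho(c)=u'(y_c)^2$, which yields the two claimed limits. The main obstacle is the rigorous justification of the double-pole Plemelj formula under the weak regularity $\varphi\in H^1$ only; this is precisely where the $C^{0,1/2}$ bound on $g$, hence on $R$, is used to make both the principal value and the dominated-convergence estimate of the remainder valid.
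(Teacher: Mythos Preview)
Your argument is correct. The decomposition $\Psi(t)=\Psi'(0)t+R(t)$ with $R(t)=O(|t|^{3/2})$ (justified because $\varphi\in H^1\hookrightarrow C^{1/2}$ makes $\Psi'$ H\"older continuous) legitimizes both the principal value and the dominated-convergence step, and the residue computation $\rho(c)\cdot i\pi\,\Psi'(0)=i\pi\varphi(y_c)$ via $\rho(c)=u'(y_c)^2$ is exactly right.

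The paper's route is different. Instead of localizing and changing variables, it subtracts globally the function $\dfrac{\varphi(y_c)}{u'(y_c)}(u(y)-c)$ from $\int_{y_c}^y\varphi$, so that the remainder divided by $(u(y)-c_\epsilon)^2$ passes to the limit by a direct estimate (its derivative is $O(|y-y_c|^{1/2})$). The subtracted term is then handled by writing $\dfrac{u(y)-c}{(u(y)-c_\epsilon)^2}=\dfrac{1}{u(y)-c_\epsilon}+\dfrac{i\epsilon}{(u(y)-c_\epsilon)^2}$ and invoking the explicit evaluations of $\rho(c_\epsilon)^{1/2}\!\int_0^\pi(u-c_\epsilon)^{-1}dy$ and $\rho(c_\epsilon)^{3/2}\!\int_0^\pi(u-c_\epsilon)^{-2}dy$ already obtained in the proof of Lemma~\ref{Lem:W(c)converge}, which are specific to $u(y)=-\cos y$. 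Your approach is more general (it only uses that $u$ is smooth with $u'(y_c)\neq 0$) and isolates clearly why the prefactor $\rho(c)$ is exactly what is needed to get the clean residue $\pm i\pi\varphi(y_c)$; the paper's approach is shorter in context because it recycles the exact formulas proved earlier for the Wronskian limits.
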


\begin{proof}
Set $g(y)=\int_{y_{c}}^y\varphi(y')dy'-\frac{\varphi(y_c)}{u'(y_c)}(u(y)-c)$. Then we have
\begin{align*}
&\rho(c_{\epsilon})\int_{0}^{\pi}\f{\int_{y_{c}}^y\varphi(y')dy'}{(u(y)-c_{\epsilon})^2}dy\\
&=\rho(c_\epsilon)\int_0^{\pi}\frac{g(y)}{(u(y)-c_{\epsilon})^2}dy
+\rho(c_\epsilon)\frac{\varphi(y_c)}{u'(y_c)}\int_0^{\pi}\frac{(u(y)-c)}{(u(y)-c_{\epsilon})^2}dy.
\end{align*}
As $\varphi\in H^1(0,\pi)$, we get
\beno
|g'(y)|=|\varphi(y)-u'(y)\varphi(y_c)/u'(y_c)|\leq C|y-y_c|^{\f12}/u'(y_c),
\eeno
which along with $g(y_c)=0$ ensures that
\beno
\int_0^{\pi}\frac{g(y)}{(u(y)-c_{\epsilon})^2}dy\lto p.v. \int_0^{\pi}\frac{g(y)}{(u(y)-c)^2}dy.
\eeno
From the proof of Lemma \ref{Lem:W(c)converge} and $p.v.\int_{0}^{\pi}\frac{dy}{u(y)-c}=0$, we deduce that as $\epsilon\to 0+$,
\begin{align*}
\rho(c_\epsilon)\int_0^{\pi}\frac{(u(y)-c)}{(u(y)-c_{\epsilon})^2}dy
&= \rho(c_\epsilon)\int_0^{\pi}\frac{dy}{(u(y)-c_{\epsilon})}dy+i\ep\rho(c_\epsilon)\int_0^{\pi}\frac{dy}{(u(y)-c_{\epsilon})^2}\\
&=\rho(c_\epsilon)\frac{\pi i}{\rho(c_\epsilon)^{\f12}}-i\ep\frac{\partial_c\rho(c_\epsilon)\pi i}{2\rho(c_\epsilon)^{\f12}}\\
&\lto \pi i\rho(c)^{\f12}=p.v.\int_{0}^{\pi}\frac{(u(y)-c)}{(u(y)-c)^2}dy+i\pi u'(y_c).
\end{align*}
This shows that as $\epsilon\to 0+$,
\begin{align*}
\rho(c_{\epsilon})\int_{0}^{\pi}\f{\int_{y_{c}}^y\varphi(y')dy'}{(u(y)-c_{\epsilon})^2}dy
\lto& \rho(c)p.v. \int_0^{\pi}\frac{g(y)+\frac{\varphi(y_c)}{u'(y_c)}(u(y)-c)}{(u(y)-c)^2}dy+i\pi u'(y_c)\frac{\varphi(y_c)}{u'(y_c)}\\&=\rho(c)\mathrm{II}_{1,1}(\varphi)(c)
+i\pi {\varphi(y_c)}.
\end{align*}
The case of $\epsilon\to 0-$ can be proved similarly.
\end{proof}

\begin{lemma}\label{Lem:convergemu}
Let $k\in \{0,1\}$ and $\widehat{\om}_0(\al,y)\in H^1(0,\pi)$. Let $\mu^o$, $\nu_k^e$ and $\mu^e_*$ be defined by \eqref{eq:def mu^o}, \eqref{eq:defnu_1^e}, \eqref{eq:defnu_0^e} and \eqref{eq:defmu_0^e}. Let $\mu_{\pm}^o$, $\mu_{*\pm}^e$ and $\nu_{k\pm}^e$ be defined by \eqref{eq:defmu_+^o}, \eqref{eq:defmu_-^o} and \eqref{eq:defmu_*+^e}-\eqref{eq:defnu_k+^e}. Then it holds that for any $y_c\in (0,\pi)$,
\begin{align*}
&\lim_{\epsilon\to 0+}\mu^o(c_{\ep})=\mu^{o}_+(c),\quad
\lim_{\epsilon\to 0-}\mu^o(c_{\ep})=\mu^o_-(c),\\
&\lim_{\epsilon\to 0+}\mu^e_*(c_{\ep})=\mu^e_{*+}(c),\quad
\lim_{\epsilon\to 0-}\mu^e_*(c_{\ep})=\mu^e_{*-}(c),\\
&\lim_{\epsilon\to 0+}\nu^e_k(c_{\ep})=\nu^e_{k+}(c),\quad
\lim_{\epsilon\to 0-}\nu^e_k(c_{\ep})=\nu^e_{k-}(c).
\end{align*}
Here $c_{\ep}=c+i\ep=u(y_c)+i\epsilon\in D_{\ep_0}$.
\end{lemma}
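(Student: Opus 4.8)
The plan is to treat every one of these coefficients as a ratio whose denominator is a Wronskian (possibly multiplied by smooth boundary factors) and whose numerator is a singular integral of the data against $\phi_1$ and $1/\phi^2$, and then to pass to the limit in numerator and denominator separately. The denominator limits are already recorded in Lemma \ref{Lem:W(c)converge}: multiplying $\mu^o(c_\ep)$ above and below by $\rho(c_\ep)^{1/2}$ turns its denominator into $\rho(c_\ep)^{1/2}W_o(c_\ep)$, which converges to $-\phi_1(0,c)\phi_1(\pi,c)\big(\rmA(c)\mp i\rmB(c)\big)$; likewise, multiplying $\mu^e_*(c_\ep),\nu^e_k(c_\ep)$ by the smooth factor $\phi(0,c_\ep)\phi(\pi,c_\ep)$ produces the denominator $\phi(0,c_\ep)\phi(\pi,c_\ep)W_e(c_\ep)$, whose limit equals $I(c)\big(\rmA_1(c)\mp i\rmB_1(c)\big)/\rho(c)$ after inserting $I(c)=-\sin y_c(\phi_1\phi_1')(\pi,c)(\phi_1\phi_1')(0,c)$ and $u'(y_c)^2=\rho(c)$. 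Throughout I would use $c=-\cos y_c$, so that $\rho(c)=1-c^2=\sin^2 y_c$ and $u'(y_c)=\sin y_c$, together with the continuity of $\phi_1,\pa_y\phi_1$ from Proposition \ref{prop:Rayleigh-Hom} to pass the limit through all the smooth boundary factors.

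The real content is the limit of the numerators, which is a Plemelj/limiting-absorption computation. First I would use $\phi=(u-c)\phi_1$ and $f_o=\widehat{\om}_o/(i\al(u-c))$ to rewrite $f_o\phi=\widehat{\om}_o\phi_1/(i\al)$, so that, using $\phi(0,c)\phi(\pi,c)=-\rho(c)\phi_1(0,c)\phi_1(\pi,c)$, the numerator of $\mu^o(c_\ep)$ in \eqref{eq:def mu^o} multiplied by $\rho(c_\ep)^{1/2}$ equals
\begin{align*}
\frac{\phi_1(0,c_\ep)\phi_1(\pi,c_\ep)}{i\al}\,\rho(c_\ep)^{3/2}\int_0^\pi\frac{\int_{y_c}^{y'}\widehat{\om}_o(y'')\phi_1(y'',c_\ep)\,dy''}{\phi(y',c_\ep)^2}\,dy'.
\end{align*}
The inner integral vanishes to first order at $y'=y_c$ with derivative $\widehat{\om}_o(y_c)\phi_1(y_c,c)=\widehat{\om}_o(y_c)$ (recall $\phi_1(y_c,c)=1$ by Proposition \ref{prop:Rayleigh-Hom}), while $\phi^2=(u-c)^2\phi_1^2$ has a double zero there; thus the integrand carries a simple pole of residue $\widehat{\om}_o(y_c)/u'(y_c)^2$. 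Splitting off this pole exactly as in the proof of Lemma \ref{Lem: II_1 convergence} (the smooth weight $\phi_1/\phi_1^2$ is harmless at the pole since $\phi_1(y_c,c)=1$) gives
\begin{align*}
\lim_{\ep\to 0\pm}\rho(c_\ep)\int_0^\pi\frac{\int_{y_c}^{y'}\widehat{\om}_o\phi_1\,dy''}{\phi(y',c_\ep)^2}\,dy'=\rho(c)\,\mathrm{II}_1(\widehat{\om}_o)(y_c)\pm i\pi\,\widehat{\om}_o(y_c),
\end{align*}
and multiplying by $\rho(c)^{1/2}$ and using $\rho^{3/2}=\sin^3 y_c$, $\rho^{1/2}=\sin y_c$ identifies the two pieces with $\rmD_{o}(\widehat{\om}_o)(c)=\sin^3 y_c\,\mathrm{II}_1(\widehat{\om}_o)$ and $\rmC_o(\widehat{\om}_o)(c)=\pi\sin y_c\,\widehat{\om}_o(y_c)$ from \eqref{eq:defD}. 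Dividing by the denominator limit and clearing the factor $1/i$ reproduces exactly \eqref{eq:defmu_+^o}--\eqref{eq:defmu_-^o}.

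For the even coefficients the same mechanism applies, but the numerators of $\mu^e_*,\nu^e_0,\nu^e_1$ in \eqref{eq:defnu_1^e}--\eqref{eq:defmu_0^e} now contain three types of building blocks: the doubly-singular integral $\int_0^\pi\phi^{-2}\int_{y_c}^{y'}f_e\phi$, handled exactly as above and producing $\rmC_o,\rmD_o$ via Lemma \ref{Lem: II_1 convergence}; the integral $\int_0^\pi\phi^{-2}\,dy'$, whose $\rho^{3/2}$-weighted limit $\rmA(c)\mp i\rmB(c)$ is already contained in the proof of Lemma \ref{Lem:W(c)converge}; and the globally regular terms $\int_{y_c}^{j\pi}\widehat{\om}_e\phi_1\,dy''=\rmE_j(\widehat{\om}_e)(y_c)$ and $\int_0^\pi\widehat{\om}_e\phi_1\,dy'$, which are continuous in $c$ up to the real axis and pass to the limit without producing residues, by dominated convergence and the continuity of $\phi_1$. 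Collecting the limits of numerator and denominator, substituting $I(c)$ and $\rho(c)=\sin^2 y_c$, and separating the $\ep\to 0+$ and $\ep\to 0-$ cases by the sign of the residue gives \eqref{eq:defmu_*+^e}--\eqref{eq:defnu_k+^e}. The main obstacle is the bookkeeping: one must carefully track the several $\rho^{1/2}$, $\sin y_c$ and $u'(y_c)$ factors introduced to regularize $W_o,W_e$ and match them against the normalizations hidden in $\rmA_1,\rmB_1,I(c)$, and verify that the residue signs flip consistently between the two one-sided limits, so that $\rmD_\pm(\widehat{\om}_e)=i\rmD_o\mp\rmC_o$ and the $\rmE_k(\rmA\mp i\rmB)$ terms appear with the correct signs.
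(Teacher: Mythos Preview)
Your approach is correct and coincides with the paper's: pass to the limit separately in numerator and denominator, using Lemma~\ref{Lem:W(c)converge} for the Wronskian denominators and Lemma~\ref{Lem: II_1 convergence} together with the continuity of $\phi_1$ for the numerators. The only place where the paper is slightly more explicit than your sketch is the step where you assert that ``the smooth weight $\phi_1/\phi_1^2$ is harmless at the pole'': the paper justifies this by writing $\frac{\phi_1(z,c_\ep)}{\phi_1(y,c_\ep)^2}=1+\Big(\frac{\phi_1(z,c_\ep)}{\phi_1(y,c_\ep)^2}-1\Big)$, bounding the bracketed term by $C|y-y_c|^2$ via Proposition~\ref{prop:Rayleigh-Hom} so that $\rho(c_\ep)(u(y)-c_\ep)^{-2}\big(\tfrac{\phi_1(z,c_\ep)}{\phi_1(y,c_\ep)^2}-1\big)$ is uniformly bounded, and then invoking dominated convergence to get the $\mathcal{L}_0$ piece, while the remaining ``$1$'' piece is exactly Lemma~\ref{Lem: II_1 convergence}; your residue heuristic is correct but this decomposition is what makes it rigorous.
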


\begin{proof}
By Proposition \ref{prop:Rayleigh-Hom},  $\phi_1(y,c)$ is continuous for
$(y,c)\in [0,\pi]\times\Om_{\epsilon_0}$ and for $\epsilon_0$ small enough,
$$|\phi_1(y,c)|>\f12,\quad |\phi_1(y,c)-1|\le C|y-y_c|^2.$$
Thus, for $(y,c)\in [0,\pi]\times\Om_{\epsilon_0}$ with  $(z-y_c)(z-y)\leq 0$, there exists a constant $C$ so that
\beno
\left|\f{\rho(c)}{(u(y)-c)^2}\Big(\f{\phi_1(z,c)}{\phi_1(y,c)^2}-1\Big)\right|\leq C.
\eeno
This implies that  as $c_{\epsilon}\to u(y_c)=c$,
\begin{align*}
&\rho(c_{\epsilon})\int_0^{\pi}\int_{y_c}^y\f{\varphi(z)}{(u(y)-c_{\epsilon})^2}\Big(\f{\phi_1(z,c_{\ep})}{\phi_1(y,c_{\epsilon})^2}-1\Big)dzdy\lto \rho(c)\mathcal{L}_{0}(\varphi)(c),
\end{align*}
from which and Lemma \ref{Lem: II_1 convergence}, it follows that as $\ep\to 0\pm$,
\beno
&&i\al u'(y_c)\rho(c_\ep)\int_{0}^{\pi}\frac{1}{\phi(y',c_\ep)^2}
\int_{y_c}^{y'}f_o\phi(y'',c_\ep)dy''dy'\\
&&\lto \rho(c)u'(y_c)\mathcal{L}_{0}(\widehat\om)(c)+\rho(c)u'(y_c)\mathrm{II}_{1,1}(\widehat\om)(c)
\pm i\pi {\widehat\om(y_c)} {u'(y_c)}.
\eeno
Then by Lemma \ref{Lem:W(c)converge}, we infer that
\beno
\lim_{\ep\to 0\pm}\mu^o(c_\ep)=\mu_\pm^o(c).
\eeno

Notice that for $c_{\ep}=u(y_c)+ i\ep$ and $k\in\{0,1\}$,
\beno
\int_{y_{c}}^{k\pi}f_e\phi(y,c_{\ep})dy\lto \f{1}{i\al}\rmE_k(\widehat{\om}_e),\quad \textrm{as}\ \ep\to 0.
\eeno
The other limits  can be proved similarly.
\end{proof}

\begin{lemma}\label{lem:5.5}
Let $\varphi(y)\in H^1(0,\pi)$. Then it holds that for $0\leq y<y_c\leq \pi$,
\begin{align*}
&\lim_{\ep\to 0}\phi(y,c_{\ep})\int_0^y\f{\int_{y_c}^{y'}\varphi(y'')\phi_1(y'',c_{\ep})dy''}{\phi(y',c_{\ep})^2}dy'= \phi(y,u(y_c))\int_0^y\f{\int_{y_c}^{y'}\varphi(y'')\phi_1(y'',u(y_c))dy''}{\phi(y',u(y_c))^2}dy',\\
&\lim_{\ep\to 0}\phi(y,c_{\ep})\int_0^y\f{1}{\phi(y',c_{\ep})^2}dy'= \phi(y,u(y_c))\int_0^y\f{1}{\phi(y',u(y_c))^2}dy',
\end{align*}
and for $0\leq y_c<y\leq \pi$,
\begin{align*}
&\lim_{\ep\to 0}\phi(y,c_{\ep})\int_{\pi}^y\f{\int_{y_c}^{y'}\varphi(y'')\phi_1(y'',c_{\ep})dy''}{\phi(y',c_{\ep})^2}dy'= \phi(y,u(y_c))\int_{\pi}^y\f{\int_{y_c}^{y'}\varphi(y'')\phi_1(y'',u(y_c))dy''}{\phi(y',u(y_c))^2}dy',\\
&\lim_{\ep\to 0}\phi(y,c_{\ep})\int_{\pi}^y\f{1}{\phi(y',c_{\ep})^2}dy'= \phi(y,u(y_c))\int_{\pi}^y\f{1}{\phi(y',u(y_c))^2}dy'.
\end{align*}
Here $c_{\ep}=c+i\ep=u(y_c)+ i\ep\in D_{\ep_0}$.
\end{lemma}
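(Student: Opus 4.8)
The plan is to observe that, in contrast with Lemma \ref{Lem: II_1 convergence} and Lemma \ref{Lem:convergemu}, here the fixed point $y$ is strictly separated from the critical point $y_c$, so no singular integral through the critical layer is encountered. Consequently every integrand stays non-singular and uniformly bounded as $\ep\to 0$, and each of the four limits reduces to a direct application of Lebesgue's dominated convergence theorem, combined with the continuity of $\phi$ and $\phi_1$ on $[0,\pi]\times\Om_{\ep_0}$ and the lower bound $|\phi_1|\ge\f12$, both from Proposition \ref{prop:Rayleigh-Hom}.

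First I would treat the case $0\le y<y_c\le\pi$. Since $u(y)=-\cos y$ satisfies $u'=\sin y\ge 0$ on $[0,\pi]$, the function $u$ is nondecreasing there, so for $y'\in[0,y]$ we have $u(y')\le u(y)<u(y_c)=c$, and therefore $|u(y')-c_\ep|\ge|u(y')-c|\ge c-u(y)>0$ uniformly in small $\ep$. Together with $|\phi_1(y',c_\ep)|\ge\f12$ this gives a uniform lower bound $|\phi(y',c_\ep)|\ge\f12\big(c-u(y)\big)>0$ on $[0,y]$, so that $\f{1}{\phi(y',c_\ep)^2}$ is bounded uniformly in $\ep$ and converges pointwise to $\f{1}{\phi(y',c)^2}$ by the continuity of $\phi$. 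For the inner factor, $\phi_1(\cdot,c_\ep)$ is bounded and $\varphi\in H^1(0,\pi)\subset L^1(0,\pi)$, so $\int_{y_c}^{y'}\varphi(y'')\phi_1(y'',c_\ep)\,dy''$ is uniformly bounded in $y'$ and $\ep$ and converges pointwise to $\int_{y_c}^{y'}\varphi(y'')\phi_1(y'',c)\,dy''$ by dominated convergence. Applying dominated convergence once more to the outer $y'$-integral over the compact interval $[0,y]$, and using $\phi(y,c_\ep)\to\phi(y,c)$ for the prefactor, yields the first identity; the second (the $1/\phi^2$-only statement) is the same argument with a trivial inner factor.

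Next I would handle the case $0\le y_c<y\le\pi$, which is identical after replacing the outer integral $\int_0^y$ by $\int_\pi^y$: now the integration variable $y'$ ranges over $[y,\pi]$, which lies above $y_c$, so monotonicity of $u$ gives $u(y')\ge u(y)>c$ and hence $|u(y')-c_\ep|\ge u(y)-c>0$ uniformly in $\ep$. The inner integral remains non-singular since $\phi_1$ is continuous across $y_c$, and the same two-fold dominated convergence argument applies verbatim.

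I do not expect a genuine obstacle here; the only point needing care is the uniform separation $|\phi(y',c_\ep)|\ge\delta>0$ away from the critical layer, which is precisely what fails at $y=y_c$ and is what produces the principal-value and $i\pi$ contributions in Lemma \ref{Lem: II_1 convergence}. Because the hypothesis $y\ne\pm y_c$ excludes that point, no principal value or jump term arises, and each limit is simply the naive substitution $c_\ep\to u(y_c)$.
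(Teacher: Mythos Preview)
Your proposal is correct and follows essentially the same approach as the paper: both exploit the fixed strict separation $y\neq y_c$ to obtain a uniform lower bound on $|\phi(y',c_\ep)|$ away from the critical layer, then apply Lebesgue's dominated convergence together with the continuity of $\phi$ and $\phi_1$ from Proposition~\ref{prop:Rayleigh-Hom}. The paper compresses the argument into a single pointwise bound $\bigl|\phi(y,c_\ep)/\phi(y',c_\ep)^2\bigr|\le C/|u(y)-u(y_c)|$ on the integrand, whereas you spell out the monotonicity of $u$ and run the dominated convergence in two nested steps; the content is the same.
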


\begin{proof}
We consider the case of $0\leq y<y_c\leq \pi$. Another case is similar. Notice that for $0\leq y'\leq y,\ y'\leq y''\leq y_c,$
\begin{align}\label{eq:term 1}
&\left|\f{\phi(y,c_{\ep})}{\phi(y',c_{\ep})^2}\right|+\left|\f{\phi(y,c_{\ep})\varphi(y'')\phi_1(y'',c_{\ep})}{\phi(y',c_{\ep})^2}\right|\\
&\leq C\f{|u(y)-c_{\ep}|}{|u(y')-c_{\ep}|^2}\leq \f{C}{|u(y')-c_{\ep}|}\leq \f{C}{|u(y)-u(y_c)|}.\nonumber
\end{align}
Then the result follows from Lebesgue's dominated convergence theorem and the continuity of $\phi_1$, $\phi$.
\end{proof}

\subsection{Solution formula  of the linearized Euler equations}
Let $\widehat{\psi}(t,\al,y)$ be a solution of \eqref{eq:Euler-L-operator} with initial data $\widehat{\psi}(0,\al,y)=-(\pa_y^2-\al^2)^{-1}\widehat{\om}_0(\al,y)$. Then we have
\begin{align*}
\widehat{\psi}(t,\al,y)=\f{1}{2\pi i}\oint_{\pa \Om} e^{-i\al tc}(c-\mathcal{R}_{\al})^{-1}\widehat{\psi}(0,\al,y) dc,
\end{align*}
where $\Om$ is a simply connected domain including the spectrum $\s(\mathcal{R}_{\al})=[-1,1]$ of $\mathcal{R}_{\al}$.

Let $\Phi(\al,y,c)$ be a solution of \eqref{eq:Rayleigh-Ihom-periodic} with $f(\al,y,c)=\f{\widehat{\om}_0(\al,y)}{i\al (u(y)-c)}$. We have
\beno
(c-\mathcal{R}_{\al})^{-1}\widehat{\psi}(0,\al,y)=i\al \Phi(\al,y,c).
\eeno
Therefore, we have
\ben\label{eq:psi(t)1}
\widehat{\psi}(t,\al,y)=\f{1}{2\pi}\oint_{\pa \Om_{\ep_0}} e^{-i\al tc}\al \Phi(\al,y,c) dc.
\een
In particular,
\ben\label{eq:psi(0)1}
\widehat{\psi}(0,\al,y)=\f{1}{2\pi}\oint_{\pa \Om_{\ep_0}}\al \Phi(\al,y,c) dc.
\een
\begin{lemma}\label{Lem:psi(t)}
Let $\widehat{\psi}(t,\al,y)$ be as in \eqref{eq:psi(t)1}. Then we have
\begin{align*}
\widehat{\psi}(t,\al,y)=&\f{1}{2\pi}\int_{-1}^1e^{-i\al tc}\al \widetilde{\Phi}_o(\al,y,c)dc+\f{1}{2\pi}\int_{-1}^1e^{-i\al tc}\al \widetilde{\Phi}_e(\al,y,c)dc\\
\triangleq&\widehat{\psi}_o(t,\al,y)+\widehat{\psi}_e(t,\al,y).
\end{align*}
In particular,
\beno
\widehat{\psi}(0,\al,y)=\f{1}{2\pi}\int_{-1}^1\al \widetilde{\Phi}_o(\al,y,c)dc+\f{1}{2\pi}\int_{-1}^1\al \widetilde{\Phi}_e(\al,y,c)dc,
\eeno
where
\beno
\widetilde{\Phi}_{o}(y,c)
=\left\{
\begin{aligned}
&(\mu_{-}^o(c)-\mu_{+}^o(c))\phi(y,c)\int_0^y\frac{1}{\phi(z,c)^2}dz,\ 0\leq y\leq y_c,\\
&(\mu_{-}^o(c)-\mu_{+}^o(c))\phi(y,c)\int_{\pi}^y\frac{1}{\phi(z,c)^2}dz,\ y_c\leq y\leq \pi,
\end{aligned}
\right.
\eeno
and
\beno
\widetilde{\Phi}_{e}(y,c)
=\left\{
\begin{aligned}
&(\mu_{*-}^e(c)-\mu_{*+}^e(c))\int_0^y\frac{\phi(y,c)}{\phi(z,c)^2}dz
+(\nu_{0-}^e(c)-\nu_{0+}^e(c))\phi(y,c),\ 0\leq y<y_c,\\
&(\mu_{*-}^e(c)-\mu_{*+}^e(c))\int_{\pi}^y\frac{\phi(y,c)}{\phi(z,c)^2}dz+(\nu_{1-}^e(c)-\nu_{1+}^e(c))\phi(y,c),\  y_c< y\leq \pi,
\end{aligned}
\right.
\eeno
and $\widetilde{\Phi}_{o}(y,c)=-\widetilde{\Phi}_{o}(-y,c)$, $\widetilde{\Phi}_{e}(y,c)=\widetilde{\Phi}_{e}(-y,c)$ for $y\in [-\pi,0]$.
\end{lemma}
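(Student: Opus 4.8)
The plan is to collapse the contour integral \eqref{eq:psi(t)1} onto the spectral interval $[-1,1]$ by shrinking the width of $\Om_{\epsilon_0}$ to zero and invoking the limiting absorption principle. Observe first that $\Phi(\al,y,\cdot)$ is holomorphic on $\mathbb{C}\setminus[-1,1]$: it equals $\frac{1}{i\al}$ times $(c-\mathcal{R}_\al)^{-1}$ applied to the fixed datum $\widehat{\psi}(0,\al,\cdot)$, and the resolvent is analytic off $\s(\mathcal{R}_\al)=[-1,1]$. Hence the value of \eqref{eq:psi(t)1} is unchanged if we replace $\Om_{\epsilon_0}$ by $\Om_{\eta}$ for any $\eta\in(0,\epsilon_0)$, and I would compute its limit as $\eta\to0^+$. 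To this end I would decompose $\pa\Om_{\eta}$ into the lower edge $\{\mathrm{Im}\,c=-\eta\}$, the upper edge $\{\mathrm{Im}\,c=+\eta\}$, and the two semicircular arcs $\pa B^l_{\eta}$, $\pa B^r_{\eta}$ around $c=\mp1$, and treat each piece separately.

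On the two horizontal edges I would pass to the limit by Proposition \ref{prop:psi-conv-periodic}: writing $c=u(y_c)\pm i\eta$, the integrand converges, for each fixed $y$ and almost every $y_c$, to $e^{-i\al tc}\al\Phi_{\pm}(\al,y,u(y_c))$ as $\eta\to0^+$. To upgrade this to convergence of the $c$-integral over the compact interval $[-1,1]$, I would use the uniform bound $\|\Phi(\al,\cdot,c)\|_{C(\mathbb{T})}\le C\|\om\|_{H^1}$ supplied by the limiting absorption principle (Proposition \ref{prop:ub}) as a dominating function; since $|e^{-i\al tc}|=e^{\al t\,\mathrm{Im}\,c}\le e^{\al t\eta}$ is bounded for fixed $t$, the dominated convergence theorem applies. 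Taking into account the counterclockwise orientation of $\pa\Om_\eta$ — so that the lower edge runs left-to-right and contributes $\Phi_-$, while the upper edge runs right-to-left and contributes $-\Phi_+$ — the two edges together converge to $\frac{1}{2\pi}\int_{-1}^1 e^{-i\al tc}\al\big(\Phi_-(\al,y,c)-\Phi_+(\al,y,c)\big)\,dc$.

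The main obstacle is to show that the two arc integrals are negligible. For $c\in B^l_{\eta}\cup B^r_{\eta}$, Howard's semicircle theorem guarantees $c\notin\s(\mathcal{R}_\al)$, so $\Phi$ is well defined there; moreover every such $c$ satisfies $0<\inf_{y}|u(y)-c|\le\eta<\epsilon_0$ (since $u(\mp\pi{\cdot}0)$-type extrema place $\pm1$ in $\mathrm{Ran}\,u$), so Proposition \ref{prop:ub} again yields $|\Phi(\al,y,c)|\le C\|\om\|_{H^1}$ uniformly on the arcs. Combined with $|e^{-i\al tc}|\le e^{\al t\eta}$ and the fact that each arc has length $O(\eta)$, each arc contribution is $O(\eta)$ and hence vanishes as $\eta\to0^+$. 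Gathering the three limits gives $\widehat{\psi}(t,\al,y)=\frac{1}{2\pi}\int_{-1}^1 e^{-i\al tc}\al\big(\Phi_--\Phi_+\big)\,dc$.

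It remains to identify $\Phi_--\Phi_+$ with $\widetilde{\Phi}_o+\widetilde{\Phi}_e$. Splitting $\Phi_\pm=\Phi^o_\pm+\Phi^e_\pm$ and subtracting, the double-integral particular-solution terms in the definitions of $\Phi^o_\pm$ and $\Phi^e_\pm$ are independent of the sign $\pm$ and cancel, leaving only the coefficient differences $\mu^o_--\mu^o_+$, $\mu^e_{*-}-\mu^e_{*+}$ and $\nu^e_{k-}-\nu^e_{k+}$ multiplying $\phi\int\phi^{-2}$ or $\phi$. Setting $\widetilde{\Phi}_o=\Phi^o_--\Phi^o_+$ and $\widetilde{\Phi}_e=\Phi^e_--\Phi^e_+$ then reproduces precisely the stated formulas, and putting $t=0$ yields the expression for $\widehat{\psi}(0,\al,y)$. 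I expect this final step to be routine bookkeeping; the analytic substance lies in the arc estimate together with the dominated-convergence passage on the edges.
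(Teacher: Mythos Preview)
Your proposal is correct and follows essentially the same route as the paper: shrink $\partial\Omega_\eta$, use Proposition~\ref{prop:psi-conv-periodic} for pointwise convergence on the horizontal edges with Proposition~\ref{prop:ub} supplying the dominating function, kill the arcs via the uniform $H^1$ bound and $O(\eta)$ arc length, and then read off $\widetilde{\Phi}_o+\widetilde{\Phi}_e=\Phi_--\Phi_+$ by cancellation of the particular-solution terms. The paper's proof is the same argument, only more tersely written.
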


\begin{proof}
For any $0<\ep\leq \ep_0$, we have
\begin{align*}
\widehat{\psi}(t,\al,y)&=\f{1}{2\pi}\oint_{\pa \Om_{\ep}} e^{-i\al tc}\al \Phi(\al,y,c) dc=I_1+I_2+I_3+I_4,
\end{align*}
where
\begin{align*}
I_1&=\f{1}{2\pi}\int_{-1}^1 e^{-i\al t(c-i\ep)}\al \Phi(\al,y,c-i\ep) dc,\\
I_2&=\f{1}{2\pi}\int_{1}^{-1} e^{-i\al t(c+i\ep)}\al \Phi(\al,y,c+i\ep) dc,\\
I_3&=\f{1}{2\pi}\int_{-\f{\pi}{2}}^{\f{\pi}{2}} e^{-i\al t(1+\ep e^{i\th})}\al \Phi(\al,y,1+\ep e^{i\th}) i\ep e^{i\th}d\th,\\
I_4&=\f{1}{2\pi}\int_{\f{\pi}{2}}^{\f{3\pi}{2}} e^{-i\al t(-1+\ep e^{i\th})}\al \Phi(\al,y,-1+\ep e^{i\th}) i\ep e^{i\th}d\th.
\end{align*}
Proposition \ref{prop:psi-conv-periodic}, Proposition \ref{prop:ub} and Lebesgue's dominated convergence theorem ensure that
\begin{align*}
&\lim_{\ep\to 0+}I_{1}
=\f{1}{2\pi}\int_{-1}^1 \al e^{-i\al tc} (\Phi_-^o(\al,y,c)+\Phi_-^e(\al,y,c)) dc, \\
&\lim_{\ep\to 0+}I_{2}
=-\f{1}{2\pi}\int_{-1}^1 \al e^{-i\al tc} (\Phi_+^o(\al,y,c)+\Phi_+^e(\al,y,c))dc,
\end{align*}
and
\beno
\lim_{\ep\to 0+}I_{3}=0,\quad \lim_{\ep\to 0+}I_{4}=0.
\eeno
Now, the lemma follows from the facts that $\widetilde{\Phi}_o=\Phi_-^o-\Phi_+^o,\ \widetilde{\Phi}_e=\Phi_-^e-\Phi_+^e$.
\end{proof}

\subsection{Dual formulation}

As in \cite{WZZ2}, we will use the dual method to derive the decay estimates of the velocity. So, we introduce the dual formulation of the stream function.

Let
\begin{align*}
\mu_{-}^o(c)-\mu_{+}^o(c)&=\f{2}{\al}\f{\mathrm{A}\mathrm{C}_{o}(\widehat{\om}_{o})+\mathrm{B}\mathrm{D}_{o}(\widehat{\om}_{o})}{\mathrm{A}^2+\mathrm{B}^2}
\eqdef\f{2}{\al}\rho(c)\mu_1(c),\\
\mu_{-}^e(c)-\mu_{+}^e(c)
&=\f{2}{\al}\f{I(c)^2(\rmA\rmC_{e}+\rmB\rmD_{e})+I(c)(\phi\phi')(j\pi,c)(\rmC_{e}+E_{1-j}\rmB)|_{j=0}^1
}{I(c)^2(\rmA_1^2+\rmB_1^2)/\rho(c)^2}\eqdef \f{2}{\al}\mu_2(c),\\
\nu_{j-}^e(c)-\nu_{j+}^e(c)
&=-\f{2}{\al}\f{\mu_2}
{\phi(j\pi,c)\phi'(j\pi,c)}\eqdef \f{2}{\al}\nu_{j}(c).
\end{align*}
Here $\rmC_{e}=\mathrm{C}_{o}(\widehat{\om}_{e})(c),\ \rmD_{e}=\mathrm{D}_{o}(\widehat{\om}_{e})(c)$ and $E_{1-j}=E_{1-j}(\widehat{\om}_{e})(y_c)$.

We denote
\ben
&&\Lambda_{1}(\varphi)(y_c)=\Lambda_{1,1}(\varphi)(y_c)+\Lambda_{1,2}(\varphi)(y_c),\label{def:Lam1}\\
&&\Lambda_{2}(\varphi)(y_c)=\Lambda_{2,1}(\varphi)(y_c)+\Lambda_{2,2}(\varphi)(y_c),\label{def:Lam2}
\een
where
\begin{align}
&\Lambda_{1,1}(\varphi)(y_c)=\rho(c)u''(y_c)\mathrm{II}_{1,1}(\varphi)(c),\label{eq:defLam_1,1}\\
&\Lambda_{1,2}(\varphi)(y_c)=\rho(c)u''(y_c)\mathcal{L}_0(\varphi)(c)+u'(y_c)\rho(c)\mathrm{II}(c)\varphi(y_c),\label{eq:defLam_1,2}\\
&\Lambda_{2,1}(\varphi)(y_c)=\rho(c)\mathrm{II}_{1,1}(u''\varphi)(c),\label{eq:defLam_2,1}\\
&\Lambda_{2,2}(\varphi)(y_c)=\rho(c)\mathcal{L}_0(u''\varphi)(c)+u'(y_c)\rho(c)\mathrm{II}(c)\varphi(y_c),\label{eq:defLam_2,2}
\end{align}

We denote
\begin{align}
&\Lambda_3(\widehat{\omega}_{e})(y_c)=\rho(c)\Lambda_{1}(\widehat{\om}_e)(y_c)+\Lambda_{3,1}(\widehat{\om}_e)(y_c),\label{eq:def Lam_3}\\
&\Lambda_4(g)(y_c)=\rho(c)\Lambda_2(g)(y_c)+\Lambda_{4,1}(g)(y_c),\label{eq:def Lam_4}
\end{align}
where
\begin{align}
&\Lambda_{3,1}(\widehat{\om}_e)(y_c)=J_j\left(\frac{u''(y_c)}{u'(y_c)}\rmE_{1-j}(\widehat{\omega}_{e})
+{\widehat{\omega}_{e}(y_c)}\right)\bigg|_{j=0}^{1},\label{eq:defLambda_3,1}\\
&\Lambda_{4,1}(g)(y_c)=J_j\left(\frac{\rmE_{1-j}(gu'')}{u'(y_c)}+g(y_c)\right)\bigg|_{j=0}^{1},\label{eq:defLambda_4,1}
\end{align}
with
\begin{align}
J_j(c)=&\f{(\phi\phi')(j\pi,c)\rho(c)}{I(c)}=\f{-u'(y_c)\rho(c)^2}{(\phi\phi')((1-j)\pi,c)}\nonumber\\
=&\f{-u'(y_c)(u(j\pi)-c)^2}{(\phi_1\phi_1')((1-j)\pi,c)}\quad \textrm{for}\ j=0,1.\label{def:J_j}
\end{align}

\begin{lemma}\label{Lem: K_o identity}
Let $f(\al,y)=(\pa_y^2-\al^2)g(\al,y)$ with $g\in H^2(0,\pi)\cap H_0^1(0,\pi)$. Then we have
\beno
\int_0^{\pi}\widehat{\psi}_o(t,\al,y)f(\al,y)dy=
-\int_{-1}^{1}K_o(c,\al)e^{-i\al ct}dc,
\eeno
where
\begin{align}\label{eq:defK_o}
K_o(c,\al)=\f{\Lambda_1(\widehat{\om}_o)(y_c)\Lambda_2(g)(y_c)}{(\mathrm{A}(c)^2+\mathrm{B}(c)^2){u'(y_c)}}.
\end{align}

\end{lemma}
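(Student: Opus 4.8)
The plan is to substitute the contour representation of $\widehat{\psi}_o$ from Lemma \ref{Lem:psi(t)} and reduce, by Fubini, to a pointwise identity in $c$. Writing $\vartheta(y,c)=\phi(y,c)\int_0^y\phi(z,c)^{-2}\,dz$ for $0\le y<y_c$ and $\vartheta(y,c)=\phi(y,c)\int_\pi^y\phi(z,c)^{-2}\,dz$ for $y_c<y\le\pi$, the function $\widetilde{\Phi}_o$ of Lemma \ref{Lem:psi(t)} is exactly $\big(\mu_-^o(c)-\mu_+^o(c)\big)\vartheta(y,c)$, so that
\[
\int_0^\pi\widehat{\psi}_o(t,\al,y)f(\al,y)\,dy=\frac{1}{2\pi}\int_{-1}^1 e^{-i\al tc}\,\al\,\big(\mu_-^o(c)-\mu_+^o(c)\big)\Big(\int_0^\pi\vartheta(y,c)f(\al,y)\,dy\Big)\,dc .
\]
Hence it suffices to prove, for a.e. $c=u(y_c)\in(-1,1)$, that $\frac{\al}{2\pi}\big(\mu_-^o-\mu_+^o\big)\int_0^\pi\vartheta f\,dy=-K_o(c,\al)$.

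Next I would strip off all the algebra attached to $\mu_-^o-\mu_+^o$. Since $u'=\sin$, $u''=\cos$ and $\rho(c)=1-c^2=\sin^2 y_c=u'(y_c)^2$, the definitions \eqref{eq:defAB}, \eqref{eq:defD} and \eqref{eq:defLam_1,1}--\eqref{eq:defLam_2,2} give the elementary identities $\rmD_o(\varphi)=u'(y_c)\rho\,\mathrm{II}_1(\varphi)$, $\Lambda_1(\varphi)=\rho\big(u''\mathrm{II}_1(\varphi)+u'\mathrm{II}(c)\varphi(y_c)\big)$ and $\Lambda_2(\varphi)=\rho\big(\mathrm{II}_1(u''\varphi)+u'\mathrm{II}(c)\varphi(y_c)\big)$, from which a direct computation yields
\[
\rmA(c)\rmC_o(\varphi)(c)+\rmB(c)\rmD_o(\varphi)(c)=\pi\,u'(y_c)\,\Lambda_1(\varphi)(y_c).
\]
Combining this with the dual formula $\mu_-^o-\mu_+^o=\frac{2}{\al}\big(\rmA\rmC_o(\widehat{\om}_o)+\rmB\rmD_o(\widehat{\om}_o)\big)/(\rmA^2+\rmB^2)$ and $\rho=u'(y_c)^2$, the whole statement collapses to the single ``core'' identity
\[
\int_0^\pi\vartheta(y,c)f(\al,y)\,dy=-\mathrm{II}_1(u''g)(y_c)-u'(y_c)\mathrm{II}(c)g(y_c)=-\frac{\Lambda_2(g)(y_c)}{u'(y_c)^2}.
\]

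To prove the core identity I would integrate by parts, using that $\vartheta$ solves the homogeneous Rayleigh equation $\vartheta''-\al^2\vartheta=\frac{u''}{u-c}\vartheta$ on each of $(0,y_c)$ and $(y_c,\pi)$, that $\vartheta(0,c)=\vartheta(\pi,c)=0$ and $g(0)=g(\pi)=0$, and that $\vartheta$ is continuous across $y_c$ with $\vartheta(y_c^\pm,c)=-1/u'(y_c)$. Excising $(y_c-\varepsilon,y_c+\varepsilon)$ and letting $\varepsilon\to0$, the derivative $\vartheta'$ is only logarithmically singular at $y_c$ with equal coefficients on the two sides, so those divergences cancel and the boundary contributions at $y_c$ converge. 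The surviving bulk term is the principal value $\mathrm{p.v.}\!\int_0^\pi \frac{u''}{u-c}\vartheta\,g\,dy$; writing $\frac{\vartheta}{u-c}=\phi_1\int_{b}^y\phi^{-2}$ (with lower limit $b=0$ or $b=\pi$ according to the branch) and exchanging the order of integration matches it against Definition \eqref{eq:defII_1}, giving exactly $-\mathrm{II}_1(u''g)(y_c)$. The remaining finite boundary term at $y_c$ produces the local contribution $-u'(y_c)\mathrm{II}(c)g(y_c)$; here the splitting $\phi(z,c)^{-2}=(u(z)-c)^{-2}+(u(z)-c)^{-2}\big(\phi_1(z,c)^{-2}-1\big)$ isolates the regular integral whose value is $\mathrm{II}(c)$ (cf. \eqref{eq:Pi}), the genuinely singular $(u-c)^{-2}$ piece entering symmetrically from the two sides and dropping out.

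The main obstacle is precisely this last, critical-layer, step: controlling the coincident double pole of $(u-c)^{-2}$ together with the logarithmic singularity of $\vartheta'$ at $y_c$, and showing that the finite part of the jump of $\vartheta'$ equals $-u'(y_c)\mathrm{II}(c)$. A clean alternative, which I would adopt if the direct bookkeeping proves too delicate, is to pair $f$ against the off-axis solutions $\Phi_o(\cdot,c\mp i\varepsilon)$, where no singularity is present: integrating by parts through the inhomogeneous Rayleigh equation \eqref{eq:Rayleigh-Ihom-periodic} gives $\int_0^\pi\Phi_o(c_\varepsilon)f=\frac1{i\al}\int_0^\pi\frac{\widehat{\om}_o g}{u-c_\varepsilon}+\int_0^\pi\frac{u''g\,\Phi_o(c_\varepsilon)}{u-c_\varepsilon}$, and subtracting the two sign choices and passing to the limit by Sokhotski--Plemelj together with Proposition \ref{prop:ub} and Lemma \ref{Lem:convergemu} (which supply $\Phi_\pm^o(y_c)=-\mu_\pm^o/u'(y_c)$) reproduces the residue term, the principal value $-\mathrm{II}_1(u''g)$, and the $\mathrm{II}(c)$ contribution, yielding $-K_o$ after the same algebra as above.
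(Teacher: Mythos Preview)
Your reduction is correct and matches the paper up to the ``core identity'': the same Fubini/Lemma~\ref{Lem:psi(t)} step, the same algebra $\rmA\rmC_o+\rmB\rmD_o=\pi u'(y_c)\Lambda_1$, and the same target
\[
\int_0^\pi\vartheta(y,c)f(y)\,dy=-\mathrm{II}_1(u''g)(y_c)-u'(y_c)\mathrm{II}(c)g(y_c).
\]
The difference is in how this identity is proved. You integrate by parts against $\vartheta$ directly, excise $(y_c-\varepsilon,y_c+\varepsilon)$, and track the (genuinely logarithmic) singularity of $\vartheta'$ and the finite part of its jump; you correctly flag this as the delicate point and offer a clean off-axis backup. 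The paper instead avoids the singularity altogether: it first swaps the order of integration to write $\int_0^\pi\vartheta f=-\int_0^\pi\phi(z,c)^{-2}\big(\int_{y_c}^zf\phi\,dy\big)dz$, and then integrates by parts only in the \emph{inner} $y$-integral, where $\phi$ is smooth. Using $f=g''-\al^2g$ and the Rayleigh equation gives
\[
\int_{y_c}^zf\phi\,dy=\int_{y_c}^zgu''\phi_1\,dy+g'(z)\phi(z,c)-g(z)\phi'(z,c)+g(y_c)u'(y_c),
\]
so after dividing by $\phi(z,c)^2$ and integrating, $\mathrm{II}_1(gu'')$ drops out immediately and the remainder is $p.v.\!\int_0^\pi\big[(g/\phi)'+g(y_c)u'(y_c)\phi^{-2}\big]dz$. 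This is then evaluated explicitly by subtracting $g(y_c)/(u-c)$ from $g/\phi$, using $g(0)=g(\pi)=0$, and invoking the identities $p.v.\!\int_0^\pi(u-c)^{-1}dz=0$ and $\mathrm{II}_{1,1}(u'')=(u(0)-u(\pi))/\rho(c)$ specific to $u=-\cos y$; the outcome is exactly $u'(y_c)\mathrm{II}(c)g(y_c)$.

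So both routes work. The paper's buys simplicity at the singularity (no $\varepsilon$-excision, no finite-part computations) at the cost of an explicit trigonometric evaluation; yours is more structural and would transfer to other profiles, but the critical-layer accounting---matching the $O(1)$ errors in the Fubini swap of the bulk with the finite part of the boundary jump---is real work, exactly as you anticipated.
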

\begin{proof}
By Lemma \ref{Lem:psi(t)}, we get
\begin{align*}
&\int_0^{\pi}\widehat{\psi}_o(t,\al,y)f(\al,y)dy\\
&=\dfrac{1}{\pi}\int_0^{\pi}f(\al,y)\int_{-1}^{u(y)}\rho(c)\mu_1(c)\phi(y,c)
\int_{\pi}^y\dfrac{1}{\phi(z,c)^2}dze^{-i\al ct}dc dy\\
&\quad+\dfrac{1}{\pi}\int_0^{\pi}f(\al,y)\int_{u(y)}^{1}\rho(c)\mu_1(c)\phi(y,c)
\int_0^y\dfrac{1}{\phi(z,c)^2}dze^{-i\al ct}dc dy\\
&=-\f{1}{\pi}\int_{-1}^{1}\rho(c)\mu_1(c)e^{-i\al ct}
\int_0^{\pi}\dfrac{\int_{y_c}^zf(\al,y)\phi(y,c)dy}{\phi(z,c)^2}dz dc.
\end{align*}
First of all, we have
$$
\rho(c)\mu_1(c)=\pi\f{\rmA(c)\rho(c)\frac{\widehat{\omega}_{o}(y_c)}{u'(y_c)}
+\rho(c)^2\frac{u''(y_c)}{u'(y_c)}\textrm{II}_1(\widehat{\om}_{o})(c)}{\rmA(c)^2+\rmB(c)^2}=\pi\f{\sin y_c{\Lambda}_1(\widehat{\om}_{o})}{\rmA(c)^2+\rmB(c)^2}.
$$
We write
\begin{align*}
&\int_0^{\pi}\dfrac{\int_{y_c}^zf(\al,y)\phi(y,c)dy}{\phi(z,c)^2}dz\\
&=\int_0^{\pi}\dfrac{\int_{y_c}^z(g''(y)-\al^2g(y))\phi(y,c)dy}{\phi(z,c)^2}dz\\
&=\int_0^{\pi}\dfrac{\int_{y_c}^zg(y)(\phi''-\al^2\phi)(y,c)dy+g'(z)\phi(z,c)-g(z)\phi'(z,c)+g(y_c)\phi'(y_c,c)}{\phi(z,c)^2}dz\\
&=\int_0^{\pi}\left[\dfrac{\int_{y_c}^zg(y)u''(y)\phi_1(y,c)dy}{\phi(z,c)^2}+\left(\dfrac{g(z)}{\phi(z,c)}\right)'
+\dfrac{g(y_c)u'(y_c)}{\phi(z,c)^2}\right]dz\\
&=\mathrm{II}_1(gu'')+p.v.\int_0^{\pi}\left[\left(\dfrac{g(z)}{\phi(z,c)}\right)'
+\dfrac{g(y_c)u'(y_c)}{\phi(z,c)^2}\right]dz.
\end{align*}
Notice that for $c\in(-1,1)$, we have
\beno
\f{g(z)}{\phi(z,c)}-\f{g(y_c)}{u(z)-c}=\f{g(z)(1-\phi_1(z,c))}{(u(z)-c)\phi_1(z,c)}+\f{g(z)-g(y_c)}{u(z)-c}\in C([0,\pi]),
\eeno
then we get
\begin{align*}
&p.v.\int_0^{\pi}\left[\left(\f{g(z)}{\phi(z,c)}\right)'
+\f{g(y_c)u'(y_c)}{\phi(z,c)^2}\right]dz\\
&=\left(\f{g(z)}{\phi(z,c)}-\f{g(y_c)}{u(z)-c}\right)\bigg|_0^{\pi}
+p.v.\int_0^{\pi}\left[\left(\f{g(y_c)}{u(z)-c}\right)'
+\f{g(y_c)u'(y_c)}{\phi(z,c)^2}\right]dz\\
&=\f{g(y_c)}{u(0)-c}-\f{g(y_c)}{u(\pi)-c}+
p.v.\int_0^{\pi}\left[-\f{g(y_c)(u'(z)-u'(y_c))}{(u(z)-c)^2}
+\f{g(y_c)u'(y_c)}{(u(z)-c)^2}\left(\f{1}{\phi(z,c)^2}-1\right)\right]dz\\
&=\f{g(y_c)(u(0)-u(\pi))}{\rho(c)}-g(y_c)
p.v.\int_0^{\pi}\f{(u'(z)-u'(y_c))}{(u(z)-c)^2}dz+g(y_c)u'(y_c)\mathrm{II}(c).
\end{align*}
From Remark \ref{rem:Pi-11} and $p.v.\int_{0}^{\pi}\f{1}{u(z)-c}dz=0$,
we know that
\begin{align*}
&p.v.\int_{0}^{\pi}\f{u'(z)-u'(y_c)}{(u(z)-c)^2}dz=\mathrm{II}_{1,1}(u'')(c)\\
&=\partial_c\Big(p.v.\int_0^{\pi}\frac{u'(y)-u'(0)}{u(y)-c}dy\Big)=\partial_c\Big(\ln\frac{u(\pi)-c}{c-u(0)}\Big)=\frac{u(0)-u(\pi)}{\rho(c)}.
\end{align*}
This gives
\beno
p.v.\int_0^{\pi}\left[\left(\f{g(z)}{\phi(z,c)}\right)'
+\f{g(y_c)u'(y_c)}{\phi(z,c)^2}\right]dz=u'(y_c)g(y_c)\mathrm{II}(c),
\eeno
 which implies
\begin{align*}
&\int_0^{\pi}\dfrac{\int_{y_c}^zf(\al,y)\phi(y,c)dy}{\phi(z,c)^2}dz
=\mathrm{II}_1(gu'')+u'(y_c)g(y_c)\mathrm{II}(c)=\Lambda_2(g)(y_c)/\rho(c).
\end{align*}
Therefore,
\begin{align*}
\int_0^{\pi}\widehat{\psi}_o(t,\al,y)f(y)dy
=-\int_{-1}^{1}\f{\Lambda_1(\widehat{\om}_{o})(y_c)\Lambda_2(g)(y_c)}{(\mathrm{A}(c)^2+\mathrm{B}(c)^2)u'(y_c)}e^{-i\al ct}dc,
\end{align*}
which proves the lemma.
\end{proof}

\begin{remark}\label{Rmk:5.9}
From the above proof, we know that if $g\in H^2(0,\pi)$, then  we have
$$p.v.\int_0^{\pi}\left[\left(\f{g(z)}{\phi(z,c)}\right)'
+\f{g(y_c)u'(y_c)}{\phi(z,c)^2}\right]dz=\f{g(z)}{\phi(z,c)}\Big|_{z=0}^{\pi}+u'(y_c)g(y_c)\mathrm{II}(c).$$\end{remark}
\begin{lemma}\label{lem:kernel identityeven}
Let $f(\al,y)=(\pa_y^2-\al^2)g(\al,y)$ with $g\in H^2(0,\pi)$ and $g'(0)=g'(\pi)=0$. Then we have
\beno
\int_0^{\pi}\widehat{\psi}_e(t,\al,y)f(\al,y)dy=
-\int_{-1}^{1}K_e(c,\al)e^{-i\al ct}dc,
\eeno
where
\begin{align}\label{eq:defK_e}
K_e(c,\al)=\f{\Lambda_3(\widehat{\omega}_{e})(y_c)\Lambda_4(g)(y_c)}{u'(y_c)(\rmA_1(c)^2+\rmB_1(c)^2)}.
\end{align}
\end{lemma}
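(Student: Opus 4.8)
The plan is to follow the proof of Lemma \ref{Lem: K_o identity} for the even part, the new features being that $g$ now satisfies the Neumann conditions $g'(0)=g'(\pi)=0$ instead of Dirichlet conditions, and that $\widetilde{\Phi}_e$ from Lemma \ref{Lem:psi(t)} carries the extra term $\big(\nu_{j-}^e-\nu_{j+}^e\big)\phi$ that is absent in the odd case. First I would insert the formula for $\widehat{\psi}_e$ from Lemma \ref{Lem:psi(t)}, pair it against $f=(\pa_y^2-\al^2)g$, integrate over $[0,\pi]$, and exchange the $c$- and $y$-integrals by splitting the $c$-range at $c=u(y)$ exactly as in the odd case. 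This writes $\int_0^\pi\widetilde{\Phi}_e\,f\,dy$ as a ``$\mu_2$-part'', built from $\mu_{*-}^e-\mu_{*+}^e=\f{2}{\al}\mu_2$, plus a ``$\nu$-part'', built from $\nu_{j-}^e-\nu_{j+}^e=\f{2}{\al}\nu_j$ with $\nu_j=-\mu_2/(\phi\phi')(j\pi,c)$.

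For the $\mu_2$-part I would integrate by parts twice in $y$, using $\phi(y_c,c)=0$ (so the $g'(y_c)$ boundary term drops out), $\phi'(y_c,c)=u'(y_c)$, and $\phi''-\al^2\phi=u''\phi_1$; by the $H^2$ version of Remark \ref{Rmk:5.9} this produces $\mathrm{II}_1(gu'')+u'(y_c)g(y_c)\mathrm{II}(c)=\Lambda_2(g)(y_c)/\rho(c)$ together with the extra boundary contribution $g(\pi)/\phi(\pi,c)-g(0)/\phi(0,c)$ that vanished in the Dirichlet case. For the $\nu$-part I would integrate by parts once on each of $[0,y_c]$ and $[y_c,\pi]$; here the Neumann conditions leave the boundary terms $g(0)\phi'(0,c)$ and $-g(\pi)\phi'(\pi,c)$, the interior terms $-\rmE_0(gu'')$ and $\rmE_1(gu'')$, and the terms $\mp g(y_c)u'(y_c)$.

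The key step is a cancellation. Substituting $\nu_j=-\mu_2/(\phi\phi')(j\pi,c)$, the $g(0)$ and $g(\pi)$ boundary terms from the $\nu$-part equal $-\f{2}{\al}\mu_2\,g(0)/\phi(0,c)+\f{2}{\al}\mu_2\,g(\pi)/\phi(\pi,c)$, which exactly cancel the boundary contribution $-\f{2}{\al}\mu_2\big(g(\pi)/\phi(\pi,c)-g(0)/\phi(0,c)\big)$ coming from the $\mu_2$-part; this is precisely why the even ansatz must carry the $\nu_{j\pm}^e\phi$ terms. Next I would rewrite the surviving interior $\nu$-terms with $J_j=(\phi\phi')(j\pi,c)\rho/I=-u'\rho^2/(\phi\phi')((1-j)\pi,c)$, which turns $1/(\phi\phi')(j\pi,c)$ into $-J_{1-j}/(u'\rho^2)$ and collapses them into $-\f{2}{\al}\mu_2\Lambda_{4,1}(g)(y_c)/\rho^2$. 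Combined with the $\mu_2$-part $-\f{2}{\al}\mu_2\Lambda_2(g)/\rho$ and the identity $\Lambda_4=\rho\Lambda_2+\Lambda_{4,1}$, this gives $\int_0^\pi\widetilde{\Phi}_e\,f\,dy=-\f{2}{\al}\mu_2\Lambda_4(g)(y_c)/\rho^2$, hence $K_e=\f{1}{\pi}\,\mu_2\Lambda_4(g)/\rho^2$.

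Finally I would identify $\mu_2$ with $\Lambda_3$. Using the flow-specific relations $\rho=u'(y_c)^2=\sin^2y_c$ and $u''(y_c)=\cos y_c$, together with $\rmA=\sin^3y_c\,\mathrm{II}$, $\rmB=\pi\cos y_c$, $\rmC_e=\pi\sin y_c\,\widehat{\om}_e(y_c)$, $\rmD_e=u'\rho\,\mathrm{II}_1(\widehat{\om}_e)$ and $J_j=(\phi\phi')(j\pi,c)\rho/I$, a direct computation gives $\rmA\rmC_e+\rmB\rmD_e=\pi\rho\Lambda_1(\widehat{\om}_e)/u'$ and $\f{J_j}{\rho}(\rmC_e+\rmE_{1-j}\rmB)\big|_{j=0}^{1}=\pi\Lambda_{3,1}(\widehat{\om}_e)/u'$, so the bracket defining $\mu_2$ equals $\pi\Lambda_3(\widehat{\om}_e)/u'$ and $\mu_2=\pi\rho^2\Lambda_3(\widehat{\om}_e)/\big(u'(\rmA_1^2+\rmB_1^2)\big)$. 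Substituting into $K_e$ yields the claimed formula. The main obstacle is the boundary-term bookkeeping in the second and third steps — keeping orientations and the two subintervals straight so that the $\mu_2$- and $\nu$-boundary terms cancel — followed by the algebraic reduction of $\mu_2$ to $\Lambda_3$, which rests on the identities $\rho=u'^2$ and $u''=\cos y_c$.
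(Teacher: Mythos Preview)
Your proposal is correct and follows essentially the same route as the paper's proof: expand $\widehat{\psi}_e$ via Lemma~\ref{Lem:psi(t)}, swap the $y$- and $c$-integrals, integrate by parts using Remark~\ref{Rmk:5.9} for the $\mu_2$-piece and directly on $[0,y_c]$, $[y_c,\pi]$ for the $\nu$-pieces, cancel the $g(0),g(\pi)$ boundary terms against each other via $\nu_j=-\mu_2/(\phi\phi')(j\pi,c)$, and then rewrite everything through $J_j$, $\Lambda_4$ and the identity $\mu_2=\pi\rho^2\Lambda_3(\widehat{\omega}_e)/\big(u'(\rmA_1^2+\rmB_1^2)\big)$. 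Your account of the boundary cancellation is in fact more explicit than the paper's, which simply records the post-cancellation expression and notes ``Here we used $\nu_j=-\mu_2/(\phi\phi')(j\pi,c)$''.
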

\begin{proof}
By Lemma \ref{Lem:psi(t)}, we have
\begin{align*}
&\int_0^{\pi}\widehat{\psi}_e(t,\al,y)f(\al,y)dy\\
&=\dfrac{1}{\pi}\int_0^{\pi}f(\al,y)\int_{-1}^{u(y)}\mu_2(c)\phi(y,c)\int_{\pi}^y\dfrac{1}{\phi(z,c)^2}dze^{-i\al ct}dc dy\\
&\quad+\dfrac{1}{\pi}\int_0^{\pi}f(\al,y)\int_{u(y)}^{1}\mu_2(c)\phi(y,c)\int_0^y\dfrac{1}{\phi(z,c)^2}dze^{-i\al ct}dc dy\\
&\quad+\dfrac{1}{\pi}\int_0^{\pi}f(\al,y)\int_{u(y)}^{1}\nu_0(c)\phi(y,c)e^{-i\al ct}dc dy
+\dfrac{1}{\pi}\int_0^{\pi}f(\al,y)\int_{-1}^{u(y)}\nu_1(c)\phi(y,c)e^{-i\al ct}dc dy\\
&=-\f{1}{\pi}\int_{-1}^{1}\mu_2(c)e^{-i\al ct}
\int_0^{\pi}\dfrac{\int_{y_c}^zf(y)\phi(y,c)dy}{\phi(z,c)^2}dz dc\\
&\quad+\f{1}{\pi}\int_{-1}^{1}\nu_0(c)e^{-i\al ct}
\int_0^{y_c}f(y)\phi(y,c)dy dc
+\f{1}{\pi}\int_{-1}^{1}\nu_1(c)e^{-i\al ct}
\int_{y_c}^{\pi}f(y)\phi(y,c)dy dc.
\end{align*}

We have
\begin{align*}
\int_0^{\pi}\dfrac{\int_{y_c}^zf(y)\phi(y,c)dy}{\phi(z,c)^2}dz
=\mathrm{II}_1(gu'')+p.v.\int_0^{\pi}\left[\left(\dfrac{g(z)}{\phi(z,c)}\right)'
+\dfrac{g(y_c)u'(y_c)}{\phi(z,c)^2}\right]dz,
\end{align*}
and by Remark \ref{Rmk:5.9},
\begin{align*}
p.v.\int_0^{\pi}\left[\left(\f{g(z)}{\phi(z,c)}\right)'
+\f{g(y_c)u'(y_c)}{\phi(z,c)^2}\right]dz
=\f{g(y_c)}{\rho(c)}\rmA(c)-\f{g(0)}{\phi(0,c)}+\f{g(\pi)}{\phi(\pi,c)}.
\end{align*}
On the other hand, we have
\begin{align*}
&\int_0^{y_c}f(y)\phi(y,c)dy=-\rmE_0(gu'')-g(y_c)u'(y_c)+g(0)\phi'(0,c),\\
&\int_{y_c}^{\pi}f(y)\phi(y,c)dy=\rmE_1(gu'')+g(y_c)u'(y_c)-g(\pi)\phi'(\pi,c).
\end{align*}
Then we conclude that
\begin{align*}
&\int_0^{\pi}\widehat{\psi}_e(t,\al,y)f(y)dy\\
&=-\f{1}{\pi}\int_{-1}^{1}
\Big[\mu_2\Big(\f{g(y_c)\rmA(c)}{\rho(c)}
+\mathrm{II}_1(gu'')\Big)
+\nu_0\big(\rmE_0(gu'')+g(y_c)u'(y_c)\big)\\
&\quad\quad\quad\quad\quad\quad
-\nu_1\big(\rmE_1(gu'')+g(y_c)u'(y_c)\big)\Big]e^{-i\al ct}dc,
\end{align*}
Here we used
\beno
\nu_{0}(c)=-\f{\mu_2(c)}
{\phi(0,c)\phi'(0,c)},\quad\nu_{1}(c)=-\f{\mu_2(c)}
{\phi(\pi,c)\phi'(\pi,c)}.
\eeno

Recall that
\begin{align*}
\mu_2(c)&=\f{I(c)^2(\rmA\rmC_{e}+\rmB\rmD_{e})+I(c)\phi(j\pi,c)\phi'(j\pi,c)(\rmC_{e}+E_{1-j}B)|_{j=0}^1}
{I(c)^2(\rmA_1(c)^2+\rmB_1(c)^2)/\rho(c)^2}\\
&=\f{\rho(c)^2(\rmA\rmC_{e}+\rmB\rmD_{e})+\rho(c)J_j(c)(\rmC_{e}+E_{1-j}B)|_{j=0}^1
}{\rmA_1(c)^2+\rmB_1(c)^2}\\
&=\pi\rho^2\f{\big(\rmA\rho(c)\frac{\widehat{\omega}_{e}(y_c)}{u'(y_c)}+\rho(c)\frac{u''(y_c)}{u'(y_c)^2}u'(y_c)
\rho(c)\textrm{II}_1(\widehat{\om}_{e})\big)+J_j\big(\frac{u''(y_c)}{u'(y_c)^2}\rmE_{1-j}
+\frac{\widehat{\omega}_{e}(y_c)}{u'(y_c)}\big)|_{j=0}^1}
{\rmA_1(c)^2+\rmB_1(c)^2}\\
&=\f{\pi\rho(c)^2}{u'(y_c)}\f{\rho(c)\Lambda_1(\widehat{\omega}_{e})(y_c)+\Lambda_{3,1}(\widehat{\omega}_{e})(y_c)}
{\rmA_1(c)^2+\rmB_1(c)^2}=\f{\pi\rho(c)^2}{u'(y_c)}\f{\Lambda_3(\widehat{\omega}_{e})(y_c)}{\rmA_1(c)^2+\rmB_1(c)^2}.
\end{align*}
Then we have
\begin{align*}
&\mu_2\Big(\f{g(y_c)\rmA(c)}{\rho(c)}
+\mathrm{II}_1(gu'')\Big)
+\nu_0\big(\rmE_0(gu'')+g(y_c)u'(y_c)\big)
-\nu_1\big(\rmE_1(gu'')+g(y_c)u'(y_c)\big)\\&=\mu_2(c)\f{\Lambda_2(g)(y_c)}{\rho(c)}-\f{\mu_2(c)\big(\rmE_0(gu'')+g(y_c)u'(y_c)\big)}
{\phi(0,c)\phi'(0,c)}+\f{\mu_2(c)\big(\rmE_1(gu'')+g(y_c)u'(y_c)\big)}
{\phi(\pi,c)\phi'(\pi,c)}\\
&=\mu_2\f{\Lambda_2(g)(y_c)}{\rho(c)}+\f{\mu_2J_1\big(\rmE_0(gu'')+g(y_c)u'(y_c)\big)}
{u'(y_c)\rho(c)^2}-\f{\mu_2J_0\big(\rmE_1(gu'')+g(y_c)u'(y_c)\big)}
{u'(y_c)\rho(c)^2}\\
&=\mu_2(c)\f{\Lambda_2(g)(y_c)}{\rho(c)}+\f{\mu_2(c)\Lambda_{4,1}(g)(y_c)}
{\rho(c)^2}=\f{\mu_2(c)\Lambda_{4}(g)(y_c)}
{\rho(c)^2}\\
&=\f{\pi\rho(c)^2}{u'(y_c)}\f{\Lambda_3(\widehat{\omega}_{e})(y_c)}{\rmA_1(c)^2+\rmB_1(c)^2}\f{\Lambda_{4}(g)(y_c)}
{\rho(c)^2}=\pi\f{\Lambda_3(\widehat{\omega}_{e})(y_c)\Lambda_{4}(g)(y_c)}{u'(y_c)(\rmA_1(c)^2+\rmB_1(c)^2)}=\pi K_e(c,\al),
\end{align*}
which gives the lemma.
\end{proof}

\section{Linear inviscid damping}

The decay estimates of the velocity rely on the following uniform $W^{1,2}$ estimates of the kernels $K_o$ and $K_e$.

\begin{proposition}\label{Prop: estimate K_o}
Let $K_o$ be defined by \eqref{eq:defK_o} with $g\in H^2(0,\pi)$, $g(0)=g(\pi)=0$ and $\widehat{\om}_o=\f{1}{2}(\widehat{\om}_0(\al,y)-\widehat{\om}_0(\al,-y))\in H^2$. Then $K_o(-1, \al)=K_o(1,\al)=0$, and there exists a constant $C$ independent of $\al$ such that,
\begin{align*}
&\|K_o(\cdot,\al)\|_{L_c^1(-1,1)}
\leq C\|\widehat{\om}_o\|_{L^2}\|g\|_{L^2},\\
&\|(\pa_cK_o)(\cdot,\al)\|_{L_c^1(-1,1)}
\leq C\|\widehat{\om}_o\|_{H^1}\|g\|_{H^1},\\
&\|(\pa_c^2K_o)(\cdot, \al)\|_{L_c^1(-1,1)}
\leq C\al^{\f12}\|\widehat{\om}_o\|_{H^2}\|f\|_{L^2}.
\end{align*}
\end{proposition}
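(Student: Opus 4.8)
The plan is to exploit the product structure of $K_o$ together with the substitution $c=u(y_c)=-\cos y_c$, for which $dc=u'(y_c)\,dy_c=\sin y_c\,dy_c$ and $\pa_c=\f{1}{\sin y_c}\pa_{y_c}$. This substitution is tailored to cancel the factor $1/u'(y_c)$ in the definition of $K_o$: regarding $K_o$ as a function of $y_c\in(0,\pi)$,
\[
\int_{-1}^{1}|K_o(c,\al)|\,dc=\int_0^{\pi}\f{|\Lambda_1(\widehat{\om}_o)(y_c)|\,|\Lambda_2(g)(y_c)|}{\rmA(c)^2+\rmB(c)^2}\,dy_c,
\]
and by Cauchy--Schwarz this is at most the product of $\big(\int_0^{\pi}|\Lambda_1(\widehat{\om}_o)|^2/(\rmA^2+\rmB^2)\,dy_c\big)^{1/2}$ with the analogous factor for $\Lambda_2(g)$. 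Hence the first bound reduces to the weighted $L^2$ estimates
\[
\int_0^{\pi}\f{|\Lambda_1(\widehat{\om}_o)|^2}{\rmA^2+\rmB^2}\,dy_c\le C\|\widehat{\om}_o\|_{L^2}^2,\qquad \int_0^{\pi}\f{|\Lambda_2(g)|^2}{\rmA^2+\rmB^2}\,dy_c\le C\|g\|_{L^2}^2,
\]
and, after expanding $\pa_cK_o$ and $\pa_c^2K_o$ by the Leibniz rule, the derivative bounds reduce to the same kind of weighted estimates for $\pa_c\Lambda_j,\pa_c^2\Lambda_j$, combined with the derivative bounds for $1/(\rmA^2+\rmB^2)$ from Proposition \ref{Prop: A^2+B^2} and with the Jacobian $\sin y_c$ that partly absorbs the singular factors generated by each $\pa_c$.

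Next I would estimate the three building blocks making up $\Lambda_1=\Lambda_{1,1}+\Lambda_{1,2}$ and $\Lambda_2=\Lambda_{2,1}+\Lambda_{2,2}$. The summand $u'(y_c)\rho(c)\mathrm{II}(c)\,\varphi(y_c)$ common to $\Lambda_{1,2}$ and $\Lambda_{2,2}$ is exactly $\rmA(c)\varphi(y_c)$ (by the definition $\rmA=\sin^3y_c\,\mathrm{II}$ and $u'\rho=\sin^3y_c$), so $\rmA\varphi/\sqrt{\rmA^2+\rmB^2}\le|\varphi(y_c)|$ and this clean part contributes at most $\|\varphi\|_{L^2}$. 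For the $\mathcal{L}_0$ pieces I would use the definition of $\mathcal{L}_0$ and the uniform bounds on $\phi_1$ and its $y$- and $c$-derivatives from Proposition \ref{prop:phi1} together with Lemma \ref{Rmk:u_1}; here the kernel $\f{1}{(u(z)-c)^2}\big(\f{\phi_1(y,c)}{\phi_1(z,c)^2}-1\big)$ is integrable because the bracket vanishes quadratically as $y,z\to y_c$. The $\mathrm{II}_{1,1}$ pieces I would treat through the representation $\mathrm{II}_{1,1}(\varphi)=-\pa_c\big(H(\Int\varphi)/(2\sin y_c)\big)$ of Remark \ref{rem:Pi-11}: expanding the $\pa_c$ produces one term proportional to $H(\pa_{y_c}\Int\varphi)$, bounded in $L^2$ by the Hilbert-transform estimate, and one proportional to $\cos y_c\,H(\Int\varphi)/\sin y_c$, controlled using $H(\Int\varphi)(0)=0$ (the integrand being odd at $y_c=0$) together with a Hardy inequality and the gain of one derivative in $\Int(\varphi)$.

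The hard part will be the derivative estimates, and above all the second one with its loss of $\al^{1/2}$. Since $\pa_c=\f{1}{\sin y_c}\pa_{y_c}$, each derivative beyond the one absorbed by the Jacobian produces negative powers of $\sin y_c$ that are singular at the critical points $y_c=0,\pi$; for $\pa_c^2K_o$ one meets weights as strong as $\sin^{-4}y_c$, arising both from $\pa_c^2\big(1/(\rmA^2+\rmB^2)\big)\le C/\big((1+\al\sin y_c)^2\sin^4y_c\big)$ in Proposition \ref{Prop: A^2+B^2} and from two $c$-derivatives falling on the Hilbert-transform part of $\mathrm{II}_{1,1}$. The plan is to balance these against the vanishing prefactors $\rho=\sin^2y_c$ and $u'=\sin y_c$ carried by every summand of $\Lambda_j$, against the lower bound $\rmA^2+\rmB^2\ge C^{-1}(1+\al\sin y_c)^2$, and against the vanishing of $H(\Int\varphi)$ and its derivatives at the critical points, using Lemma \ref{Lem: II} to track the $\al$-dependence of $\mathrm{II}$ and its derivatives. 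I expect the uncancelled imbalance to concentrate in the transition region $\sin y_c\sim1/\al$, where the weight $(1+\al\sin y_c)^{-2}$ changes regime; a careful accounting there, in which the $H^2$-regularity of $\widehat{\om}_o$ and the $L^2$-bound on $f=(\pa_y^2-\al^2)g$ are spent, should yield precisely the single power $\al^{1/2}$, the integrals being $\al$-uniform away from that region.

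Finally, the boundary values $K_o(\pm1,\al)=0$ fall out of the same analysis. As $y_c\to0,\pi$ the potentially non-vanishing Hilbert-transform contributions to $\Lambda_{1,1}$ and $\Lambda_{2,1}$ cancel (e.g.\ $-\tfrac12\cos y_c\,H(\pa_{y_c}\Int\varphi)$ against $\tfrac12\cos^2y_c\,H(\Int\varphi)/\sin y_c$ as $y_c\to0$), so that each of $\Lambda_1(\widehat{\om}_o)$ and $\Lambda_2(g)$ vanishes at least linearly in $\sin y_c$; combined with $\rmA^2+\rmB^2\ge C^{-1}$ this gives $K_o=\Lambda_1\Lambda_2/\big((\rmA^2+\rmB^2)u'\big)=O(\sin y_c)\to0$ at $c=\pm1$.
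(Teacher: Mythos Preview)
Your approach is the paper's approach: reduce to weighted $L^2_{y_c}$ estimates for $\Lambda_1,\Lambda_2$ and their $c$-derivatives, handle $\mathrm{II}_{1,1}$ via the Hilbert-transform representation, $\mathcal{L}_0$ via the $\phi_1$-bounds, and the weight $1/(\rmA^2+\rmB^2)$ via Proposition~\ref{Prop: A^2+B^2}. The $\al^{1/2}$ does indeed arise from integrating $\al/(1+\al\sin y_c)$ in $L^2_{y_c}$, concentrated in the transition region $\sin y_c\sim1/\al$, and your boundary-value argument (cancellation between the two pieces of $\Lambda_{1,1}$ at $y_c=0,\pi$) is correct in spirit.

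One mechanism you under-sell and will need to make explicit for the $W^{1,1}$ and $W^{2,1}$ bounds: the boundary vanishing $\widehat{\om}_o(0)=\widehat{\om}_o(\pi)=0$ (from oddness) and $g(0)=g(\pi)=0$ (hypothesis). In the paper this is what unlocks the improved estimate $\|\sin y_c\,\mathrm{II}_{1,1}(\varphi)\|_{L^2}\le C\|\varphi\|_{H^1}$ of Lemma~\ref{Lem: II_1,1odd} (replacing the merely-$L^2$ estimate of Lemma~\ref{Lem: II_1,1L^2}) and the $\varphi/u'\in L^p$ hypothesis in Lemmas~\ref{Lem:H^1II_1,2_periodic}--\ref{Lem:H^2II_1,2_periodic}; these yield the extra factors of $\sin y_c$ in $\Lambda_j$ that cancel the $\sin^{-k}y_c$ weights coming from $\pa_c$-derivatives and from $\pa_c^k(1/(\rmA^2+\rmB^2))$. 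Your sketch invokes Hardy only for the basic $L^2$ bound of $\rho\,\mathrm{II}_{1,1}$; without the endpoint vanishing of $\varphi$ itself (not just of $H(\Int\varphi)$) the derivative estimates will not close. Your $\mathcal{L}_0$ argument (``the bracket vanishes quadratically'') is also too coarse: the precise kernel bound is $C\min\{\al^2|z-y_c|^2,1\}/(u(z)-c)^2$, and tracking the two regimes in this minimum is exactly what produces the correct $\al$- and $\sin y_c$-weights (cf.\ Lemma~\ref{Lem:L^2L_k}).
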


\begin{proposition}\label{Prop: estimate K_e}
Let $K_e$ be defined by \eqref{eq:defK_e} with $g\in H^2(0,\pi)$, $g'(0)=g'(\pi)=0$ and
$\widehat{\om}_e=\f{1}{2}(\widehat{\om}_0(\al,y)+\widehat{\om}_0(\al,-y))\in H^2$. Then $K_e(-1,\al)=K_e(1,\al)=0$,
and there exists a constant $C$ independent of $\al$ such that,
\begin{align*}
&\|K_e(\cdot,\al)\|_{L_c^1(-1,1)}
\leq C\|\widehat{\om}_e\|_{L^2}\|g\|_{L^2},\\
&\|(\pa_cK_e)(\cdot, \al)\|_{L_c^1(-1,1)}
\leq C|\al|^{\f12}\|\widehat{\om}_e\|_{H^1}(\|g'\|_{L^2}+\al \|g\|_{L^2}),\\
&\|(\pa_c^2K_e)(\cdot,\al)\|_{L_c^1(-1,1)}
\leq C|\al|^{\f32}\|\widehat{\om}_e\|_{H^2}\|f\|_{L^2}.
\end{align*}
\end{proposition}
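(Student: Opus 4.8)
The plan is to follow the template of Proposition \ref{Prop: estimate K_o} for the odd kernel, writing
\[
K_e(c,\al)=\frac{1}{\rmA_1(c)^2+\rmB_1(c)^2}\cdot\frac{\Lambda_3(\widehat{\om}_e)(y_c)\,\Lambda_4(g)(y_c)}{u'(y_c)},
\]
and estimating the three factors separately before recombining them by the Leibniz rule. Proposition \ref{Prop: A_1^2+B_1^2} already furnishes the pointwise bounds on $1/(\rmA_1^2+\rmB_1^2)$ and its first two $c$-derivatives in terms of the weight $1+\al\sin y_c$, so the substance of the argument is to prove matching weighted bounds on $\Lambda_3(\widehat{\om}_e)$ and $\Lambda_4(g)/u'(y_c)$ together with their $c$-derivatives up to second order. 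Once these are in hand I would pass to the variable $y_c$ through $c=u(y_c)=-\cos y_c$, so that $dc=u'(y_c)\,dy_c=\sin y_c\,dy_c$ and $\pa_c=\frac{1}{\sin y_c}\pa_{y_c}$, and close each inequality by Cauchy--Schwarz in $y_c$, bounding the $\widehat{\om}_e$-factor by $\|\widehat{\om}_e\|_{H^m}$ and the $g$-factor by the relevant norm of $g$ or of $f=(\pa_y^2-\al^2)g$. The boundary vanishing $K_e(\pm1,\al)=0$ is checked directly from the explicit formula, using the endpoint behaviour of $\rho(c)=\sin^2 y_c$, of the $J_j$ and of the $\rmE_{1-j}$ exactly as in the odd case.

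The core of the work is the control of the constituents of $\Lambda_3=\rho\Lambda_1+\Lambda_{3,1}$ and $\Lambda_4=\rho\Lambda_2+\Lambda_{4,1}$, where $\Lambda_1,\Lambda_2$ are assembled from $\mathrm{II}_{1,1}$, $\mathcal{L}_0$ and $\mathrm{II}$. For the principal-value term I would use the Hilbert-transform representation $\mathrm{II}_{1,1}(\varphi)=-\pa_c\big(H(\Int\varphi)/(2\sin y_c)\big)$ from Remark \ref{rem:Pi-11}; this is the device that turns the singular integral into $L^2$-controlled quantities, since every further $\pa_c$ falls onto a Hilbert transform and is absorbed by the $L^2$-boundedness of $H$ on $\mathbb{T}$, together with the commutator identities relating $\pa_c$ and $\Int$. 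The smooth piece $\mathcal{L}_0(\varphi)$ and its $\big(\frac{\pa_y}{u'(y_c)}+\pa_c\big)$-derivatives are estimated as in Proposition \ref{prop:phi1}, using $|1-\phi_1^{-2}|\le C\min\{1,\al^2|y-y_c|^2\}$ and the good-derivative bounds for $\phi_1$, while the factor $\mathrm{II}$ is controlled by Lemma \ref{Lem: II}. The boundary pieces $\Lambda_{3,1},\Lambda_{4,1}$ are governed by the $J_j$-estimates of Lemma \ref{Lem: J_j high}.

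With the weighted pointwise bounds on $\Lambda_3(\widehat{\om}_e)$, $\Lambda_4(g)/u'(y_c)$ and their first two $c$-derivatives established, I would expand $\pa_c K_e$ and $\pa_c^2 K_e$ by the Leibniz rule, insert these together with the three estimates of Proposition \ref{Prop: A_1^2+B_1^2}, and integrate. Under the change of variables $\int_{-1}^1 dc$ becomes $\int_0^\pi \sin y_c\,dy_c$, so it remains only to verify that each resulting weight is integrable against $\sin y_c\,dy_c$ and that Cauchy--Schwarz reproduces exactly $C\|\widehat{\om}_e\|_{L^2}\|g\|_{L^2}$, then $C|\al|^{1/2}\|\widehat{\om}_e\|_{H^1}(\|g'\|_{L^2}+\al\|g\|_{L^2})$, and finally $C|\al|^{3/2}\|\widehat{\om}_e\|_{H^2}\|f\|_{L^2}$ for $m=0,1,2$. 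The additional powers of $\al$ relative to the odd kernel arise precisely when the $c$-derivatives land on $1/(\rmA_1^2+\rmB_1^2)$, which by Proposition \ref{Prop: A_1^2+B_1^2} costs $\al^6/(1+\al\sin y_c)^8$ and $\al^8/(1+\al\sin y_c)^{10}$.

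The main obstacle I anticipate is the second-derivative estimate for $\pa_c^2 K_e$. Two $c$-derivatives acting on the principal-value term $\mathrm{II}_{1,1}$ (and on $\mathcal{L}_0$) produce the most singular contributions near the endpoints $y_c=0,\pi$, where each $\pa_c=\frac{1}{\sin y_c}\pa_{y_c}$ manufactures an extra factor $1/\sin y_c$ and the weight $1/u'(y_c)=1/\sin y_c$ already blows up. Showing that the degeneration of $\rho(c)=\sin^2 y_c$ and of the boundary terms $\Lambda_{3,1},\Lambda_{4,1}$ at the endpoints is exactly strong enough to render the integrand integrable against $\sin y_c\,dy_c$ --- while keeping the $\al$-dependence no worse than $|\al|^{3/2}$ --- is the delicate bookkeeping that pins down the powers in the statement. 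I expect this to require splitting the $c$-integral into the regions $\sin y_c\lesssim 1/\al$ and $\sin y_c\gtrsim 1/\al$ and using the correspondingly sharp form of each bound from Lemma \ref{Lem: II}, Lemma \ref{Lem: J_j high} and Proposition \ref{Prop: A_1^2+B_1^2}.
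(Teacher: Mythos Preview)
Your plan is essentially the paper's own proof: factor $K_e$ as $(\rmA_1^2+\rmB_1^2)^{-1}\cdot \Lambda_3(\widehat\om_e)\Lambda_4(g)/u'(y_c)$, feed in Proposition \ref{Prop: A_1^2+B_1^2} for the first factor, decompose $\Lambda_3=\rho\Lambda_1+\Lambda_{3,1}$ and $\Lambda_4=\rho\Lambda_2+\Lambda_{4,1}$, treat $\mathrm{II}_{1,1}$ via the Hilbert transform (Lemma \ref{Lem: II_1,1L^2}), $\mathcal L_0$ via Lemmas \ref{Lem:L^2L_k}--\ref{Lem:H^2II_1,2_periodic}, $\mathrm{II}$ via Lemma \ref{Lem: II}, and assemble by Leibniz. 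The paper packages the final Cauchy--Schwarz step in the $\cL^p$ shorthand of section \ref{K_oK_e}, which is exactly your ``integrate against $\sin y_c\,dy_c$ and apply Cauchy--Schwarz''.

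The one place where your outline is too quick is the claim that $\Lambda_{3,1},\Lambda_{4,1}$ are ``governed by the $J_j$-estimates of Lemma \ref{Lem: J_j high}''. The factor multiplying $J_j$ in
\[
\Lambda_{3,1}(\widehat\om_e)=J_j\Big(\tfrac{u''(y_c)}{u'(y_c)}\rmE_{1-j}(\widehat\om_e)+\widehat\om_e(y_c)\Big)\Big|_{j=0}^{1}
\]
carries a $1/\sin y_c$ and the exponential growth of $\phi_1((1-j)\pi,c)$ hidden in $\rmE_{1-j}$; at the $L^2$ level the $J_j$ bound absorbs this, but for the $W^{1,1}$ and $W^{2,1}$ estimates one needs an \emph{extra} power of $|(1-j)\pi-y_c|$ from the bracket itself. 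The paper extracts this by a cancellation argument (Lemma \ref{Lem:Lambda_3,1 4,1}): writing
\[
u''(y_c)\rmE_{1-j}(\widehat\om_e)+u'(y_c)\widehat\om_e(y_c)=\int_{y_c}^{(1-j)\pi}\big(u''(y_c)\phi_1(y,c)\widehat\om_e(y)-u''(y)\widehat\om_e(y_c)\big)\,dy
\]
and splitting the integrand into pieces that each gain a factor $|y-y_c|$ (via $|u''(y_c)-u''(y)|$, via $\phi_1-1\le C\min\{1,\al^2|y-y_c|^2\}\phi_1$, and via $\widehat\om_e(y)-\widehat\om_e(y_c)$). The analogous identity for $\Lambda_{4,1}(g)$ uses $g'(0)=g'(\pi)=0$. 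Without this cancellation the bookkeeping near $y_c=0,\pi$ does not close with the stated powers of $\al$; this, rather than the splitting $\sin y_c\lessgtr 1/\al$, is the genuine endpoint mechanism in the even case.
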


For the moment,  let us admit these two propositions and complete
the proof of Theorem \ref{Thm:LinInviDam}.

We have  $\widehat{\psi}(t,\al,y)=\widehat{\psi}_o(t,\al,y)+\widehat{\psi}_e(t,\al,y)$.  Using integration by parts,  Proposition \ref{Prop: estimate K_o} and Proposition \ref{Prop: estimate K_e}  ensure that
\begin{align*}
\|\widehat{\psi}_o(t,\al,\cdot)\|_{L^2}
&=2\sup_{\|f\|_{L^2}\leq 1}\left|\int_0^{\pi}\widehat{\psi}_o(t,\al,y)f(y)dy\right|\\
&\leq 2\sup_{\|f\|_{L^2}\leq1}\left|\int_{-1}^{1}K_o(c,\al)e^{-i\al ct}dc\right|
\leq C\f{1}{|\al|^{\f32}t^2}\|\widehat{\om}_o\|_{H^2},
\end{align*}
and
\begin{align*}
\|\widehat{\psi}_e(t,\al,\cdot)\|_{L^2}
&=2\sup_{\|f\|_{L^2}\leq1}\left|\int_0^{\pi}\widehat{\psi}_e(t,\al,y)f(y)dy\right|\\
&\leq 2\sup_{\|f\|_{L^2}\leq 1}\left|\int_{-1}^{1}K_e(c,\al)e^{-i\al ct}dc\right|
\leq C\f{1}{|\al|^{\f12}t^2}\|\widehat{\om}_e\|_{H^2}.
\end{align*}

Proposition \ref{Prop: estimate K_o} and Proposition \ref{Prop: estimate K_e} also imply that
\begin{align*}
&\al^2\|\widehat{\psi_o}(t,\al,\cdot)\|_{L^2}^2+\|\partial_y\widehat{\psi_o}(t,\al,\cdot)\|_{L^2}^2\\
&=-2\int_0^{\pi}\widehat{\psi_o}(t,\al,y)(\overline{\widehat{\psi_o}}''-\al^2\overline{\widehat{\psi_o}})(t,\al,y)dy
\leq \left\{
\begin{aligned}
&C\|\widehat{\om}_o\|_{L^2}\|\widehat{\psi_o}(t,\al,\cdot)\|_{L^2},\\
&C\f{1}{|\al t|}\|\widehat{\om}_o\|_{H^1}\|\widehat{\psi_o}(t,\al,\cdot)\|_{H^1},
\end{aligned}
\right.
\end{align*}
and
\begin{align*}
&\al^2\|\widehat{\psi_e}(t,\al,\cdot)\|_{L^2}^2+\|\partial_y\widehat{\psi_e}(t,\al,\cdot)\|_{L^2}^2\\
&=-2\int_0^{\pi}\widehat{\psi_e}(t,\al,y)(\overline{\widehat{\psi_e}}''-\al^2\overline{\widehat{\psi_e}})(t,\al,y)dy\\
&\leq\left\{
\begin{aligned}
&C\|\widehat{\om}_e(\al,\cdot)\|_{L^2_y}\|\widehat{\psi}_e(t,\al,\cdot)\|_{L^2_y},\\
&C\f{1}{|\al|^{\f12}|t|}\|\widehat{\om}_e(\al,\cdot)\|_{H^1_y}
(|\al|\|\widehat{\psi}_e(t,\al,\cdot)\|_{L^2_y}+\|\partial_y\widehat{\psi}_e(t,\al,\cdot)\|_{L^2_y}).
\end{aligned}
\right.
\end{align*}
Thus, we conclude that for $|t|\leq 1$,
\begin{align*}
&\|\widehat{V_o^1}\|_{L^2}+\|\widehat{V_o^2}\|_{L^2}
\leq |\al|\|\widehat{\psi_o}(t,\al,\cdot)\|_{L^2}+\|\partial_y\widehat{\psi_o}(t,\al,\cdot)\|_{L^2}
\leq C|\al|^{-1}\|\widehat{\om}_o\|_{L^2},\\
&\|\widehat{V_e^1}\|_{L^2}+\|\widehat{V_e^2}\|_{L^2}
\leq |\al|\|\widehat{\psi_e}(t,\al,\cdot)\|_{L^2}+\|\partial_y\widehat{\psi_e}(t,\al,\cdot)\|_{L^2}
\leq C|\al|^{-1}\|\widehat{\om}_e\|_{L^2},
\end{align*}
and for $|t|\geq 1$,
\begin{align*}
&\|\widehat{V_o^1}\|_{L^2}+\|\widehat{V_o^2}\|_{L^2}
\leq |\al|\|\widehat{\psi_o}(t,\al,\cdot)\|_{L^2}+\|\partial_y\widehat{\psi_o}(t,\al,\cdot)\|_{L^2}
\leq C\f{1}{|\al t|}\|\widehat{\om}_o\|_{H^1},\\
&\|\widehat{V_e^1}\|_{L^2}+\|\widehat{V_e^2}\|_{L^2}
\leq |\al|\|\widehat{\psi_e}(t,\al,\cdot)\|_{L^2}+\|\partial_y\widehat{\psi_e}(t,\al,\cdot)\|_{L^2}
\leq C\f{1}{|\al|^{\f12}|t|}\|\widehat{\om}_e\|_{H^1},
\end{align*}
Due to $V^1=V_o^1+V_e^1$ and $V^2=V_o^2+V_e^2$, we obtain
\beno
\|V^1\|_{L^2_{x,y}}+\|V^2\|_{L^2_{x,y}}\leq \f{C}{\langle t\rangle}\|\om_0\|_{H^{-\f12}_xH^1_y}.
\eeno
As $\|\widehat{V^2}(t,\al,\cdot)\|_{L^2}\leq C|\al|\|\widehat{\psi}(t,\al,\cdot)\|_{L^2}
\leq C\dfrac{|\al|^{\f12}}{t^2}\|\widehat{\om}_0(t,\al,\cdot)\|_{H^2}$, we infer that
\beno
\|V^2\|_{L^2_{x,y}}\leq \f{C}{\langle t^2\rangle}\|\om_0\|_{H^{\f12}_xH^2_y}.
\eeno
This completes the proof of Theorem \ref{Thm:LinInviDam}.\smallskip

Let us remark that if $\om_0(x,y)$ is odd in $y$, then we have
\begin{align*}
&\|V\|_{L^2_{x,y}}\leq \f{C}{\langle t\rangle}\|\om_0\|_{H^{-1}_xH^1_y},\quad \|V^2\|_{L^2_{x,y}}\leq \f{C}{\langle t^2\rangle}\|\om_0\|_{H^{-\f12}_xH^2_y}.
\end{align*}

\section{Estimates of some integral operators}\label{key term}

In this section, we present the estimates of some integral operators, which will be used to establish the $W^{2,1}$ estimates of $K_o$ and $K_e$.

\subsection{Basic properties of Hilbert transform}

Recall that the Hilbert transform $H$  on $\mathbb{T}$  is defined by
\beno
H(f)(x)=p.v.\int_{-\pi}^{\pi}\cot\f{x-y}{2}f(y)dy.
\eeno

\begin{lemma}\label{Lem: identity-Hilbert-periodic}
It holds that
\begin{align*}
&\f{d}{dx}\big(H(\varphi)(x)\big)=H(\varphi')(x),\\
&\f{d}{dx}\big(\sin x (H\varphi)'( x)-\cos x H\varphi(x)\big)
=\sin x (H\varphi)''(x)+\sin x H(\varphi)(x).
\end{align*}
If $\varphi$ is even, then
\begin{align*}
&\cot  x H(\varphi)(x)-H(\cot y\varphi)(x)
=-\int_{-\pi}^{\pi}\varphi(z)\,dz,\\
&\sin x(H\varphi)(x)-H(\sin y \varphi)(x)
=\int_{-\pi}^{\pi}(\cos x+\cos y) \varphi(y)\,dy,\\
&H(\cos y \varphi)-\cos x H(\varphi)
=\sin x\int_{-\pi}^{\pi} \varphi(y)dy,\\
&H(\varphi)(x)=-2p.v.\int_{0}^{\pi}\f{1}{\sin y}\ln\left|\frac{\sin\frac{x+y}{2}}{\sin\frac{x-y}{2}}\right|\sin y\varphi'(y)\,dy,
\end{align*}
and if $\varphi$ is odd, then
\begin{align*}
\sin x(H\varphi)(x)-H(\sin y\varphi)(x)=0.
\end{align*}
\end{lemma}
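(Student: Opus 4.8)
The plan is to treat the first two identities as direct consequences of the definition, to reduce the sine/cosine commutator identities to an elementary trigonometric cancellation, and to isolate the cotangent commutator (the only one with a genuine double singularity) as the delicate case.

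First I would rewrite $H$ as a convolution by substituting $s=x-y$, so that $H(\varphi)(x)=\mathrm{p.v.}\int_{-\pi}^{\pi}\cot\frac{s}{2}\,\varphi(x-s)\,ds$. Since the kernel $\cot\frac{s}{2}$ does not depend on $x$ and $\varphi'$ is bounded, differentiating under the principal-value integral gives $\frac{d}{dx}H(\varphi)(x)=\mathrm{p.v.}\int_{-\pi}^{\pi}\cot\frac{s}{2}\,\varphi'(x-s)\,ds=H(\varphi')(x)$, which is the first identity. The second identity is then a pure product-rule computation: expanding $\frac{d}{dx}\big(\sin x\,(H\varphi)'-\cos x\,H\varphi\big)$, the two terms $\pm\cos x\,(H\varphi)'$ cancel and one is left with $\sin x\,(H\varphi)''+\sin x\,H\varphi$.

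Next I would handle the identities in which the multiplier difference cancels the kernel pole. For a smooth multiplier $m$ one has $m(x)H(\varphi)(x)-H(m\varphi)(x)=\mathrm{p.v.}\int_{-\pi}^{\pi}\big(m(x)-m(y)\big)\cot\frac{x-y}{2}\,\varphi(y)\,dy$, and for $m=\sin$ or $m=\cos$ the factor $m(x)-m(y)$ carries a $\sin\frac{x-y}{2}$ that kills the pole: indeed $(\sin x-\sin y)\cot\frac{x-y}{2}=2\cos\frac{x+y}{2}\cos\frac{x-y}{2}=\cos x+\cos y$ and $(\cos y-\cos x)\cot\frac{x-y}{2}=2\sin\frac{x+y}{2}\cos\frac{x-y}{2}=\sin x+\sin y$, both smooth. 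Hence the principal value becomes an ordinary integral, yielding the fourth identity $\sin x\,H\varphi-H(\sin\cdot\,\varphi)=\int_{-\pi}^{\pi}(\cos x+\cos y)\varphi\,dy$ directly, together with $H(\cos\cdot\,\varphi)-\cos x\,H\varphi=\int_{-\pi}^{\pi}(\sin x+\sin y)\varphi\,dy$. The remaining forms follow by parity: for even $\varphi$ the term $\int_{-\pi}^{\pi}\sin y\,\varphi\,dy$ vanishes, giving the fifth identity; for odd $\varphi$ both $\int_{-\pi}^{\pi}\cos y\,\varphi\,dy$ and $\cos x\int_{-\pi}^{\pi}\varphi\,dy$ vanish, giving the last identity as a special case of the fourth.

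The main obstacle is the identity with $m=\cot$, since here $m(x)-m(y)$ still removes the pole at $y=x$ but creates new singularities at $y=0,\pm\pi$. The computation $(\cot x-\cot y)\cot\frac{x-y}{2}=-\frac{1+\cos(x-y)}{\sin x\sin y}=-1-\frac{1}{\sin x\sin y}-\cot x\cot y$ shows the integrand is regular at $y=x$, so I may drop the excision there and write the left side as $-\int_{-\pi}^{\pi}\varphi\,dy-\frac{1}{\sin x}\,\mathrm{p.v.}\!\int_{-\pi}^{\pi}\frac{\varphi(y)}{\sin y}\,dy-\cot x\,\mathrm{p.v.}\!\int_{-\pi}^{\pi}\cot y\,\varphi(y)\,dy$. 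The evenness of $\varphi$ is exactly what is needed: $\frac{\varphi}{\sin y}$ and $\cot y\,\varphi$ are odd, so their principal-value integrals over $[-\pi,\pi]$ vanish, leaving $-\int_{-\pi}^{\pi}\varphi$. The care here is in justifying the splitting of the principal value, which I would do by first restricting to $\{|y-x|>\epsilon,\ |y|>\epsilon,\ |y\mp\pi|>\epsilon\}$, invoking continuity at $y=x$ to discard the first restriction, and then letting $\epsilon\to0$. Finally, for the sixth identity I would fold the integral using evenness to obtain $H(\varphi)(x)=\mathrm{p.v.}\int_0^{\pi}\big(\cot\frac{x-y}{2}+\cot\frac{x+y}{2}\big)\varphi(y)\,dy$, recognize the kernel as $2\frac{d}{dy}\ln\big|\sin\frac{x+y}{2}/\sin\frac{x-y}{2}\big|$, and integrate by parts; the boundary terms vanish because the logarithm equals $\ln 1=0$ at both $y=0$ and $y=\pi$, and the principal-value contributions at $y=x$ cancel since the two logarithmic factors there agree. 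This gives the claimed formula, the factor $\frac{1}{\sin y}\cdot\sin y=1$ being trivial.
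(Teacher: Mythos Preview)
Your proof is correct and follows essentially the same approach as the paper: the same trigonometric factorizations $(\sin x-\sin y)\cot\frac{x-y}{2}=\cos x+\cos y$, $(\cos y-\cos x)\cot\frac{x-y}{2}=\sin x+\sin y$, $(\cot x-\cot y)\cot\frac{x-y}{2}=-\frac{1+\cos(x-y)}{\sin x\sin y}$, followed by the same parity arguments and the same integration-by-parts for the logarithmic formula. Your treatment is in fact slightly more careful than the paper's, which simply writes the cotangent step as a one-line equality without discussing the splitting of the principal value; your further decomposition $\frac{1+\cos(x-y)}{\sin x\sin y}=1+\frac{1}{\sin x\sin y}+\cot x\cot y$ is a cosmetic variant of the paper's use of parity on the whole fraction.
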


\begin{proof}
The first two equalities are obvious. Notice that
\begin{align*}
\sin x(H\varphi)(x)-H(\sin y\varphi)(x)
&=p.v.\int_{-\pi}^{\pi}(\sin x-\sin y) \cot\frac{x-y}{2}\varphi(y)dy\\
&=\int_{-\pi}^{\pi}(\cos x+\cos y) \varphi(y)dy,
\end{align*}
in particular, if $\varphi$ is odd, then
\begin{align*}
\sin x(H\varphi)(x)-H(\sin x \varphi)(x)=0.
\end{align*}

If  $\varphi$ is  even, then we have
\begin{align*}
H(\varphi)(x)&=p.v.\int_{-\pi}^{\pi}\cot\frac{x-y}{2}\varphi(y)dy\\
&=p.v.\int_{0}^{\pi}\left(\cot\frac{x-y}{2}+\cot\frac{x+y}{2}\right)\varphi(y)dy\\
&=-2p.v.\int_{0}^{\pi}\f{1}{\sin y}\ln\left|\frac{\sin\frac{x+y}{2}}{\sin\frac{x-y}{2}}\right|\sin y\varphi'(y)dy,
\end{align*}
and
\begin{align*}
&\cot x(H\varphi)(x)-H(\cot y\varphi)(x)\\
&=p.v.\int_{-\pi}^{\pi}(\cot x-\cot y) \cot\frac{x-y}{2}\varphi(y)dy\\
&=-p.v.\int_{-\pi}^{\pi}\f{1+\cos(x-y)}{\sin  x\sin y} \varphi(y)dy
=-\int_{-\pi}^{\pi} \varphi(y)dy,
\end{align*}
and
\begin{align*}
H(\cos y\varphi)-\cos x H(\varphi)
&=p.v.\int_{-\pi}^{\pi}(\cos y-\cos x) \cot\frac{x-y}{2}\varphi(y)dy\\
&=\int_{-\pi}^{\pi}(\sin x+\sin y) \varphi(y)dy
=\sin x\int_{-\pi}^{\pi} \varphi(y)dy.
\end{align*}
This proves the lemma.
\end{proof}

The following lemma can be easily proved by Hardy's inequality.

\begin{lemma}\label{Lem: Sobolev_Hardy}
If $\varphi\in H^2(\mathbb{T})$ is even, then $\varphi'(0)=\varphi'(\pi)=0$ and
\beno
\Big\|\f{\pa_{ y}\varphi}{\sin y}\Big\|_{L^2(0,\pi)}\leq C\|\varphi\|_{H^2(0,\pi)}.
\eeno
If $\varphi$ is even and  $\varphi(0)=\varphi(\pi)=0,$ then
\beno
&&\Big\|\f{\varphi}{\sin^2 y}\Big\|_{L^2(0,\pi)}
+\Big\|\f{\varphi}{\sin y}\Big\|_{L^{\infty}(0,\pi)}
\leq C\|\varphi\|_{H^2(0,\pi)},\\
&&\Big\|\f{\varphi}{\sin y}\Big\|_{L^2(0,\pi)}\leq C\|\varphi\|_{H^1(0,\pi)}.
\eeno
\end{lemma}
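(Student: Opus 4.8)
The plan is to reduce every estimate to the classical one-dimensional Hardy inequality near each of the two endpoints $y=0$ and $y=\pi$, after first extracting the boundary vanishing conditions from the evenness and periodicity of $\varphi$. Since $\varphi\in H^2(\mathbb{T})\hookrightarrow C^1(\mathbb{T})$, the pointwise values $\varphi'(0),\varphi'(\pi)$ are well defined. Evenness shows $\varphi'$ is odd, hence $\varphi'(0)=0$; combining periodicity with evenness gives $\varphi(\pi+y)=\varphi(\pi-y)$, so $\varphi$ is symmetric about $y=\pi$ and therefore $\varphi'(\pi)=0$ as well, which is the first assertion. For the remaining estimates I would use that $\sin y$ is comparable to $y$ on $[0,\pi/2]$ and to $\pi-y$ on $[\pi/2,\pi]$, and bounded below away from the endpoints, so it suffices to control the singular weights $y^{-1},y^{-2}$ near $0$ and their mirror images near $\pi$.

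First I would establish the elementary inequalities through a Minkowski (integral) form of Hardy's inequality rather than the differentiated form, so as not to lose constants. For $h$ with $h(0)=0$ one writes $h(y)=y\int_0^1 h'(sy)\,ds$, whence $\|h/y\|_{L^2(0,a)}\le\int_0^1 s^{-1/2}\,ds\,\|h'\|_{L^2(0,a)}=2\|h'\|_{L^2(0,a)}$. Applying this with $h=\partial_y\varphi$ (using $\varphi'(0)=0$) on $[0,\pi/2]$ and the reflected version (using $\varphi'(\pi)=0$) on $[\pi/2,\pi]$ yields $\|\partial_y\varphi/\sin y\|_{L^2(0,\pi)}\le C\|\varphi\|_{H^2(0,\pi)}$.

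For the second group of estimates, where in addition $\varphi(0)=\varphi(\pi)=0$ (and hence, by the same evenness argument, $\varphi'(0)=\varphi'(\pi)=0$), I would use the second-order Taylor representation $\varphi(y)=\int_0^y(y-t)\varphi''(t)\,dt=y^2\int_0^1(1-s)\varphi''(sy)\,ds$, valid because $\varphi$ vanishes to second order at the origin. This gives $\varphi/y^2=\int_0^1(1-s)\varphi''(sy)\,ds$, and Minkowski's inequality produces $\|\varphi/y^2\|_{L^2(0,a)}\le\int_0^1(1-s)s^{-1/2}\,ds\,\|\varphi''\|_{L^2(0,a)}=\tfrac43\|\varphi''\|_{L^2(0,a)}$; together with the symmetric estimate near $\pi$ this controls $\|\varphi/\sin^2 y\|_{L^2}$. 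The $L^\infty$ bound follows from $\varphi/y=\int_0^1\varphi'(sy)\,ds$, so that $\|\varphi/\sin y\|_{L^\infty}\le C\|\varphi'\|_{L^\infty}\le C\|\varphi\|_{H^2}$ by Sobolev embedding. Finally the $H^1$ estimate $\|\varphi/\sin y\|_{L^2}\le C\|\varphi\|_{H^1}$ is the single-Hardy bound with $h=\varphi$, which only uses $\varphi(0)=\varphi(\pi)=0$.

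There is no serious obstacle here: the statement merely packages standard Hardy and Sobolev inequalities. The only two points requiring genuine care are (i) deducing the endpoint conditions $\varphi'(0)=\varphi'(\pi)=0$ from evenness together with periodicity, which is exactly what licenses the double-weight ($y^{-2}$) estimate, and (ii) keeping the Hardy constants admissible — using the Minkowski form above rather than iterating the differentiated Hardy inequality, whose constant $2>1$ makes a naive bootstrap for the $y^{-2}$ weight fail, is essential for the $\sin^2 y$ estimate.
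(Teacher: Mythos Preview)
Your proof is correct and matches the paper's approach: the paper itself gives no detailed argument, stating only that ``the following lemma can be easily proved by Hardy's inequality,'' which is precisely what you carry out. Your care in using the integral (Minkowski) form of Hardy for the $\sin^{-2}y$ weight, rather than a naive iteration of the differentiated Hardy inequality, is the right way to close that estimate.
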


\begin{lemma}\label{lem:Z}
Let $Z(f)(y)=f'(y)-\cot y f(y)$. If $f\in H^{m}(0,\pi)$ is an even function for $m=1,2,3$, then $Z(f)\in H^{m-1}(0,\pi)$ can be extended to a periodic odd function. Moreover, we have
\beno
\|Z(f)\|_{H^{m-1}(\mathbb{T})}\leq C\|f\|_{H^{m}(0,\pi)}.
\eeno
If we $f\in H^2(0,\pi)\cap H_0^1(0,\pi)$ is an even function, then
\beno
\|{Z(f)}/{\sin y}\|_{L^2(\mathbb{T})}\leq \|f\|_{H^2(0,\pi)}.
\eeno
\end{lemma}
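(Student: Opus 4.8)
The plan is to reduce everything to the single algebraic identity
\beno
Z(f)(y)=f'(y)-\cot y\,f(y)=\sin y\,\pa_y\Big(\f{f(y)}{\sin y}\Big),
\eeno
so that, writing $h=f/\sin y$, one has $Z(f)=\sin y\,h'$ and $Z(f)/\sin y=h'$. First I would dispose of the symmetry bookkeeping: since $f$ is even and $\sin y$ is odd, $h$ is odd, whence $Z(f)=\sin y\,h'$ is odd; moreover an even $f\in H^m$ has all its odd-order derivatives vanishing at $y=0$ and $y=\pi$, which is precisely what makes the odd periodic extension of $Z(f)$ match to the right order at the endpoints (in particular $Z(f)(0)=Z(f)(\pi)=0$). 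This turns the first assertion into the interval estimate $\|Z(f)\|_{H^{m-1}(0,\pi)}\le C\|f\|_{H^m(0,\pi)}$ together with the vanishing of the boundary traces.

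For the norm bound the essential point is \emph{not} to expand $Z(f)=f'-\cot y\,f$ term by term. The two summands are individually too singular near the endpoints (already $\cot y\,f\sim f(0)/y$), and after differentiation $\pa_y^2Z(f)$ contains pieces $f/\sin^3y$, $f'/\sin^2y$ and $\cot y\,f''$ whose leading endpoint singularities cancel only when they are kept together. The factorization $Z(f)=\sin y\,h'$ exposes this cancellation cleanly: by Leibniz,
\beno
\pa_y^{\,m-1}\big(\sin y\,h'\big)=\sum_{k=0}^{m-1}\binom{m-1}{k}(\sin y)^{(k)}\,h^{(m-k)},
\eeno
so it suffices to show that $h\in H^{m-1}(0,\pi)$ and that the top order is controlled in the weighted sense $\sin y\,\pa_y^m h\in L^2$. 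Each of these is a weighted Hardy--Sobolev estimate of $f$ and its derivatives against powers of $\sin y$, all of which are furnished by Lemma \ref{Lem: Sobolev_Hardy}; the gain that makes it work is that dividing an even function vanishing at the endpoints by $\sin y$ yields an odd function one order less singular, so $h$ and its derivatives are genuinely dominated by $f$ in $H^m$. I would run this as a short induction over $m\in\{1,2,3\}$, using the recursion $\pa_yZ(f)=Z(\pa_yf)+f/\sin^2y$ to pass from the even function $f$ to the odd function $\pa_yf$, for which the same estimate is easier since oddness already contributes a vanishing factor at the endpoints.

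The second statement is the quantitative base case of the same computation: from $Z(f)/\sin y=h'=f'/\sin y-\cos y\,f/\sin^2y$ together with $f\in H^2(0,\pi)\cap H_0^1(0,\pi)$ even, the Hardy--Sobolev bounds $\|f'/\sin y\|_{L^2}\le C\|f\|_{H^2}$ and $\|f/\sin^2y\|_{L^2}\le C\|f\|_{H^2}$ of Lemma \ref{Lem: Sobolev_Hardy} give $\|Z(f)/\sin y\|_{L^2(0,\pi)}\le C\|f\|_{H^2}$, and because $Z(f)/\sin y$ is even the passage to $L^2(\mathbb{T})$ costs only a harmless constant factor. The main obstacle throughout is exactly this endpoint analysis: one must refrain from breaking $Z(f)$ into its non-integrable constituents and instead keep the conservative form $\sin y\,(f/\sin y)'$, so that the $\cot y$ singularities are absorbed by the Hardy inequalities of Lemma \ref{Lem: Sobolev_Hardy} and by the vanishing of the odd-order derivatives of $f$ at $0$ and $\pi$ coming from the even symmetry.
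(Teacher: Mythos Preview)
Your approach is correct and essentially the same as the paper's: both rest on the factorization $Z(f)=\sin y\,\pa_y(f/\sin y)$, and the paper simply writes this locally near each endpoint as $m(y)\,y\,\pa_y\bigl(f/(y\,m(y))\bigr)$ with $m(y)=\sin y/y$ smooth (and analogously with $\pi-y$), so that the weighted control reduces to the standard one-variable Hardy inequality in $y$ rather than in $\sin y$. Your global treatment via Lemma~\ref{Lem: Sobolev_Hardy} packages the same estimates, and your handling of the second assertion ($Z(f)/\sin y=h'$ bounded termwise by $\|f'/\sin y\|_{L^2}+\|f/\sin^2 y\|_{L^2}\le C\|f\|_{H^2}$) is exactly right---the paper in fact leaves that part to the reader.
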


\begin{proof}
For $0<y\leq \f{\pi}{4}$,  we may write
\begin{align*}
Z(f)(y)=m(y)y\big(\f{f}{ym(y)}\big)'=f'-\frac {\cos y f} {ym(y)},
\end{align*}
where $m(y)=\f{\sin y}{y}\in C^{\infty}(\R)$ and $\f{1}{m}\in C^{\infty}([0,\f{3\pi}{4}])$.  On the other hand, for $\pi-y\leq \f{3\pi}{4}$, we write
\begin{align*}
Z(f)(y)=m(\pi-y)(\pi-y)\big(\f{f}{(\pi-y)m(\pi-y)}\big)'=f'-\f {\cos y f} {(\pi-y)m(\pi-y)}.
\end{align*}
Thus, we can conclude that
\beno
\|Z(f)\|_{H^{m-1}(0,\pi)}\leq C\|f\|_{H^m(0,\pi)}.
\eeno

To prove that $Z(f)$ can be extended to an even $\pi$-periodic function in $H^{m-1}(\mathbb{T})$, we only need to show that $Z(f)(0)=Z(f)(\pi)=0$ and $\pa_yZ(f)$ is an even function. This is because $\lim\limits_{y\to0+}Z(f)(y)=\lim\limits_{y\to0+}f'(y)-\lim\limits_{y\to0+}\cot y f(y)=f'(0+)-f'(0+)=0,\ \lim\limits_{y\to\pi-}Z(f)(y)=f'(\pi-)-f'(\pi-)=0$, and $Z(f)$ is an odd function.
So,  $\|Z(f)\|_{H^{m-1}(\mathbb{T})}\leq C\|f\|_{H^m(0,\pi)}$.
\end{proof}

\subsection{Estimates of $\mathrm{II}_{1,1}$} Recall that $\mathrm{II}_{1,1}$ is defined by
\begin{align*}
\textrm{II}_{1,1}(\varphi)(y_c)
=p.v.\int_0^{\pi}\f{\Int(\varphi)(y)-\Int(\varphi)(y_c)}{(u(y)-u(y_c))^2}\,dy,
\end{align*}
where $\Int(\varphi)$ is defined by
\begin{align*}
\Int(\varphi)(y)=\int_{0}^y\varphi(y')\,dy' \quad\textrm{for}\, y\in [0,\pi],\quad \Int(\varphi)(y)=\Int(\varphi)(-y)\quad\textrm{for}\, y\in [-\pi,0].
\end{align*}
In what follows, we denote
\beno
\|f\|_{H^k_{y_c}}=\displaystyle\sum_{n=0}^k\|\pa_{y_c}^nf\|_{L^2_{y_c}(\mathbb{T})},\quad \|\cdot\|_{H^k}=\|\cdot\|_{H^k(0,\pi)}.
\eeno

\begin{lemma}\label{Lem: II_1,1L^2}
It holds that for $k=0,1,2$,
\beno
&&\|\rho\mathrm{II}_{1,1}(\varphi)\|_{H_{ y_c}^k}
\leq C\|\varphi\|_{H^k},\\
&&\left\|{\pa_{y_c}(\rho\mathrm{II}_{1,1}(\varphi))}/{\sin y_c}\right\|_{L^2_{y_c}}
\leq C\|\varphi\|_{H^2}.
\eeno
\end{lemma}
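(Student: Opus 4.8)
The plan is to reduce everything to a single Hilbert-transform identity and then estimate the resulting operator with the $Z$-calculus of Lemma \ref{lem:Z} together with the Hardy inequalities of Lemma \ref{Lem: Sobolev_Hardy}. The starting point is Remark \ref{rem:Pi-11}, which gives $\mathrm{II}_{1,1}(\varphi)(y_c)=-\partial_c\big(H(\Int(\varphi))(y_c)/(2\sin y_c)\big)$. Since $u(y_c)=-\cos y_c$ we have $u'(y_c)=\sin y_c$, $\rho(c)=1-c^2=\sin^2 y_c$ and $\partial_c=(\sin y_c)^{-1}\partial_{y_c}$; a direct computation then yields
\[
\rho(c)\,\mathrm{II}_{1,1}(\varphi)=-\tfrac12\big(\partial_{y_c}-\cot y_c\big)H(\Int(\varphi))=-\tfrac12\,Z\big(H(\Int(\varphi))\big),
\]
where $Z(f)=f'-\cot y\,f=\sin y\,\partial_y(f/\sin y)$ is the operator of Lemma \ref{lem:Z}. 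As $\Int(\varphi)$ is even, $h:=H(\Int(\varphi))$ is odd and vanishes at $y_c=0,\pi$, so $P:=h/\sin y_c$ is even. In the notation $\|\cdot\|_{H^k_{y_c}}$, the three claimed bounds become exactly $\|Z(h)\|_{H^k(\mathbb T)}\le C\|\varphi\|_{H^k}$ for $k=0,1,2$ and $\|\partial_{y_c}Z(h)/\sin y_c\|_{L^2}\le C\|\varphi\|_{H^2}$.

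For the base case I would use that $H$ is bounded on $L^2(\mathbb T)$ and commutes with $\partial_{y_c}$ (Lemma \ref{Lem: identity-Hilbert-periodic}), so that $\|h\|_{H^1(\mathbb T)}\le C\|\Int(\varphi)\|_{H^1(\mathbb T)}\le C\|\varphi\|_{L^2}$, the last step because $\partial_{y_c}\Int(\varphi)$ is the odd extension of $\varphi$. Writing $Z(h)=h'-\cos y_c\,(h/\sin y_c)$ and invoking the Hardy inequality $\|h/\sin y_c\|_{L^2}\le C\|h\|_{H^1}$ from Lemma \ref{Lem: Sobolev_Hardy} (valid since $h$ is odd and hence vanishes at $0,\pi$) gives the $k=0$ estimate. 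For $k=1,2$ and for the weighted bound the plan is to differentiate through the factorization $Z(h)=\sin y_c\,\partial_{y_c}P$, so that $\partial_{y_c}Z(h)=\cos y_c\,P'+\sin y_c\,P''$ and $\partial_{y_c}Z(h)/\sin y_c=\cot y_c\,P'+P''$; each term is then controlled by the $L^2$-norms of $P,P',P''$ against $\cot y_c$ and $\csc^2 y_c$ weights, which are absorbed by the Hardy inequalities once one knows that $P$ is one derivative smoother than $\varphi$, a gain coming from the antiderivative in $\Int$ and the boundedness of $H$ on Sobolev spaces.

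The hard part will be the endpoints $y_c=0,\pi$, i.e. $c=\pm1$. Because $\Int(\varphi)$ is even with $\Int(\varphi)(0)=0$ but in general $\varphi(0),\varphi(\pi)\neq0$ and $\Int(\varphi)(\pi)\neq0$, its periodic extension is only $H^1(\mathbb T)$ with corners, and consequently $h=H(\Int(\varphi))$ is \emph{not} in $H^2(\mathbb T)$; in fact $P=h/\sin y_c$ carries a logarithmic singularity at $\pi$, so that $h''$, $\csc^2 y_c\,h$ and $\cot y_c\,h'$ are each singular there. The mechanism saving the estimate is that in the combination $\partial_{y_c}Z(h)=\cos y_c\,P'+\sin y_c\,P''$ the leading $(y_c-\pi)^{-1}$ singularities of $\cos y_c\,P'$ and $\sin y_c\,P''$ cancel exactly — precisely the cancellation built into the factorization $Z(f)=\sin y\,\partial_y(f/\sin y)$ exploited in Lemma \ref{lem:Z}. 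Making this quantitative is the crux: I would insert the log-kernel representation of $H(\Int(\varphi))$ from Lemma \ref{Lem: identity-Hilbert-periodic}, split off the boundary contributions proportional to $\varphi(0)$ and $\varphi(\pi)$, and estimate the remainder by Hardy's inequality. The weighted bound then genuinely needs the full $H^2$-regularity of $\varphi$ to control $P''$ near $\pi$, which is why its right-hand side is $\|\varphi\|_{H^2}$ rather than $\|\varphi\|_{H^1}$.
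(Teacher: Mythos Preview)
Your starting identity $\rho\,\mathrm{II}_{1,1}(\varphi)=-\tfrac12 Z\big(H(\Int(\varphi))\big)$ is correct, and the $k=0$ argument via Hardy's inequality is fine. The gap is in $k\ge 1$: you correctly diagnose that $\Int(\varphi)$ has corners on $\mathbb T$ and that this makes $h''$, $P''$, etc.\ individually singular, but you leave ``making this quantitative is the crux'' as a plan rather than a proof. Moreover, the subtraction you propose (boundary pieces proportional to $\varphi(0)$, $\varphi(\pi)$) addresses the jumps of $\Int(\varphi)'$ but not the more basic obstruction $\Int(\varphi)(\pi)\neq0$, which is what prevents $\cot y\,\Int(\varphi)$---and hence $Z(\Int(\varphi))$---from lying in $L^2$ near $y=\pi$.

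The paper avoids fighting these singularities altogether with two moves you did not use. First, it replaces $\Int(\varphi)$ by $\In(\varphi):=\Int(\varphi)-\tfrac12\Int(\varphi)(\pi)(1-\cos y)$, which leaves $\mathrm{II}_{1,1}$ unchanged (since $H(1-\cos y)=-2\pi\sin y$ makes the subtracted term vanish under $\partial_c\big(H(\cdot)/(2\sin y_c)\big)$) but forces $\In(\varphi)(0)=\In(\varphi)(\pi)=0$. Second, it commutes $Z$ through $H$: for even $g$, Lemma \ref{Lem: identity-Hilbert-periodic} gives $Z(Hg)=H(Zg)+\int_{-\pi}^{\pi}g$, so that
\[
\rho\,\mathrm{II}_{1,1}(\varphi)=-\tfrac12 H\big(Z(\In(\varphi))\big)-\tfrac12\int_{-\pi}^{\pi}\In(\varphi).
\]
Now Lemma \ref{lem:Z} applies directly to the even function $\In(\varphi)\in H^{k+1}(0,\pi)$, producing $Z(\In(\varphi))$ as a periodic odd function in $H^k(\mathbb T)$, which $H$ preserves; the weighted estimate follows because $\partial_{y_c}H(Z(\In(\varphi)))$ is odd and lies in $H^1(\mathbb T)$ when $\varphi\in H^2$. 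Your log-kernel/cancellation program might in principle be pushed through, but it is considerably more laborious, and your sketch does not yet contain the argument.
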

\begin{proof}

By Lemma \ref{Lem: identity-Hilbert-periodic}, we have for $\varphi$ even,
\begin{align*}
\f{\sin y_c}{2} \pa_{ y_c}\Big(\frac{H(\varphi)(y_c)}{\sin y_c}\Big)
&=\f {1}{2}( (H\varphi)'( y_c)-\cot y_c H\varphi( y_c))\nonumber\\
&=\f {1}{2}H(\varphi'-\cot y_c \varphi)( y_c)+\f {1}{2}\int_{-\pi}^{\pi} \varphi(y)dy.
\end{align*}
Let $\In(\varphi)(y)=\Int(\varphi)(y)-\f{\Int(\varphi)(\pi)}{2}(1-\cos y)=\int_0^y\varphi(y')\,dy'-\sin^2\f{y}{2}\int_0^\pi\varphi(y')\,dy'$ for $y\in [0,\pi]$.  We extend $\In(\varphi)$ to be an even $\pi$-periodic function.  Then we get
by Remark \ref{rem:Pi-11} that
\begin{align}
\mathrm{II}_{1,1}(\varphi)(y_c)&=-\f{1}{2\sin y_c}\pa_{y_c}\Big(\frac{H(\In(\varphi))(y_c)}{\sin y_c}\Big)\nonumber\\
&=-\f{1}{2\sin^2 y_c}\mathcal{H}(\varphi)(y_c)-\f{1}{2\sin^2 y_c}\int_{-\pi}^{\pi}\In(\varphi)(y')dy'.\label{eq: OpH}
\end{align}
Here $\mathcal{H}=H\circ Z\circ \In$. Then by Lemma \ref{lem:Z}, we get
\begin{align*}
\|\rho\mathrm{II}_{1,1}(\varphi)\|_{H^k_{y_c}}
&\leq C\big
(\|\mathcal{H}(\varphi)\|_{H^k(\mathbb{T})}+\|\varphi\|_{L^2}\big)\\
&\leq C\big(\|Z\circ \In(\varphi)\|_{H^k(\mathbb{T})}
+\|\varphi\|_{L^2}\big)\\
&\leq C\big(\|\In(\varphi)\|_{H^{k+1}(0,\pi)}+\|\varphi\|_{L^2}\big)
\leq C\|\varphi\|_{H^k}.
\end{align*}

As $\pa_{y_c}(\rho\mathrm{II}_{1,1}(\varphi))=H\Big(\pa_{y}(Z\circ\In(\varphi))\Big)$, $H\Big(\pa_{y}(Z\circ\In(\varphi))\Big)\in H^1(\mathbb{T})$ is odd, and by Lemma \ref{lem:Z}, we have
\beno
\left\|\pa_{y_c}(\rho\mathrm{II}_{1,1})/\sin y_c\right\|_{L^2_{y_c}}
\leq C\big\|H\big({\pa_{y}(Z\circ\In(\varphi))}\big)\big\|_{H^1}
\leq C\|\In(\varphi)\|_{H^3}\leq C\|\varphi\|_{H^2}.
\eeno
This proves the lemma.
\end{proof}

The following lemma shows that for $\varphi$ vanishing at $0$ and $\pi$, we have better estimates, which will be used in the odd case.

\begin{lemma}\label{Lem: II_1,1odd}
If $\varphi\in H_0^1(0,\pi)$, then
\beno
\|\sin y_c\mathrm{II}_{1,1}(\varphi)\|_{L_{ y_c}^2}\leq C\|\varphi\|_{H^1}.
\eeno
If $\varphi\in H^2(0,\pi)\cap H_0^1(0,\pi)$, then
\beno
\|\sin y_c\mathrm{II}_{1,1}(\varphi)\|_{L^{\infty}}
+\|\mathrm{II}_{1,1}(\varphi)\|_{L_{ y_c}^2}
\leq C\|\varphi\|_{H^2}.
\eeno
If $\varphi\in H^2(0,\pi)\cap H_0^1(0,\pi)$ and $\varphi''/\sin y_c\in L^2(0,\pi)$, then
\begin{align*}
\|\pa_{y_c}\mathrm{II}_{1,1}(\varphi)\|_{L^2_{y_c}}
&+\|\mathrm{II}_{1,1}(\varphi)\|_{L^{\infty}}+\|\pa_{y_c}^2(\sin y_c\mathrm{II}_{1,1}(\varphi))\|_{L^2_{y_c}}\\
&+\|\sin y_c\pa_{y_c}\mathrm{II}_{1,1}(\varphi)\|_{L^{\infty}}
\leq C\big(\|\varphi\|_{H^2}+\|\varphi''/\sin y_c\|_{L^2_{y_c}}\big).
\end{align*}
\end{lemma}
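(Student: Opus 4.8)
The plan is to reduce all four estimates to a single clean identity and then read each bound off from the $L^2$- and $H^s$-boundedness of the Hilbert transform together with Hardy's inequality. Let $\In(\varphi)$ be as in the proof of Lemma \ref{Lem: II_1,1L^2}, and set $h=\In(\varphi)/\sin y$ and $w=h'=Z(\In(\varphi))/\sin y$. Since $\varphi\in H_0^1(0,\pi)$ we have $\In(\varphi)(0)=\In(\varphi)(\pi)=0$ together with $(\In\varphi)'(0)=\varphi(0)=0$ and $(\In\varphi)'(\pi)=\varphi(\pi)=0$, so $h$ extends to an even function vanishing at $0,\pi$ and, by Lemma \ref{lem:Z} applied to the even function $\In(\varphi)\in H^2\cap H_0^1$, $\|w\|_{L^2}\le\|\In(\varphi)\|_{H^2}\le C\|\varphi\|_{H^1}$. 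Starting from \eqref{eq: OpH}, which reads $\mathrm{II}_{1,1}(\varphi)=-\f{1}{2\sin^2y_c}H(Z\circ\In(\varphi))-\f{1}{2\sin^2y_c}\int_{-\pi}^{\pi}\In(\varphi)$, I apply the even-function identity of Lemma \ref{Lem: identity-Hilbert-periodic} to $\psi:=\sin y\,w=Z(\In\varphi)$, giving $H(\psi)(y_c)=\sin y_c\,H(w)(y_c)-\cos y_c\int_{-\pi}^\pi w-\int_{-\pi}^\pi\cos y\,w$. The two constant terms are computed explicitly: $\int_{-\pi}^\pi w=2[h]_0^\pi=0$, and one integration by parts (using $h(0)=h(\pi)=0$) gives $\int_{-\pi}^\pi\cos y\,w=2\int_0^\pi\In(\varphi)=\int_{-\pi}^\pi\In(\varphi)$. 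Hence the constants cancel exactly against the last term in \eqref{eq: OpH}, and I obtain the key identity
\begin{align}
\sin y_c\,\mathrm{II}_{1,1}(\varphi)(y_c)=-\f12 H(w)(y_c),\qquad w=\Big(\f{\In(\varphi)}{\sin y}\Big)'.\label{eq:key-reduction}
\end{align}
This is the only place where the hypothesis $\varphi(0)=\varphi(\pi)=0$ enters.

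Given \eqref{eq:key-reduction}, the first estimate is immediate from the $L^2(\mathbb{T})$-boundedness of $H$: $\|\sin y_c\,\mathrm{II}_{1,1}(\varphi)\|_{L^2_{y_c}}\le C\|w\|_{L^2}\le C\|\varphi\|_{H^1}$. For the second group, $\varphi\in H^2$ forces $\In(\varphi)\in H^3$, hence $h\in H^2(\mathbb{T})$ and $w\in H^1(\mathbb{T})$; then $H(w)\in H^1(\mathbb{T})\hookrightarrow L^\infty$ controls $\|\sin y_c\,\mathrm{II}_{1,1}(\varphi)\|_{L^\infty}$. Because $w$ is even, $H(w)$ is odd and so vanishes at $y_c=0,\pi$; therefore $\mathrm{II}_{1,1}(\varphi)=-H(w)/(2\sin y_c)$ is controlled in $L^2$ by Hardy's inequality, $\|H(w)/\sin y_c\|_{L^2}\le C\|(H(w))'\|_{L^2}\le C\|w\|_{H^1}\le C\|\varphi\|_{H^2}$.

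The third part is the genuine difficulty, since it requires gaining one further derivative on $w$. Differentiating $h=\In(\varphi)/\sin y$ three times, the most singular contribution near $y=0,\pi$ is of the form $(\In\varphi)'''/\sin y=(\varphi''+\text{const}\cdot\sin y)/\sin y$, which lies in $L^2$ precisely when $\varphi''/\sin y\in L^2$; carrying out this bookkeeping yields $w\in H^2(\mathbb{T})$ with $\|w\|_{H^2}\le C(\|\varphi\|_{H^2}+\|\varphi''/\sin y\|_{L^2})$. From \eqref{eq:key-reduction}, the identities $\f{d}{dy_c}H(\cdot)=H(\pa_y\cdot)$ and $(g/\sin y)'=Z(g)/\sin y$ give
\begin{align*}
\partial_{y_c}\mathrm{II}_{1,1}(\varphi)=-\f12\Big(\f{H(w)}{\sin y_c}\Big)'=-\f12\f{Z(H(w))}{\sin y_c},\qquad
\partial_{y_c}^2\big(\sin y_c\,\mathrm{II}_{1,1}(\varphi)\big)=-\f12 H(w'').
\end{align*}
The $L^2$ bound for $\partial_{y_c}^2(\sin y_c\,\mathrm{II}_{1,1})$ is then just the $L^2$-boundedness of $H$ applied to $w''$. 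Since $H(w)$ is an odd function in $H^2$ vanishing at $0,\pi$, Hardy's inequality applied to $H(w)$ and its derivatives (the odd analogue of Lemma \ref{lem:Z}) shows that $H(w)/\sin y_c\in H^1$; this yields the $L^2$ bound for $\partial_{y_c}\mathrm{II}_{1,1}=-\f12\,Z(H(w))/\sin y_c$ and, via $H^1\hookrightarrow L^\infty$, the $L^\infty$ bounds for $\mathrm{II}_{1,1}$ and for $\sin y_c\,\partial_{y_c}\mathrm{II}_{1,1}=-\f12 H(w')-\cos y_c\,\mathrm{II}_{1,1}$. The main obstacle throughout is this last part: converting the anisotropic hypothesis $\varphi''/\sin y\in L^2$ into the honest Sobolev bound $w\in H^2$, and then verifying the exact cancellation under repeated division by $\sin y_c$ at the degenerate points $y_c=0,\pi$ so that Hardy's inequality is applicable.
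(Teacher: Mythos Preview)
Your proposal is correct and takes essentially the same approach as the paper: your key identity \eqref{eq:key-reduction} coincides with the paper's $\sin y_c\,\mathrm{II}_{1,1}(\varphi)=-\tfrac12 H\big(\pa_y(\In(\varphi)/\sin y)\big)$, which the paper derives a bit more directly by observing that $\In(\varphi)/\sin y$ is odd and applying the odd-function identity of Lemma~\ref{Lem: identity-Hilbert-periodic}, rather than passing through the even function $w$ and checking that the constants cancel. For the third part, the paper supplies the explicit ``bookkeeping'' you allude to by subtracting a trigonometric correction $\In_2(\varphi)$ from $\In(\varphi)$ so that $\In_2,\In_2',\In_2''$ all vanish at $0,\pi$, after which Hardy's inequality applies termwise to $(\In_2/\sin y)'''$; this subtraction is the actual mechanism behind your claim that $(\In\varphi)'''/\sin y$ is the ``most singular contribution'', since the individual Leibniz terms of $(\In(\varphi)/\sin y)'''$ are not separately in $L^2$.
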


\begin{proof}
For $\varphi\in H^1_0(0,\pi)$, we get by Hardy's inequality that
\beno
\Big\|\f{\In(\varphi)}{\sin^2 y}\Big\|_{L^2}
+\Big\|\f{\pa_y\In(\varphi)}{\sin y}\Big\|_{L^2}
+\Big\|\pa_y\big(\frac{\In(\varphi)}{\sin y}\big)\Big\|_{L^2}
\leq C\|\varphi\|_{H^1}.
\eeno
Recall that
\begin{align}
\mathrm{II}_{1,1}(\varphi)(y_c)&=-\f{1}{2\sin y_c}\pa_{y_c}\Big(\frac{H(\In(\varphi))}{\sin y_c}\Big),
\end{align}
and $\f{\In(\varphi)}{\sin y}$ is odd, by Lemma \ref{Lem: identity-Hilbert-periodic}, we have
\begin{align*}
\sin y_c\mathrm{II}_{1,1}(\varphi)(y_c)&=-\f{1}{2}\pa_{y_c}H\Big(\frac{\In(\varphi)}{\sin y}\Big)=-\f12H\Big(\pa_y\big(\frac{\In(\varphi)}{\sin y}\big)\Big),
\end{align*}
which implies that for $\varphi\in H^1_0(0,\pi)$,
\beno
\|\sin y_c\mathrm{II}_{1,1}(\varphi)\|_{L_{ y_c}^2}\leq C\|\varphi\|_{H^1}.
\eeno

For $\varphi\in H^2(0,\pi)\cap H_0^1(0,\pi)$, $\pa_y^2\big(\frac{\In(\varphi)}{\sin y}\big)\in L^2(\mathbb{T})$ and
\beno
\pa_{y_c}\big(\sin y_c\mathrm{II}_{1,1}(\varphi)(y_c)\big)=
-\f12H\Big(\pa_y^2\big(\frac{\In(\varphi)}{\sin y}\big)\Big),
\eeno
which shows that
\beno
\big\|\pa_{y_c}\big(\sin y_c\mathrm{II}_{1,1}(\varphi)(y_c)\big)\big\|_{L^2_{y_c}}\leq C\Big\|\pa_y^2\big(\frac{\In(\varphi)}{\sin y}\big)\Big\|_{L^2}\leq C\|\varphi\|_{H^2},
\eeno
which implies  that
\beno
\|\sin y_c\mathrm{II}_{1,1}(\varphi)\|_{L^{\infty}}+\|\mathrm{II}_{1,1}(\varphi)\|_{L^2_{y_c}}\leq C\|\varphi\|_{H^2}.
\eeno

For $f(0)=f'(0)=f''(0)=0$ and $f'''/y\in L^2$, we have
\beno
\Big\|\f{f}{y^4}\Big\|_{L^2}+\Big\|\f{f'}{y^3}\Big\|_{L^2}+\Big\|\f{f''}{y^3}\Big\|_{L^2}\leq C\|f'''/y\|_{L^2}.
\eeno
For $\varphi\in H^2(0,\pi)\cap H_0^1(0,\pi)$ and $\varphi''/u'\in L^2(0,\pi)$, let
\beno
\In_2(\varphi)=\In(\varphi)-(\sin y)^2\big(\varphi'(0)(1+\cos y)+\varphi'(\pi)(1-\cos y)+\Int(\varphi)(\pi)\cos y\big)/4,
\eeno
then $\In_2(\varphi)=\In_2(\varphi)'=\In_2(\varphi)''=0 $ for $y=0,\pi.$ Thus,
\beno
\Big\|\f{\In_2(\varphi)}{\sin^4 y}\Big\|_{L^2}
+\Big\|\f{\In_2(\varphi)'}{\sin^3 y}\Big\|_{L^2}
+\Big\|\f{\In_2(\varphi)''}{\sin^2 y}\Big\|_{L^2}
\leq C\big(\|\In_2(\varphi)'''/\sin y\|_{L^2}+\|\In_2(\varphi)\|_{H^2}\big),
\eeno
which implies that
\beno
\left\|\pa_y^3\big(\frac{\In(\varphi)}{\sin y}\big)\right\|_{L^2}\leq\left\|\pa_y^3\big(\frac{\In_2(\varphi)}{\sin y}\big)\right\|_{L^2}+C\|\varphi\|_{H^2}\leq
C\big(\|\varphi''/\sin y\|_{L^2}+\|\varphi\|_{H^2}\big).
\eeno
Therefore, by Lemma \ref{lem:Z},
\begin{align*}
\|\pa_{y_c}\mathrm{II}_{1,1}(\varphi)\|_{L^2_{y_c}}
&\leq C\Big\|\f{Z\circ H\big(\pa_y(\In(\varphi)/\sin y)\big)}{\sin y}\Big\|_{L^2}\\
&\leq C\|H\big(\pa_y(\In(\varphi)/\sin y)\big)\|_{H^2}\\
&\leq C\|\In(\varphi)/\sin y\|_{H^3}
\leq C\big(\|\varphi''/\sin y\|_{L^2}+\|\varphi\|_{H^2}\big),
\end{align*}
as well as
\begin{align*}
\|\pa_{y_c}^2(\sin y_c\mathrm{II}_{1,1}(\varphi))\|_{L^2_{y_c}}
\leq \|H(\pa_y^3(\In(\varphi/\sin y)))\|_{L^2}
\leq C(\|\varphi''/\sin y\|_{L^2}+\|\varphi\|_{H^2}),
\end{align*}
which implies that
$\|\sin y_c\pa_{y_c}\mathrm{II}_{1,1}(\varphi)\|_{L^{\infty}}\leq C(\|\varphi\|_{H^2}+\|\varphi''/u'\|_{L^2}).$
\end{proof}

\subsection{Estimates of $\mathcal{L}_k$}

In this subsection, we denote by $\phi(y,c)$ a solution of \eqref{eq:Rayleigh-H} obtained by Proposition \ref{prop:Rayleigh-Hom} and let $\phi_1(y,c)=\f{\phi(y,c)}{u(y)-c}$ for $c\in D_0$. Here $u(y)=-\cos y$. We denote $\|\cdot\|_{L^p}=\|\cdot\|_{L^p(0,\pi)}$. Recall that
\beno
\mathcal{L}_k(\varphi)(y_c)=\int_0^{\pi}\int_{y_c}^{z}{\varphi}(y)\left(\frac{\partial_z+\partial_y}{u'(y_c)}+\partial_c\right)^k
\left(\f{1}{(u(z)-c)^2}\left(\f{\phi_1(y,c)\,}{\phi_1(z,c)^2}-1\right)\right)\,dydz.
\eeno

\begin{lemma}\label{Lem:L^2L_k}
For any $p\in [1,\infty)$, there exists a constant $C$ independent of $\al$ such that for $k=0,1,2,$
\begin{align*}
&\|u'(y_c)^{2k+2}\mathcal{L}_k(\varphi)\|_{L^{\infty}}\leq C|\al|^{\f{1}{p}}\left\|\varphi\right\|_{L^{p}},\\
&\|u'(y_c)^{2k+1}\mathcal{L}_k(\varphi)\|_{L^{\infty}}\leq C|\al|^{1+\f{1}{p}}\left\|\varphi\right\|_{L^{p}},\\
&\big\|u'(y_c)^{2k+1}\mathcal{L}_k(\varphi)\big\|_{L^\infty}\leq C|\al|\|\varphi\|_{L^{\infty}}.
\end{align*}
\end{lemma}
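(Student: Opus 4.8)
The plan is to prove a single pointwise bound for the kernel generated by $\left(\frac{\partial_z+\partial_y}{u'(y_c)}+\partial_c\right)^k$ and then collapse all three inequalities onto one family of scalar integrals in $z$. Concretely, I would first establish, for $y$ lying between $y_c$ and $z$, the estimate
\[
\left|\left(\frac{\partial_z+\partial_y}{u'(y_c)}+\partial_c\right)^k\left(\frac{1}{(u(z)-c)^2}\Big(\frac{\phi_1(y,c)}{\phi_1(z,c)^2}-1\Big)\right)\right|\leq \frac{C\min\{\al^2|z-y_c|^2,1\}}{u'(y_c)^{2k}(u(z)-c)^2}.
\]

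For $k=0$ this is the inequality $\big|\frac{\phi_1(y,c)}{\phi_1(z,c)^2}-1\big|\leq C\min\{\al^2|z-y_c|^2,1\}$, which I would get by writing $1-\frac{\phi_1(y,c)}{\phi_1(z,c)^2}=\frac{(\phi_1(z,c)^2-1)-(\phi_1(y,c)-1)}{\phi_1(z,c)^2}$ and using $\phi_1\geq 1$, the monotonicity of $\phi_1(\cdot,c)$ along $[y_c,z]$ from Lemma \ref{lem:phi1}, and $\phi_1-1\leq C\min\{\al^2|y-y_c|^2,1\}\phi_1$ from \eqref{eq:phi-est1}. For $k=1,2$ I would expand by Leibniz: the operator acts on $\phi_1(y,c)$ as the good derivative $\frac{\partial_y}{u'(y_c)}+\partial_c$ and on $\phi_1(z,c)^{-2}$, $(u(z)-c)^{-2}$ as $\frac{\partial_z}{u'(y_c)}+\partial_c$, with extra cross terms when $\partial_c$ falls on the explicit factor $1/u'(y_c)$. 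Each differentiation costs $u'(y_c)^{-2}$, and the good-derivative estimates of Proposition \ref{prop:phi1} together with $|y-y_c|\leq|z-y_c|$, $\phi_1(y,c)/\phi_1(z,c)^2\leq 1$ and \eqref{eq:u-gd} supply the weight $\min\{\al^2|z-y_c|^2,1\}$; the cross terms are absorbed exactly as in Step 3 of the proof of Proposition \ref{prop:phi1}.

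Granting this bound, Hölder's inequality in $y$ gives $\int_{y_c}^z|\varphi|\,dy\leq\|\varphi\|_{L^p}|z-y_c|^{1-1/p}$, and Lemma \ref{Rmk:u_1} gives $(u(z)-c)^2\geq C^{-1}\sin^2\frac{z+y_c}{2}(z-y_c)^2$, so after extracting the factor $u'(y_c)^{2k}$ all three statements reduce to controlling
\[
J_p(y_c)=\int_0^\pi\frac{\min\{\al^2|z-y_c|^2,1\}}{\sin^2\frac{z+y_c}{2}}\,|z-y_c|^{-1-1/p}\,dz
\]
by $u'(y_c)^2 J_p\leq C\al^{1/p}$ (first estimate) and $u'(y_c)J_p\leq C\al^{1+1/p}$ (second, and at $p=\infty$ third, estimate). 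In the bulk, where $\sin\frac{z+y_c}{2}\sim u'(y_c)$, the substitution $|z-y_c|=r$ gives $u'(y_c)^2J_p\sim\int_0^\infty\min\{\al^2 r^2,1\}r^{-1-1/p}\,dr\sim\al^{1/p}$, which settles the first estimate, and the second then follows whenever $\sin y_c\gtrsim 1/\al$.

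The hard part will be the near-critical regime $\sin y_c\to 0$, where $u'(y_c)^{-1}$ is not controlled by $\al$. There, for $y_c\lesssim 1/\al$ I would substitute $z=y_ct$, so that $\sin\frac{z+y_c}{2}\sim y_c(t+1)$ and $|z-y_c|=y_c|t-1|$; since $\al y_c<1$ the cutoff $\min\{\al^2 y_c^2(t-1)^2,1\}$ stays genuinely quadratic over the effective range, and this extra vanishing converts the apparent loss of $u'(y_c)^{-1}=y_c^{-1}$ into the bounded quantity $u'(y_c)J_p\sim\al^2y_c^{\,1-1/p}\leq\al^{1+1/p}$. The borderline $p=\infty$ produces instead a factor $\al\,(\al y_c)\log\frac{1}{\al y_c}$, which is $\leq C\al$ because $s\mapsto s\log\frac1s$ is bounded on $(0,1)$; the symmetric substitution $z=(1-t)\pi+ty_c$ handles $\pi-y_c\lesssim1/\al$, and one checks that the logarithm coming from the bulk tail is always dominated by the asserted power of $\al$.
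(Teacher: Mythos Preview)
Your approach is correct and matches the paper's: the same pointwise kernel bound (the paper records it as \eqref{eq:directive_k}, citing Proposition \ref{prop:phi1} without further detail), then H\"older in $y$, then the same scalar $z$-integral. The only organizational difference is that the paper splits the $z$-integral at $|z-y_c|=1/\al$ and proves directly that it is bounded by $C\min\{\al^{1+1/p}/\sin y_c,\,\al^{1/p}/\sin^2 y_c\}$ (and by $C\al/\sin y_c$ when $p=\infty$), which yields all three statements at once; you instead split according to whether $\sin y_c\gtrsim 1/\al$ or not and treat the near-critical case by rescaling. Both routes are equivalent in substance.

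Two small remarks. First, your ``$\sin\frac{z+y_c}{2}\sim u'(y_c)$'' should be read only as a lower bound $\sin\frac{z+y_c}{2}\geq C\sin y_c$ (it can be much larger when $z$ is far from $y_c$), but that is all you use. Second, there are in fact no separate ``cross terms'' to absorb for $k=2$: the operator $(\frac{\partial_y}{u'(y_c)}+\partial_c)^2$ in Proposition \ref{prop:phi1} already differentiates the coefficient $1/u'(y_c)$, and the companion bound $|(\frac{\partial_z}{u'(y_c)}+\partial_c)^2(u(z)-c)|\le C|u(z)-c|/u'(y_c)^4$ (used in the proof of Lemma \ref{Lem: II}) handles the $(u(z)-c)^{-2}$ factor. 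So a clean Leibniz expansion with $D=\frac{\partial_z+\partial_y}{u'(y_c)}+\partial_c$ and the bounds of Proposition \ref{prop:phi1} suffices.
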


\begin{proof}
By Proposition \ref{prop:phi1}, we know that for $k=0,1,2,$ and $0\leq z\leq y\leq y_c\leq \pi$ or $0\leq y_c\leq y\leq z\leq \pi$,
\begin{align}\label{eq:directive_k}
\left|\left(\frac{\partial_z+\partial_y}{u'(y_c)}+\partial_c\right)^k
\left(\f{1}{(u(z)-c)^2}\left(\f{\phi_1(y,c)\,}{\phi_1(z,c)^2}-1\right)\right)\right|\leq C\f{\min\{\al^2|z-y_c|^2,1\}}{(u(z)-c)^2u'(y_c)^{2k}}.
\end{align}
Recall that $u_1(y,c)=\f{\cos y_c-\cos y}{y-y_c}$ and
\beno
C^{-1}\sin \f{y+y_c}{2}\leq u_1(y,c)\leq C\sin \f{y+y_c}{2}.
\eeno
Thus,
\begin{align*}
&\int_0^{\pi}\f{|z-y_c|^{1-\f{1}{p}}\min\{\al^2|z-y_c|^2,1\}}{|u(z)-c|^2}\,dz\\
&\leq C\int_0^{\pi}\f{\min\{\al^2|z-y_c|^2,1\}}{\sin^2 \f{z+y_c}{2}|z-y_c|^{1+\f1p}}\,dz\\
&\leq C\int_{|z-y_c|\leq \f{1}{\al},0\leq z\leq \pi}\f{\al^2|z-y_c|^{1-\f1p}}{\sin^2 \f{z+y_c}{2}}\,dz
+C\int_{|z-y_c|> \f{1}{\al},0\leq z\leq \pi}\f{|z-y_c|^{-1-\f1p}}{\sin^2 \f{z+y_c}{2}}\,dz.
\end{align*}
Using the fact that
\begin{align*}
\int_{|z-y_c|\leq \f{1}{\al},0\leq z\leq \pi}\f{\al^2|z-y_c|^{1-\f1p}}{\sin^2 \f{z+y_c}{2}}\,dz\leq C\min\Big\{\f{\al^{1+\f1p}}{\sin y_c},\f{\al^{\f1p}}{\sin^2 y_c}\Big\},\\
\int_{|z-y_c|> \f{1}{\al},0\leq z\leq \pi}\f{|z-y_c|^{-1-\f1p}}{\sin^2 \f{z+y_c}{2}}\,dz\leq C\min\Big\{\f{\al^{1+\f1p}}{\sin y_c},\f{\al^{\f1p}}{\sin^2 y_c}\Big\},
\end{align*}
and for $p=\infty$,
\begin{align*}
&\int_0^{\pi}\f{|z-y_c|\min\{\al^2|z-y_c|^2,1\}}{|u(z)-c|^2}\,dz\\
&\leq C\int_{|z-y_c|\leq \f{1}{\al},0\leq z\leq \pi}\f{\al^2|z-y_c|}{\sin^2 \f{z+y_c}{2}}\,dz
+C\int_{|z-y_c|> \f{1}{\al},0\leq z\leq \pi}\f{|z-y_c|^{-1}}{\sin^2 \f{z+y_c}{2}}\,dz\\
&\leq C\f{|\al|}{\sin y_c},
\end{align*}
we deduce that for $k=0,1,2,$ and $p\in [1,\infty)$,
\begin{align*}
\big|u'(y_c)^{2k}\mathcal{L}_k(\varphi)(c)\big|
&\leq C\int_0^{\pi}\f{|z-y_c|^{1-\f{1}{p}}\min\{\al^2|z-y_c|^2,1\}}{|u(z)-c|^2}dz\left\|\varphi\right\|_{L^{p}}\\
&\leq C\min\Big\{\frac{\al^{1+\f{1}{p}}}{u'(y_c)},\frac{\al^{\f{1}{p}}}{u'(y_c)^2}\Big\}\left\|\varphi\right\|_{L^{p}},
\end{align*}
and
\beq\label{eq:estL_kL^infty}
|u'(y_c)^{2k}\mathcal{L}_k(\varphi)|\leq C\f{|\al|}{\sin y_c}\|\varphi\|_{L^{\infty}}.
\eeq
This proves the lemma.
\end{proof}

To estimate the derivatives of $\mathcal{L}_0$, we introduce for $k,j=0,1$,
\begin{align}\label{eq:defI_k,j}
I_{k,j}(\varphi)(c)=\int_{y_c}^{z}{\varphi}(y)\left(\frac{\partial_z+\partial_y}{u'(y_c)}+\partial_c\right)^k
\left(\f{1}{(u(z)-c)^2}\left(\f{\phi_1(y,c)\,}{\phi_1(z,c)^2}-1\right)\right)\,dy\bigg|_{z=j\pi}.
\end{align}
Thus, for $k=0,1,$
\ben\label{eq:pa_cL_0}
\partial_c\mathcal{L}_k(\varphi)=\frac{1}{u'(y_c)}\left(\mathcal{L}_k(\varphi')-I_{k,1}(\varphi)+I_{k,0}(\varphi)\right)+\mathcal{L}_{k+1}(\varphi).
\een
\begin{lemma}\label{Lem:H^1II_1,2_periodic}
It holds that
\beno
&&\big\|u'(y_c)^3\pa_c\mathcal{L}_{0}(\varphi)\big\|_{L^{\infty}}\leq C\al\|\varphi\|_{L^{\infty}}+C\al^{\f12}\|\varphi'\|_{L^2},\\
&&\|u'(y_c)^3\pa_c\mathcal{L}_{0}(\varphi)\|_{L^{\infty}}\leq C\al\|\varphi\|_{L^{\infty}}+C\al u'(y_c)\|\varphi'\|_{L^{\infty}}.
\eeno
If ${\varphi}/{u'}\in L^p(0,\pi)$ for $p\in (1,\infty)$, then
\begin{align*}
&\|\sin y_c\mathcal{L}_{0}(\varphi)\|_{L^{\infty}}
\leq C\al^{\f1p}\|\varphi\|_{W^{1,p}},\\
&\|\mathcal{L}_{0}(\varphi)\|_{L^{\infty}}
\leq C\al\|\varphi\|_{W^{1,\infty}},\\
&\|\sin^3 y_c\pa_c\mathcal{L}_{0}(\varphi)\|_{L^{\infty}}\leq C\al^{\f12}\|\varphi\|_{H^1}.
\end{align*}
\end{lemma}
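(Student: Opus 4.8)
The plan is to split the five bounds into the two $\partial_c\mathcal{L}_0$ estimates, handled through the recursion for the $c$-derivative, and the three bounds on $\mathcal{L}_0$ itself, handled by a substitution that activates the hypothesis $\varphi/u'\in L^p$. For the derivative estimates I would start from the commutator identity \eqref{eq:pa_cL_0} with $k=0$, multiplied by $u'(y_c)^3$:
\[
u'(y_c)^3\partial_c\mathcal{L}_0(\varphi)=u'(y_c)^2\mathcal{L}_0(\varphi')+u'(y_c)^2\big(I_{0,0}(\varphi)-I_{0,1}(\varphi)\big)+u'(y_c)^3\mathcal{L}_1(\varphi).
\]
The two operator terms are immediate from Lemma \ref{Lem:L^2L_k}: taking $k=0$, $p=2$ gives $\|u'(y_c)^2\mathcal{L}_0(\varphi')\|_{L^\infty}\le C\al^{1/2}\|\varphi'\|_{L^2}$; the third ($L^\infty$) estimate gives $\|u'(y_c)\mathcal{L}_0(\varphi')\|_{L^\infty}\le C\al\|\varphi'\|_{L^\infty}$, hence the term $C\al u'(y_c)\|\varphi'\|_{L^\infty}$ after one more factor $u'(y_c)$; and the third estimate with $k=1$ gives $\|u'(y_c)^3\mathcal{L}_1(\varphi)\|_{L^\infty}\le C\al\|\varphi\|_{L^\infty}$. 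These account for every term on the right-hand sides of the first two displays except the boundary contributions.

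The only genuinely new computation is the boundary term $u'(y_c)^2 I_{0,j}(\varphi)$, $j=0,1$, defined in \eqref{eq:defI_k,j}. Here I would use that $u'(j\pi)=0$ forces $|u(j\pi)-c|\sim|j\pi-y_c|^2$, together with the pointwise bound $\big|\phi_1(y,c)\phi_1(j\pi,c)^{-2}-1\big|\le C\min\{\al^2|j\pi-y_c|^2,1\}$ coming from \eqref{eq:directive_k} and Proposition \ref{prop:phi1}. Substituting these and $\int_{y_c}^{j\pi}|\varphi|\le|j\pi-y_c|\,\|\varphi\|_{L^\infty}$ into $I_{0,j}(\varphi)=(u(j\pi)-c)^{-2}\int_{y_c}^{j\pi}\varphi\big(\phi_1(y,c)\phi_1(j\pi,c)^{-2}-1\big)dy$ reduces matters to the elementary inequality $\sin^2 y_c\,|j\pi-y_c|^{-3}\min\{\al^2|j\pi-y_c|^2,1\}\le C\al$, which follows by splitting at $|j\pi-y_c|=1/\al$ and invoking $\sin y_c\sim y_c(\pi-y_c)$ from Lemma \ref{Rmk:u_1}. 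This yields $\|u'(y_c)^2 I_{0,j}(\varphi)\|_{L^\infty}\le C\al\|\varphi\|_{L^\infty}$ and finishes the first two displays.

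For the last three displays the unifying device is to write $\varphi=u'\cdot(\varphi/u')$ and apply H\"older to the inner integral: using \eqref{eq:directive_k},
\[
\Big|\int_{y_c}^z\varphi\,\big(\tfrac{\phi_1(y,c)}{\phi_1(z,c)^2}-1\big)\,dy\Big|\le C\min\{\al^2|z-y_c|^2,1\}\Big(\int_{y_c}^z|u'|^{p'}\Big)^{1/p'}\Big\|\tfrac{\varphi}{u'}\Big\|_{L^p}.
\]
Since $\big(\int_{y_c}^z|u'|^{p'}\big)^{1/p'}\sim u'(y_c)|z-y_c|^{1/p'}$ near $z=y_c$, this gains exactly one power of $u'(y_c)$ compared with Lemma \ref{Lem:L^2L_k}; the remaining scalar integral $\int_0^\pi\frac{\min\{\al^2|z-y_c|^2,1\}}{(u(z)-c)^2}\big(\int_{y_c}^z|u'|^{p'}\big)^{1/p'}dz$ is estimated exactly as in the proof of Lemma \ref{Lem:L^2L_k} (splitting at $|z-y_c|=1/\al$, using $|u(z)-c|\sim\sin\tfrac{z+y_c}{2}|z-y_c|$), giving $C\al^{1/p}/u'(y_c)$ for $p<\infty$ and $C\al$ for $p=\infty$. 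Combined with Hardy's inequality $\|\varphi/u'\|_{L^p}\le C\|\varphi\|_{W^{1,p}}$ (Lemma \ref{Lem: Sobolev_Hardy}), this gives the $\sin y_c\mathcal{L}_0$ and $\mathcal{L}_0$ bounds. The final display then follows by inserting into \eqref{eq:pa_cL_0} the sharpened estimates that the same substitution provides for $\mathcal{L}_1$ and for the boundary terms, namely $\|u'(y_c)^3\mathcal{L}_1(\varphi)\|_{L^\infty}+\|u'(y_c)^2 I_{0,j}(\varphi)\|_{L^\infty}\le C\al^{1/2}\|\varphi/u'\|_{L^2}$, again followed by Hardy.

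The hard part will be the bookkeeping of powers near the critical points $y_c\to 0,\pi$, where $(u(z)-c)^{-2}$ degenerates. The sharp exponent $\al^{1/2}$ (rather than $\al^{3/2}$) in the last display is only obtained because the derivative is exploited through the $\varphi/u'$ substitution in \emph{every} term of the recursion; a crude $L^\infty$ bound on $\mathcal{L}_1(\varphi)$ would lose half a power of $\al$. I expect the one delicate point to be verifying that the borderline logarithmic growth of the scalar kernel in the region $|z-y_c|>1/\al$ is genuinely absorbed, since $\sup_{t\ge1}(\ln t)/t<\infty$ turns the apparent $u'(y_c)^{-1}\ln\al$ into the claimed $C\al$.
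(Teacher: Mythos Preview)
Your proposal is correct and follows essentially the same route as the paper: both use the recursion \eqref{eq:pa_cL_0} together with Lemma \ref{Lem:L^2L_k} for the operator terms and a direct pointwise bound on $I_{0,j}$ for the first two displays, and both exploit the substitution $\varphi=u'\cdot(\varphi/u')$ with H\"older on $u'$ to gain a power of $u'(y_c)$ in the last three. Two minor remarks: the bound $\big(\int_{y_c}^z|u'|^{p'}\big)^{1/p'}\le C\,u_1(z,c)\,|z-y_c|^{1/p'}$ involves $u_1(z,c)$ rather than $u'(y_c)$ (one then uses $u_1\ge C^{-1}u'(y_c)$), and the ``logarithmic growth'' you flag does not actually arise, since for $p>1$ the far region gives $\int_{|z-y_c|>1/\al}|z-y_c|^{-1-1/p}\,dz\le C\al^{1/p}$ with no log, and for $p=\infty$ the paper uses $|u(z)-c|\ge C^{-1}|z-y_c|^2$ directly, yielding $\int_0^\pi\min\{\al^2|z-y_c|^2,1\}|z-y_c|^{-2}\,dz\le C\al$.
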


\begin{proof}
By \eqref{eq:directive_k}, we have for $k=0,1$,
\begin{align}
\nonumber|I_{k,j}(\varphi)(c)|&\leq C\left|\int_{y_c}^{j\pi}\varphi(y)dy\right|\f{\min\{\al^2|j\pi-y_c|^2,1\}}{(\cos j\pi-\cos y_c)^2u'(y_c)^{2k}}\\
\nonumber&\leq C\f{|j\pi-y_c|\min\{\al^2|j\pi-y_c|^2,1\}}{(\cos j\pi-\cos y_c)^2u'(y_c)^{2k}}\|\varphi\|_{L^{\infty}}\\
\label{eq: I_k,jinfty}&\leq C\f{\min\{\al^2|j\pi-y_c|^2,1\}}{|j\pi-y_c|^3u'(y_c)^{2k}}\|\varphi\|_{L^{\infty}},
\end{align}
thus by the fact that $\sin y_c\leq C|j\pi-y_c|$, we deduce that for $j=0,1,$
\begin{align}
\label{6.10}|u(y_c)^{2k+3}I_{k,j}(\varphi)(c)|&\leq C\|\varphi\|_{L^{\infty}},\\
\label{6.11}|u(y_c)^{2k+2}I_{k,j}(\varphi)(c)|&\leq C\al\|\varphi\|_{L^{\infty}}.
\end{align}
Notice that
\begin{align}
\pa_c\mathcal{L}_{0}(\varphi)
=\frac{1}{u'(y_c)}\left(\mathcal{L}_0(\varphi')-I_{0,1}(\varphi)+I_{0,0}(\varphi)\right)+\mathcal{L}_{1}(\varphi).\label{eq:L0-pa}
\end{align}
We get by Lemma \ref{Lem:L^2L_k} that
\beno
&&\|u'(y_c)^2\mathcal{L}_0(\varphi')\|_{L^{\infty}}
\leq C\al^{\f12}\|\varphi'\|_{L^2},\\
&&\|u'(y_c)^3\mathcal{L}_1(\varphi)\|_{L^{\infty}}
\leq C\al\|\varphi\|_{L^{\infty}},
\eeno
which along with the estimates of $I_{0,0}$ and $I_{0,1}$ imply that
\beno
\|u'(y_c)^3\pa_c\mathcal{L}_{0}(\varphi)\|_{L^{\infty}}\leq C\al\|\varphi\|_{L^{\infty}}+C\al^{\f12}\|\varphi'\|_{L^2}.\eeno
By \eqref{eq:estL_kL^infty}, we get
\beno
\|u'(y_c)\mathcal{L}_0(\varphi')\|_{L^{\infty}}
\leq C|\al|\|\varphi'\|_{L^{\infty}}.
\eeno
This along with the estimates of $I_{0,0}$ and $I_{0,1}$ gives
\beno
\|u'(y_c)^3\pa_c\mathcal{L}_{0}(\varphi)\|_{L^{\infty}}\leq C\al\|\varphi\|_{L^{\infty}}+C\al u'(y_c)\|\varphi'\|_{L^{\infty}}.
\eeno

If  $\varphi/u'\in L^p$, then $\f{\varphi(y)}{u'(y)}=\f{1}{u'(y)}\int_{j\pi}^y\varphi'(z)dz$, thus, $\big\|\f{\varphi(y)}{u'(y)}\big\|_{L^p(0,\pi)}\leq C\|\varphi'\|_{L^p(0,\pi)}$ for any $p\in (1,\infty]$.
Due to $u(y)-c=u_1(y,c)(y-y_c)$, we have
\begin{align*}
\left|\int_{y_c}^zu'(y)^{p'}dy\right|^{\f{1}{p'}}
\leq C\max_{|2y-y_c-z|\leq |z-y_c|}u'(y)|z-y_c|^{\f{1}{p'}}\leq Cu_1(z,c)|z-y_c|^{\f{1}{p'}}.
\end{align*}
Then we have
\begin{align*}
|\mathcal{L}_k(\varphi)(c)|
&\leq C\int_0^{\pi}\f{\min\{\al^2|z-y_c|^2,1\}{\left|\int_{y_c}^zu'(y)^{p'}dy\right|^{\f{1}{p'}}}}{{u_1(z,c)^2}(z-y_c)^2u'(y_c)^{2k}}dz\left\|\f{\varphi(y)}{u'(y)}\right\|_{L^{p}}\\
&\leq C\int_0^{\pi}\f{\min\{\al^2|z-y_c|^2,1\}}{u'(y_c)^{2k+1}|z-y_c|^{1+\f1p}}\|\varphi'\|_{L^p}
\leq C\f{\al^{\f1p}}{u'(y_c)^{2k+1}}\|\varphi'\|_{L^p},
\end{align*}
and similarly,
\ben\label{eq:Lk-5}
&&\|u'(y_c)^{2k+1}\mathcal{L}_k(\varphi)\|_{L^{\infty}}\leq C\al\|{\varphi}\|_{L^{\infty}}.
\een
In particular, we get
\beno
|\sin y_c\mathcal{L}_{0}(\varphi)\|_{L^{\infty}}
\leq C\al^{\f1p}\|\varphi\|_{W^{1,p}}.
\eeno

As $|u(z)-c|=\Big|\sin \f{y_c+z}{2}\sin \f{y_c-z}{2}\Big|\geq \Big|\sin \f{y_c-z}{2}\Big|^2\geq C^{-1}|z-y_c|^2$, we have
\begin{align}\nonumber
|u'(y_c)^{2k}\mathcal{L}_k(\varphi)(c)|
&\leq C\int_0^{\pi}\f{\min\{\al^2|z-y_c|^2,1\}{\int_{y_c}^zu'(y)dy}}{(u(z)-c)^2}dz\left\|\f{\varphi(y)}{u'(y)}\right\|_{L^{\infty}}\\ \label{6.12}
&\leq C\int_0^{\pi}\f{\min\{\al^2|z-y_c|^2,1\}}{|u(z)-c|}dz\|\varphi'\|_{L^{\infty}}\\ \nonumber
&\leq C\int_0^{\pi}\f{\min\{\al^2|z-y_c|^2,1\}}{|z-y_c|^2}dz\|\varphi'\|_{L^{\infty}}
\leq C|\al|\|\varphi'\|_{L^{\infty}},
\end{align}
In particular, we get
\beno
\|\mathcal{L}_{0}(\varphi)\|_{L^{\infty}}
\leq C\al\|\varphi\|_{W^{1,\infty}}.
\eeno
On the other hand, we have
\begin{align*}
|u'(y_c)^{2k+2}I_{k,0}(\varphi)(c)|
\leq C\f{u'(y_c)^{2k+2}y_c^{2-\f1p}\min\{\al^2y_c^2,1\}}{u'(y_c)^{2k+2}y_c^2}\left\|\f{\varphi}{u'}\right\|_{L^{p}}\leq C\al^{\f1p}\|\varphi\|_{W^{1,p}(0,\pi)}.
\end{align*}
The bound of $I_{k,1}$ is similar. Then we can conclude the last inequality of the lemma by using \eqref{eq:L0-pa}.
\end{proof}

\begin{lemma}\label{Lem:H^2II_1,2_periodic}
It holds that
\begin{align*}
\big|u'(y_c)^5\pa_c^2\mathcal{L}_{0}(\varphi)(c)\big|
\leq C\big(\al^{\f12}u'(y_c)\|\varphi''\|_{L^2}+\al\|\varphi\|_{L^{\infty}}+\al u'(y_c)\|\varphi'\|_{L^{\infty}}+\al^{\f12}\|\varphi'\|_{L^2}\big).
\end{align*}
If $\varphi/u'\in L^{\infty}$, then for $k=0,1,2$,
\beno
\|u'(y_c)^{2k}\partial_c^k\mathcal{L}_0(\varphi)\|_{L^{\infty}}\leq C\al\left\|{\varphi}'\right\|_{L^{\infty}}+C\al^{\f{1}{2}}\left\|{\varphi}''\right\|_{L^{2}}.
\eeno
\end{lemma}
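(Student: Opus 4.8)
The plan is to reduce both estimates to the first–order identity \eqref{eq:pa_cL_0} by differentiating once more in $c$ and then bounding every resulting piece with the bounds already established for $\mathcal{L}_k$ and for the boundary terms $I_{k,j}$. Writing \eqref{eq:pa_cL_0} with $k=0$ and applying $\pa_c$ a second time, using $\pa_cy_c=1/u'(y_c)$, $\pa_c\big(1/u'(y_c)\big)=-u''(y_c)/u'(y_c)^3$, and \eqref{eq:pa_cL_0} once more with $k=0$ (applied to $\varphi'$) and with $k=1$, one obtains an expansion of $\pa_c^2\mathcal{L}_0(\varphi)$ as a linear combination, with coefficients that are explicit powers of $1/u'(y_c)$ times the bounded factor $u''(y_c)$, of the terms
\[
\mathcal{L}_0(\varphi''),\ \mathcal{L}_1(\varphi'),\ \mathcal{L}_2(\varphi),\ I_{0,j}(\varphi),\ I_{0,j}(\varphi'),\ I_{1,j}(\varphi),\ \pa_cI_{0,j}(\varphi)\quad(j=0,1).
\]
Only the last family, $\pa_cI_{0,j}(\varphi)$, is genuinely new; all the others are covered by the preceding lemmas.

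For the first estimate I multiply this expansion by $u'(y_c)^5$ and match each term to one of the four quantities on the right-hand side: $u'(y_c)^2\mathcal{L}_0(\varphi')$ and $u'(y_c)^4\mathcal{L}_1(\varphi')$ are bounded by $C\al^{1/2}\|\varphi'\|_{L^2}$ by Lemma \ref{Lem:L^2L_k} with $(k,p)=(0,2)$ and $(1,2)$; $u'(y_c)^3\mathcal{L}_0(\varphi'')=u'(y_c)\cdot u'(y_c)^2\mathcal{L}_0(\varphi'')$ is bounded by $C\al^{1/2}u'(y_c)\|\varphi''\|_{L^2}$; $u'(y_c)^5\mathcal{L}_2(\varphi)$ is bounded by $C\al\|\varphi\|_{L^\infty}$ by the $2k{+}1$ bound of Lemma \ref{Lem:L^2L_k} at $p=\infty$; the boundary terms $u'(y_c)^2I_{0,j}(\varphi)$ and $u'(y_c)^4I_{1,j}(\varphi)$ are bounded by $C\al\|\varphi\|_{L^\infty}$ by \eqref{6.11}, and $u'(y_c)^3I_{0,j}(\varphi')=u'(y_c)\cdot u'(y_c)^2I_{0,j}(\varphi')$ is bounded by $C\al u'(y_c)\|\varphi'\|_{L^\infty}$, again by \eqref{6.11}. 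Thus every term except $u'(y_c)^4\pa_cI_{0,j}(\varphi)$ is already of the asserted form.

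The main obstacle is therefore the estimate of $\pa_cI_{0,j}(\varphi)$. Writing $I_{0,j}(\varphi)=\int_{y_c}^{j\pi}\varphi(y)h_j(y,c)\,dy$ with $h_j(y,c)=\f{1}{(u(j\pi)-c)^2}\big(\f{\phi_1(y,c)}{\phi_1(j\pi,c)^2}-1\big)$ and differentiating under the integral sign, the lower-limit contribution is $-\f{\varphi(y_c)}{u'(y_c)}h_j(y_c,c)$ (using $\phi_1(y_c,c)=1$), while the interior contribution is $\int_{y_c}^{j\pi}\varphi\,\pa_ch_j\,dy$. I would bound these exactly as in the derivation of \eqref{6.10}--\eqref{6.11}: from Proposition \ref{prop:phi1} one has $\big|\f{1}{\phi_1(j\pi,c)^2}-1\big|\le C\min\{\al^2|j\pi-y_c|^2,1\}$ and $|\pa_c\phi_1|\le C\phi_1\,\al\min\{1,\al|y-y_c|\}/u'(y_c)$, and from Lemma \ref{Rmk:u_1} the comparisons $|u(j\pi)-c|\sim|j\pi-y_c|^2$ and $\sin y_c\sim y_c(\pi-y_c)$. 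Splitting into the region where $y_c$ is near the relevant critical point ($y_c\to0$ for $j=0$, $y_c\to\pi$ for $j=1$), where the singular weight $(u(j\pi)-c)^{-3}$ produced by $\pa_c$ is compensated by $u'(y_c)^4$ and by the short length $|j\pi-y_c|$ of integration, and the region where $y_c$ is bounded away from $0,\pi$, where everything is smooth, yields $|u'(y_c)^4\pa_cI_{0,j}(\varphi)|\le C\al\|\varphi\|_{L^\infty}$. This closes the first estimate.

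For the second estimate I use the same expansion but multiply by $u'(y_c)^{2k}$ and invoke the sharper bounds available when $\varphi/u'\in L^\infty$, namely \eqref{6.12} (which gives $\|u'(y_c)^{2k}\mathcal{L}_k(\varphi)\|_{L^\infty}\le C\al\|\varphi'\|_{L^\infty}$), \eqref{eq:estL_kL^infty} (which carries an extra $1/\sin y_c$ and so absorbs one power of $u'(y_c)$), and the elementary bound $\|\varphi/u'\|_{L^\infty}\le C\|\varphi'\|_{L^\infty}$. The case $k=0$ is \eqref{6.12} directly; for $k=1,2$ the $\mathcal{L}_k$ terms are controlled by \eqref{6.12} and \eqref{eq:estL_kL^infty}, and the boundary terms are re-estimated by writing $\varphi=(\varphi/u')\,u'$ and using $\int_{y_c}^{j\pi}|u'|\,dy=|u(j\pi)-c|$ together with the pointwise bound \eqref{eq:directive_k}, which upgrades \eqref{eq: I_k,jinfty} to a bound proportional to $\|\varphi'\|_{L^\infty}$ with the correct power of $\al$; the term $\pa_cI_{0,j}(\varphi)$ is handled as above but with the $\varphi/u'$ structure in place of $\|\varphi\|_{L^\infty}$. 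I expect the $\pa_cI_{0,j}$ estimate, and in particular the bookkeeping of the powers of $u'(y_c)$ near the degenerate points $y_c=0,\pi$, to be the only delicate point in the whole argument.
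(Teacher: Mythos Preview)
Your proposal is correct and follows essentially the same route as the paper: iterate \eqref{eq:pa_cL_0} to expand $\pa_c^2\mathcal{L}_0(\varphi)$, control all $\mathcal{L}_k$ and $I_{k,j}$ terms via Lemma \ref{Lem:L^2L_k} and \eqref{6.10}--\eqref{6.12}, and treat the new term $\pa_cI_{0,j}(\varphi)$ by differentiating under the integral sign and using the pointwise bounds on $\phi_1$ and $\pa_c\phi_1$ from Proposition \ref{prop:phi1}. The paper records the key outcome as $|u'(y_c)^{3+\gamma}\pa_cI_{0,j}(\varphi)|\le C\al^{2-\gamma}\|\varphi\|_{L^\infty}$ for $\gamma\in[0,1]$ (so $\gamma=1$ gives exactly your claimed $|u'(y_c)^4\pa_cI_{0,j}(\varphi)|\le C\al\|\varphi\|_{L^\infty}$), and in the $\varphi/u'\in L^\infty$ case sharpens this to $|u'(y_c)^3\pa_cI_{0,j}(\varphi)|\le C\al\|\varphi/u'\|_{L^\infty}$, which is precisely the upgrade you describe via $\varphi=(\varphi/u')u'$.
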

\begin{proof}
Since we have
\begin{align*}
\partial_cI_{0,j}(\varphi)(c)
&=-\f{\varphi(y_c)}{u'(y_c)}\f{1}{(u(z)-c)^2}\left(\f{1\,}{\phi_1(z,c)^2}-1\right)\bigg|_{z=j\pi}\\
&\quad+\int_{y_c}^{j\pi}\partial_c
\left(\f{{\varphi}(y)}{(u(z)-c)^2}\left(\f{\phi_1(y,c)\,}{\phi_1(z,c)^2}-1\right)\right)\,dy\bigg|_{z=j\pi},
\end{align*}
and by Proposition \ref{prop:phi1}, we get for $|2y-y_c-z|\leq |z-y_c|$,
\begin{align*}
&\left|\partial_c\left(\f{1}{(u(z)-c)^2}\left(\f{\phi_1(y,c)\,}{\phi_1(z,c)^2}-1\right)\right)\right|\\
&\leq C\left|\left(\f{1}{|u(z)-c|^3}\left(\f{\phi_1(y,c)\,}{\phi_1(z,c)^2}-1\right)\right)\right|
+C\left|\f{1}{|u(z)-c|^2}\pa_c\left(\f{\phi_1(y,c)\,}{\phi_1(z,c)^2}\right)\right|\\
&\leq C\f{\min\{\al^2|z-y_c|^2,1\}}{|u(z)-c|^3}
+C\f{\min\{\al^2|z-y_c|,\al\}}{u'(y_c)|u(z)-c|^2}.
\end{align*}
Thus, we obtain
\begin{align*}
|\pa_cI_{0,1}(\varphi)(c)|
&\leq C\|\varphi\|_{L^{\infty}}\f{\min\{\al^2(\pi-y_c)^2,1\}}{(\pi-y_c)^2u'(y_c)^3}\\
&\quad+C\left|\partial_c\left(\f{1}{(u(\pi)-c)^2}\left(\f{\phi_1(y,c)\,}{\phi_1(\pi,c)^2}-1\right)\right)\right||\pi-y_c|\|\varphi\|_{L^{\infty}}\\
&\leq C\|\varphi\|_{L^{\infty}}\f{\min\{\al^2(\pi-y_c)^2,1\}}{(\pi-y_c)^2u'(y_c)^3}
+C\|\varphi\|_{L^{\infty}}\f{\min\{\al^2(\pi-y_c),\al\}}{|\pi-y_c|u'(y_c)^3},
\end{align*}
and
\begin{align*}
|\pa_cI_{0,0}(\varphi)(c)|\leq C\|\varphi\|_{L^{\infty}}\Big(\f{\min\{\al^2 y_c^2,1\}}{y_c^2u'(y_c)^3}+\f{\min\{\al^2 y_c,\al\}}{y_cu'(y_c)^3}\Big).
\end{align*}
This gives
\begin{align*}
|\pa_cI_{0,1}(\varphi)(c)|+|\pa_cI_{0,0}(\varphi)(c)|
\leq C\|\varphi\|_{L^{\infty}}\Big(\f{\min\{\al^2 \sin^2y_c,1\}}{\sin^5 y_c}+\f{\min\{\al^2 \sin^2y_c,\al\sin y_c\}}{\sin^5 y_c}\Big).
\end{align*}
Therefore, for any $\g\in [0,1]$,
\begin{align}\label{eq:pa_cI_0,j}
\big|u'(y_c)^{3+\g}\pa_cI_{0,0}(\varphi)(c)\big|
+\big|u'(y_c)^{3+\g}\pa_cI_{0,1}(\varphi)(c)\big|
\leq C\al^{2-\g}\|\varphi\|_{L^{\infty}}.
\end{align}
Notice that
\begin{align*}
\pa_c^2\mathcal{L}_{0}(\varphi)
&=\pa_c\left(\frac{1}{u'(y_c)}\left(\mathcal{L}_0(\varphi')-I_{0,1}(\varphi)+I_{0,0}(\varphi)\right)+\mathcal{L}_{1}(\varphi)\right)\\
&=\frac{1}{u'(y_c)^2}\left(\mathcal{L}_0(\varphi'')-I_{0,1}(\varphi')+I_{0,0}(\varphi')\right)+\f{1}{u'(y_c)}\mathcal{L}_{1}(\varphi')\\
&\quad-\f{1}{u'(y_c)}\pa_c(I_{0,1}(\varphi)(c)-I_{0,0}(\varphi)(c))\\
&\quad-\f{u''(y_c)}{u'(y_c)^3}\left(\mathcal{L}_0(\varphi')-I_{0,1}(\varphi)+I_{0,0}(\varphi)\right)\\
&\quad+\frac{1}{u'(y_c)}\left(\mathcal{L}_1(\varphi')-I_{1,1}(\varphi)+I_{1,0}(\varphi)\right)+\mathcal{L}_{2}(\varphi).
\end{align*}
Thus, by Lemma \ref{Lem:L^2L_k}, \eqref{6.10}, \eqref{6.11},  \eqref{eq:Lk-5} and \eqref{eq:pa_cI_0,j}, we obtain
\begin{align*}
|\sin^5y_c\pa_c^2\mathcal{L}_{0}(\varphi)(c)|
\leq C\al^{\f12}u'(y_c)\|\varphi''\|_{L^2}+C\al\|\varphi\|_{L^{\infty}}
+C\al u'(y_c)\|\varphi'\|_{L^{\infty}}+C\al^{\f12}\|\varphi'\|_{L^2}.
\end{align*}

If  $\varphi(0)=\varphi(\pi)=0$, then
\beno
|\pa_cI_{0,0}(\varphi)(c)|\leq C\Big\|\f{\varphi}{u'}\Big\|_{L^{\infty}}\Big(\f{\min\{\al^2 y_c^2,1\}}{y_c^2u'(y_c)^2}+\f{\min\{\al^2 y_c,\al\}}{y_cu'(y_c)^2}\Big).
\eeno
Therefore,
\begin{align*}
|u'(y_c)^{3}\pa_cI_{0,0}(\varphi)(c)|\leq C\al\Big\|\f{\varphi}{u'}\Big\|_{L^{\infty}}.
\end{align*}
Similarly, we have
\begin{align*}
|u'(y_c)^{3}\pa_cI_{0,1}(\varphi)(c)|\leq C\al\Big\|\f{\varphi}{u'}\Big\|_{L^{\infty}}.
\end{align*}
We also have
\beno
&&\|u'(y_c)^{2k+1}I_{k,j}(\varphi)\|_{L^{\infty}}\leq C\al\left\|\f{\varphi}{u'}\right\|_{L^{\infty}}.
\eeno
With the estimates above, we deduce the second inequality of the lemma by using \eqref{6.12}, \eqref{6.11} and Lemma \ref{Lem:L^2L_k}.
\end{proof}

\section{$W^{2,1}$ estimates of $K_o$ and $K_e$}
\label{K_oK_e}

This section is devoted to the proof of Proposition \ref{Prop: estimate K_o} and Proposition \ref{Prop: estimate K_e}.
For the sake of simplicity, we introduce some notations:
\begin{itemize}
\item We use $\cL^p$ to denote a function $f$ which satisfies $\|f\|_{L_{y_c}^p}\leq C$.

\item We use $\cL^{p}\cap \cL^q$  to denote a function $f$ which satisfies $\|f\|_{L_{y_c}^p}+\|f\|_{L_{y_c}^q}\leq C$.

\item We use $\rho \cL^{p}$ denote a function $f$ which satisfies $\left\|\f{f}{\rho}\right\|_{L^p_{y_c}}\leq C$.

\item We use $\cL^{p}+\cL^q$  to denote a function $f$ which can be divided into two parts $f=f_1+f_2$ with $f_1,f_2$ satisfing $\|f_1\|_{L_{y_c}^p}\leq C,\|f_2\|_{L_{y_c}^q}\leq C$.
\end{itemize}

\subsection{$W^{2,1}$ estimate of $K_o$}

In this subsection, we prove Proposition \ref{Prop: estimate K_o}.Recall that
\begin{align*}
K_o(c,\al)=\f{\Lambda_1(\widehat{\om}_o)(y_c)\Lambda_2(g)(y_c)}{(\mathrm{A}(c)^2+\mathrm{B}(c)^2){u'(y_c)}},
\end{align*}
where $\rmA, \rmB$ are defined by \eqref{eq:defAB} and $\Lambda_1, \Lambda_2$ are defined by \eqref{def:Lam1} and \eqref{def:Lam2}.
\smallskip

{\bf Step 1. $L^1$ estimate.} \smallskip

We normalize $\widehat{\om}_0, g$ so that $\|\widehat{\om}_{o}\|_{L^2}\leq 1,\ \|g\|_{L^2}\leq 1$.
By \eqref{eq:defLam_1,1}, \eqref{eq:defLam_2,1} and Lemma \ref{Lem: II_1,1L^2}, we get $\Lambda_{1,1}(\widehat{\omega}_{o})(y_c)=\cL^2$ and $\Lambda_{2,1}(g)(y_c)=\cL^2$.
By \eqref{eq:defLam_1,2}, \eqref{eq:defLam_2,2} Lemma \ref{Lem: II} and Lemma \ref{Lem:L^2L_k}, we get $\Lambda_{1,2}(\widehat{\omega}_{o})(y_c)=|\al|^{\f12}\cL^{\infty}+|\al| \sin y_c\cL^2$ and $\Lambda_{2,2}(g)(y_c)=|\al|^{\f12}\cL^{\infty}+|\al| \sin y_c\cL^2$. Thus, we have
\beno
\Lambda_{1}(\widehat{\omega}_{o})(y_c)=(1+|\al|\sin y_c)\cL^2+|\al|^{\f{1}{2}}\cL^{\infty},\
\Lambda_2(g)(y_c)=(1+|\al|\sin y_c)\cL^2+|\al|^{\f{1}{2}}\cL^{\infty}.
\eeno
Then by Proposition \ref{Prop: A^2+B^2}, we infer that
\begin{align*}
u'(y_c)K_o(c,\al)=\cL^1+\f{|\al|^{\f{1}{2}}\cL^2}{1+|\al|\sin y_c}+\f{\al \cL^{\infty}}{(1+|\al|\sin y_c)^2},
\end{align*}
which implies that
\begin{align*}
&\|K_o(\cdot,\al)\|_{L_c^1(-1,1)}\leq C\|\widehat{\om}_o\|_{L^2}\|g\|_{L^2}.
\end{align*}

{\bf Step 2. $W^{1,1}$ estimate.} \smallskip

We normalize $\widehat{\om}_0, g$ so that  $\|\widehat{\om}_{o}\|_{H^1}\leq 1,\  \|g\|_{H^1}\leq 1$. By the definition of $\widehat{\om}_{o}$, we know that $\widehat{\om}_{o}(\al,0)=\widehat{\om}_{o}(\al,\pi)=g(0)=g(\pi)=0$, then $\big\|\f{\widehat{\om}_{o}}{u'}\big\|_{L^2}+\big\|\f{g}{u'}\big\|_{L^2}\leq C$.
By \eqref{eq:defLam_1,1}, \eqref{eq:defLam_2,1} and Lemma \ref{Lem: II_1,1odd}, we get
\beno
\Lambda_{1,1}(\widehat{\omega}_{o})(y_c)=\sin y_c\cL^2,\quad \Lambda_{2,1}(g)(y_c)=\sin y_c\cL^2,
\eeno
 and by Lemma \ref{Lem: II_1,1L^2}, we get
 \beno
 \pa_{y_c}\Lambda_{1,1}(\widehat{\omega}_{o})(y_c)=\cL^2,\quad \pa_{y_c}\Lambda_{2,1}(g)(y_c)=\cL^2.
 \eeno
 By \eqref{eq:defLam_1,2}, \eqref{eq:defLam_2,2}, Lemma \ref{Lem: II} and Lemma \ref{Lem:H^1II_1,2_periodic}, we get
 \beno
 &&\Lambda_{1,2}(\widehat{\omega}_{o})(y_c)=|\al|^{\f12}\sin y_c\cL^{\infty}+|\al| \sin^2 y_c\cL^2,\\
 &&\Lambda_{2,2}(g)(y_c)=|\al|^{\f12}\sin y_c\cL^{\infty}+|\al| \sin^2 y_c\cL^2,\\
&&\pa_c\Lambda_{1,2}(\widehat{\omega}_{o})(y_c)=\f{\al^{\f12}\cL^{\infty}}{\sin y_c}+|\al|\cL^2,\\
&&\pa_c\Lambda_{2,2}(g)(y_c)=\f{\al^{\f12}\cL^{\infty}}{\sin y_c}+|\al|\cL^2.
\eeno
Thus, we obtain
\beno
&&\Lambda_{1}(\widehat{\omega}_{o})(y_c)=\sin y_c((1+|\al|\sin y_c) \cL^2+|\al|^{\f{1}{2}}\cL^{\infty}),\\
&&\Lambda_2(g)(y_c)=\sin y_c((1+|\al|\sin y_c)\cL^2+|\al|^{\f{1}{2}}\cL^{\infty}),\\
&&\pa_c\Lambda_{1}(\widehat{\omega}_{o})(y_c)=\f{1}{\sin y_c}\big((1+|\al|\sin y_c)\cL^2+\al^{\f12}\cL^{\infty}\big),\\
&&\pa_c\Lambda_2(g)(y_c)=\f{1}{\sin y_c}\big((1+|\al|\sin y_c)\cL^2+\al^{\f12}\cL^{\infty}\big).
\eeno
Therefore,
\beno
&&\Lambda_{1}(\widehat{\omega}_{o})\Lambda_2(g)=\sin^2 y_c\big((1+|\al|\sin y_c)^2\cL^1+\al^{\f12}(1+|\al|\sin y_c)\cL^{2}+\al \cL^{\infty}\big),\\
&&\pa_c(\Lambda_{1}(\widehat{\omega}_{o})\Lambda_2(g))
=(1+|\al|\sin y_c)^2\cL^1+\al^{\f12}(1+|\al|\sin y_c)\cL^{2}+\al \cL^{\infty},
\eeno
which along with Proposition \ref{Prop: A^2+B^2} give
\begin{align*}
&\|\pa_cK_o(\cdot,\al)\|_{L_c^1(-1,1)}
\leq C\|\widehat{\om}_o\|_{H^1}\|g\|_{H^1}.
\end{align*}

{\bf Step 3. $W^{2,1}$ estimate.}\smallskip

 We normalize $\widehat{\om}_0, g$ so that $\|\widehat{\om}_{o}\|_{H^2}\leq 1,\ \|f\|_{L^2}\leq 1$.
 Then we have
 \beno
 &&\|g''\|_{L^2}^2+2\al^2\|g'\|_{L^2}^2+\al^4\|g\|_{L^2}^2\leq 1,\\
 &&\big\|\f{u''g}{u'}\big\|_{L^\infty}\leq C\|(u''g)'\|_{L^\infty}\leq C/\al^{\f{1}{2}},\quad \|(u''g)''\|_{L^2}\leq C.
\eeno
By \eqref{eq:defLam_1,1}, \eqref{eq:defLam_2,1} and Lemma \ref{Lem: II_1,1odd}, we get
\beno
\Lambda_{1,1}(\widehat{\omega}_{o})(y_c)=\sin^2 y_c\cL^2\cap \sin y_c\cL^{\infty},\quad \Lambda_{2,1}(g)(y_c)=\sin^2 y_c\cL^2\cap \sin y_c\cL^{\infty}.
\eeno
By Lemma \ref{Lem: II_1,1L^2}, we get
\beno
&&\pa_c\Lambda_{1,1}(\widehat{\omega}_{o})(y_c)=\cL^2,\quad \pa_c\Lambda_{2,1}(g)(y_c)=\cL^2,\\
&&\pa_c^2\Lambda_{1,1}(\widehat{\omega}_{o})(y_c)=\f{\cL^2}{\sin^2y_c},\quad \pa_c^2\Lambda_{2,1}(g)(y_c)=\f{\cL^2}{\sin^2y_c}.
\eeno
By \eqref{eq:defLam_1,2}, \eqref{eq:defLam_2,2}, Lemma \ref{Lem: II} and Lemma \ref{Lem:H^2II_1,2_periodic}, we get
\beno
&&\Lambda_{1,2}(\widehat{\omega}_{o})(y_c)=|\al|\sin^2 y_c\cL^{\infty},\quad \Lambda_{1,2}(g)(y_c)=|\al|^{\f12}\sin^2 y_c\cL^{\infty},\\
&&\pa_c\Lambda_{1,2}(\widehat{\omega}_{o})(y_c)=|\al| \cL^{\infty},\quad \pa_c\Lambda_{1,2}(g)(y_c)=|\al|^{\f12}{\cL^\infty},\\
&&\pa_c^2\Lambda_{1,2}(\widehat{\omega}_{o})(y_c)=\f{|\al|\cL^{\infty}}{\sin^2y_c}+\f{|\al|\cL^2}{\sin y_c},\quad
\pa_c^2\Lambda_{1,2}(g)(y_c)=\f{|\al|^{\f12}\cL^\infty} {\sin^2y_c}+\f{|\al|\cL^2}{\sin y_c}.
\eeno
Therefore, we obtain
\begin{align*}
&\Lambda_1(\widehat{\om}_{o})=\sin^2y_c \cL^2\cap \sin y_c\cL^{\infty}+|\al|\sin^2y_c\cL^{\infty},\\
&\partial_c(\Lambda_1(\widehat{\om}_{o}))=\cL^2+\al \cL^{\infty},\\
&\partial_c^2(\Lambda_1(\widehat{\om}_{o}))=\f{(1+|\al|\sin y_c)\cL^2+|\al| \cL^{\infty}}{\sin^2y_c},\\
&\Lambda_2(g)=\sin^2y_c(\cL^2+|\al|^{\f{1}{2}}\cL^{\infty}),\\
&\partial_c(\Lambda_2(g))= \cL^2+|\al|^{\f{1}{2}}\cL^{\infty},\\
&\partial_c^2(\Lambda_2(g))=\f{(1+|\al|\sin y_c)\cL^2+|\al|^{\f{1}{2}} \cL^{\infty}}{\sin^2y_c}.
\end{align*}
Thus,
\begin{align*}
&\Lambda_1(\widehat{\om}_{o})\Lambda_2(g)=\sin^4y_c\big(\cL^1+\al \cL^2+\al^{\f32}\cL^{\infty}\big),\\
&\pa_c\big(\Lambda_1(\widehat{\om}_{o})\Lambda_2(g)\big)
=\sin^2y_c\big(\cL^1+\al \cL^2+\al^{\f32}\cL^{\infty}\big),\\
&\pa_c^2\big(\Lambda_1(\widehat{\om}_{o})\Lambda_2(g)\big)
=(1+\al\sin y_c)\big(\cL^1+\al\cL^2+\al^2\cL^{\infty}\big),
\end{align*}
from which and  Proposition \ref{Prop: A^2+B^2}, we infer that
\begin{align*}
&\|\pa_c^2K_o(\cdot, \al)\|_{L_c^1(-1,1)}
\leq C\al^{\f12}\|\widehat{\om}_o\|_{H^2}\|f\|_{L^2},
\end{align*}
and
\beno
K_o(c, \al)=C(\al)\sin^2 y_c\cL^2,
\eeno
which implies that $K_o(\pm 1,\al)=0$.

\subsection{$W^{2,1}$ estimate of $K_e$}

In this subsection, we prove Proposition \ref{Prop: estimate K_e}. Recall that
\begin{align*}
K_e(c,\al)=\f{\Lambda_3(\widehat{\omega}_{e})(y_c)\Lambda_4(g)(y_c)}{u'(y_c)(\rmA_1(c)^2+\rmB_1(c)^2)},
\end{align*}
where $\rmA_1, \rmB_2$ are defined by \eqref{eq:defA_1B_1},
and $\Lambda_3, \Lambda_4$ are defined by \eqref{eq:def Lam_3} and \eqref{eq:def Lam_4}.\smallskip

We need the following lemma.

\begin{lemma}\label{Lem:Lambda_3,1 4,1}
Let  $\phi(y,c)$ be as in Proposition \ref{prop:Rayleigh-Hom} with $(y,c)\in [0,\pi]\times [-1,1]$, and $\Lambda_{3,1}(\widehat{\om}_e)$ and $\Lambda_{4,1}(g)$ be defined by \eqref{eq:defLambda_3,1} and \eqref{eq:defLambda_4,1} with $\rmE_j$ defined by \eqref{eq:defE_j} for $j=0,1$.  It holds that

(1) if $\|\widehat{\om}_e\|_{L^2}\leq 1$ and $\|g\|_{L^2}\leq 1$, then\begin{align*}
\Lambda_{3,1}(\widehat{\om}_e)=\f{\cL^2}{\al^2},\quad
\Lambda_{4,1}(g)=\f{\cL^2}{\al^2};
\end{align*}

(2) if $\|\widehat{\om}_e\|_{H^1}\leq 1$ and $|\al|\|g\|_{L^2}+\|g'\|_{L^2}\leq 1$, then
\begin{align*}
&\Lambda_{3,1}(\widehat{\om}_e)=\frac{\sin y_c(\cL^2+\al \cL^{\infty})}{\al^2},\quad
\Lambda_{4,1}(g)=\frac{\sin y_c(\cL^2+\al^{\frac{1}{2}}\cL^{\infty})}{\al^2},\\
&\pa_c\big(\Lambda_{3,1}(\widehat{\om}_e)\big)
=\frac{\cL^2+\al \cL^{\infty} }{\al^2\sin y_c},\quad
\pa_c\big(\Lambda_{4,1}(g)\big)
=\frac{\cL^2+\al^{\frac{1}{2}}\cL^{\infty}}{\al^2\sin y_c};
\end{align*}

(3) if  $\|\widehat{\om}_e\|_{H^2}\leq 1$ and $\|g''-\al^2g\|_{L^2}\leq 1$ and     $\widehat{\om}_e'(0)=\widehat{\om}_e'(\pi)=g'(0)=g'(\pi)=0$, then \begin{align*}
&\Lambda_{3,1}(\widehat{\om}_e)
=\al^{-2}(\rho \cL^2\cap u' \cL^{\infty})+\rho \cL^{\infty},\quad
\Lambda_{4,1}(g)
=\al^{-2}(\rho \cL^2\cap u' \cL^{\infty})+ \al^{-\f32}\rho \cL^{\infty},\\
&\pa_c\big(\Lambda_{3,1}(\widehat{\om}_e)\big)
=\al^{-2}\cL^2+\cL^{\infty},\quad
\pa_c\big(\Lambda_{4,1}(g)\big)
=\al^{-2}(\cL^2+\al^{\frac{1}{2}}\cL^{\infty}),\\
&\pa_c^2\big(\Lambda_{3,1}(\widehat{\om}_e)\big)
=\f{\al^{-2}\cL^2+\cL^{\infty}}{\sin^2y_c},\quad
\pa_c^2\big(\Lambda_{4,1}(g)\big)
=\f{\cL^2+\al^{\frac{1}{2}}\cL^{\infty}}{\al^2\sin^2 y_c};
\end{align*}

(4) if  $\|\widehat{\om}_e\|_{H^2}\leq 1$ and $\|\pa_y\widehat{\om}_e/u'\|_{H^1}\leq 1$, then
\begin{align*}
\pa_c\big(\Lambda_{3,1}(\widehat{\om}_e)\big)=\cL^{\infty},\quad
\pa_c^2\big(\Lambda_{3,1}(\widehat{\om}_e)\big)=\f{\al^2\cL^{\infty}}{(1+\al \sin y_c)^2}+\f{\cL^2}{\al\sin y_c}.
\end{align*}
\end{lemma}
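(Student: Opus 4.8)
The plan is to treat $\Lambda_{3,1}$ and $\Lambda_{4,1}$ in parallel, since by \eqref{eq:defLambda_3,1} and \eqref{eq:defLambda_4,1} they share the algebraic form $J_j\big(\frac{u''(y_c)}{u'(y_c)}\rmE_{1-j}(\varphi)+\eta(y_c)\big)\big|_{j=0}^{1}$, with $(\varphi,\eta)=(\widehat{\om}_e,\widehat{\om}_e)$ for $\Lambda_{3,1}$ and $(\varphi,\eta)=(gu'',g)$ for $\Lambda_{4,1}$; the only difference is the power of $\al$ paid in passing from the norms of $\widehat{\om}_e$ to those of $g$ through $f=(\pa_y^2-\al^2)g$. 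Throughout I would use $\rho(c)=u'(y_c)^2=\sin^2y_c$, $u''(y_c)/u'(y_c)=\cot y_c$, the sharp bounds on $J_0,J_1$ and their $c$-derivatives from Lemma \ref{Lem: J_j high} (with $J_j=J_j^2$), and the $\phi_1$-estimates of Proposition \ref{prop:phi1} together with the boundary behaviour of $\cF,\cG$ from Lemma \ref{Rmk: fg_periodic}.

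The first step is to record estimates for $\rmE_{1-j}(\varphi)(y_c)=\int_{y_c}^{(1-j)\pi}\varphi\,\phi_1\,dy$ and its $c$-derivatives. Since $\pa_cy_c=1/u'(y_c)$ and $\phi_1(y_c,c)=1$, one differentiation yields a boundary term $-\varphi(y_c)/u'(y_c)$ plus an interior integral of $\varphi\,\pa_c\phi_1$, the latter being controlled by $|\pa_c\phi_1|\le C\al\phi_1\min\{1,\al|y-y_c|\}/u'(y_c)$; iterating gives $\pa_c^2\rmE_{1-j}$. The key structural point is that the denominator of $J_j$ carries $\phi_1\phi_1'\big((1-j)\pi,c\big)$ while $\rmE_{1-j}$ carries $\phi_1(y,c)$ over the interval between $y_c$ and $(1-j)\pi$, where $\phi_1(y,c)/\phi_1((1-j)\pi,c)\le1$; thus the product $J_j\rmE_{1-j}$ has no exponential growth, which is exactly the cancellation taming the interaction of the two critical points $y_c=0,\pi$. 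I would then assemble $\Lambda_{3,1},\Lambda_{4,1}$ and bound the resulting $\cL^p$ norms by Cauchy--Schwarz and Schur-type estimates for the integral terms and directly for the pointwise multiplier terms.

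The four parts differ only in how much regularity is exploited. Part (1) uses only the crude bound $|J_j|\le C|(1-j)\pi-y_c|^5/(\al^2\phi_1((1-j)\pi,c))$ and gives the $\cL^2/\al^2$ estimate from $L^2$ data. For parts (2)--(3) the extra factor $\sin y_c$ (and the $\cL^\infty$-with-$\al$ pieces) comes from a cancellation inside $\cot y_c\,\rmE_{1-j}(\varphi)+\eta(y_c)$: writing $\varphi\phi_1=\varphi(y_c)+(\varphi\phi_1-\varphi(y_c))$ gives $\rmE_{1-j}(\varphi)=\varphi(y_c)\big((1-j)\pi-y_c\big)+\int_{y_c}^{(1-j)\pi}(\varphi\phi_1-\varphi(y_c))\,dy$, and near the endpoint $y_c=(1-j)\pi$ one has $1+\cot y_c\big((1-j)\pi-y_c\big)=O(\sin^2 y_c)$, so the leading pointwise pieces cancel and the remainder is controlled by $\|\varphi'\|$ and $|\phi_1-1|\le C|y-y_c|^2$; away from that endpoint $J_j$ itself supplies the decay through the $\phi_1$-ratio, and the $\Lambda_{4,1}$ case is identical up to the bounded factor $u''$. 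The $c$-derivative estimates in part (3) additionally use the endpoint conditions $\widehat{\om}_e'(0)=\widehat{\om}_e'(\pi)=g'(0)=g'(\pi)=0$ and the $m=1,2$ bounds of Lemma \ref{Lem: J_j high}, selected according to the regimes $\al\sin y_c\lesssim1$ and $\al\sin y_c\gtrsim1$. Part (4), where only $\Lambda_{3,1}$ appears, replaces the $H^2$ input by $\|\pa_y\widehat{\om}_e/u'\|_{H^1}\le1$, letting me move a derivative onto $\widehat{\om}_e$ against the vanishing of $u'$ at $0,\pi$; the improved bound $\pa_c^2\Lambda_{3,1}=\frac{\al^2\cL^\infty}{(1+\al\sin y_c)^2}+\frac{\cL^2}{\al\sin y_c}$ then needs the near-critical-point estimates of Proposition \ref{Prop:tphi_1} for $y_c\lesssim1/\al$.

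The main obstacle is the second $c$-derivative near the critical points $y_c=0,\pi$: each differentiation of $\rmE_{1-j}$ or of $J_j$ costs a factor $1/u'(y_c)=1/\sin y_c$, so a term-by-term estimate loses several powers of $\sin y_c$ and falls short of the claimed bounds. The saving cancellations are precisely (i) the $\phi_1$-ratio cancellation between $J_j$ and $\rmE_{1-j}$, (ii) the cancellation in $\cot y_c\,\rmE_{1-j}(\varphi)+\eta(y_c)$ produced by the endpoint vanishing of the derivatives of $\widehat{\om}_e,g$, and (iii) the sharp min-form of Lemma \ref{Lem: J_j high}, whose two competing bounds must be chosen consistently in the regimes $\al\sin y_c\lesssim1$ and $\al\sin y_c\gtrsim1$. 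Keeping this regime bookkeeping coherent across all the product terms, with Proposition \ref{Prop:tphi_1} supplying the refined behaviour for $y_c\lesssim1/\al$, is the delicate part of the argument.
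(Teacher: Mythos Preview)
Your plan is essentially the paper's own proof: the same toolbox (Lemma~\ref{Lem: J_j high}, Proposition~\ref{prop:phi1}, Proposition~\ref{Prop:tphi_1}), the same $\phi_1$--ratio cancellation between $J_j$ and $\rmE_{1-j}$, the same endpoint cancellation inside $\frac{u''(y_c)}{u'(y_c)}\rmE_{1-j}(\widehat{\om}_e)+\widehat{\om}_e(y_c)$, and the same regime splitting $\al\sin y_c\lesssim1$ versus $\al\sin y_c\gtrsim1$. The only visible difference is cosmetic: the paper multiplies through by $u'(y_c)$ and uses $u'(y_c)=-\int_{y_c}^{j\pi}u''$ to write the bracket as $\int_{y_c}^{j\pi}\big(u''(y_c)\phi_1\widehat{\om}_e(y)-u''(y)\widehat{\om}_e(y_c)\big)\,dy=I_1+I_2+I_3$, whereas you expand $\phi_1\varphi=\varphi(y_c)+(\phi_1\varphi-\varphi(y_c))$ and read off the factor $1+\cot y_c\big((1-j)\pi-y_c\big)=O(\sin^2y_c)$ near $y_c=(1-j)\pi$. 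Both decompositions isolate the same three pieces (the $u''(y_c)-u''(y)$ term, the $\phi_1-1$ term, and the $\varphi(y)-\varphi(y_c)$ term) and lead to the same bounds.

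Two small slips to fix. First, your common template for $\Lambda_{4,1}$ is off: by \eqref{eq:defLambda_4,1} it is $J_j\big(\frac{1}{u'(y_c)}\rmE_{1-j}(gu'')+g(y_c)\big)\big|_{j=0}^{1}$, with \emph{no} extra $u''(y_c)$ in front of $\rmE_{1-j}$; the paper accordingly writes $\rmE_j(u''g)+u'(y_c)g(y_c)=I_2'+I_3'$ with only two pieces (no analogue of $I_1$). Second, the ``crude bound'' you quote for $J_j$ has the indices swapped: from Lemma~\ref{Lem: J_j high} one has $|J_j|\le C\,|j\pi-y_c|^{5}/\big(\al^2\phi_1((1-j)\pi,c)\big)$, not $|(1-j)\pi-y_c|^{5}$ in the numerator. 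Neither slip affects the architecture of the argument; once corrected, your sketch lines up with the paper's proof of Lemma~\ref{Lem:Lambda_3,1 4,1} step by step, including the use of Proposition~\ref{Prop:tphi_1} for the refinement near $y_c\lesssim1/\al$ in part~(4).
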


\begin{remark}\label{Rmk:difference1/2}
Following the proof of Case 2, we can show that if  $|\al|\|\om\|_{L^2}+\|\pa_y\om\|_{L^2}\leq 1$, then
\begin{align*}
&\Lambda_{3,1}(\om)=\frac{\sin y_c(\cL^2+\al^{\f12} \cL^{\infty})}{\al^2},\quad
\pa_c\big(\Lambda_{3,1}(\om)\big)
=\frac{\cL^2+\al^{\f12}\cL^{\infty}}{\al^2\sin y_c}.
\end{align*}
This will be used in the proof of Proposition \ref{Prop: commutator}.
\end{remark}

\begin{proof}
{\bf Case 1.} $\|\widehat{\om}_e\|_{L^2}\leq 1$ and $\|g\|_{L^2}\leq 1$.\smallskip

Using the fact that $\phi_1(y,c)\leq \phi_1(0,c)$  for $0<y<y_c$ and
$\phi_1(y,c)\leq \phi_1(\pi,c)$ for $y_c<y<\pi$, we get
\begin{align}\label{7.1}
\Big\|\f{\rmE_j(\varphi)}{\phi_1(j\pi,c)(j\pi-y_c)}\Big\|_{L^p}\leq C\|\varphi\|_{L^p},\ \ \ 1<p\leq+\infty,
\end{align}
which implies that
\begin{align*}
&{u''(y_c)}\rmE_0(\widehat{\om}_e)+u'(y_c){\widehat{\om}_e(y_c)}
=\phi_1(0,c)y_c\cL^{2},\\
&{u''(y_c)}\rmE_1(\widehat{\om}_e)+u'(y_c){\widehat{\om}_e(y_c)}
=\phi_1(\pi,c)(\pi-y_c)\cL^{2},
\end{align*}
and
\begin{align*}
&{\rmE_0(gu'')}+{u'(y_c)}g(y_c)=\phi_1(0,c)y_c\cL^{2},\\
&{\rmE_1(gu'')}+{u'(y_c)}g(y_c)=\phi_1(\pi,c)(\pi-y_c)\cL^{2},
\end{align*}
from which and Lemma \ref{Lem: J_j high}, we deduce (1). \smallskip

{\bf Case 2.} $\|\widehat{\om}_e\|_{H^1}\leq 1$ and $|\al|\|g\|_{L^2}+\|g'\|_{L^2}\leq 1$(thus, $\|g\|_{L^{\infty}}\leq C|\al|^{-\f 12}.$)
\smallskip

We write
\begin{align}
\nonumber&u''(y_c)\rmE_{j}(\widehat{\om}_e)+u'(y_c)\widehat{\om}_e(y_c)\\
\nonumber&=\int_{y_c}^{j\pi}\big(u''(y_c)\phi_1(y,c)\widehat{\om}_e(y)-u''(y)\widehat{\om}_e(y_c)\big)dy\\
\nonumber&=\int_{y_c}^{j\pi}\big(u''(y_c)-u''(y)\big)\phi_1(y,c)\widehat{\om}_e(y)dy\\
\nonumber&\quad+\int_{y_c}^{j\pi}\big(\phi_1(y,c)-1\big)u''(y)\widehat{\om}_e(y)dy+\int_{y_c}^{j\pi}u''(y)\big(\widehat{\om}_e(y)-\widehat{\om}_e(y_c)\big)dy\\
&=I_1+I_2+I_3.\label{eq:I_1+I_2+I_3}
\end{align}
Using the fact that $\phi_1(y,c)-1\leq C\min\{\al^2|y-y_c|^2,1\}\phi_1(y,c)\leq C\min\{\al^2|y-y_c|^2,|\al||y-y_c|\}\phi_1(y,c)$ and $|u''(y)-u''(y_c)|\leq \sin \f{y+y_c}{2}|y-y_c|$, we obtain
\begin{align}
&|I_1|\leq C|j\pi-y_c|^3\phi_1(j\pi,c)\|\widehat{\om}_e\|_{L^{\infty}},\label{eq:estI_1}\\
&|I_2|\leq C|\al| |j\pi-y_c|^2\min\big\{1,|\al||j\pi-y_c|\big\}\phi_1(j\pi,c)\|\widehat{\om}_e\|_{L^{\infty}},\label{eq:estI_2}\\
&\Big\|\f{I_3}{|j\pi-y_c|^2}\Big\|_{L^2}\leq C\|\widehat{\om}_e'\|_{L^2}.\label{eq:estI_3}
\end{align}
Thus, we infer that
\begin{align*}
&{u''(y_c)}\rmE_0(\widehat{\om}_e)+u'(y_c){\widehat{\om}_e(y_c)}
=\phi_1(0,c)y_c^2\al \cL^{\infty}+y_c^2\cL^2,\\
&{u''(y_c)}\rmE_1(\widehat{\om}_e)+u'(y_c){\widehat{\om}_e(y_c)}
=\phi_1(\pi,c)(\pi-y_c)^2\al\cL^{\infty}+(\pi-y_c)^2\cL^2,
\end{align*}
which along with Lemma \ref{Lem: J_j high} show that
\beno
\Lambda_{3,1}(\widehat{\om}_e)=\frac{\sin y_c(\cL^2+\al \cL^{\infty})}{\al^2}.
\eeno
We write
\begin{align}
\nonumber&\rmE_{j}(u''g)+u'(y_c)g(y_c)\\
\nonumber&=\int_{y_c}^{j\pi}\big(u''(y)\phi_1(y,c)g(y)-u''(y)g(y_c)\big)dy\\
\nonumber&=\int_{y_c}^{j\pi}\big(\phi_1(y,c)-1\big)u''(y)g(y)dy+\int_{y_c}^{j\pi}u''(y)\big(g(y)-g(y_c)\big)dy\\
&=I_2'+I_3'.\label{eq:I_2'+I_3'}
\end{align}
We have
\begin{align}
\label{eq:estI_2'}
&|I_2'|\leq C|\al| |j\pi-y_c|^2\min\big\{1,|\al| |j\pi-y_c|\big\}\phi_1(j\pi,c)\|g\|_{L^{\infty}},\\
&\Big\|\f{I_3'}{|j\pi-y_c|^2}\Big\|_{L^2}\leq C\|g'\|_{L^2}.\nonumber
\end{align}
Thus, we obtain
\begin{align*}
&{\rmE_0(gu'')}+{u'(y_c)}g(y_c)=\phi_1(0,c)y_c^2\al^{\frac{1}{2}}\cL^{\infty}+y_c^2\cL^2,\\
&{\rmE_1(gu'')}+{u'(y_c)}g(y_c)=\phi_1(\pi,c)(\pi-y_c)^2\al^{\frac{1}{2}} \cL^{\infty}+(\pi-y_c)^2\cL^2,
\end{align*}
which along with Lemma \ref{Lem: J_j high} show that
\beno
\Lambda_{4,1}(g)=\frac{\sin y_c(\cL^2+\al^{\frac{1}{2}}\cL^{\infty})}{\al^2}.
\eeno

A direct calculation gives
\begin{align}\label{7.7}
&\partial_c\left({u''(y_c)}\rmE_j(\widehat{\om}_e)+u'(y_c){\widehat{\om}_e(y_c)}\right)\\ \nonumber
&=\f{u'''(y_c)}{u'(y_c)}\rmE_j(\widehat{\om}_e)
-\f{u''(y_c)\widehat{\om}_e(y_c)}{u'(y_c)}
+{u''(y_c)}\int_{y_c}^{j\pi}\widehat{\om}_e\partial_c\phi_1(y,c)dy
+\f{({u'\widehat{\om}_e})'(y_c)}{u'(y_c)}\\ \nonumber
&=\f{u'''(y_c)}{u'(y_c)}\rmE_j(\widehat{\om}_e)
+\widehat{\om}_e'(y_c)+{u''(y_c)}\int_{y_c}^{j\pi}\widehat{\om}_e\partial_c\phi_1(y,c)dy,
\end{align}
and
\begin{align*}
\partial_c\left({\rmE_j(gu'')}+{u'(y_c)}g(y_c)\right)
&=-\f{gu''(y_c)}{u'(y_c)}+\int_{y_c}^{j\pi}gu''\partial_c\phi_1(y,c)dy
+\f{(u'g)'(y_c)}{u'(y_c)}\\
&=g'(y_c)+\int_{y_c}^{j\pi}gu''\partial_c\phi_1(y,c)dy.
\end{align*}
By Proposition \ref{prop:phi1}, we get for $0<y<y_c,\, j=0$ or $y_c<y<\pi,\, j=1$,
\beno
\left|\f{\partial_c\phi_1(y,c)}{\phi_1(j\pi,c)}\right|
\leq \f{C|\al|}{u'(y_c)}\min\{|\al||y-y_c|,1\}
\leq \f{C|\al|}{u'(y_c)}\min\{|\al||j\pi-y_c|,1\}.
\eeno
Then we have for $j=0$
\begin{align}\label{7.9}
\left\|\f{(\pi-y_c)}{\min\{\al y_c,1\}}\int_{y_c}^0\varphi(y)\f{\partial_c\phi_1(y,c)}{\phi_1(0,c)}dy\right\|_{L^p}\leq C\al \|\varphi\|_{L^{p}},\ \ 1<p\leq \infty.
\end{align}
Similarly, we get for $j=1$
\begin{align}\label{7.10}
\left\|\f{y_c}{\min\{\al (\pi-y_c),1\}}\int_{y_c}^{\pi}\varphi(y)\f{\partial_c\phi_1(y,c)}{\phi_1(\pi,c)}dy\right\|_{L^p}\leq C\al \|\varphi\|_{L^{p}},\ \ 1<p\leq \infty.
\end{align}
Recall that $\|\widehat{\om}_e\|_{H^1}\leq 1$ and $\al\|g\|_{L^2}+\|g'\|_{L^2}\leq 1$, then $\|g\|_{L^{\infty}}\leq C\al^{-\frac{1}{2}}$.
Thus, we obtain
\begin{align*}
&\partial_c\left({u''(y_c)}\rmE_0(\widehat{\om}_e)+u'(y_c){\widehat{\om}_e(y_c)}\right)
=\phi_1(0,c)\f{\al \cL^{\infty}}{\pi-y_c}+\cL^2,\\
&\partial_c\left({u''(y_c)}\rmE_1(\widehat{\om}_e)+u'(y_c){\widehat{\om}_e(y_c)}\right)
=\phi_1(\pi,c)\f{\al \cL^{\infty}}{y_c}+\cL^2,\\
&\partial_c\left({\rmE_0(gu'')}+{u'(y_c)}g(y_c)\right)=\f{\phi_1(0,c)\al^{\frac{1}{2}} \cL^{\infty}}{\pi-y_c}+\cL^2,\\
&\partial_c\left({\rmE_1(gu'')}+{u'(y_c)}g(y_c)\right)=\f{\phi_1(\pi,c)\al^{\frac{1}{2}}\cL^{\infty}}{y_c}+\cL^2,
\end{align*}
from which and Lemma \ref{Lem: J_j high}, we infer that
\beno
\pa_c\big(\Lambda_{3,1}(\widehat{\om}_e)\big)
=\frac{\cL^2+\al \cL^{\infty} }{\al^2\sin y_c},\quad
\pa_c\big(\Lambda_{4,1}(g)\big)
=\frac{\cL^2+\al^{\frac{1}{2}}\cL^{\infty}}{\al^2\sin y_c}.
\eeno

{\bf Case 3.} $\|\widehat{\om}_e\|_{H^2}\leq 1$ and $\|g''-\al^2g\|_{L^{2}}\leq 1$ and $\widehat{\om}_e'(0)=\widehat{\om}_e'(\pi)=g'(0)=g'(\pi)=0$. Thus, we have
\beno
&&\|g''\|_{L^2}^2+2\al^2\|g'\|_{L^2}^2+\al^4\|g\|_{L^2}^2\leq 1,\quad \big\|\f{\widehat{\om}_e'}{u'}\big\|_{L^{2}}\leq C\|\widehat{\om}_e''\|_{L^{2}},\\
&&\|g\|_{L^{\infty}}\leq C\al^{-\frac{3}{2}},\quad
\big\|\f{g'}{u'}\big\|_{L^{2}}\leq C\|g''\|_{L^{2}}\leq C,\quad
\|g'\|_{L^{\infty}}\leq C\al^{-\frac{1}{2}}.
\eeno

First of all, we have for $I_3$ in \eqref{eq:I_1+I_2+I_3} and $I_3'$ in \eqref{eq:I_2'+I_3'},
\begin{align*}
|I_3|\leq C\Big|\int_{y_c}^{j\pi}\int_{y}^{y_c}\widehat{\om}_e'(y')dy'dy\Big|\leq C\Big|\int_{y_c}^{j\pi}|j\pi-y'|\widehat{\om}_e'(y')dy'\Big|\leq C|j\pi-y_c|^2\Big|\int_{y_c}^{j\pi}\f{\widehat{\om}_e'(y')}{j\pi-y'}dy'\Big|,
\end{align*}
which implies that
\beno
\Big\|\f{I_3}{|j\pi-y_c|^3}\Big\|_{L^2}\leq C\Big\|\f{\widehat{\om}_e'}{u'}\Big\|_{L^2}\leq C\|\widehat{\om}_e''\|_{L^2},\quad \Big\|\f{I_3}{|j\pi-y_c|^2}\Big\|_{L^{\infty}}\leq C\|\widehat{\om}_e'\|_{L^{\infty}}.
\eeno
Similarly, for $I'_3$, we have
\beno
\Big\|\f{I_3'}{|j\pi-y_c|^3}\Big\|_{L^2}\leq C\Big\|\f{g'}{u'}\Big\|_{L^2}\leq C\|g''\|_{L^2},\quad \Big\|\f{I_3'}{|j\pi-y_c|^2}\Big\|_{L^{\infty}}\leq C\|g'\|_{L^{\infty}}.
\eeno
Thus, by \eqref{eq:estI_1}, \eqref{eq:estI_2} and \eqref{eq:estI_2'}, we obtain
\begin{align*}
&{u''(y_c)}\rmE_0(\widehat{\om}_e)+u'(y_c){\widehat{\om}_e(y_c)}
=\phi_1(0,c)y_c^3\al^2 \cL^{\infty}+(y_c^3\cL^2\cap y_c^2\cL^{\infty}),\\
&{u''(y_c)}\rmE_1(\widehat{\om}_e)+u'(y_c){\widehat{\om}_e(y_c)}
=\phi_1(\pi,c)(\pi-y_c)^3\al^2 \cL^{\infty}+(\pi-y_c)^2((\pi-y_c)\cL^2\cap \cL^{\infty}),\\
&\rmE_0(u''g)+u'(y_c){g(y_c)}
=\phi_1(0,c)y_c^3\al^{\frac{1}{2}}\cL^{\infty}+(y_c^3\cL^2\cap y_c^2\cL^{\infty}),\\
&\rmE_1(u''g)+u'(y_c){g(y_c)}
=\phi_1(\pi,c)(\pi-y_c)^3\al^{\frac{1}{2}}\cL^{\infty}+(\pi-y_c)^2((\pi-y_c)\cL^2\cap \cL^{\infty}),
\end{align*}
which along with Lemma \ref{Lem: J_j high} show that
\beno
\Lambda_{3,1}(\widehat{\om}_e)
=\al^{-2}(\rho\cL^2\cap u' \cL^{\infty})+ \rho \cL^{\infty},\quad
\Lambda_{4,1}(g)
=\al^{-2}(\rho \cL^2\cap u' \cL^{\infty})+ \al^{-\f32}\rho \cL^{\infty}.
\eeno

We have
\begin{align*}
&\pa_c\left(u''(y_c)E_{j}(\widehat{\om}_e)+u'(y_c)\widehat{\om}_e(y_c)\right)\\
&=\int_{y_c}^{j\pi}u''(y_c)\pa_c\phi_1(y,c)\widehat{\om}_e(y)dy+\widehat{\om}_e'(y_c)-\int_{y_c}^{j\pi}\phi_1(y,c)\widehat{\om}_e(y)dy\\
&=I_4+I_5+I_6.
\end{align*}
By Proposition \ref{prop:phi1}, we have
\begin{align*}
&|I_4|\leq \dfrac{C|\al||j\pi-y_c|\phi_1(j\pi,c)}{u'(y_c)}\min\{|\al||j\pi-y_c|,1\}\|\widehat{\om}_e\|_{L^{\infty}},\\
&\big\|\f{I_5}{u'}\big\|_{L^2}\leq C\big\|\f{\widehat{\om}_e'}{u'}\big\|_{L^2},\ |I_6|\leq C|\phi_1(j\pi,c)|j\pi-y_c|\|\widehat{\om}_e\|_{L^{\infty}},
\end{align*}
which implies that
\beno
\pa_c\left(u''(y_c)E_{j}(\widehat{\om}_e)+u'(y_c)\widehat{\om}_e(y_c)\right)=\f{\phi_1(j\pi,c)\al^2|j\pi-y_c|\cL^{\infty}}{((1-j)\pi-y_c)}+u'(y_c)\cL^2.
\eeno
Similarly, due to
\begin{align*}
&\pa_c\left(E_{j}(u''g)+u'(y_c)g(y_c)\right)
=\int_{y_c}^{j\pi}u''(y)\pa_c\phi_1(y,c)g(y)dy+g'(y_c),
\end{align*}
we get
\beno
\partial_c\left(\rmE_j(u''g)+u'(y_c){g(y_c)}\right)
=\f{\phi_1(j\pi,c)\al^{\frac{1}{2}}|j\pi-y_c| \cL^{\infty}}{((1-j)\pi-y_c)}+u'(y_c)\cL^2,
\eeno
from which and Lemma \ref{Lem: J_j high}, we infer that
\beno
\pa_c\big(\Lambda_{3,1}(\widehat{\om}_e)\big)
=\al^{-2}\cL^2+\cL^{\infty},\quad
\pa_c\big(\Lambda_{4,1}(g)\big)
=\al^{-2}(\cL^2+\al^{\frac{1}{2}}\cL^{\infty}).
\eeno

We have
\begin{align*}
\partial_c^2\left({\rmE_j(gu'')}+{u'(y_c)}g(y_c)\right)
=\f{g''(y_c)}{u'(y_c)}+\int_{y_c}^{j\pi}gu''\partial_c^2\phi_1(y,c)dy,
\end{align*}
and
\begin{align}\label{7.8}
&\partial_c^2\left({u''(y_c)}\rmE_j(\widehat{\om}_e)+u'(y_c){\widehat{\om}_e(y_c)}\right)\\ \nonumber
&=\left(\f{u''''(y_c)}{u'(y_c)^2}-\f{(u'''u'')(y_c)}{u'(y_c)^3}\right)\rmE_j(\widehat{\om}_e)
+\f{\widehat{\om}_e''(y_c)}{u'(y_c)}+2\f{u'''(y_c)}{u'(y_c)}\int_{y_c}^{j\pi}\widehat{\om}_e\partial_c\phi_1(y,c)dy\\ \nonumber
&\quad+{u''(y_c)}\int_{y_c}^{j\pi}\widehat{\om}_e\partial_c^2\phi_1(y,c)dy-\f{u'''(y_c)}{u'(y_c)^2}\widehat{\om}_e(y_c)\\ \nonumber
&=\f{\widehat{\om}_e''(y_c)}{u'(y_c)}-2\int_{y_c}^{j\pi}\widehat{\om}_e\partial_c\phi_1(y,c)dy
+{u''(y_c)}\int_{y_c}^{j\pi}\widehat{\om}_e\partial_c^2\phi_1(y,c)dy-\f{u'''(y_c)}{u'(y_c)^2}\widehat{\om}_e(y_c)
\end{align}
By Proposition \ref{prop:phi1}, we deduce that for $0<y<y_c,\ j=0$ or $y_c<y<\pi ,\ j=1$, $\Big|\f{\partial_c^2\phi_1(y,c)}{\phi_1(j\pi,c)}\Big|\leq \dfrac{C\al^2}{u'(y_c)^2}$, and then
\begin{align}\label{7.11}
&\left\|y_c(\pi-y_c)^2\int_{y_c}^0\varphi(y)\f{\partial_c^2\phi_1(y,c)}{\phi_1(0,c)}dy\right\|_{L^p}\leq C\al^2\|\varphi\|_{L^{p}},\ \ 1<p\leq \infty,\\ \label{7.12}
&\left\|y_c^2(\pi-y_c)\int_{y_c}^{\pi}\varphi(y)\f{\partial_c^2\phi_1(y,c)}{\phi_1(\pi,c)}dy\right\|_{L^p}\leq C\al^2\|\varphi\|_{L^{p}},\ \ 1<p\leq \infty.
\end{align}
Then we can deduce that
\begin{align*}
&y_c\partial_c^2\left({u''(y_c)}\rmE_0(\widehat{\om}_e)+u'(y_c){\widehat{\om}_e(y_c)}\right)
=\f{\phi_1(0,c)\al^2 \cL^{\infty}}{(\pi-y_c)^2}+\cL^2,\\
&(\pi-y_c)\partial_c^2\left({u''(y_c)}\rmE_1(\widehat{\om}_e)+u'(y_c){\widehat{\om}_e(y_c)}\right)
=\f{\phi_1(\pi,c)\al^2 \cL^{\infty}}{y_c^2}+\cL^2,
\end{align*}
from which and Lemma \ref{Lem: J_j high}, we infer that
\begin{align}
\label{eq:partLambda_3,1}&\pa_c^2\left(\f{J_{1-j}}{u'(y_c)}\big({u''(y_c)}\rmE_j(\widehat{\om}_e)+u'(y_c){\widehat{\om}_e(y_c)}\big)\right)
=\f{\cL^{\infty}}{(j\pi-y_c)^2}+\f{\cL^2}{\al^2(j\pi-y_c)^2},
\end{align}
which implies that
\beno
\pa_c^2\big(\Lambda_{3,1}(\widehat{\om}_e)\big)
=\f{\al^{-2}\cL^2+\cL^{\infty}}{\sin^2y_c}.
\eeno

Similarly, we have
\begin{align*}
&y_c\partial_c^2\left(\rmE_0(u''g)+u'(y_c){g(y_c)}\right)
=\f{\phi_1(0,c)\al^{\frac{1}{2}}\cL^{\infty}}{(\pi-y_c)^2}+\cL^2,\\
&(\pi-y_c)\partial_c^2\left(\rmE_1(u''g)+u'(y_c){g(y_c)}\right)
=\f{\phi_1(\pi,c)\al^{\frac{1}{2}} \cL^{\infty}}{y_c^2}+\cL^2,
\end{align*}
from which and Lemma \ref{Lem: J_j high}, we infer that
\beno
\pa_c^2\big(\Lambda_{4,1}(g)\big)
=\f{\cL^2+\al^{\frac{1}{2}}\cL^{\infty}}{\al^2\sin^2 y_c}.
\eeno

{\bf Case 4. } $\|\widehat{\om}_e\|_{H^2}\leq 1$ and $\|\pa_y\widehat{\om}_e/u'\|_{H^1}\leq 1$.\smallskip

Recall that
\begin{align*}
&\pa_c\left(u''(y_c)E_{j}(\widehat{\om}_e)+u'(y_c)\widehat{\om}_e(y_c)\right)\\
&=\int_{y_c}^{j\pi}u''(y_c)\pa_c\phi_1(y,c)\widehat{\om}_e(y)dy+\widehat{\om}_e'(y_c)-\int_{y_c}^{j\pi}\phi_1(y,c)\widehat{\om}_e(y)dy.
\end{align*}
Now since $\pa_y\widehat{\om}_e/u'\in H^1\subset L^{\infty}$, we get
\beno
\pa_c\left(u''(y_c)E_{j}(\widehat{\om}_e)+u'(y_c)\widehat{\om}_e(y_c)\right)=\f{\phi_1(j\pi,c)\al^2|j\pi-y_c|\cL^{\infty}}{((1-j)\pi-y_c)}+u'(y_c)\cL^{\infty},
\eeno
from which and Lemma \ref{Lem: J_j high}, we infer that $\pa_c\Lambda_{3,1}=\cL^{\infty}$.

For the estimates of higher derivative, we only need to improve the estimate of $\f{J_{1-j}}{u'(y_c)}\big({u''(y_c)}\rmE_j(\widehat{\om}_e)+u'(y_c){\widehat{\om}_e(y_c)}\big)$ for $|y_c-j\pi|\leq \f{1}{\al}$. We show the proof of the case $j=0$, the other case can be proved similarly.

Let $\om_1=\f{\pa_y\widehat{\om}_e}{y}\in H^1(0,1)$, and then
\beno
E_0(\widehat{\omega}_{e})
=\int_{y_c}^0\widehat{\om}_{e}(y)\phi_1(y,c)dy
=y_c\int_{1}^0\widehat{\om}_{e}(yy_c)\widetilde{\phi_1}(y,c)dy,
\eeno
here we recall the definition of $\widetilde{\phi_1}(y,c)=\phi_1(yy_c,c)$ and  its estimate in the proof of Lemma \ref{Lem: J_j high}.

Using the fact that
\beno
\pa_c\widehat{\om}_{e}(yy_c)=\f{y\widehat{\omega}_{e}'(yy_c)}{u'(y_c)}=\f{y^2{\omega}_{1}(yy_c)}{m_1(y_c)},
\eeno
with $m_1(y)=\f{\sin y}{y}$ and
\beno
\pa_c^2\widehat{\om}_{e}(yy_c)=\f{y^3{\omega}_{1}'(yy_c)}{m_1(y_c)u'(y_c)}+\f{y^2{\omega}_{1}(yy_c)m_1'(y_c)}{m_1(y_c)^2u'(y_c)},
\eeno
and that $1\leq |\widetilde{\phi_1}(y,c)|\leq 2$ and \eqref{eq:phi1-pa1}, \eqref{eq:phi1-pa2} hold for $y\in [0,1]$ and $c\in [u(0),u(1/\al)]$, we deduce that for $0<y_c<\frac{1}{\al},$
\begin{align*}
&|\partial_c(E_0(\widehat{\om}_{e})/y_c)|
\leq C(\|{\om}_{1}\|_{L^{\infty}}+\al^2\|\widehat{\om}_{e}\|_{L^{\infty}}),\\
&|\partial_c^2(E_0(\widehat{\om}_{e})/y_c)|\leq C(\al^2\|{\om}_{1}\|_{L^{\infty}}+\al^4\|\widehat{\om}_{e}\|_{L^{\infty}})
+\frac{C}{y_c}\int_0^1y^3|{\om}_{1}'(yy_c)|dy.
\end{align*}
Now using  the fact that
\beno
J_1\left(\f{u''(y_c)}{u'(y_c)}\rmE_{0}(\widehat{\om}_{e})+{\widehat{\om}_{e}(y_c)}\right)
=\f{-(u(\pi)-c)^2y_c^2u''(y_c)}{\widetilde{\phi_1}(0,c)\widetilde{\phi_1}'(0,c)}\f{\rmE_{0}(\widehat{\om}_{e})}{y_c}
+J_1{\widehat{\om}_{e}(y_c)},
\eeno
and by Proposition \ref{Prop:tphi_1} and Lemma \ref{Lem: J_j high}, we deduce that for $0<y_c<\frac{1}{\al},$
\beno
\pa_c^2\left(J_1\Big(\f{u''(y_c)}{u'(y_c)}\rmE_{0}(\widehat{\om}_{e})+{\widehat{\om}_{e}(y_c)}\Big)\right)\chi_{[u(0),u(1/\al)]}=\al^2\cL^{\infty}+\f{\cL^2}{\al^2y_c},
\eeno
which along with \eqref{eq:partLambda_3,1}  gives
\beno
\pa_c^2\left(J_1\Big(\f{u''(y_c)}{u'(y_c)}\rmE_{0}(\widehat{\om}_{e})+{\widehat{\om}_{e}(y_c)}\Big)\right)=\f{\al^2\cL^{\infty}}{(1+\al\sin y_c)^2}+\f{\cL^2}{\al y_c}.
\eeno
Similarly, for $j=1$, we have
\beno
E_1(\widehat{\om}_e)=\int_{y_c}^{\pi}\widehat{\om}_e(y)\phi_1(y,c)dy=(y_c-\pi)\int_{1}^0\widehat{\om}_e\big((1-z)\pi+zy_c\big)\widetilde{\widetilde{\phi_1}}(z,c)dz.
\eeno
Then by the same argument, we obtain
\beno
\pa_c^2\left(J_0\Big(\f{u''(y_c)}{u'(y_c)}\rmE_{1}(\widehat{\om}_{e})+{\widehat{\om}_{e}(y_c)}\Big)\right)=\f{\al^2\cL^{\infty}}{(1+\al\sin y_c)^2}+\f{\cL^2}{\al(\pi-y_c)},
\eeno
from which, we deduce (4) of the lemma.
\end{proof}

Now we are in a position to prove Proposition \ref{Prop: estimate K_e}.

\begin{proof}
{\bf Step1. $L^1$ estimate.}\smallskip

We normalize $\widehat{\om}_e, g$ so that $\|\widehat{\om}_e\|_{L^2}\leq 1$ and $\|g\|_{L^2}\leq 1$. By Lemma \ref{Lem: II_1,1L^2}, we get
$\Lambda_{1,1}(\widehat{\om}_e)=\cL^2$ and $\Lambda_{2,1}(g)=\cL^2$. By Lemma \ref{Lem:L^2L_k} and Lemma \ref{Lem: II}, we get $\Lambda_{1,2}(\widehat{\om}_e)=|\al|^{\f12}\cL^{\infty}+|\al|\sin y_c\cL^2$ and $\Lambda_{2,2}(g)=|\al|^{\f12}\cL^{\infty}+|\al|\sin y_c\cL^2$. Then by Lemma \ref{Lem:Lambda_3,1 4,1}, we get
\beno
&&\Lambda_3(\widehat{\om}_e)=\rho(1+|\al|\sin y_c)\cL^2+|\al|^{\f12}\sin^2 y_c\cL^{\infty}+\f{\cL^2}{\al^2},\\
&&\Lambda_4(g)=\rho(1+|\al|\sin y_c)\cL^2+|\al|^{\f12}\sin^2 y_c\cL^{\infty}+\f{\cL^2}{\al^2},
\eeno
from which and Proposition \ref{Prop: A_1^2+B_1^2}, we deduce that
\beno
\|K_e(\cdot, \al)\|_{L_c^1(-1,1)}
\leq C\|\widehat{\om}_e\|_{L^2}\|g\|_{L^2}.
\eeno

{\bf Step 2. $W^{1,1}$ estimate.}\smallskip

We normalize $\widehat{\om}_e, g$ so that $\|\widehat{\om}_e\|_{H^1}\leq 1$ and $|\al|\|g\|_{L^2}+\|g'\|_{L^2}\leq 1$, then $\|g\|_{L^{\infty}}\leq C|\al|^{-\frac{1}{2}}$.
By Lemma \ref{Lem: II_1,1L^2}, we get $\Lambda_{1,1}(\widehat{\om}_e)=\cL^{\infty}$ and $\Lambda_{2,1}(g)=|\al|^{-\f12}\cL^{\infty}$. By Lemma \ref{Lem:L^2L_k} and Lemma \ref{Lem: II}, we get $\Lambda_{1,2}(\widehat{\om}_e)=|\al|{\sin y_c}\cL^{\infty}$ and $\Lambda_{2,2}(g)=|\al|^{\f12}{\sin y_c}\cL^{\infty}$. Then by Lemma \ref{Lem:Lambda_3,1 4,1}, we get
\beno
&&\Lambda_3(\widehat{\om}_e)=\sin^2y_c(1+|\al|\sin y_c)\cL^{\infty}+\frac{C\sin y_c(\cL^2+\al \cL^{\infty})}{\al^2}\\
&&\quad=\frac{\sin y_c}{\al}(1+|\al|\sin y_c)^2\cL^{\infty}+\frac{\sin y_c}{\al^2}\cL^2={|\al|^{-\f32}\sin y_c}(1+|\al|\sin y_c)^3\cL^2,\\
&&\Lambda_4(g)=|\al|^{-\f12}\sin^2y_c(1+|\al|\sin y_c)\cL^{\infty}+\frac{\sin y_c(\cL^2+|\al|^{\f12}\cL^{\infty})}{\al^2}\\
&&\quad=\frac{\sin y_c}{|\al|^{-\f32}}(1+|\al|\sin y_c)^2\cL^{\infty}+\frac{\sin y_c}{\al^2}\cL^2=\f{\sin y_c}{\al^2}(1+|\al|\sin y_c)^3\cL^2.
\eeno

By Lemma \ref{Lem: II_1,1L^2}, we get $\pa_c\Lambda_{1,1}(\widehat{\om}_e)=\f{\cL^{2}}{\sin y_c}$ and $\pa_c\Lambda_{2,1}(g)=\f{\cL^{2}}{\sin y_c}$. By Lemma \ref{Lem:H^1II_1,2_periodic} and Lemma \ref{Lem: II}, we get
$\pa_c\Lambda_{1,2}(\widehat{\om}_e)=\f{\al\cL^{\infty}}{\sin y_c}+\al \cL^2$ and $\pa_c\Lambda_{2,2}(g)=\f{\al^{\f12}\cL^{\infty}}{\sin y_c}+\al \cL^2$. Then by Lemma \ref{Lem:Lambda_3,1 4,1}, we obtain
\begin{align*}
\partial_c\big(\Lambda_3(\widehat{\om}_e)\big)
&=(1+\al \sin y_c)(\sin y_c\cL^{2}+ \cL^{\infty})+\f{(\cL^2+\al \cL^{\infty})}{\al^2\sin y_c}\\
&=\frac{(1+|\al|\sin y_c)^2}{\al\sin y_c}\cL^{\infty}+\frac{(1+|\al|\sin y_c)^3}{\al^2\sin y_c}\cL^2=\frac{(1+|\al|\sin y_c)^3}{|\al|^{\f32}\sin y_c}\cL^2,\\
\partial_c\big(\Lambda_4(g)\big)&=\al^{\f12}\sin y_c\cL^{\infty}+\sin y_c(1+\al\sin y_c)\cL^2+\f{(\cL^2+\al^{\frac{1}{2}}\cL^{\infty})}{\al^2\sin y_c}\\
&=\frac{(1+|\al|\sin y_c)^2}{|\al|^{\f32}\sin y_c}\cL^{\infty}+\frac{(1+|\al|\sin y_c)^3}{\al^2\sin y_c}\cL^2=\frac{(1+|\al|\sin y_c)^3}{|\al|^{2}\sin y_c}\cL^2,
\end{align*}
from which and  Proposition \ref{Prop: A_1^2+B_1^2},  we infer that
\beno
\|\pa_cK_e(\cdot, \al)\|_{L_c^1(-1,1)}
\leq C\al^{\f12}\|\widehat{\om}_e\|_{H^1}\|g\|_{H^1}.
\eeno

{\bf Step 3. $W^{2,1}$ estimate.}\smallskip

We normalize $\widehat{\om}_e, g$ so that $\|\widehat{\om}_e\|_{H^2}\le 1$ with $\widehat{\om}_e'(0)=\widehat{\om}_e'(1)=0$ and $\|g''-\al^2g\|_{L^{2}}\leq 1$ with $g'(0)=g'(1)=0$. Then we have
\beno
&&\|g''\|_{L^2}^2+\al^2\|g'\|_{L^2}^2+\al^4\|g\|_{L^2}\leq 1,\quad \big\|\f{\widehat{\om}_e'}{u'}\big\|_{L^{2}}\leq C\|\widehat{\om}_e''\|_{L^{2}}\leq C,\\
 &&\|g\|_{L^{\infty}}\leq C\al^{-\frac{3}{2}},\quad
\big\|\f{g'}{u'}\big\|_{L^{2}}\leq C\|g''\|_{L^{2}}\leq C,\quad
 \|g'\|_{L^{\infty}}\leq C\al^{-\frac{1}{2}},\ \|u''g\|_{H^k}\leq C\|g\|_{H^k}
 \eeno
 for $k=0,1,2.$

By Lemma \ref{Lem: II_1,1L^2}, we get $\Lambda_{1,1}(\widehat{\om}_e)=\cL^{\infty}$ and $\Lambda_{2,1}(g)=|\al|^{-\f32}\cL^{\infty}$. By Lemma \ref{Lem:L^2L_k} and Lemma \ref{Lem: II}, we get $\Lambda_{1,2}(\widehat{\om}_e)=|\al|{\sin y_c}\cL^{\infty}$ and $\Lambda_{2,2}(g)=|\al|^{-\f12}{\sin y_c}\cL^{\infty}$. Then by Lemma \ref{Lem:Lambda_3,1 4,1}, we get
\begin{align*}
\Lambda_3(\widehat{\om}_e)
&=\al^{-2}(\sin^2 y_c \cL^2\cap \sin y_c \cL^{\infty})+ \sin^2 y_c(1+\al \sin y_c)\cL^{\infty}=\al^{-\f12}\sin^2 y_c(1+\al \sin y_c)^2\cL^2,\\
\Lambda_4(g)
&=\al^{-2}(\rho \cL^2\cap u' \cL^{\infty})+ \al^{-\f32}\rho (1+\al\sin y_c) \cL^{\infty}=\al^{-2}\sin^2 y_c(1+\al \sin y_c)^2\cL^2.
\end{align*}

By Lemma \ref{Lem: II_1,1L^2}, we get $\pa_c\Lambda_{1,1}(\widehat{\om}_e)=\cL^{2}$ and $\pa_c\Lambda_{2,1}(g)=\cL^{2}$.
By Lemma \ref{Lem:H^1II_1,2_periodic} and Lemma \ref{Lem: II}, we get $\pa_c\Lambda_{1,2}(\widehat{\om}_e)=\f{\al \cL^{\infty}}{\sin y_c}$ and $\pa_c\Lambda_{2,2}(g)=\f{\al^{-\f12}\cL^{\infty}+\al^{\f12}\sin y_c\cL^{\infty}}{\sin y_c}$. Then by Lemma \ref{Lem:Lambda_3,1 4,1},
we obtain
\beno
&&\partial_c\big(\Lambda_3(\widehat{\om}_e)\big)
=\sin^2 y_c\cL^{2}+\al^{-2}\cL^2+ (1+\al \sin y_c)\cL^{\infty}=\al^{-\f12}(1+\al \sin y_c)^2\cL^2,\\
&&\partial_c\big(\Lambda_4(g)\big)
=\al^{-2}(\cL^{2}+\al^{\frac{1}{2}}(1+\al\sin y_c)\cL^{\infty})+\sin^2 y_c(\cL^2+\al^{\frac{1}{2}}\cL^{\infty})=\al^{-2}(1+\al \sin y_c)^3\cL^2.
\eeno

By Lemma \ref{Lem: II_1,1L^2}, we get $\pa_c^2\Lambda_{1,1}(\widehat{\om}_e)=\f{\cL^{2}}{\sin^2y_c}$ and $\pa_c^2\Lambda_{2,1}(g)=\f{\cL^{2}}{\sin^2y_c}$.
By Lemma \ref{Lem:H^2II_1,2_periodic} and Lemma \ref{Lem: II}, we get $\pa_c^2\Lambda_{1,2}(\widehat{\om}_e)=\f{\al\cL^{\infty}}{\sin^3 y_c}+\f{\al\cL^2}{\sin y_c}$ and $\pa_c^2\Lambda_{2,2}(g)=\f{(\al^{\f12}\sin y_c+\al^{-\f12})}{\sin^3 y_c}\cL^{\infty}+\f{\al\cL^2}{\sin y_c}$.
Then by Lemma \ref{Lem:Lambda_3,1 4,1}, we obtain
\begin{align*}
\partial_c^2\big(\Lambda_3(\widehat{\om}_e)\big)
&=(1+\al \sin y_c)(\cL^{2}+ \rho^{-1}\cL^{\infty})+\rho^{-1}(\al^{-2}\cL^2+ \cL^{\infty})=\al^{-\f12}\rho^{-1}(1+\al \sin y_c)^3\cL^2,\\
\partial_c^2\big(\Lambda_4(g)\big)
&=(\al^{-2}\rho^{-1}+1)(\cL^2+\al^{\frac{1}{2}}\cL^{\infty})+(1+\al\sin y_c)\cL^2=\al^{-2}\rho^{-1}(1+\al \sin y_c)^3\cL^2,
\end{align*}
from which and Proposition \ref{Prop: A_1^2+B_1^2}, we infer that
\beno
\|\pa_c^2K_e(\cdot, \al)\|_{L_c^1(-1,1)}
\leq C\al^{\f32}\|\widehat{\om}_e\|_{H^2}\|f\|_{L^2},
\eeno
and
\beno
K_e(c,\al)=C(\al)\sin^2 y_c\cL^2,
\eeno
which implies that $K_e(\pm 1, \al)=0$.
\end{proof}

\section{Decay estimates at critical points}\label{critical points}

\subsection{Vorticity depletion phenomena}

In this subsection, we give a formula of the vorticity at a critical point and
confirm the vorticity depletion phenomena found by Bouchet and Morita \cite{BM}.

\begin{lemma}\label{Lem:repre of vorticity}
Let $\om(t,x,y)$ be a solution to \eqref{eq:Euler-L}. Then we have
\begin{align*}
\widehat{w}(t,\al,0)=\int_{-1}^1\f{(1+\cos y_c)^2\Lambda_3(\widehat{\omega}_{e})e^{-i\al c t}}{u'(y_c)\phi_1'(0,c)(\rmA_1^2+\rmB_1^2)}dc.
\end{align*}
and $\f{(1+\cos y_c)^2\Lambda_3(\widehat{\omega}_{e})}{u'(y_c)\phi_1'(0,c)(\rmA_1^2+\rmB_1^2)}\in L^1_c(-1,1).$ In particular, for any  $\al\neq 0$,
\beno
\lim_{t\to\infty}\widehat{w}(t,\al,0)=0.
\eeno
\end{lemma}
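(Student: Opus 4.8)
The plan is to compute $\widehat{\om}(t,\al,0)$ directly from the solution formula of Lemma~\ref{Lem:psi(t)} and then read off the decay from the Riemann--Lebesgue lemma. Since $\om=-\Delta\psi$, in Fourier we have $\widehat{\om}(t,\al,y)=(\al^2-\pa_y^2)\widehat{\psi}(t,\al,y)$. First I would exploit the parity decomposition $\widehat{\psi}=\widehat{\psi}_o+\widehat{\psi}_e$: the odd part $\widehat{\psi}_o$ is odd in $y$, so $(\al^2-\pa_y^2)\widehat{\psi}_o$ is again odd and vanishes at $y=0$. Hence only the even part survives, and, after differentiating under the integral sign (justified by the uniform bounds of Proposition~\ref{prop:ub}),
\[
\widehat{\om}(t,\al,0)=(\al^2-\pa_y^2)\widehat{\psi}_e(t,\al,y)\big|_{y=0}=\f{1}{2\pi}\int_{-1}^1 e^{-i\al tc}\,\al\,(\al^2-\pa_y^2)\widetilde{\Phi}_e(\al,y,c)\big|_{y=0}\,dc.
\]

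The key simplification is that, by the explicit formula in Lemma~\ref{Lem:psi(t)}, on the interval $0\le y<y_c$ the function $\widetilde{\Phi}_e(\cdot,c)$ is a linear combination of the two independent homogeneous Rayleigh solutions $\phi(\cdot,c)$ and $\phi(\cdot,c)\int_0^{\cdot}\phi^{-2}$; in particular it solves the homogeneous Rayleigh equation \eqref{eq:Rayleigh-H} there. Since $c\in(-1,1)$ forces $y_c\in(0,\pi)$, the point $y=0$ lies strictly inside $[0,y_c)$, so the operator may be replaced pointwise: using $u''(0)=1$ and $u(0)-c=-1-c$,
\[
(\al^2-\pa_y^2)\widetilde{\Phi}_e(0,c)=-\f{u''(0)}{u(0)-c}\widetilde{\Phi}_e(0,c)=\f{\widetilde{\Phi}_e(0,c)}{1+c}.
\]
Moreover the integral coefficient drops at $y=0$ (an $\int_0^0$), leaving $\widetilde{\Phi}_e(0,c)=(\nu_{0-}^e-\nu_{0+}^e)(c)\,\phi(0,c)$. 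Then I would insert the dual-formulation identities $\nu_{0-}^e-\nu_{0+}^e=\f{2}{\al}\nu_0$, $\nu_0=-\mu_2/(\phi\phi')(0,c)$, together with $\phi'(0,c)=(u(0)-c)\phi_1'(0,c)=-(1+c)\phi_1'(0,c)$, to obtain $\widetilde{\Phi}_e(0,c)=2\mu_2(c)/\big(\al(1+c)\phi_1'(0,c)\big)$. Substituting the expression $\mu_2=\f{\pi\rho(c)^2}{u'(y_c)}\f{\Lambda_3(\widehat{\om}_e)}{\rmA_1^2+\rmB_1^2}$ derived in the proof of Lemma~\ref{lem:kernel identityeven}, and using $\rho(c)=(1-c)(1+c)$ with $1-c=1+\cos y_c$, the factors $(1+c)^2$ cancel and the stated representation follows.

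For integrability, I would change variables $c=u(y_c)=-\cos y_c$, so that $dc=u'(y_c)\,dy_c$ cancels the $u'(y_c)$ in the denominator and reduces the $L^1_c$ claim to
\[
\int_0^\pi \f{(1+\cos y_c)^2\,|\Lambda_3(\widehat{\om}_e)|}{|\phi_1'(0,c)|\,(\rmA_1(c)^2+\rmB_1(c)^2)}\,dy_c<\infty.
\]
Here I would lower-bound $\rmA_1^2+\rmB_1^2\gtrsim (1+\al\sin y_c)^6/\al^4$ by Proposition~\ref{Prop: A_1^2+B_1^2}, use $\phi_1'(0,c)=\cF(0,c)\phi_1(0,c)$ with the sharp two-sided bound $|\cF(0,c)|\sim\al\min\{\al y_c,1\}$ from Lemma~\ref{Rmk: fg_periodic} (refined near $y_c=0$ via the rescaled estimates of Proposition~\ref{Prop:tphi_1}), and control $\Lambda_3(\widehat{\om}_e)$ through Lemma~\ref{Lem:Lambda_3,1 4,1}. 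The only dangerous region is $y_c\to0$ (and symmetrically $y_c\to\pi$), where $1/|\phi_1'(0,c)|\sim 1/(\al^2 y_c)$; there the vanishing of $\Lambda_3$ like $\rho\sim y_c^2$ exactly absorbs this singularity, which is precisely the vorticity-depletion cancellation. Once the kernel is shown to lie in $L^1_c(-1,1)$, the Riemann--Lebesgue lemma yields $\widehat{\om}(t,\al,0)\to0$ as $t\to\infty$ for every $\al\neq0$. I expect this endpoint integrability check to be the main obstacle, since it forces one to combine the sharp behaviour of $\phi_1'(0,c)$, of $\rmA_1^2+\rmB_1^2$, and of $\Lambda_3$ near the critical values $c=\pm1$.
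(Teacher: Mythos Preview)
Your algebraic derivation is correct and leads to exactly the formula stated, but the route differs from the paper and has one real soft spot.

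\textbf{The approach versus the paper.} The paper does \emph{not} differentiate $\widehat\psi_e$ in $y$ under the $c$-integral. Instead it pairs $\widehat\psi_e$ against a sequence of test functions $f_n=(\pa_y^2-\al^2)g_n$ with $g_n(y)=n\eta'(ny)$ approximating $-\delta_0'$, invokes the dual identity of Lemma~\ref{lem:kernel identityeven}, computes the pointwise limit $\Lambda_4(g_n)\to -(u(\pi)-c)^2/\phi_1'(0,c)$, and passes to the limit by dominated convergence after establishing the uniform bound $\|u'(y_c)\Lambda_4(g_n)\|_{L^\infty}\le C|\al|$. This completely avoids interchanging $\pa_y^2$ with $\int_{-1}^1\,dc$. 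Your approach is more direct and conceptually transparent---you really are exploiting that $\widetilde\Phi_e$ solves the homogeneous Rayleigh equation on $[0,y_c)$---but the paper's route is more robust because it only uses the $H^1_y$ control that is actually available.

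\textbf{The gap.} Your justification ``differentiating under the integral sign (justified by the uniform bounds of Proposition~\ref{prop:ub})'' is insufficient: that proposition gives only $\|\Phi_\pm\|_{H^1_y}\le C\|\om\|_{H^1}$, which controls one $y$-derivative, not two. The difficulty is genuine: for $y$ in any fixed neighbourhood $[0,\epsilon]$ of the origin, the integration region in $c$ contains points with $y_c<y$, where the piecewise formula for $\widetilde\Phi_e$ switches branches, so a naive dominated-convergence argument for the second difference quotient must handle the moving interface $y=y_c$. One can rescue your route---for instance by first evaluating $\widehat\psi_e(t,\al,0)$ (no derivatives needed), then recovering $\widehat\om(t,\al,0)$ from the ODE $\pa_t\widehat\om+i\al u(0)\widehat\om+i\al u''(0)\widehat\psi=0$ at $y=0$, which is essentially how the paper passes between Lemma~\ref{Lem:repre of vorticity} and Lemma~\ref{lem:psi-c}---but as written the interchange is a gap, not a technicality.

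\textbf{A minor point.} In your endpoint analysis you write that $\Lambda_3$ vanishes like $\rho\sim y_c^2$; in fact the sharp $L^\infty$ control from the proof of Proposition~\ref{Prop: estimate K_e} is $|\Lambda_3(\widehat\om_e)|\lesssim \al^{-2}\sin y_c+\sin^2 y_c(1+\al\sin y_c)$, so near $y_c=0$ it is $O(y_c)$, not $O(y_c^2)$. This still cancels the $1/(\al^2 y_c)$ from $|\phi_1'(0,c)|^{-1}$, so your integrability conclusion stands, but the stated order is off by one.
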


\begin{proof}
Let $\eta\in C^{\infty}(\R)$ so that $\eta(y)=1$ for $|y|<\f12$ and $\eta(y)=0$ for $|y|>1$ and $\eta'(y)\leq 0$ for $y>0$.  We denote $\eta_n(y)=\eta(ny)$, $g_n(y)=\eta_n'(y)=n\eta'(ny)$ and $f_n(y)=g_n''(y)-\al^2g_n(y)$.

Assume that $\om_0(x,y)\in H^2$, then for any $t\geq 0$, $\om(t,x,y)\in H^2$. Then $\widehat{\om}_o(t,\al,0)=0$, and by Lemma \ref{lem:kernel identityeven},
\begin{align*}
\widehat{w}(t,\al,0)
&=-\lim_{n\to+\infty}\int_0^{\pi}\widehat{\om}_e(t,\al,y)g_n(y)dy
\\
&=\lim_{n\to+\infty}\int_0^{\pi}\widehat{\psi}_e(t,\al,y)f_n(y)dy\\
&=-\lim_{n\to+\infty}
\int_{u(0)}^{u(\pi)}
\f{\Lambda_3(\widehat{\omega}_{e})\Lambda_4(g_n)}{u'(y_c)(\rmA_1^2+\rmB_1^2)}e^{-i\al c t}dc.
\end{align*}

{\bf Pointwise limit of $\Lambda_4(g_n)$.} \smallskip

Since $\lim\limits_{n\to+\infty}\eta_n(y_c)=0$ and $ \lim\limits_{n\to+\infty}g_n(y_c)=0$ for $y_c\in (0,\pi],$  we get
\beno
&&\lim\limits_{n\to+\infty}\rmE_0(g_nu'')(c)=\lim\limits_{n\to+\infty}-\int_0^{y_c}u''(y)\phi_1(y,c)d\eta_n(y)=u''(0)\phi_1(0,c),\\
&&\lim\limits_{n\to+\infty}\rmE_1(g_nu'')(c)=\lim\limits_{n\to+\infty}\int_{y_c}^{\pi}g_n(y)u''(y)\phi_1(y,c)dy=0.
\eeno
By the proof of Lemma \ref{Lem:convergemu}, we get
$\mathrm{II}_{1,1}(\varphi)=p.v.\int_{0}^{\pi}\f{\int_{y_c}^y\varphi(y')dy'}{(u(y)-c)^2}dy.$ Then for $1/n<y_c$, as $n\to \infty$,
\begin{align*}
|\textrm{II}_{1,1}(g_nu'')(c)|
&\leq\int_{0}^{1/n}\frac{\int_y^{y_c}|g_nu''|(z)dz}{(u(y)-c)^2}dy\\
&\leq \|g_nu''\|_{L^1}\int_{0}^{1/n}\f{dy}{(u(y)-c)^2}\to 0.
\end{align*}
By \eqref{eq:directive_k}, we also have for $1/n<y_c$, as $n\to \infty$,
\begin{align}\label{eq:L_0limit}
|\mathcal{L}_0(g_nu'')|
&\leq \bigg|\int_0^{\f1n}\int_{y_c}^{z}{g_nu''}(y)\left(\f{1}{(u(z)-c)^2}\left(\f{\phi_1(y,c)\,}{\phi_1(z,c)^2}-1\right)\right)\,dydz\bigg|\\
\nonumber&\leq C\|g_nu''\|_{L^1}\int_0^{\f1n}\sup_{y\in [z,1/n]}\left|\f{1}{(u(z)-c)^2}\left(\f{\phi_1(y,c)\,}{\phi_1(z,c)^2}-1\right)\right|\,dz\\
\nonumber&\leq \f{C|\al|}{ny_c^3}\|g_nu''\|_{L^1}\to 0
\end{align}
Thus, for $y_c\in (0,\pi]$, we obtain
\begin{align*}
\lim\limits_{n\to+\infty}\Lambda_4(g_n)(c)
=&\dfrac{J_1(c)u''(0)\phi_1(0,c)}{u'(y_c)}\\
=&-\dfrac{u'(y_c)\rho(c)^2u''(0)\phi_1(0,c)}{\phi(0,c)\phi'(0,c)u'(y_c)}
=-\dfrac{(u(\pi)-c)^2}{\phi_1'(0,c)}.
\end{align*}

{\bf Uniform bound of $\Lambda_4(g_n)$.}\smallskip

For $c\in (-1,1],$ by Lemma \ref{Lem: J_j high} we have for $j=0,1,$
\beno
\|\rmE_j(g_nu'')(c)|\leq \phi_1(j\pi,c)\|g_nu''\|_{L^1}\leq C\phi_1(j\pi,c),\quad |J_j|\leq \f{C}{\al^2\phi_1((1-j)\pi,c)}.
\eeno
By Lemma \ref{Lem: II}, we get $|\rho\mathrm{II}(c)|\leq C\al \sin y_c.$
By Lemma \ref{Lem:L^2L_k}, we have
\begin{align*}
|\sin y_c^2\mathcal{L}_0(g_nu'')|\leq C|\al|\|g_nu''\|_{L^1}.
\end{align*}
Thus, $|\Lambda_{2,2}(g_nu'')|\leq C|\al|$.

By \eqref{eq: OpH}, we get
\begin{align}
\sin^3y_c \mathrm{II}_{1,1}(\varphi)(y_c)=-\f{\sin y_c}{2}\mathcal{H}(\varphi)-\f{\sin y_c}{2}\int_{-\pi}^{\pi}\In(\varphi)(y')dy'.
\end{align}
As $Z\circ \In(u''g_n)$ is odd, we get by Lemma \ref{Lem: identity-Hilbert-periodic} that
\beno
\sin y_cH(Z\circ \In(u''g_n))=H(\sin yZ\circ \In(u''g_n)).
\eeno
Using the fact that for $y\in[0,\pi]$,
\beno
\sin yZ\circ \In(u''g_n)(y)=\sin yu''g_n(y)-\sin y\int_0^{\pi}u''g_n(y)dy-\cos y\In(u''g_n)(y),
\eeno
we deduce that
\beno
\sin y_c\mathcal{H}(u''g_n)=H(\sin yu''g_n(y))-\int_0^{\pi}u''g_n(y)dyH(\sin y)-H(\cos y\In(u''g_n)(y)).
\eeno
Thanks to $\|\sin yu''g_n\|_{H^1}\leq C\sqrt{n}$ and $\|\sin yu''g_n\|_{L^2}\leq \f{C}{\sqrt{n}}$, we get
\beno
\|H(\sin yu''g_n(y))\|_{L^{\infty}}\leq C\|H(\sin yu''g_n)\|_{H^1}^{\f12}\|H(\sin yu''g_n)\|_{L^2}^{\f12}\leq C.
\eeno
Thanks to  $\|\cos y\In(u''g_n)(y)\|_{H^1}\leq C\sqrt{n}$ and $\|\cos y\In(u''g_n)(y)\|_{L^2}\leq \f{C}{\sqrt{n}}$, we get
\beno
\|H(\cos y\In(u''g_n))\|_{L^{\infty}}\leq C\|H(\cos y\In(u''g_n))\|_{H^1}^{\f12}\|H(\cos y\In(u''g_n))\|_{L^2}^{\f12}\leq C.
\eeno

With the estimates above, we conclude $\|u'(y_c)\Lambda_4(g_n)\|_{L^{\infty}}\leq C|\al|$.\smallskip

By the proof of Proposition \ref{Prop: estimate K_e}, we get for $\widehat{\om}_e\in H^2$,
\beno
\Lambda_3(\widehat{\om}_e)
=C\al^{-2}(\sin^2 y_c \cL^2\cap \sin y_c \cL^{\infty})+ C\sin^2 y_c(1+\al \sin y_c)\cL^{\infty}.
\eeno
Thus, by Lebesgue's dominated convergence theorem, we obtain
\begin{align*}
\widehat{\om}(t,\al,0)
&=\lim_{n\to+\infty}
-\int_{u(0)}^{u(\pi)}
\f{\Lambda_3(\widehat{\omega}_{e})\Lambda_4(g_n)}{u'(y_c)(\rmA_1^2+\rmB_1^2)}e^{-i\al c t}dc\\
&=\int_{u(0)}^{u(\pi)}
\f{(u(\pi)-c)^2\Lambda_3(\widehat{\omega}_{e})e^{-i\al c t}}{u'(y_c)\phi_1'(0,c)(\rmA_1^2+\rmB_1^2)}dc.
\end{align*}
By Proposition \ref{Prop: A_1^2+B_1^2}, we get $\f{(u(\pi)-c)^2\Lambda_3(\widehat{\omega}_{e})e^{-i\al c t}}{u'(y_c)\phi_1'(0,c)(\rmA_1^2+\rmB_1^2)}\in L^2_{y_c}\subset L^1_{y_c}\subset L^1_c$.
\end{proof}

\subsection{Decay estimates at critical points}

First of all, we need the following formula of the stream function at a critical point.

\begin{lemma}\label{lem:psi-c}
Let $\om(t,x,y)$ be the solution to \eqref{eq:Euler-L}, and $-\Delta \psi=\om$. Then it holds that
\begin{align*}
&\widehat{\psi}(t,\al,0)
=\int_{-1}^{1}
\f{(1+\cos y_c)\sin y_c\Lambda_3(\widehat{\omega}_{e})e^{-i\al c t}}{\phi_1'(0,c)(\rmA_1^2+\rmB_1^2)}dc,\\
&\partial_y\widehat{\psi}(t,\al,0)
=\int_{-1}^{1}\frac{\sin y_c\Lambda_1(\widehat{\omega}_{o})}{(\rmA^2+\rmB^2)\phi(0,c)}e^{-i\al ct}dc.
\end{align*}
\end{lemma}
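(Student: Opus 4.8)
The plan is to read both formulas off directly from the solution representation of Lemma \ref{Lem:psi(t)}, exploiting the parity of the two pieces $\widehat{\psi}_o,\widehat{\psi}_e$ together with the vanishing of the ``$\int_0^y$'' factors at $y=0$. First I would record that $\widehat{\psi}_o(t,\al,\cdot)$ is odd and $\widehat{\psi}_e(t,\al,\cdot)$ is even in $y$, so that $\widehat{\psi}_o(t,\al,0)=0$ and $\pa_y\widehat{\psi}_e(t,\al,0)=0$; hence $\widehat{\psi}(t,\al,0)=\widehat{\psi}_e(t,\al,0)$ and $\pa_y\widehat{\psi}(t,\al,0)=\pa_y\widehat{\psi}_o(t,\al,0)$. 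Under the regularity hypotheses (e.g. $\om_0\in H^2$, as in Lemma \ref{Lem:repre of vorticity}) elliptic regularity for $-\Delta\psi=\om$ gives $\widehat{\psi}(t,\al,\cdot)\in C^1(\mathbb{T})$, so both quantities are genuine pointwise values; moreover, since $\widetilde{\Phi}_o,\widetilde{\Phi}_e$ are continuous in $y$ and uniformly dominated through Proposition \ref{prop:ub}, the representation of Lemma \ref{Lem:psi(t)} is continuous (indeed $C^1$) in $y$ and so holds pointwise at $y=0$ and may be differentiated there.

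Next I would evaluate the two representations at $y=0$. In $\widetilde{\Phi}_e(\al,y,c)$ the term $\phi(y,c)\int_0^y\phi(z,c)^{-2}dz$ vanishes at $y=0$, leaving $\widetilde{\Phi}_e(\al,0,c)=(\nu^e_{0-}-\nu^e_{0+})(c)\,\phi(0,c)$; likewise $\pa_y\widetilde{\Phi}_o(\al,0,c)=(\mu^o_--\mu^o_+)(c)/\phi(0,c)$, since only the boundary value $1/\phi(0,c)$ survives the differentiation. I would then insert the identities from the dual-formulation subsection: $\mu^o_--\mu^o_+=\f{2}{\al}\rho\mu_1$ with $\rho\mu_1=\pi\sin y_c\,\Lambda_1(\widehat{\om}_o)/(\rmA^2+\rmB^2)$ (as in the proof of Lemma \ref{Lem: K_o identity}), and $\nu^e_{0-}-\nu^e_{0+}=\f{2}{\al}\nu_0=-\f{2}{\al}\mu_2/(\phi\phi')(0,c)$ with $\mu_2=\f{\pi\rho^2}{u'(y_c)}\Lambda_3(\widehat{\om}_e)/(\rmA_1^2+\rmB_1^2)$ (from the proof of Lemma \ref{lem:kernel identityeven}). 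After multiplying by the factor $\al/(2\pi)$ from Lemma \ref{Lem:psi(t)}, the formula for $\pa_y\widehat{\psi}_o(t,\al,0)$ appears with no further work. The only genuinely computational step is the simplification of the prefactor in the even case: using $c=-\cos y_c$, $\rho=\sin^2 y_c$, $u'(y_c)=\sin y_c$ and $\phi'(0,c)=(u(0)-c)\phi_1'(0,c)=-(1-\cos y_c)\phi_1'(0,c)$, one checks
\beno
\f{\rho^2}{u'(y_c)\phi'(0,c)}=-\f{\sin^3 y_c}{(1-\cos y_c)\phi_1'(0,c)}=-\f{(1+\cos y_c)\sin y_c}{\phi_1'(0,c)},
\eeno
which produces exactly the factor $(1+\cos y_c)\sin y_c/\phi_1'(0,c)$ in the first formula.

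Finally I would confirm that the two resulting integrands lie in $L^1_c(-1,1)$, so that the formulas are meaningful and the passage $\ep\to 0$ behind Lemma \ref{Lem:psi(t)} is justified by dominated convergence at $y=0$ as well. This is the main obstacle: near the endpoints $c\to\pm1$ one has $y_c\to 0,\pi$, where $\phi(0,c)$ and $\phi_1'(0,c)$ degenerate, so integrability is not automatic. It is handled exactly as in the proof of Lemma \ref{Lem:repre of vorticity}: the lower bounds of Propositions \ref{Prop: A^2+B^2} and \ref{Prop: A_1^2+B_1^2} on $\rmA^2+\rmB^2$ and $\rmA_1^2+\rmB_1^2$, the bounds on $\Lambda_1(\widehat{\om}_o)$ and $\Lambda_3(\widehat{\om}_e)$ obtained in Section \ref{K_oK_e}, and the boundary behaviour of $\phi_1$ and $\cF$ at $y=0,\pi$ recorded in Lemma \ref{Rmk: fg_periodic} and Proposition \ref{prop:phi1} together absorb the apparent singularities; here the vanishing of the odd datum $\widehat{\om}_o$ at the critical points supplies the extra factor of $\sin y_c$ needed to offset $\sin y_c/\phi(0,c)$. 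As an alternative that sidesteps any pointwise evaluation, one may instead derive both formulas by pairing $\widehat{\psi}_e$ and $\widehat{\psi}_o$ with suitable approximate identities and invoking Lemma \ref{lem:kernel identityeven} and Lemma \ref{Lem: K_o identity}, in precisely the spirit of Lemma \ref{Lem:repre of vorticity}.
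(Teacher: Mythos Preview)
Your argument is correct. For the second identity $\partial_y\widehat{\psi}(t,\al,0)$ you proceed exactly as the paper does: evaluate $\partial_y\widetilde{\Phi}_o$ at $y=0$ via Lemma \ref{Lem:psi(t)} and insert $\mu_-^o-\mu_+^o=\f{2}{\al}\rho\mu_1$.

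For the first identity the two proofs differ. You evaluate $\widetilde{\Phi}_e(\al,0,c)=(\nu^e_{0-}-\nu^e_{0+})\phi(0,c)$ directly and simplify using $\nu_0=-\mu_2/(\phi\phi')(0,c)$ and the expression for $\mu_2$; your algebraic reduction of the prefactor $\rho^2/(u'(y_c)\phi'(0,c))$ is correct. The paper instead avoids this direct evaluation: it uses the linearized Euler relation $u''(0)i\al\widehat{\psi}(t,\al,0)=-\partial_t\widehat{\om}(t,\al,0)-i\al u(0)\widehat{\om}(t,\al,0)$, inserts the vorticity formula from Lemma \ref{Lem:repre of vorticity}, differentiates under the integral in $t$, and simplifies $(c-u(0))\cdot\f{(u(\pi)-c)^2}{u'(y_c)}=(1+\cos y_c)\sin y_c$. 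Your route is shorter and more transparent, but it requires you to justify separately that the representation of Lemma \ref{Lem:psi(t)} holds pointwise at $y=0$ and that the resulting integrand lies in $L^1_c$; you handle both points adequately. The paper's route buys that work for free, since the pointwise issue and the $L^1_c$ membership were already dealt with in the proof of Lemma \ref{Lem:repre of vorticity} via approximate identities, and only an elementary algebraic manipulation remains.
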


\begin{proof}
By \eqref{eq:Euler-L-operator}, we get
\beno
u''(y)i\al \widehat{\psi}=-\pa_t\widehat{\om}-i\al u(y)\widehat{\om},
\eeno
which gives
\beno
\widehat{\psi}(t,\al,0)=\f{-1}{i\al u''(0)}\big(\pa_t\widehat{\om}_e(t,\al,0)+i\al u(0)\widehat{\om}_e(t,\al,0)\big).
\eeno
By Lemma \ref{Lem:repre of vorticity}, we get
\begin{align*}
\pa_t\widehat{\om}(t,\al,0)
&=\int_{u(0)}^{u(\pi)}
-i\al c\f{(u(\pi)-c)^2\Lambda_3(\widehat{\omega}_{e})e^{-i\al c t}}{u'(y_c)\phi_1'(0,c)(\rmA_1^2+\rmB_1^2)}dc,
\end{align*}
which implies the first identity.

Thanks to $\pa_y\psi(t,x,0)=\pa_y\psi_o(t,x,0)$, we get by Lemma \ref{Lem:psi(t)}
that
\begin{align*}
\pa_y\psi(t,x,0)&=\f{1}{2\pi}\int_{-1}^{1}e^{-i\al tc}\al\pa_y\widetilde{\Phi}_o(\al,0,c)dc\\
&=\f{1}{2\pi}\int_{-1}^{1}e^{-i\al tc}\f{\al(\mu_-^o(c)-\mu_+^o(c))}{\phi(0,c)}dc,\\
&=\int_{-1}^1\f{\sin y_c\Lambda_1(\widehat{\om}_o)}{(\rmA^2+\rmB^2)\phi(0,c)}e^{-i\al tc}dc.
\end{align*}
This finishes the proof.
\end{proof}

We denote
\beno
&&K_o^0(c,\al)=\f{\sin y_c\Lambda_1(\widehat{\om}_o)(y_c)}{(\rmA^2+\rmB^2)\phi(0,c)},\\
&&K_e^0(c,\al)=\f{(1+\cos y_c)\sin y_c\Lambda_3(\widehat{\omega}_{e})}{\phi_1'(0,c)(\rmA_1^2+\rmB_1^2)}=-\f{J_1^1(c)\Lambda_3(\widehat{\omega}_{e})(y_c)}{(\rmA_1^2+\rmB_1^2)}.
\eeno

\begin{lemma}\label{prop:K_o^0}
If $\|\widehat{\omega}_{o}\|_{H^2}+\|\widehat{\omega}_{o}''/u'\|_{L^2}\leq 1$, then we have
\beno
\partial_cK_o^0={\al} y_c^{-2}\sin y_c\cL^{\infty},\quad \partial_c^2K_o^0={\al y_c^{-2}}( (\sin y_c)^{-1}\cL^{\infty}+\cL^2).
\eeno
\end{lemma}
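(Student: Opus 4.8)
The plan is to factor $K_o^0$ into pieces whose $c$–derivatives are each controlled by the estimates already in hand, and then to expand $\partial_c$ and $\partial_c^2$ by the Leibniz rule. Since $\phi(0,c)=(u(0)-c)\phi_1(0,c)$ with $u(0)-c=-(1-\cos y_c)$, I first record the elementary identity $\frac{\sin y_c}{\phi(0,c)}=\frac{-\cot(y_c/2)}{\phi_1(0,c)}$, so that $K_o^0=q\,\Lambda_1(\widehat{\om}_o)\,\frac{1}{\rmA^2+\rmB^2}\,\frac{1}{\phi_1(0,c)}$ with $q:=\sin y_c/(u(0)-c)=-\cot(y_c/2)$. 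Throughout I use $\partial_c=\frac{1}{\sin y_c}\partial_{y_c}$ and the comparability of $1-\cos y_c$ with $y_c^2$ on $(0,\pi)$, which legitimizes writing $y_c^{-2}$ for the prefactor. A direct computation gives, near $y_c=0$, the bounds $|q|\le Cy_c^{-1}$, $|\partial_c q|\le Cy_c^{-3}$, $|\partial_c^2 q|\le Cy_c^{-5}$, with the corresponding bounds near $y_c=\pi$ two powers less singular, since $q=O(\pi-y_c)$ there.

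The second ingredient is a sharpening of the $\Lambda_1$–estimates from the proof of Proposition \ref{Prop: estimate K_o}. Under the extra hypothesis $\widehat{\om}_o''/u'\in L^2$ (together with $\widehat{\om}_o\in H^2$ odd, so $\widehat{\om}_o/u'\in L^\infty$ and $\widehat{\om}_o(0)=\widehat{\om}_o(\pi)=0$), Lemma \ref{Lem: II_1,1odd}(3) upgrades $\mathrm{II}_{1,1}(\widehat{\om}_o)$ to $\mathrm{II}_{1,1}\in L^\infty$ with $\partial_{y_c}\mathrm{II}_{1,1}\in L^2$ and $\partial_{y_c}^2(\sin y_c\,\mathrm{II}_{1,1})\in L^2$, while the second part of Lemma \ref{Lem:H^2II_1,2_periodic} controls $u'(y_c)^{2k}\partial_c^k\mathcal{L}_0(\widehat{\om}_o)$ in $L^\infty$ for $k=0,1,2$. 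Feeding these together with Lemma \ref{Lem: II} and $\rho u''=\sin^2 y_c\cos y_c$ into $\Lambda_1=\Lambda_{1,1}+\Lambda_{1,2}$ via \eqref{eq:defLam_1,1}--\eqref{eq:defLam_1,2}, I obtain bounds of the schematic form $\Lambda_1(\widehat{\om}_o)=\al\sin^2 y_c\,\cL^\infty$, $\partial_c\Lambda_1(\widehat{\om}_o)=\al\,\cL^\infty+\sin y_c\,\cL^2$, and $\partial_c^2\Lambda_1(\widehat{\om}_o)=\frac{\al\,\cL^\infty+(1+\al\sin y_c)\cL^2}{\sin^2 y_c}$; the essential gain over Steps 2--3 of Proposition \ref{Prop: estimate K_o} is that the leading pieces are now $L^\infty$ rather than only $L^2$, and that $\Lambda_1$ still vanishes to second order in $\sin y_c$.

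Finally I record $\frac{1}{\rmA^2+\rmB^2}$ and its first two $c$–derivatives from Proposition \ref{Prop: A^2+B^2}, and $\Psi:=1/\phi_1(0,c)$ from Proposition \ref{prop:phi1} and Lemma \ref{Rmk: fg_periodic}: one has $\partial_c\Psi=-\cG(0,c)\Psi$ and $\partial_c^2\Psi=(\cG(0,c)^2-\partial_c\cG(0,c))\Psi$, with $|\cG(0,c)|\le C\al\min(1,\al y_c)/\sin y_c$, $|\partial_c\cG(0,c)|\le C\al^2/\sin^2 y_c$, and the crucial lower bound $\phi_1(0,c)\ge C^{-1}e^{C^{-1}\al y_c}$, so that $\Psi$ and every product $(\al\sin y_c)^m\Psi$ remain bounded. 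Expanding $\partial_c K_o^0$ and $\partial_c^2 K_o^0$ by Leibniz and inserting all of the above, each resulting term is an explicit monomial in $\al$ and $\sin y_c$ against an $\cL^\infty$ or $\cL^2$ factor; using that $\Lambda_1$ carries the vanishing factor $\sin^2 y_c$ and that $\Psi$ absorbs all positive powers of $\al\sin y_c$, every term collapses into the asserted forms $\al y_c^{-2}\sin y_c\,\cL^\infty$ and $\al y_c^{-2}\big((\sin y_c)^{-1}\cL^\infty+\cL^2\big)$. I expect the main obstacle to be exactly this bookkeeping: the most singular factors $\partial_c^2 q=O(y_c^{-5})$ and $\partial_c^2\frac{1}{\rmA^2+\rmB^2}=O((1+\al\sin y_c)^{-2}\sin^{-4}y_c)$ must be shown to cancel against the $\sin^2 y_c$ of $\Lambda_1$ so that no power worse than the advertised $y_c^{-2}(\sin y_c)^{-1}$ survives, and uniformity must be checked across the regimes $\al y_c\lesssim1$, $\al y_c\gtrsim1$ and $y_c$ near $\pi$, where it is the exponential smallness of $\Psi$, rather than any algebraic gain, that controls the growth.
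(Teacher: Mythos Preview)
Your approach is essentially the same as the paper's: both factor $K_o^0$, use the sharpened $\Lambda_1$ estimates coming from Lemma~\ref{Lem: II_1,1odd} and the second part of Lemma~\ref{Lem:H^2II_1,2_periodic} (which need the extra hypothesis $\widehat{\om}_o''/u'\in L^2$), combine with Proposition~\ref{Prop: A^2+B^2} and the $\phi_1(0,c)$ bounds from Lemma~\ref{Rmk: fg_periodic}, and expand by Leibniz. The only cosmetic difference is the grouping: the paper collapses your four factors into two by writing $K_o^0=\dfrac{\rho(c)}{\phi(0,c)}\cdot\dfrac{\Lambda_1(\widehat{\om}_o)}{\sin y_c(\rmA^2+\rmB^2)}$ and noting $\dfrac{\rho(c)}{\phi(0,c)}=-\dfrac{1-c}{\phi_1(0,c)}$, which packages your $q$ and $\Psi$ together with an extra $\sin y_c$ and makes the bookkeeping shorter.

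There is one imprecision to flag. Your ``schematic'' bounds $\partial_c\Lambda_1=\al\,\cL^\infty+\sin y_c\,\cL^2$ and $\partial_c^2\Lambda_1=\sin^{-2}y_c\big(\al\,\cL^\infty+(1+\al\sin y_c)\cL^2\big)$ are too loose to close as written: the stray $\sin y_c\,\cL^2$ in $\partial_c\Lambda_1$ would feed an $\cL^2$ contribution into $\partial_c K_o^0$, which must be purely $\cL^\infty$; and the ``$1+$'' in $\partial_c^2\Lambda_1$ produces a $\sin^{-2}y_c\,\cL^2$ piece that, after multiplication by $|q|\sim y_c^{-1}$, gives $y_c^{-3}\cL^2$ rather than the required $\al y_c^{-2}\cL^2$ for $y_c\lesssim\al^{-1}$. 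The fix is already in your cited lemmas: Lemma~\ref{Lem: II_1,1odd} gives $\sin y_c\,\partial_{y_c}\mathrm{II}_{1,1}\in L^\infty$ (not just $\partial_{y_c}\mathrm{II}_{1,1}\in L^2$), and the second part of Lemma~\ref{Lem:H^2II_1,2_periodic} gives $u'^{2k}\partial_c^k\mathcal{L}_0\in L^\infty$; together with Lemma~\ref{Lem: II} these yield the sharper $\partial_c\Lambda_1=\al\,\cL^\infty$ and $\partial_c^2\Lambda_1=\rho^{-1}\al(\sin y_c\,\cL^2+\cL^\infty)$, which are exactly what the paper records and what is needed.
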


\begin{proof}
By the identity $\f{\rho(c)}{\phi(0,c)}=-\f{1-c}{\phi_1(0,c)}$ and Remark \ref{Rmk: fg_periodic}, we get
\ben\label{eq:rho/phi1}
\left|\f{\rho(c)}{\phi(0,c)}\right|\leq C(1-c),\
\left|\partial_c\f{\rho(c)}{\phi(0,c)}\right|\leq \left|\dfrac{1}{\phi_1(0,c)}\right|+\left|\f{(c-1)\cG}{\phi_1(0,c)^2}\right|\leq\f{C\al(\pi-y_c)}{y_c}+C,
\een
and
\ben
\label{eq:rho/phi2}\left|\partial_c^2\dfrac{\rho(c)}{\phi(0,c)}\right|\leq \left|\dfrac{2\cG+(c-1)\partial_c\cG}{\phi_1(0,c)^2}\right|+2\left|\dfrac{(c-1)\cG^2}{\phi_1(0,c)^3}\right|\leq
\frac{C\al^2}{y_c^2}+\frac{C\al}{\sin y_c}.
\een
Then by \eqref{eq:defLam_1,1} and Lemma \ref{Lem: II_1,1odd}, we have
\beno
&&\pa_c\Lambda_{1,1}(\widehat{\omega}_{o})(y_c)=\cL^{\infty},\quad\Lambda_{1,1}(\widehat{\omega}_{o})(y_c)=\sin^2 y_c\cL^{\infty},\quad\pa_c^2\Lambda_{1,1}(\widehat{\omega}_{o})(y_c)=\f{\cL^2}{\sin y_c}.
\eeno
By \eqref{eq:defLam_1,2}, Lemma \ref{Lem: II} and Lemma \ref{Lem:H^2II_1,2_periodic}, we obtain
\beno
\Lambda_{1,2}(\widehat{\omega}_{o})(y_c)=|\al|\sin^2 y_c\cL^{\infty},\quad
\pa_c\Lambda_{1,2}(\widehat{\omega}_{o})(y_c)=|\al| \cL^{\infty},\quad
\pa_c^2\Lambda_{1,2}(\widehat{\omega}_{o})(y_c)=\f{|\al| \cL^{\infty}}{\sin^2y_c}+\f{|\al|L^2}{\sin y_c}.
\eeno
Therefore, we obtain
\ben\label{eq:Lambda_1new}
\Lambda_1(\widehat{\omega}_{o})=\al\rho \cL^{\infty},\quad
\partial_c\Lambda_1(\widehat{\omega}_{o})=\al \cL^{\infty},\quad
\partial_c^2\Lambda_1(\widehat{\omega}_{o})=\rho^{-1}\al(\sin y_c \cL^2+ \cL^{\infty}).
\een
Notice that $K_o^0(c,\al)=\f{\rho(c)}{\phi(0,c)}\f{\Lambda_1(\widehat{\om}_o)}{\sin y_c(\rmA^2+\rmB^2)}$. By \eqref{eq:rho/phi1}, \eqref{eq:rho/phi2}, \eqref{eq:Lambda_1new} and Proposition \ref{Prop: A^2+B^2}, we deduce the result.
\end{proof}

\begin{lemma}\label{prop:K_e^0}
Assume that $\widehat{\omega}_{e}\in H^2$ and $\f{\pa_y\widehat{\omega}_{e}}{u'}\in H^1$. Then $K_e^0\big|_{c=\pm1}=0$ and
\beno
\|\pa_c^2K_e^0\|_{L_c^1}\leq C{|\al|^2}(\|{\widehat{\om}_{e}}\|_{H^2}+\|{\widehat{\om}_{e}'}/{u'}\|_{H^1}).
\eeno
\end{lemma}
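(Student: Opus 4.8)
The plan is to exploit the factored form $K_e^0(c,\al)=-J_1^1(c)\Lambda_3(\widehat\omega_e)(y_c)/(\rmA_1(c)^2+\rmB_1(c)^2)$ and distribute the two $c$-derivatives among the three factors by the Leibniz rule. Each factor is controlled by results already at hand: $J_1^1$ and its derivatives by Lemma \ref{Lem: J_j high} (with $k=1$, $j=0$); the reciprocal $1/(\rmA_1^2+\rmB_1^2)$ and its derivatives by Proposition \ref{Prop: A_1^2+B_1^2}, which in particular supplies the lower bound $\rmA_1^2+\rmB_1^2\ge C^{-1}(1+\al\sin y_c)^6/\al^4$ so that the denominator never vanishes; and $\Lambda_3(\widehat\omega_e)=\rho(c)\Lambda_1(\widehat\omega_e)+\Lambda_{3,1}(\widehat\omega_e)$ together with its first two derivatives.

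First I would record the endpoint vanishing. Since $\rmA_1^2+\rmB_1^2$ is bounded below, it suffices to see $J_1^1\Lambda_3\to0$ as $c\to\pm1$. By Lemma \ref{Lem: J_j high} the first branch gives $|J_1^1(c)|\le C|\pi-y_c|^3/\al^2$, which vanishes at $c=-1$ (where $y_c=\pi$), while the estimates of $\Lambda_3$ assembled in Step 3 of the proof of Proposition \ref{Prop: estimate K_e} show that $\Lambda_3$ carries a factor $\sim\sin^2y_c$, vanishing at $c=1$ (where $y_c=0$); at each endpoint one factor tends to $0$ and the other stays bounded, so $K_e^0|_{c=\pm1}=0$.

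Then I would normalize so that $\|\widehat\omega_e\|_{H^2}\le1$ and $\|\pa_y\widehat\omega_e/u'\|_{H^1}\le1$ and assemble the derivative estimates in the $\cL^p$ bookkeeping of Section \ref{K_oK_e}. For $J_1^1$ use Lemma \ref{Lem: J_j high}; for $\Lambda_3$ combine the $H^2$-estimates of $\rho\Lambda_1$ and of $\Lambda_{3,1}$ (Case 3 of Lemma \ref{Lem:Lambda_3,1 4,1}) with the crucial improved bounds of Case 4 of the same lemma, namely $\pa_c\Lambda_{3,1}=\cL^\infty$ and $\pa_c^2\Lambda_{3,1}=\al^2\cL^\infty/(1+\al\sin y_c)^2+\cL^2/(\al\sin y_c)$; for $1/(\rmA_1^2+\rmB_1^2)$ use Proposition \ref{Prop: A_1^2+B_1^2}. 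Multiplying through and collecting powers of $(1+\al\sin y_c)$ should reduce every term of $\pa_c^2K_e^0$ to the form $\al^2(1+\al\sin y_c)^{-p}\cL^2$ with $p\ge0$ (or an $\cL^2$-term already carrying enough $\sin y_c$), each integrable in $c$ with an $\al$-independent constant after a Cauchy--Schwarz step, giving $\|\pa_c^2K_e^0\|_{L^1_c}\le C\al^2(\|\widehat\omega_e\|_{H^2}+\|\widehat\omega_e'/u'\|_{H^1})$.

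The main obstacle will be the second-derivative estimate of $\Lambda_{3,1}$ near the critical points $y_c=0,\pi$. Under $\widehat\omega_e\in H^2$ alone, Case 3 of Lemma \ref{Lem:Lambda_3,1 4,1} only yields $\pa_c^2\Lambda_{3,1}=(\al^{-2}\cL^2+\cL^\infty)/\sin^2y_c$, and the $\cL^\infty/\sin^2y_c$ piece, even after multiplication by the small factor $J_1^1\sim\al^{-2}$ near $y_c=0$, leaves a non-integrable $1/\sin^2y_c$ singularity. The extra hypothesis $\pa_y\widehat\omega_e/u'\in H^1$ is exactly what upgrades this to the Case 4 bound, whose singular part is only $\cL^2/(\al\sin y_c)$; matching this against the vanishing factors carried by $J_1^1$ and using the refined small-$y_c$ bounds $|J_1^1|\le C\al^{-2}$, $|\pa_cJ_1^1|\le C$, $|\pa_c^2J_1^1|\le C\al^2$ of Lemma \ref{Lem: J_j high} restores integrability. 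Verifying this interplay term by term near both endpoints is where the real work lies.
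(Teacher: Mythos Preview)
Your strategy is exactly the paper's: write $K_e^0=-J_1^1\,\Lambda_3/(\rmA_1^2+\rmB_1^2)$, apply Leibniz, and control $J_1^1$ by Lemma~\ref{Lem: J_j high}, $1/(\rmA_1^2+\rmB_1^2)$ by Proposition~\ref{Prop: A_1^2+B_1^2}, and $\Lambda_3=\rho\Lambda_1+\Lambda_{3,1}$ by the $H^2$-level estimates of Section~\ref{K_oK_e} together with Case~4 of Lemma~\ref{Lem:Lambda_3,1 4,1}; you have also correctly identified that Case~4 (requiring $\widehat\omega_e'/u'\in H^1$) is exactly what tames the $\sin^{-2}y_c$ singularity in $\pa_c^2\Lambda_{3,1}$ that Case~3 alone cannot handle.

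Two small points. First, you have the endpoint correspondence reversed: since $u(y)=-\cos y$, one has $y_c=0$ at $c=-1$ and $y_c=\pi$ at $c=1$. Thus the bound $|J_1^1|\le C|\pi-y_c|^3/\al^2$ forces vanishing at $c=1$, while the $\sin y_c$ factor in $\Lambda_3$ (which in fact vanishes at both ends) handles $c=-1$. This is only a labeling slip and does not affect the argument. Second, for the second derivative of $J_1^1$ the paper actually uses the sharper combined bound $|\pa_c^2 J_1^1|\le C\al/\big((1+\al\sin y_c)^3\sin y_c\big)$, obtained by interpolating both branches of Lemma~\ref{Lem: J_j high}; your cruder $|\pa_c^2 J_1^1|\le C\al^2$ still closes the $L^1_c$ estimate at the same order $\al^2$, but the bookkeeping is cleaner with the refined form.
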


\begin{proof}
We may assume $\|{\widehat{\om}_{e}}\|_{H^2}+\|{\widehat{\om}_{e}'}/{u'}\|_{H^1}\leq 1$ by normalization.

By Lemma \ref{Lem: J_j high}, we obtain
\beno
\left|J_1^1(c)\right|\leq \f{C}{\al^2},\ \left|\pa_cJ_1^1(c)\right|\leq \f{C}{(1+\al\sin y_c)^{2}},\ \left|\pa_c^2J_1^1(c)\right|\leq \f{C\al}{(1+\al\sin y_c)^{3}\sin y_c}.
\eeno
By the proof of Proposition \ref{Prop: estimate K_e}, we have
\beno
\Lambda_3(\widehat{\om}_e)
=\al^{-2}(\sin^2 y_c \cL^2\cap \sin y_c \cL^{\infty})+ \sin^2 y_c(1+\al \sin y_c)\cL^{\infty},
\eeno
and by Lemma \ref{Lem: II_1,1L^2}, we get $\pa_c\Lambda_{1,1}(\widehat{\om}_e)=\cL^{2}$;
by Lemma \ref{Lem:H^1II_1,2_periodic} and Lemma \ref{Lem: II}, we get $\pa_c\Lambda_{1,2}(\widehat{\om}_e)=\f{\al \cL^{\infty}}{\sin y_c}$.
We also have $\Lambda_{1}(\widehat{\om}_e)=(1+\al \sin y_c)\cL^{\infty}.$ Then by Lemma \ref{Lem:Lambda_3,1 4,1}, we get
\beno
\pa_c\big(\Lambda_3(\widehat{\om}_e)\big)
=\sin^2 y_c\cL^{2}+ (1+\al \sin y_c)\cL^{\infty}.
\eeno
By Lemma \ref{Lem: II_1,1L^2}, we get $\pa_c^2\Lambda_{1,1}(\widehat{\om}_e)=\f{\cL^{2}}{\sin^2y_c}$;
and by Lemma \ref{Lem:H^2II_1,2_periodic} and Lemma \ref{Lem: II}, we get $\pa_c^2\Lambda_{1,2}(\widehat{\om}_e)=\f{\al \cL^{\infty}}{\sin^3 y_c}+\f{\al \cL^{2}}{\sin y_c}$.
Then by Lemma \ref{Lem:Lambda_3,1 4,1}, we get
\beno
\pa_c^2\big(\Lambda_3(\widehat{\om}_e)\big)
=\Big(1+\al\sin y_c+\f{1}{\al\sin y_c}\Big)\cL^{2}+ \f{\al^2}{(1+\al\sin y_c)^2}\cL^{\infty}+\f{\al}{\sin y_c}\cL^{\infty}.
\eeno
Then by Proposition \ref{Prop: A_1^2+B_1^2}, we obtain
\beno
u'(y_c)\pa_c^2K_e^0=\f{\al^3\cL^{\infty}}{(1+\al\sin y_c)^6}+\f{\al \cL^{2}}{(1+\al\sin y_c)^4}=\al^2\cL^1,
\eeno
and
\beno
K_{e}^{0}=C(\al)\sin y_c\cL^{\infty},
\eeno
which implies that $K_{e}^{0}\big|_{c=\pm 1}=0$ and $\pa_cK_e^0=\al^2\cL^{\infty}$.
\end{proof}

Now we are in a position to prove Theorem \ref{Thm: critical}.

\begin{proof}
{\bf Step 1.}
Let $c'\in (u(0),u(1))$, which will be determined later. Without loss of generality, let us consider the case of $|\al t|>1.$ Then by Lemma \ref{lem:psi-c}, we have
\begin{align*}
\partial_y\widehat{\psi}(t,\al,0)
&=\int_{u(0)}^{u(\pi)}K_o^0(c,\al)e^{-i\al ct}dc\\
&=\frac{1}{i\al t}\int_{u(0)}^{u(\pi)}\partial_cK_o^0(c,\al)e^{-i\al ct}dc\\
&=\frac{1}{i\al t}\int_{u(0)}^{c'}\partial_cK_o^0(c,\al)e^{-i\al ct}dc\\
&\quad-\frac{1}{(\al t)^2}\left(\partial_cK_o^0(c',\al)e^{-i\al c't}+\int_{c'}^{u(\pi)}\partial_c^2K_o^0(c,\al)e^{-i\al ct}dc\right).
\end{align*}
By Lemma \ref{prop:K_o^0}, we have
\begin{align*}
\|\partial_c^2K_o^0\|_{L_c^1(c',u(\pi))}&=\|u'(y_c)\partial_c^2K_o^0\|_{L_{y_c}^1(y_{c'},1)}\\
&\leq C\al(\|y_c^{-2}\|_{L_{y_c}^1(y_{c'},1)}+\|y_c^{-1}\|_{L_{y_c}^2(y_{c'},1)})\leq C\al y_{c'}^{-1}.
\end{align*}
Therefore, we deduce that for any $y_{c'}\in (0,1)$,
\beno
|\partial_y\widehat{\psi}(t,\al,0)|\leq \frac{C\al}{|\al t|}\int_{u(0)}^{c'}\frac{dc}{y_c}+C\frac{1}{(\al t)^2}\frac{\al}{y_{c'}}\leq C\Big(\frac{y_{c'}}{|t|}+\frac{1}{\al y_{c'}t^2}\Big).
\eeno
Taking $y_{c'}=|\al t|^{-\frac{1}{2}}, $ we deduce that
\beno
 |\partial_y\widehat{\psi}(t,\al,0)|\leq C\al^{-\frac{1}{2}}|t|^{-\frac{3}{2}}.
\eeno

 {\bf Step 2.} Using the fact that $\widehat{\psi}(t,\al,0)=\int_{-1}^1K_{e}^0(c,\al)e^{-i\al ct}dc$, and Lemma \ref{prop:K_e^0}, we obtain
\begin{align*}
|\widehat{\psi}(t,\al,0)|&=\left|\f{1}{i\al t}\int_{-1}^1\pa_cK_{e}^0(c,\al)e^{-i\al ct}dc\right|\\
&\leq \left|\f{1}{\al^2 t^2}\int_{-1}^1\pa_c^2K_{e}^0(c,\al)e^{-i\al ct}dc\right|+\f{1}{\al^2 t^2}\|\pa_cK_{e}^0(c,\al)\|_{L^{\infty}}\\
&\leq Ct^{-2}(\|\widehat{\om}_e\|_{H^2}+\|\widehat{\om}_e'/u'\|_{H^1}).
\end{align*}
Let $W(t,x,y)=\om(t,x+u(y)t,y)$. Then $\widehat{W}(t,\al,y)=e^{i\al tu(y)}\widehat{\om}(t,\al,y)$ and
\beno
\pa_t\widehat{W}=-i\al e^{i\al u(y)t}u''\widehat{\psi}.
\eeno
By Lemma \ref{Lem:repre of vorticity}, $\widehat{\om}(t,\al,0)\to 0$ as $t\to \infty$, so does $\widehat{W}(t,\al,0)$. Thus,
\beno
\widehat{W}(t,\al,0)=\int_t^{\infty}i\al e^{i\al u(0)s}u''(0)\widehat{\psi}(s,\al,0)ds.
\eeno
Thus, if $t>1$ then
\begin{align*}
|\widehat{\om}(t,\al,0)|
&=|\widehat{W}(t,\al,0)|
\leq C|\al|\int_t^{+\infty}|\widehat{\psi}(s,\al,0)|ds\\
&\leq C|\al|\int_t^{+\infty}s^{-2}(\|\widehat{\om}_e\|_{H^2}+\|\widehat{\om}_e'/u'\|_{H^1})ds\\
&\leq C|\al||t|^{-1}(\|\widehat{\om}_e\|_{H^2}+\|\widehat{\om}_e'/u'\|_{H^1}),
\end{align*}
and if $t<-1$ then
\begin{align*}
|\widehat{\om}(t,\al,0)|
&=|\widehat{W}(t,\al,0)|
\leq C|\al|\int_{-\infty}^t|\widehat{\psi}(s,\al,0)|ds\\
&\leq C|\al||t|^{-1}(\|\widehat{\om}_e\|_{H^2}+\|\widehat{\om}_e'/u'\|_{H^1}).
\end{align*}

{\bf Step 3.} By Step 1 and Step 2, we deduce that
\begin{align*}
&|{\om}(t,x,0)|=\Big|\sum_{\al\neq 0}\widehat{\om}(t,\al,0)e^{i\al x}\Big|\leq \sum_{\al\neq 0}|\widehat{\om}(t,\al,0)|
\leq \sum_{\al\neq 0}C|\al||t|^{-1}\|\widehat{\om}_e(\al,\cdot)\|_{H^3}\\&\quad\leq C|t|^{-1}\Big(\sum_{\al\neq 0}|\al|^{-2s}\Big)^{\f12}\Big(\sum_{\al\neq 0}|\al|^{2+2s}\|\widehat{\om}_e(\al,\cdot)\|_{H^3}^2\Big)^{\f12}
\leq C|t|^{-1}\|{\om}_e\|_{H_x^{s+1}H_y^3},\\
&|V^2(t,x,0)|=\Big|\sum_{\al\neq 0}i\al\widehat{\psi}(t,\al,0)e^{i\al x}\Big|\leq \sum_{\al\neq 0}|\al||\widehat{\psi}(t,\al,0)|
\leq \sum_{\al\neq 0}C|\al||t|^{-2}\|\widehat{\om}_e(\al,\cdot)\|_{H^3}\\&\quad\leq C|t|^{-2}\Big(\sum_{\al\neq 0}|\al|^{-2s}\Big)^{\f12}\Big(\sum_{\al\neq 0}|\al|^{2+2s}\|\widehat{\om}_e(\al,\cdot)\|_{H^3}^2\Big)^{\f12}
\leq C|t|^{-2}\|{\om}_e\|_{H_x^{s+1}H_y^3},\\
&|V^1(t,x,0)|=\Big|\sum_{\al\neq 0}\pa_y\widehat{\psi}(t,\al,0)e^{i\al x}\Big|\leq \sum_{\al\neq 0}|\pa_y\widehat{\psi}(t,\al,0)|
\leq \sum_{\al\neq 0}C|\al|^{-\f12}|t|^{-\f32}\|\widehat{\om}_o(\al,\cdot)\|_{H^3}\\&\quad\leq C|t|^{-\f32}\Big(\sum_{\al\neq 0}|\al|^{-2s}\Big)^{\f12}\Big(\sum_{\al\neq 0}|\al|^{2s-1}\|\widehat{\om}_o(\al,\cdot)\|_{H^3}^2\Big)^{\f12}
\leq C|t|^{-\f32}\|{\om}_o\|_{H_x^{s-\f12}H_y^3}.
\end{align*}

The above results also hold with $0$ replaced by $\pi,$ thus all $k\pi(k\in\mathbb{Z}).$
\end{proof}

\section{Wave operator}\label{Operator D}

To prove the enhanced dissipation, we use the wave operator method introduced in \cite{LWZ}.

\subsection{Basic properties of wave operator}

\begin{definition}\label{Def:oper_D}
Let $|\al|>1$.
Let $\phi(y,c)$ be defined in Proposition \ref{prop:Rayleigh-Hom} with $\phi_1(y,c)=\f{\phi(y,c)}{u(y)-c}$ for $(y,c)\in [0,\pi]\times [-1,1]$. For any function $\varphi(y)$ defined on $[-\pi,\pi]$, we define
\begin{align*}
-\bbD({\varphi}_{o})(-y_c)=\bbD({\varphi}_{o})(y_c)=\f{\Lambda_{1}(\varphi_{o})(y_c)}{(\mathrm{A}^2+\mathrm{B}^2)^{\f12}},\\
\bbD({\varphi}_{e})(-y_c)=\bbD({\varphi}_{e})(y_c)=\f{\Lambda_3({\varphi}_{e})(y_c)}{(\mathrm{A}_1^2+\mathrm{B}_1^2)^{\f12}},
\end{align*}
where $\varphi_{o}=\f{\varphi(y)-\varphi(-y)}{2}$ and $\varphi_{e}=\f{\varphi(y)+\varphi(-y)}{2}$.
\end{definition}

The operator $\bbD$ is called the wave operator, which
has the following basic properties.

\begin{proposition}\label{Prop: oper_D}
It holds that
\begin{align*}
&\bbD\big(\cos y(1+(\pa_y^2-\al^2)^{-1})\om\big)=\cos y\bbD(\om),\\
&\|\bbD(\om)\|_{L^2}^2=\langle \om, \om+(\pa_y^2-\al^2)^{-1}\om\rangle.
\end{align*}
Moreover, there exists a constant $C$ independent of $\al$ so that
\begin{align*}
&(1-\al^{-2})\|\om\|_{L^2}^2\leq \|\bbD(\om)\|_{L^2}^2\leq \|\om\|_{L^2}^2,\\
&\|\sin y\bbD (\om)\|_{L^2}^2\geq \|\pa_y\psi\|_{L^2}^2+(\al^2-1)\|\psi\|_{L^2}^2,\\
&\|\pa_{y}\bbD(\om)\|_{L^2}\leq C|\al|^{\f12}\|\om\|_{H^1},\quad
\|\pa_{y}^2\bbD(\om)\|_{L^2}\leq C|\al|^{\f32}\|\om\|_{H^2}.
\end{align*}
where $-(\pa_y^2-\al^2)\psi=\om$.
\end{proposition}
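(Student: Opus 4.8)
The plan is to treat the odd and even parts separately, since $\bbD(\om_o)$ is odd and $\bbD(\om_e)$ is even in $y_c$, so the two are orthogonal in $L^2$ and every claim splits accordingly. Throughout write $S=1+(\pa_y^2-\al^2)^{-1}$, so that $S\om=\om-\psi$ with $-(\pa_y^2-\al^2)\psi=\om$, and $M=\cos y\,S$ is exactly the nonlocal operator $\cos y(1+(\pa_y^2-\al^2)^{-1})$. Since $\cos y=u''$ is even, $M$ preserves parity. The whole proposition reduces to two algebraic identities about $\Lambda_1,\Lambda_2$ (odd case) and $\Lambda_3,\Lambda_4$ (even case), after which properties 3 and 4 become elementary and property 5 is a rerun of the kernel estimates.

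First I would record the \textbf{shift identity}
\[
\Lambda_1(u''\varphi)=u''(y_c)\Lambda_2(\varphi),\qquad \Lambda_3(u''\varphi)=u''(y_c)\Lambda_4(\varphi),
\]
which is immediate from the definitions: writing $\Lambda_1(\varphi)=\rho\,u''(y_c)\mathrm{II}_1(\varphi)+u'(y_c)\rho\,\mathrm{II}(c)\varphi(y_c)$ and $\Lambda_2(\varphi)=\rho\,\mathrm{II}_1(u''\varphi)+u'(y_c)\rho\,\mathrm{II}(c)\varphi(y_c)$, one only uses $(u''\varphi)(y_c)=u''(y_c)\varphi(y_c)$; the even case is the same after carrying the boundary pieces $\Lambda_{3,1},\Lambda_{4,1}$. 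Next comes the \textbf{reproducing identity}
\[
\Lambda_2(S\om_o)=\Lambda_1(\om_o),\qquad \Lambda_4(S\om_e)=\Lambda_3(\om_e).
\]
To prove it I would start from the representation $\Lambda_2(g)/\rho=\int_0^\pi\phi(z,c)^{-2}\int_{y_c}^z(\pa_y^2-\al^2)g\,\phi\,dy\,dz$ established inside Lemma \ref{Lem: K_o identity}, insert $g=S\om$, use $(\pa_y^2-\al^2)S\om=\om''-(\al^2-1)\om$, integrate by parts, and invoke the Rayleigh equation $\phi''-\al^2\phi=u''\phi_1$. The key point is the Kolmogorov cancellation $u+u''=-\cos y+\cos y=0$, which gives $\phi''-(\al^2-1)\phi=(u-c+u'')\phi_1=-c\,\phi_1=u''(y_c)\phi_1$; the bulk term then collapses to $u''(y_c)\mathrm{II}_1(\om)$, while the boundary term $[\om'\phi-\om\phi']_{y_c}^z/\phi^2$ is exactly the principal-value expression evaluated in Remark \ref{Rmk:5.9}, producing $u'(y_c)\mathrm{II}(c)\om(y_c)$. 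Together these two pieces equal $\Lambda_1(\om_o)/\rho$. Granting both identities, property 1 follows at once: since $M\om_o=\cos y\,S\om_o$ is odd, $\bbD(M\om_o)=\Lambda_1(\cos y\,S\om_o)/(\rmA^2+\rmB^2)^{1/2}=u''(y_c)\Lambda_2(S\om_o)/(\rmA^2+\rmB^2)^{1/2}=\cos y_c\,\Lambda_1(\om_o)/(\rmA^2+\rmB^2)^{1/2}=\cos y_c\,\bbD(\om_o)$, and likewise for the even part.

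For property 2 I would pair these with the \textbf{duality identity} obtained by setting $t=0$ in Lemma \ref{Lem: K_o identity} and Lemma \ref{lem:kernel identityeven} and integrating by parts once: for $g$ with $f=(\pa_y^2-\al^2)g$ one gets $\int_0^\pi\frac{\Lambda_1(\om_o)\Lambda_2(g)}{\rmA^2+\rmB^2}\,dy_c=\langle\om_o,g\rangle_{[0,\pi]}$, and the analogous formula with $\Lambda_3,\Lambda_4,\rmA_1^2+\rmB_1^2$. Choosing $g=S\om$ and using the reproducing identity turns the left side into $\int_0^\pi\frac{\Lambda_1(\om_o)^2}{\rmA^2+\rmB^2}\,dy_c=\tfrac12\|\bbD(\om_o)\|_{L^2(\mathbb{T})}^2$, so $\|\bbD(\om)\|_{L^2}^2=\langle\om,S\om\rangle=\langle\om,\om+(\pa_y^2-\al^2)^{-1}\om\rangle$; this first for smooth $\om$ and then by density, the boundedness being supplied by the identity itself. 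Properties 3 and 4 are then purely spectral. Since $(\pa_y^2-\al^2)^{-1}\le 0$ one has $\langle\om,S\om\rangle\le\|\om\|^2$, while the Fourier-in-$y$ inequality $\al^2\le k^2+\al^2$ gives $\langle\om,\psi\rangle\le\al^{-2}\|\om\|^2$ and hence the lower bound $(1-\al^{-2})\|\om\|^2$. For property 4 I would use property 1 to write $\|\sin y_c\bbD(\om)\|^2=\|\bbD(\om)\|^2-\|\cos y_c\bbD(\om)\|^2=\langle\om,S\om\rangle-\langle M\om,SM\om\rangle$; since $\langle M\om,SM\om\rangle=\langle w,Sw\rangle\le\|w\|^2\le\|\om-\psi\|^2$ with $w=\cos y(\om-\psi)$, the difference is at least $\langle\om,S\om\rangle-\|\om-\psi\|^2=\|\pa_y\psi\|^2+(\al^2-1)\|\psi\|^2$, which is exactly the claim.

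Finally, property 5 is the one genuinely computational step: the bounds $\|\pa_y\bbD(\om)\|_{L^2}\le C|\al|^{1/2}\|\om\|_{H^1}$ and $\|\pa_y^2\bbD(\om)\|_{L^2}\le C|\al|^{3/2}\|\om\|_{H^2}$ are obtained by differentiating $\Lambda_1/(\rmA^2+\rmB^2)^{1/2}$ and $\Lambda_3/(\rmA_1^2+\rmB_1^2)^{1/2}$ in $y_c$ (note $\pa_{y_c}=u'(y_c)\pa_c$) and feeding in the derivative estimates of $\Lambda_1,\Lambda_3$ from Lemmas \ref{Lem: II_1,1L^2}--\ref{Lem:Lambda_3,1 4,1} together with the lower and derivative bounds for $\rmA^2+\rmB^2$ and $\rmA_1^2+\rmB_1^2$ in Proposition \ref{Prop: A^2+B^2} and Proposition \ref{Prop: A_1^2+B_1^2}; this is the single-factor version of the $W^{2,1}$ estimates already carried out for $K_o,K_e$ in Proposition \ref{Prop: estimate K_o} and Proposition \ref{Prop: estimate K_e}, and the powers of $|\al|$ match. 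The main obstacle is the reproducing identity $\Lambda_2(S\om_o)=\Lambda_1(\om_o)$ and its even counterpart: everything else is formal algebra, a density argument, or a reuse of estimates already in the paper, whereas this identity is where the Kolmogorov structure $u+u''=0$ and the careful principal-value bookkeeping at the critical layer and at $y=0,\pi$ really enter.
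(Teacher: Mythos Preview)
Your proposal is correct and follows essentially the same route as the paper. The only difference is organizational: the paper first proves property~1 by directly computing $\mathrm{II}_1(u''\psi)$ (and $\rmE_j(u''\psi)$ in the even case) via integration by parts with the Rayleigh equation and the cancellation $u+u''=0$, and then deduces the reproducing identity $\Lambda_2(S\overline{\om_o})=\Lambda_1(\overline{\om_o})$ from it; you reverse this order, proving the reproducing identity first from the integral representation of $\Lambda_2(g)/\rho$ in Lemma~\ref{Lem: K_o identity} and then obtaining property~1 via the shift identity. The core computation is identical, and properties~3,~4,~5 are handled exactly as in the paper (one small omission: for property~2 with complex-valued $\om$ you should take $g=\overline{S\om}$ rather than $S\om$, so that the kernel integrand becomes $|\Lambda_1(\om_o)|^2/(\rmA^2+\rmB^2)$, as the paper does).
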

\begin{proof}
{\bf Step 1.} If $\om$ is odd, then $\psi$ is odd and $\psi(0)=\psi(\pi)=0$. Then we have
\begin{align*}
&\mathrm{II}_1(u''\psi)\\
&={p.v.}\int_0^{\pi}\f{\int_{y_c}^y\psi(y')u''(y')\phi_1(y',c)dy'}{\phi(y,c)^2}dy\\
&={p.v.}\int_0^{\pi}\f{\int_{y_c}^y\psi(y')(\pa_y^2-\al^2)\phi(y',c)dy'}{\phi(y,c)^2}dy\\
&=-\int_0^{\pi}\f{\int_{y_c}^y\om(y')\phi(y',c)dy'}{\phi(y,c)^2}dy
+{p.v.}\int_0^{\pi}\f{\psi(y)\pa_y\phi(y,c)-u'(y_c)\psi(y_c)}{\phi(y,c)^2}dy\\
&\quad-{p.v.}\int_0^{\pi}\f{\pa_y\psi(y)\phi(y,c)}{\phi(y,c)^2}dy\\
&=-\int_0^{\pi}\f{\int_{y_c}^y\om(y')\phi(y',c)dy'}{\phi(y,c)^2}dy
-\f{\psi(y)}{\phi(y,c)}\bigg|_{y=0}^{\pi}
-\f{\psi(y_c)}{\rho(c)}\rmA\\
&=u(y_c)\mathrm{II}_1(\om)-\f{\psi(y_c)}{\rho(c)}\rmA
-p.v.\int_0^{\pi}\f{\int_{y_c}^y\om(y')u(y')\phi_1(y',c)dy'}{\phi(y,c)^2}dy.
\end{align*}
Here we used Remark  \ref{Rmk:5.9}.
Thus, we get
\beno
\rho\mathrm{II}_1\big(u''\psi+u\om\big)
=\rho u\mathrm{II}_1(\om)-\psi(y_c)\rmA.
\eeno
Recall that $\Lambda_1(\varphi)=\rmA\varphi+u''\rho\mathrm{II}_1(\varphi)$. This shows that for $\om$ odd,
\beno
\bbD(u''\psi+u\om)=u \bbD(\om).
\eeno

If $\om$ is even, then $\psi$ is also even and  $\pa_y\psi(0)=\pa_y\psi(\pi)=0$. Then we have
\begin{align*}
&\mathrm{II}_1(u''(y)\psi(\al,y))\\
&={p.v.}\int_0^{\pi}\f{\int_{y_c}^y\psi(y')u''(y')\phi_1(y',c)dy'}{\phi(y,c)^2}dy\\
&={p.v.}\int_0^{\pi}\f{\int_{y_c}^y\psi(y')(\pa_y^2-\al^2)\phi(y',c)dy'}{\phi(y,c)^2}dy\\
&=-\int_0^{\pi}\f{\int_{y_c}^y\om(y')\phi(y',c)dy'}{\phi(y,c)^2}dy
+{p.v.}\int_0^{\pi}\f{\psi(y)\pa_y\phi(y,c)-u'(y_c)\psi(y_c)}{\phi(y,c)^2}dy\\
&\quad-{p.v.}\int_0^{\pi}\f{\pa_y\psi(y)\phi(y,c)}{\phi(y,c)^2}dy\\
&=-\int_0^{\pi}\f{\int_{y_c}^y\om(y')\phi(y',c)dy'}{\phi(y,c)^2}dy
-\f{\psi(y)}{\phi(y,c)}\bigg|_{y=0}^{\pi}
-\f{\psi(y_c)}{\rho(c)}\rmA\\
&=u(y_c)\mathrm{II}_1(\om)-\f{\psi(y_c)}{\rho(c)}\rmA
-\mathrm{II}_1(u\om)-\f{\psi(\pi)}{\phi(\pi,c)}+\f{\psi(0)}{\phi(0,c)},
\end{align*}
and for $j=0,1,$
\begin{align*}
\mathrm{E}_j(u''\psi)&=\int_{y_c}^{j\pi}u''\psi\phi_1(y,c)\,dy
=\int_{y_c}^{j\pi}\psi(\pa_y^2-\al^2)\phi(y',c)\,dy\\
&=-\int_{y_c}^{j\pi}\om\phi(y',c)\,dy
+\psi\pa_y\phi(y',c)\big|_{y'=y_c}^{j\pi}
-\pa_y\psi\phi(y',c)\big|_{y'=y_c}^{j\pi}\\
&=u(y_c)\rmE_j(\om)-\rmE_j(u\om)+\psi(j\pi)(u(j\pi)-c)\pa_y\phi_1(j\pi,c)-\psi(y_c)u'(y_c).
\end{align*}
Thus, we obtain
\begin{align*}
&\Lambda_3(u''\psi+u\om)\\
&=\rho(c)\rmA(u''\psi+u\om)+u''\rho^2\mathrm{II}_1(u''\psi+u\om)\\
&\quad-\f{u'(y_c)(u(\pi)-c)^2}{\phi_1(0,c)\phi_1'(0,c)}\Big(\f{u''(y_c)}{u'(y_c)}\rmE_0(u''\psi+u\om)+u''\psi+u\om\Big)\\
&\quad+\f{u'(y_c)(u(0)-c)^2}{\phi_1(\pi,c)\phi_1'(\pi,c)}\Big(\f{u''(y_c)}{u'(y_c)}\rmE_1(u''\psi+u\om)+u''\psi+u\om\Big)\\
&=\rho(c)\rmA(u''\psi+u\om)
+u''\rho^2\Big(u(y_c)\mathrm{II}_1(\om)-\f{\rmA\psi}{\rho(c)}-\f{\psi(\pi)}{\phi(\pi,c)}+\f{\psi(0)}{\phi(0,c)}\Big)\\
&\quad-\f{u'(y_c)(u(\pi)-c)^2}{\phi_1(0,c)\phi_1'(0,c)}\Big(\f{u''(y_c)}{u'(y_c)}\big(u\rmE_0(\om)+\psi(0)(u(0)-c)\phi_1'(0,c)-u'\psi\big)+u''\psi+u\om\Big)\\
&\quad+\f{u'(y_c)(u(0)-c)^2}{\phi_1(\pi,c)\phi_1'(\pi,c)}\Big(\f{u''(y_c)}{u'(y_c)}\big(u\rmE_1(\om)+\psi(\pi)(u(\pi)-c)\phi_1'(\pi,c)-u'\psi\big)+u''\psi+u\om\Big)\\
&=u(y_c)\Lambda_3(\om).
\end{align*}
This shows that for $\om$ even,
\beno
\bbD(u''\psi+u\om)=u\bbD(\om).
\eeno

{\bf Step 2.}
Let $\om=\om_o+\om_e$ with $\om_o=\f{\om(y)-\om(-y)}{2}$ and $\om_e=\f{\om(y)+\om(-y)}{2}$. Let $(\pa_y^2-\al^2)g_o=\overline{\om_o}+(\pa_y^2-\al^2)\overline{\om_o}$, where we denote by $\overline{f}$ the complex conjugate of $f$. Then by Lemma \ref{Lem: K_o identity}, we get
\begin{align*}
\langle \psi_o,\om_o+(\pa_y^2-\al^2)\om_o\rangle
&=-2\int_{u(0)}^{u(\pi)}\f{\Lambda_1(\om_o){\Lambda_2(g_o)}}{(\rmA^2+\rmB^2)u'(y_c)}dc.
\end{align*}
As $g_o=\overline{\om_o}-\overline{\psi_o}$, we have
\begin{align*}
\Lambda_2(g_o)&=\rho\mathrm{II}_1(u''g_o)+\rmA g_o=-\rho\mathrm{II}_1(u''\overline{\psi_o}+u\overline{\omega_o})+\rmA g_o\\
&=-\rho u\mathrm{II}_1(\overline{{\om_o}})+\rmA\overline{\psi_o}+\rmA g_o=\rho u''\mathrm{II}_1(\overline{{\om_o}})+\rmA\overline{\omega_o}=\Lambda_1(\overline{\omega_o}).
\end{align*}
This shows that
\begin{align*}
\langle \om_o,\om_o-\psi_o\rangle&=-\langle \psi_o,\om_o+(\pa_y^2-\al^2)\om_o\rangle\\
&=2\int_{u(0)}^{u(\pi)}\f{\Lambda_1(\om_o)\overline{\Lambda_1(\omega_o)}}{(\rmA^2+\rmB^2)u'(y_c)}dc=\|\bbD(\om_o)\|_{L^2}^2.
\end{align*}

Let $(\pa_y^2-\al^2)g_e=\overline{\om_e}+(\pa_y^2-\al^2)\overline{\om_e}$. Then we have
\begin{align*}
\langle \psi_e,\om_e+(\pa_y^2-\al^2)\om_e\rangle
&=-2\int_{u(0)}^{u(\pi)}\f{\Lambda_3(\om_e)\Lambda_4(g_e)}{(\rmA_1^2+\rmB^2_1)u'(y_c)}dc.
\end{align*}
Due to $g_e=\overline{\om_e}-\overline{\psi_e}$, we have
\begin{align*}
u''(y_c)\Lambda_4(g_e)
&=\Lambda_3(u''g_e)=-\Lambda_3(u''\overline{\psi_e}+u\overline{\omega_e})\\
&=-u(y_c)\Lambda_3(\overline{\omega_e})=u''(y_c)\Lambda_3(\overline{\omega_e}),
\end{align*}
thus, $\Lambda_4(g_e)=\Lambda_3(\overline{\omega_e})\ a.e. $ and
\begin{align*}
\langle \om_e,\om_e-\psi_e\rangle&=-\langle \psi_e,\om_e+(\pa_y^2-\al^2)\om_e\rangle\\
&=2\int_{u(0)}^{u(\pi)}\f{\Lambda_3(\om_e)\overline{\Lambda_3(\omega_e)}}{(\rmA_1^2+\rmB_1^2)u'(y_c)}dc
=\|\bbD(\om_e)\|_{L^2}^2.
\end{align*}
Thus we get $\|\bbD(\om)\|_{L^2}^2=\langle \om, \om+(\pa_y^2-\al^2)^{-1}\om\rangle$. \\

{\bf Step 3.}
Let $-(\pa_y^2-\al^2)\psi=\om$. Then we have
\beno
\|\om\|_{L^2}^2=\|\pa_y^2\psi\|_{L^2}^2+2\al^2\|\pa_y\psi\|_{L^2}^2+\al^4\|\psi\|_{L^2}^2\geq \al^2(\|\pa_y\psi\|_{L^2}^2+\al^2\|\psi\|_{L^2}^2),
\eeno
which gives
\begin{align*}
&\|\bbD(\omega)\|_{L^2}^2=\langle \om, \om-\psi\rangle
=\|\om\|_{L^2}^2-(\|\pa_y\psi\|_{L^2}^2+\al^2\|\psi\|_{L^2}^2)
\leq \|\om\|_{L^2}^2,\\
&\|\bbD(\omega)\|_{L^2}^2\geq (1-\al^{-2})\|\om\|_{L^2}^2.
\end{align*}

\begin{align*}
\|\sin y\bbD(\omega)\|_{L^2}^2
&=\|\bbD(\omega)\|_{L^2}^2
-\|\cos y\bbD(\omega)\|_{L^2}^2\\
&=\|\bbD(\omega)\|_{L^2}^2
-\|\bbD(\cos y(\omega-\psi))\|_{L^2}^2\\
&\geq\|\bbD(\omega)\|_{L^2}^2
-\|\cos y(\omega-\psi)\|_{L^2}^2\geq \|\bbD(\omega)\|_{L^2}^2
-\|\omega-\psi\|_{L^2}^2\\
&=\langle\omega,\omega-\psi\rangle-\|\omega-\psi\|_{L^2}^2
=\langle\psi,\omega-\psi\rangle\\
&=\|\partial_y\psi\|_{L^2}^2+(\al^2-1)\|\psi\|_{L^2}^2\geq 0.
\end{align*}

{\bf Step 4.} By Proposition \ref{Prop: A^2+B^2}, we obtain
\begin{align*}
\left|\pa_{y_c}\Big(\f{1}{(\rmA^2+\rmB^2)^{1/2}}\Big)\right|
&\leq C\left|(\rmA^2+\rmB^2)^{1/2}\sin y_c\pa_{c}\Big(\f{1}{\rmA^2+\rmB^2}\Big)\right|\\
&\leq \f{C}{(1+\al\sin y_c)\sin y_c},
\end{align*}
and by the proof of Proposition \ref{Prop: estimate K_o}, we get for $\om\in H^1(\mathbb{T})$ odd, $ \|\om\|_{H^1}=1,$
\beno
&&\Lambda_{1}(\om)(y_c)=\sin y_c((1+|\al|\sin y_c)\cL^2+|\al|^{\f{1}{2}}\cL^{\infty}),\\
&&\pa_{y_c}\Lambda_{1}(\om)(y_c)=\big((1+|\al|\sin y_c)\cL^2+\al^{\f12}\cL^{\infty}\big).
\eeno
Thus, for $\om\in H^1(\mathbb{T})$ odd, $ \|\om\|_{H^1}=1,$
\begin{align*}
\pa_{y_c}\bbD(\om)&=\Lambda_{1}(\om)(y_c)\pa_{y_c}\Big(\f{1}{(\rmA^2+\rmB^2)^{1/2}}\Big)+\f{\pa_{y_c}\Lambda_{1}(\om)(y_c)}{(\rmA^2+\rmB^2)^{1/2}}\\
&=\cL^2+\f{|\al|^{\f12}\cL^{\infty}}{1+\al\sin y_c}=\cL^2.
\end{align*}
Thus, for $\om\in H^1(\mathbb{T})$ odd,
\beno
\|\bbD(\om)\|_{H^1}\leq C\|\om\|_{H^1}.
\eeno

By Proposition \ref{Prop: A^2+B^2}, we get
\begin{align*}
\left|\pa_{y_c}^2\Big(\f{1}{(\rmA^2+\rmB^2)^{1/2}}\Big)\right|
\leq \f{C}{(1+\al\sin y_c)\sin^2 y_c},
\end{align*}
and by the proof of Proposition \ref{Prop: estimate K_o}, for $\om\in H^2(\mathbb{T})$ odd, $ \|\om\|_{H^2}=1,$ we get
\beno
&&\Lambda_1(\om)=(\sin^2y_c\cL^2\cap \sin y_c\cL^{\infty})+|\al|\sin^2y_c \cL^{\infty},\\
&&\partial_{y_c}(\Lambda_1(\om))=\sin y_c(\cL^2+\al \cL^{\infty}),\\
&&\partial_{y_c}^2(\Lambda_1(\om))=((1+|\al|\sin y_c)\cL^2+|\al|\cL^{\infty})
\eeno
Thus for $\om\in H^2(\mathbb{T})$ odd, $ \|\om\|_{H^2}=1,$ we get
\begin{align*}
\pa_{y_c}^2\bbD(\om)&=\Lambda_{1}(\om)\pa_{y_c}^2\Big(\f{1}{(\rmA^2+\rmB^2)^{1/2}}\Big)+\f{\pa_{y_c}^2\Lambda_{1}(\om)}{(\rmA^2+\rmB^2)^{1/2}}+2\pa_{y_c}\Lambda_{1}(\om)\pa_{y_c}\Big(\f{1}{(\rmA^2+\rmB^2)^{1/2}}\Big)\\
&=\cL^2+\f{|\al|^{\f12}\cL^{\infty}}{1+\al\sin y_c}+\f{|\al|\cL^{\infty}}{1+\al\sin y_c}=\al^{\f12}\cL^2.
\end{align*}
Thus, for $\om\in H^2(\mathbb{T})$ odd,
\beno
\|\bbD(\om)\|_{H^2}\leq C\al^{\f12}\|\om\|_{H^2}.
\eeno

By Proposition \ref{Prop: A_1^2+B_1^2}, we get
\beno
&&\f{1}{(\rmA_1^2+\rmB_1^2)^{\f12}}\leq \f{C\al^2}{(1+\al\sin y_c)^3},\\
&&\pa_{y_c}\Big(\f{1}{(\rmA_1^2+\rmB_1^2)^{\f12}}\Big)\leq \f{C\al^2}{(1+\al\sin y_c)^3\sin y_c},\\
&&\pa_{y_c}^2\Big(\f{1}{(\rmA_1^2+\rmB_1^2)^{\f12}}\Big)\leq \f{C\al^2}{(1+\al\sin y_c)^3\sin^2 y_c}.
\eeno
For $\om\in H^1$ even, $ \|\om\|_{H^1}=1,$ by the proof of Proposition \ref{Prop: estimate K_e}, we get
\beno
&&\Lambda_3(\om)={|\al|^{-\f32}\sin y_c}(1+|\al|\sin y_c)^3\cL^2,\\
&&\partial_{y_c}\big(\Lambda_3(\om)\big)
={|\al|^{-\f32}}(1+|\al|\sin y_c)^3\cL^2.
\eeno
Thus, for $\om\in H^1$ even, $ \|\om\|_{H^1}=1,$
\begin{align*}
\pa_{y_c}\bbD(\om)&=\Lambda_{3}(\om)(y_c)\pa_{y_c}\Big(\f{1}{(\rmA_1^2+\rmB_1^2)^{1/2}}\Big)
+\f{\pa_{y_c}\Lambda_{3}(\om)(y_c)}{(\rmA_1^2+\rmB_1^2)^{1/2}}=\al^{\f12}\cL^2.
\end{align*}
This shows that for $\om\in H^1$ even,
\beno
\|\bbD(\om)\|_{H^1}\leq C|\al|^{\f12}\|\om\|_{H^1}.
\eeno

For $\om\in H^2$ even, $\|\om\|_{H^2}=1$, by the proof of Proposition \ref{Prop: estimate K_e}, we can get
\begin{align*}
\partial_{y_c}^m\Lambda_3(\om)
&=\al^{-1/2}(\sin y_c)^{2-m}(1+\al \sin y_c)^3\cL^2,\ \ \ m=0,1,2.
\end{align*}
Thus, for $\om\in H^2$ even, $\|\om\|_{H^2}=1$,
\begin{align*}
\pa_{y_c}^2\bbD(\om)&=\Lambda_{3}(\om)\pa_{y_c}^2\Big(\f{1}{(\rmA_1^2+\rmB_1^2)^{1/2}}\Big)+\f{\pa_{y_c}^2\Lambda_{3}(\om)}{(\rmA_1^2+\rmB_1^2)^{1/2}}+2\pa_{y_c}\Lambda_{3}(\om)\pa_{y_c}\Big(\f{1}{(\rmA_1^2+\rmB_1^2)^{1/2}}\Big)\\
&=\al^{\f32}\cL^2.
\end{align*}
This shows that for $\om\in H^2$ even,
\beno
\|\bbD(\om)\|_{H^2}\leq C|\al|^{\f32}\|\om\|_{H^2}.
\eeno

This completes the proof of the proposition.
\end{proof}

\subsection{Commutator estimate}

\begin{proposition}\label{Prop: commutator}
It holds that
\beno
\|\sin y(\bbD(\pa_y^2\omega)-\pa_y^2\bbD(\omega))\|_{L^2}\leq C(|\al|\|\omega\|_{L^2}+\|\pa_y\omega\|_{L^2}).
\eeno
\end{proposition}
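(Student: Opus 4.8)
The plan is to prove the estimate separately on the odd and even parts of $\om$, since $\bbD$ preserves parity (Definition \ref{Def:oper_D}) and $\pa_y^2$ maps odd functions to odd and even to even. Writing $\om=\om_o+\om_e$, it suffices to bound $\|\sin y\,(\bbD(\pa_y^2\om_o)-\pa_y^2\bbD(\om_o))\|_{L^2}$ and its even analogue, and I would normalize $|\al|\|\om\|_{L^2}+\|\pa_y\om\|_{L^2}\le 1$, which is the natural scaling of the right-hand side. For the odd part, using $\bbD(\om_o)(y_c)=\Lambda_1(\om_o)(y_c)/(\rmA^2+\rmB^2)^{1/2}$ and identifying the output variable with $y_c$, one has
\beno
\bbD(\pa_y^2\om_o)-\pa_y^2\bbD(\om_o)=\f{\Lambda_1(\pa_y^2\om_o)}{(\rmA^2+\rmB^2)^{1/2}}-\pa_{y_c}^2\Big(\f{\Lambda_1(\om_o)}{(\rmA^2+\rmB^2)^{1/2}}\Big),
\eeno
so the whole matter is reduced to comparing $\Lambda_1(\pa_y^2\om_o)$ with the $y_c$-derivatives of $\Lambda_1(\om_o)$.

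The heart of the argument is an integration-by-parts identity for $\Lambda_1(\pa_y^2\om_o)$. Recall $\Lambda_1(\varphi)=\rmA\varphi+u''\rho\,\mathrm{II}_1(\varphi)$, where $\mathrm{II}_1=\mathrm{II}_{1,1}+\mathcal{L}_0$ is built from inner integrals $\int_{y_c}^{y}\varphi\,\phi_1\,dy'$. Integrating by parts twice in $\int_{y_c}^{y}(\pa_{y'}^2\om_o)\phi_1\,dy'$ and using the homogeneous Rayleigh structure $\big((u-c)^2\phi_1'\big)'=\al^2(u-c)^2\phi_1$ together with $\phi_1(y_c,c)=1$ and $\pa_y\phi_1(y_c,c)=0$, I can trade the two $y$-derivatives on $\om_o$ for a factor $\al^2$ (reproducing $\al^2\mathrm{II}_1(\om_o)$ and the analogous $\mathcal{L}_0$ term) plus boundary contributions at $y=0,\pi,y_c$ that are at most first order in $\om_o$ and $\pa_y\om_o$. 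On the other side, $\pa_{y_c}^2\Lambda_1(\om_o)$ is expanded using the $\pa_c$-formulas of Section \ref{formulation} (the identity \eqref{eq:pa_cL_0} for $\mathcal{L}_k$ and the corresponding formulas for $\mathrm{II}_{1,1}$). Matching the two computations, the leading $\al^2$-parts together with the transport structure cancel—reflecting the intertwining $\bbD(\cos y(1+(\pa_y^2-\al^2)^{-1})\om)=\cos y\bbD(\om)$ of Proposition \ref{Prop: oper_D}—and what survives is a sum of terms linear in $\om_o,\pa_y\om_o$.

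These remainders I would estimate with the machinery already assembled: the bounds on $\mathrm{II}$ and $\mathrm{II}_{1,1}$ (Lemmas \ref{Lem: II}, \ref{Lem: II_1,1L^2}, \ref{Lem: II_1,1odd}), on $\mathcal{L}_k$ and its derivatives (Lemmas \ref{Lem:L^2L_k}, \ref{Lem:H^1II_1,2_periodic}, \ref{Lem:H^2II_1,2_periodic}), on $\rmE_j,J_j$ (Lemmas \ref{Lem: J_j high}, \ref{Lem:Lambda_3,1 4,1}), and the denominator bounds of Propositions \ref{Prop: A^2+B^2}, \ref{Prop: A_1^2+B_1^2}. The weight $\sin y_c=u'(y_c)$ is used systematically to absorb the factors $1/u'(y_c)$ produced both by differentiating in $c$ and by the boundary terms. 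The even part runs along the same lines with $\Lambda_3,\rmA_1,\rmB_1$ in place of $\Lambda_1,\rmA,\rmB$; there the extra pieces $\Lambda_{3,1}$ and the coefficients $J_j$ enter, handled by Lemma \ref{Lem:Lambda_3,1 4,1}.

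The hard part will be the even part. As already visible in Step~4 of Proposition \ref{Prop: oper_D} and in Lemma \ref{Lem:Lambda_3,1 4,1}, the even estimates carry a loss of $|\al|^{1/2}$, so a direct bound would yield $|\al|^{1/2}\|\pa_y\om\|_{L^2}$ instead of the required $\|\pa_y\om\|_{L^2}$. Recovering this half power is exactly the role of the weight $\sin y$ together with the tailored estimate of Remark \ref{Rmk:difference1/2}, which, under precisely the normalization $|\al|\|\om\|_{L^2}+\|\pa_y\om\|_{L^2}\le 1$, gives $\Lambda_{3,1}(\om)=\sin y_c(\cL^2+\al^{1/2}\cL^\infty)/\al^2$ and $\pa_c\Lambda_{3,1}(\om)=(\cL^2+\al^{1/2}\cL^\infty)/(\al^2\sin y_c)$; multiplying through by $\sin y_c$ turns the $\cL^\infty$ pieces into $\cL^2$ pieces and eliminates the loss. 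A secondary difficulty is the boundary contribution at the critical points $y=0,\pi$, where $u'(y_c)$ degenerates; these I would control through the rescaled solutions $\widetilde{\phi_1},\widetilde{\widetilde{\phi_1}}$ and their estimates (Proposition \ref{Prop:tphi_1}), exactly as in Case~4 of the proof of Lemma \ref{Lem:Lambda_3,1 4,1}.
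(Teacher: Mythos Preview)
Your overall scaffolding (odd/even split, normalization, use of the denominator bounds and of Remark~\ref{Rmk:difference1/2} in the even case) is fine, but the proposed core mechanism is wrong. Integrating by parts twice in $\int_{y_c}^{y}(\pa_{y'}^2\om)\phi_1\,dy'$ and invoking $\big((u-c)^2\phi_1'\big)'=\al^2(u-c)^2\phi_1$ does \emph{not} produce a clean $\al^2\int_{y_c}^y\om\phi_1$: the equation for $\phi_1$ reads $\phi_1''+\tfrac{2u'}{u-c}\phi_1'=\al^2\phi_1$, so the remainder $-2\int_{y_c}^y\tfrac{u'\om}{u-c}\phi_1'\,dy'$ is of the same size as the $\al^2$ term (by Proposition~\ref{prop:phi1}, $|\phi_1'/\phi_1|\sim\al\min\{\al|y-y_c|,1\}$), and there is no $\al^2\mathrm{II}_1(\om)$ sitting inside $\pa_{y_c}^2\Lambda_1(\om)$ to cancel it against. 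The intertwining $\bbD(\cos y(\om-\psi))=\cos y\,\bbD(\om)$ is a statement about the transport part, not about the Laplacian, and does not provide the cancellation you describe.

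The paper's proof avoids this entirely: it never touches the Rayleigh equation here. It expands $\pa_{y_c}^2\bbD(\om)-\bbD(\pa_y^2\om)$ by Leibniz and reduces everything to three dedicated commutator estimates, proved just before Proposition~\ref{Prop: commutator}: Lemma~\ref{lem: commu_II_1,1} for $\sin y_c\,[\pa_{y_c}^2,\rho\mathrm{II}_{1,1}]$ (a pure Hilbert-transform computation), Lemma~\ref{lem: commL_0} for $\sin^3 y_c\,[\pa_{y_c}^2,\mathcal{L}_0]$ (built from the $I_{k,j}$ boundary pieces and $\mathcal{L}_1,\mathcal{L}_2$), and, in the even case, Lemma~\ref{Lem:commLambda_3,1} for $\sin y_c\,[\pa_{y_c}^2,\Lambda_{3,1}]$. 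These three lemmas are the missing ingredients in your plan; once you have them, the terms where derivatives hit the coefficients $u'',\rho,(\rmA^2+\rmB^2)^{-1/2},(\rmA_1^2+\rmB_1^2)^{-1/2}$ are controlled by the estimates you already listed, and the $\sin y_c$ weight is exactly what converts the resulting $\al^{1/2}\cL^\infty/(1+\al\sin y_c)$ pieces into $\cL^2$.
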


We need following lemmas.

\begin{lemma}\label{lem: commu_II_1,1}
It holds that
\beno
\|\sin y_c[\pa_y^2,\rho\mathrm{II}_{1,1}]\om\|_{L^2}\leq C\|\om\|_{H^1}.
\eeno
\end{lemma}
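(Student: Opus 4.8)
The plan is to trade the integral operator $\rho\,\mathrm{II}_{1,1}$ for the Hilbert transform and reduce the commutator to an algebraic identity. Recall from Remark~\ref{rem:Pi-11} and the proof of Lemma~\ref{Lem: II_1,1L^2} that, up to an additive constant (in $y_c$) and an absolute multiplicative constant $c$, $\rho\,\mathrm{II}_{1,1}(\varphi)=c\,H\big(Z(\In\varphi)\big)+\mathrm{const}$ together with the differentiation rule $\pa_{y_c}\big(\rho\,\mathrm{II}_{1,1}(\varphi)\big)=c\,H\big(\pa_y(Z\circ\In\varphi)\big)$, where $Z(f)=f'-\cot y\,f$. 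First I would record the purely algebraic commutator identity for $Z$: since $Z=\pa_y-\cot y$ and $(\cot y)'=-\csc^2 y,\ (\cot y)''=2\csc^2 y\cot y$, one has $[\pa_y^2,Z]=\tfrac{2}{\sin^2 y}\,Z$ as operators. Second, the two halves of the commutator are matched through the identity $\In(\pa_y^2\omega)=\pa_y^2(\In\omega)-P$, where $P\in\mathrm{span}\{1,\cos y\}$ is the unique affine-in-$\cos y$ correction making the right-hand side vanish at $y=0,\pi$; this is forced because $\In(\psi)(0)=\In(\psi)(\pi)=0$ for every $\psi$. Combining $\pa_{y_c}^2(\rho\,\mathrm{II}_{1,1}\omega)=c\,H\big((Z\In\omega)''\big)$ with $\rho\,\mathrm{II}_{1,1}(\pa_y^2\omega)=c\,H\big(Z\In(\pa_y^2\omega)\big)+\mathrm{const}$, the top–order term $\pa_y^3(\In\omega)$ cancels and I obtain
\beno
[\pa_y^2,\rho\,\mathrm{II}_{1,1}]\omega=c\,H(F)+\mathrm{const},\qquad F=\tfrac{2}{\sin^2 y}\,Z(\In\omega)+Z(P).
\eeno
By Lemma~\ref{lem:Z} the operator $Z$ sends even functions to odd ones, and $\In\omega,\ P$ are even, so $F$ is odd.

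The next step exploits the weight $\sin y_c$ to lower the singularity order. Since $F$ is odd, the identity $\sin x\,(HF)(x)=H(\sin y\,F)(x)$ from Lemma~\ref{Lem: identity-Hilbert-periodic} converts the external weight into an internal one, so that $\|\sin y_c\,H(F)\|_{L^2}\le C\|\sin y\,F\|_{L^2}$ with $\sin y\,F=\tfrac{2}{\sin y}Z(\In\omega)+\sin y\,Z(P)$. The first piece is exactly the quantity controlled by the last assertion of Lemma~\ref{lem:Z}: because $\In\omega\in H^2(0,\pi)\cap H_0^1(0,\pi)$ is even (indeed $\In\omega(0)=\In\omega(\pi)=0$), one has $\|Z(\In\omega)/\sin y\|_{L^2}\le\|\In\omega\|_{H^2}\le C\|\omega\|_{H^1}$. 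For the second piece I would use $\sin y\,Z(1)=-\cos y$ and $\sin y\,Z(\cos y)=-1$, so that $\sin y\,Z(P)$ is a bounded combination of $1$ and $\cos y$; its coefficients, together with the leftover constant $\tfrac12\big(\int_{-\pi}^{\pi}\In(\pa_y^2\omega)\big)\sin y_c$, are the only endpoint-type quantities that remain to be dominated. Everything else is just $L^2$-boundedness of $H$.

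The main obstacle is precisely this last bookkeeping: a naive estimate of $P$ and of the constant would involve the traces $\pa_y\omega(0),\pa_y\omega(\pi)$ and hence demand $\|\omega\|_{H^2}$. The resolution is structural. Integrating by parts inside the defining integral one finds $\mathrm{II}_{1,1}(\pa_y^2\omega)(y_c)=p.v.\int_0^{\pi}\frac{\pa_y\omega(y)-\pa_y\omega(y_c)}{(u(y)-u(y_c))^2}\,dy$, so the spurious value $\pa_y\omega(0)$ drops out of $\In(\pa_y^2\omega)$ before it is differentiated; the surviving endpoint contributions then reorganize into $m=\int_0^{\pi}\omega$, controlled by $\|\omega\|_{L^2}$, and the one–dimensional traces $\omega(0),\omega(\pi)$, controlled by $\|\omega\|_{H^1}$ through the Sobolev embedding $H^1(0,\pi)\hookrightarrow C^0$. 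When the argument is split into its even and odd parts (as it is in the definition of $\bbD$), the relevant endpoint derivatives of $\omega$ vanish by symmetry, so no uncontrolled $\|\pa_y\omega\|_{L^\infty}$-type term is left behind. Verifying that the coefficients of $\sin y\,Z(P)$ and of the constant collapse in exactly this way — rather than retaining a genuine second–order trace — is the delicate part; once it is done, the bound $\|\sin y_c[\pa_y^2,\rho\,\mathrm{II}_{1,1}]\omega\|_{L^2}\le C\|\omega\|_{H^1}$ follows from the $L^2$-boundedness of $H$ and the Hardy inequalities already packaged in Lemma~\ref{lem:Z}.
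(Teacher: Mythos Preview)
Your framework is essentially the paper's proof, streamlined by the nice identity $[\pa_y^2,Z]=\tfrac{2}{\sin^2 y}Z$. You correctly reduce everything to $H\big(\tfrac{2}{\sin y}Z(\In\omega)\big)$, $H(\sin y\,Z(P))$, and the additive constant $\tfrac{\sin y_c}{2}\int\In(\pa_y^2\omega)$; the first of these is exactly the term the paper estimates by Hardy and Lemma~\ref{lem:Z}.

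The gap is in your resolution of the endpoint traces. Your parity argument is wrong in the case that matters: when $\omega$ is odd (the case used for $I_3$ in Proposition~\ref{Prop: commutator}), $\omega'$ is \emph{even}, so $\omega'(0),\omega'(\pi)$ do not vanish by symmetry. The lemma must hold for such $\omega$, so the $\omega'(0)+\omega'(\pi)$ terms have to cancel algebraically, not by parity. This is precisely what the paper checks. Concretely: with $P=a+b\cos y$, $a=\tfrac{\omega'(0)+\omega'(\pi)}{2}$ and $b=-\tfrac{\omega'(\pi)-\omega'(0)+\Int(\omega)(\pi)}{2}$, one has $\sin y\,Z(P)=-a\cos y-b$. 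Since $H(1)=0$ the $b$-term (carrying $\omega'(\pi)-\omega'(0)$) disappears, and since $H(\cos y)=2\pi\sin y_c$ the $a$-term produces $-\tfrac12 H(-a\cos y)=\pi a\sin y_c$. Independently, $\tfrac12\int_{-\pi}^{\pi}\In(\pa_y^2\omega)=\omega(\pi)-\omega(0)-\pi a$, so after multiplying by $\sin y_c$ the $\pi a\sin y_c$ contributions cancel exactly. What survives is
\[
\sin y_c[\pa_y^2,\rho\mathrm{II}_{1,1}]\omega=\tfrac12 H\Big(-\tfrac{2\omega}{\sin y}+\tfrac{2\cos y}{\sin^2 y}\In(\omega)\Big)+\sin y_c\big(\omega(\pi)-\omega(0)\big),
\]
which is bounded by $C\|\omega\|_{H^1}$ via $L^2$-boundedness of $H$, Hardy's inequality, and $\|\omega\|_{L^\infty}\le C\|\omega\|_{H^1}$. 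You identified the right mechanism (``reorganize into $\omega(0),\omega(\pi)$'') but substituted an incorrect symmetry claim for the two-line computation that actually closes the argument.
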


\begin{proof}
By \eqref{eq: OpH}, we get
\beno
\rho\mathrm{II}_{1,1}(\om)=-\f12H(Z\circ \In(\om))-\f12\int_{-\pi}^{\pi}\In(\om)(y')dy'.
\eeno
As $Z\circ \In(\om)$ is odd, so does $\pa_{y_c}^2Z\circ \In(\om)$. Thus,
\begin{align*}
\sin y_c\pa_{y_c}^2\rho\mathrm{II}_{1,1}(\om)
=-\f12H(\sin y_c\pa_{y_c}^2Z\circ \In(\om))
\end{align*}
Then we  get
\begin{align*}
\sin y_c[\pa_y^2,\rho\mathrm{II}_{1,1}]\om=-\f12H(\sin y_c[\pa_{y_c}^2,Z\circ \In](\om))+\f{\sin y_c}{2}\int_{-\pi}^{\pi}\In(\pa_y^2\om)(y')dy'.
\end{align*}
On one hand, we have
\begin{align*}
\f12\int_{-\pi}^{\pi}\In(\pa_y^2\om)(y)dy=\int_{0}^{\pi}\In(\pa_y^2\om)(y)dy
&=\int_{0}^{\pi}\om'(y)-\om'(0)-\sin^2\f{y}{2}(\om'(\pi)-\om'(0))dy\\
&=\om(\pi)-\om(0)-\pi(\om'(0)+\om'(\pi))/2.
\end{align*}
On the other hand,
\begin{align*}
&\sin y_c\big(Z\circ\In(\pa_y^2\om)-\pa_y^2Z\circ\In(\om)\big)\\
&=\f{\om'(0)+\om'(\pi)}{2}\cos y+\f{\om'(0)-\om'(\pi)}{2}-\f{2\om}{\sin y}+\f{2\cos y}{\sin^2 y}\In(\om)(y)-\f{\Int(\om)(\pi)}{2}.
\end{align*}
and $H(\cos y)=2\pi \sin y$. Then we deduce that
\begin{align*}
\sin y_c[\pa_y^2,\rho\mathrm{II}_{1,1}]\om
=\f12H\Big(-\f{2\om}{\sin y}+\f{2\cos y}{\sin^2 y}\In(\om)(y)\Big)+\sin y_c(\om(\pi)-\om(0)),
\end{align*}
from which, it follows that
\begin{align*}
\big\|\sin y_c[\pa_y^2,\rho\mathrm{II}_{1,1}]\om\big\|_{L^2}
&\leq C\left\|H\Big(-\f{2\om}{\sin y}+\f{2\cos y}{\sin^2 y}\In(\om)(y)\Big)\right\|_{L^2}+C\|\om\|_{L^\infty}\\
&\leq C\left\|-\f{2\om}{\sin y}+\f{2\cos y}{\sin^2 y}\In(\om)(y)\right\|_{L^2}+C\|\om\|_{L^\infty}\\
&\leq C\left\|-\f{\om}{\sin y}+\f{\cos y}{\sin^2 y}\int_0^y\om(y')dy'\right\|_{L^2(0,\f{\pi}{2})}\\
&\quad+C\left\|-\f{\om}{\sin y}+\f{\cos y}{\sin^2 y}\int_{\pi}^y\om(y')dy'\right\|_{L^2[\f{\pi}{2},\pi]}+C\|\om\|_{L^\infty}.
\end{align*}
Using the fact that
\beno
-\f{\om}{\sin y}+\f{\cos y}{\sin^2 y}\int_0^y\om(y')dy'=\f{-1}{\sin^2 y}\big(\int_0^y\sin y'\om'(y')dy'+\int_0^y\om(y')(\cos y'-\cos y)dy'\big),\\
-\f{\om}{\sin y}+\f{\cos y}{\sin^2 y}\int_{\pi}^y\om(y')dy'=\f{-1}{\sin^2 y}\big(\int_{\pi}^y\sin y'\om'(y')dy'+\int_{\pi}^y\om(y')(\cos y'-\cos y)dy'\big),
\eeno
and Hardy's inequality, we infer that
\beno
\left\|-\f{\om}{\sin y}+\f{\cos y}{\sin^2 y}\int_0^y\om(y')dy'\right\|_{L^2(0,\f{\pi}{2})}+\left\|-\f{\om}{\sin y}+\f{\cos y}{\sin^2 y}\int_{\pi}^y\om(y')dy'\right\|_{L^2(\f{\pi}{2},\pi)}\leq C\|\om\|_{H^1}.
\eeno

Summing up, we conclude the lemma.
\end{proof}

\begin{lemma}\label{lem: commL_0}
It holds that
\beno
|\sin^3 y_c[\pa_{y_c}^2,\mathcal{L}_0]\varphi|\leq C\al^{\f12}(\|\varphi\|_{H^1}+\al\|\varphi\|_{L^2}).
\eeno
\end{lemma}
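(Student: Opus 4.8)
The plan is to reduce the commutator to a finite sum of lower‑order pieces by iterating the differentiation formula \eqref{eq:pa_cL_0}. Writing $\pa_{y_c}=u'(y_c)\pa_c$ and abbreviating $B_k(\varphi):=I_{k,0}(\varphi)-I_{k,1}(\varphi)$, formula \eqref{eq:pa_cL_0} becomes $\pa_{y_c}\mathcal{L}_k(\varphi)=\mathcal{L}_k(\varphi')+B_k(\varphi)+u'(y_c)\mathcal{L}_{k+1}(\varphi)$ for $k=0,1$. Applying this to $\mathcal{L}_0$, then once more to each of the resulting pieces (using the $k=1$ case for the $u'(y_c)\mathcal{L}_1(\varphi)$ term), the single term $\mathcal{L}_0(\varphi'')$ cancels against $\mathcal{L}_0(\pa_y^2\varphi)$ and I expect to arrive at
$$[\pa_{y_c}^2,\mathcal{L}_0]\varphi=B_0(\varphi')+2u'(y_c)\mathcal{L}_1(\varphi')+u'(y_c)^2\mathcal{L}_2(\varphi)+u''(y_c)\mathcal{L}_1(\varphi)+u'(y_c)B_1(\varphi)+\pa_{y_c}B_0(\varphi).$$

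Once this identity is established, the task is to bound each of the six terms, after multiplication by $\sin^3 y_c=u'(y_c)^3$, uniformly in $y_c$. The three terms built from $\mathcal{L}_k$ follow directly from Lemma \ref{Lem:L^2L_k} with $p=2$: the quantities $u'(y_c)^4\mathcal{L}_1(\varphi')$, $u'(y_c)^5\mathcal{L}_2(\varphi)$ and $u'(y_c)^3\mathcal{L}_1(\varphi)$ are controlled by $C\al^{\f12}\|\varphi'\|_{L^2}$, $C\al^{\f32}\|\varphi\|_{L^2}$ and $C\al^{\f32}\|\varphi\|_{L^2}$ respectively, each dominated by $C\al^{\f12}(\|\varphi\|_{H^1}+\al\|\varphi\|_{L^2})$. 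For the boundary pieces, $\sin^3 y_c\,u'(y_c)B_1(\varphi)=u'(y_c)^4B_1(\varphi)$ is estimated by \eqref{6.11} (with $k=1$), and $\sin^3 y_c\,\pa_{y_c}B_0(\varphi)=u'(y_c)^4\pa_cB_0(\varphi)$ by \eqref{eq:pa_cI_0,j} (with $\gamma=1$); both give $C\al\|\varphi\|_{L^\infty}$. These $L^\infty$ bounds are then converted by the one‑dimensional interpolation $\|\varphi\|_{L^\infty}\le C\|\varphi\|_{L^2}^{1/2}\|\varphi\|_{H^1}^{1/2}+C\|\varphi\|_{L^2}$ followed by Young's inequality, which yields $\al\|\varphi\|_{L^\infty}\le C\al^{\f12}(\|\varphi\|_{H^1}+\al\|\varphi\|_{L^2})$.

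The one term that genuinely needs care is $\sin^3 y_c\,B_0(\varphi')=u'(y_c)^3\big(I_{0,0}(\varphi')-I_{0,1}(\varphi')\big)$, since $\varphi'$ is only in $L^2$ and the crude bound \eqref{6.10} would require $\|\varphi'\|_{L^\infty}$, i.e. a further derivative. Here I would instead insert the sharp pointwise kernel bound \eqref{eq:directive_k} (case $k=0$) into the integral defining $I_{0,j}$ and apply Cauchy–Schwarz. With $r=|j\pi-y_c|$ this gives $|I_{0,j}(\varphi')|\le C\,r^{1/2}\min\{\al^2 r^2,1\}\,|u(j\pi)-c|^{-2}\|\varphi'\|_{L^2}$; using $|u(j\pi)-c|\ge C^{-1}r^2$ (the quadratic vanishing of $u$ at the endpoints) and $\sin y_c\le Cr$ from Lemma \ref{Rmk:u_1}, I get $\sin^3 y_c|I_{0,j}(\varphi')|\le C\,r^{-1/2}\min\{\al^2 r^2,1\}\|\varphi'\|_{L^2}$. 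Splitting at $r=1/\al$ shows this last expression is $\le C\al^{\f12}\|\varphi'\|_{L^2}$, which is the exact power needed.

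The main obstacle is precisely this $B_0(\varphi')$ contribution: the balance of the endpoint kernel singularity $|u(j\pi)-c|^{-2}\sim r^{-4}$ against the three factors of $\sin y_c$ and the Cauchy–Schwarz gain $r^{1/2}$ leaves a borderline weight $r^{-1/2}\min\{\al^2r^2,1\}$, whose supremum is attained at the critical scale $r\sim1/\al$ and is exactly $\al^{1/2}$; any cruder estimate either diverges at $r\to0$ or costs an extra derivative. Collecting the six contributions then yields $|\sin^3 y_c[\pa_{y_c}^2,\mathcal{L}_0]\varphi|\le C\al^{\f12}(\|\varphi\|_{H^1}+\al\|\varphi\|_{L^2})$, completing the proof.
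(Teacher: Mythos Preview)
Your proof is correct and follows essentially the same route as the paper: the commutator identity you derive by iterating \eqref{eq:pa_cL_0} coincides term by term with the paper's expansion, and each of the six pieces is estimated in the same way (Lemma~\ref{Lem:L^2L_k} for the $\mathcal{L}_k$ terms, \eqref{6.11} and \eqref{eq:pa_cI_0,j} for $u'(y_c)B_1(\varphi)$ and $\pa_{y_c}B_0(\varphi)$, and the direct kernel bound \eqref{eq:directive_k} with Cauchy--Schwarz for the critical piece $B_0(\varphi')$). The only cosmetic difference is that you convert the $C\al\|\varphi\|_{L^\infty}$ bounds to $C\al^{1/2}(\|\varphi\|_{H^1}+\al\|\varphi\|_{L^2})$ explicitly via interpolation, whereas the paper simply states this at the end.
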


\begin{proof}
By \eqref{eq:pa_cL_0}, we get
\begin{align*}
\partial_{y_c}^2\mathcal{L}_0(\varphi)&=u'(y_c)\pa_c\left(\mathcal{L}_0(\varphi')-I_{0,1}(\varphi)+I_{0,0}(\varphi)\right)+u'(y_c)\pa_c(u'(y_c)\mathcal{L}_{1}(\varphi))\\
&=\mathcal{L}_0(\varphi'')-I_{0,1}(\varphi')+I_{0,0}(\varphi')+u'(y_c)\mathcal{L}_{1}(\varphi')\\
&\quad-u'(y_c)\pa_cI_{0,1}(\varphi)+u'(y_c)\pa_cI_{0,0}(\varphi)+u''(y_c)\mathcal{L}_1(\varphi)\\
&\quad+u'(y_c)\left(\mathcal{L}_1(\varphi')-I_{1,1}(\varphi)+I_{1,0}(\varphi)\right)+u'(y_c)^2\mathcal{L}_{2}(\varphi),
\end{align*}
which gives
\begin{align*}
[\pa_{y_c}^2,\mathcal{L}_0](\varphi)
&=-I_{0,1}(\varphi')+I_{0,0}(\varphi')+u'(y_c)\big(-I_{1,1}(\varphi)+I_{1,0}(\varphi)\big)\\
&\quad-u'(y_c)\pa_cI_{0,1}(\varphi)+u'(y_c)\pa_cI_{0,0}(\varphi)\\
&\quad+u''(y_c)\mathcal{L}_1(\varphi)+2u'(y_c)\mathcal{L}_1(\varphi')+u'(y_c)^2\mathcal{L}_{2}(\varphi).
\end{align*}

By \eqref{eq:directive_k}, we have for $j=0,1$
\begin{align*}
|I_{0,j}(\varphi)(c)|&\leq C\left|\int_{y_c}^{j\pi}\varphi(y)dy\right|\f{\min\{\al^2|j\pi-y_c|^2,1\}}{(\cos j\pi-\cos y_c)^2u'(y_c)}\\
&\leq C\f{|j\pi-y_c|^{\f12}\min\{\al^2|j\pi-y_c|^2,1\}}{(\cos j\pi-\cos y_c)^2u'(y_c)}\|\varphi\|_{L^{2}}\\
&\leq C\f{\min\{\al^2|j\pi-y_c|^2,1\}}{|j\pi-y_c|^{7/2}u'(y_c)}\|\varphi\|_{L^{2}},
\end{align*}
which gives
\beno
|\sin^3 y_cI_{0,j}(\varphi')|\leq C\al^{\f12}\|\varphi'\|_{L^2}.
\eeno

We get by \eqref{eq: I_k,jinfty} that
\beno
|\sin^4 y_cI_{1,j}(\varphi)|\leq C\al\|\varphi\|_{L^{\infty}}\leq C\al^{\f12}(\|\varphi'\|_{L^2}+\al\|\varphi\|_{L^2}).
\eeno
Using \eqref{eq:pa_cI_0,j} with $\gamma=1$, we get
\beno
|\sin^4 y_c\pa_cI_{0,j}(\varphi)|\leq C\al\|\varphi\|_{L^{\infty}}\leq C\al^{\f12}(\|\varphi'\|_{L^2}+\al\|\varphi\|_{L^2}).
\eeno
By \eqref{eq:estL_kL^infty}, we get
\beno
|\sin^3y_c\mathcal{L}_1(\varphi)|+|\sin^5y_c\mathcal{L}_2(\varphi) |\leq C\al\|\varphi\|_{L^{\infty}}\leq C\al^{\f12}\big(\|\varphi'\|_{L^2}+\al\|\varphi\|_{L^2}\big),
\eeno
and by Lemma \ref{Lem:L^2L_k}, we get
\beno
|u'(y_c)^4\mathcal{L}_1(\varphi')|\leq C\al^{\f12}\|\varphi'\|_{L^2}.
\eeno

Summing up, we conclude the lemma.
 \end{proof}

\begin{lemma}\label{Lem:commLambda_3,1}
For $\om\in H^2[0,\pi],\ \om'(0)=\om'(\pi)=0,$ we have
\beno
\sin y_c[\pa_{y_c}^2,\Lambda_{3,1}](\om)=C(\al^{-2}+\rho)(\|\om\|_{H^1}+\al\|\om\|_{L^2})(\al^{\f12}\cL^{\infty}+\cL^2).
\eeno
\end{lemma}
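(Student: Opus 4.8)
Looking at the final statement, Lemma 10.10 (the commutator estimate for $\Lambda_{3,1}$), I need to plan a proof.

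The plan is to express the commutator $[\partial_{y_c}^2,\Lambda_{3,1}](\om)$ explicitly by using the defining formula \eqref{eq:defLambda_3,1}, namely $\Lambda_{3,1}(\om)(y_c)=J_j\big(\frac{u''(y_c)}{u'(y_c)}\rmE_{1-j}(\om)+\om(y_c)\big)\big|_{j=0}^1$, and then differentiate twice in $y_c$. Since $\partial_{y_c}=u'(y_c)\partial_c$, each application of $\partial_{y_c}$ produces terms falling on $J_j$, on the geometric prefactors $u''/u'$, and on the integral quantities $\rmE_{1-j}(\om)$ and $\om(y_c)$. The commutator structure means I should track which terms arise from $\partial_{y_c}$ hitting the integration limits $y_c$ in $\rmE_{1-j}$ versus the integrand — the former are the genuinely new "commutator" contributions, while the latter combine to give $\Lambda_{3,1}$ applied to derivatives of $\om$ and hence cancel in the commutator. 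First I would assemble, from \eqref{7.7} and \eqref{7.8}, the expressions for $\partial_c\big(u''(y_c)\rmE_j(\om)+u'(y_c)\om(y_c)\big)$ and its second derivative, since these are exactly the building blocks whose boundary-term pieces survive the commutator.

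Next I would estimate each resulting piece using the tools already developed. The key ingredients are Lemma \ref{Lem: J_j high} for the size and derivatives of $J_j^k(c)$, Proposition \ref{prop:phi1} for the pointwise bounds on $\partial_c^m\phi_1$, and the weighted $L^p$ bounds \eqref{7.1}, \eqref{7.9}, \eqref{7.10}, \eqref{7.11}, \eqref{7.12} controlling integrals of $\om$ against $\partial_c^m\phi_1/\phi_1(j\pi,c)$. Since the hypothesis is only $\om\in H^2$ with $\om'(0)=\om'(\pi)=0$ but crucially we only demand the bound in terms of $\|\om\|_{H^1}+\al\|\om\|_{L^2}$, I expect the integration-by-parts manipulations of Case 2 in Lemma \ref{Lem:Lambda_3,1 4,1} (especially the decomposition \eqref{eq:I_1+I_2+I_3} into $I_1,I_2,I_3$ with estimates \eqref{eq:estI_1}--\eqref{eq:estI_3}) to be the natural template, because those already produce exactly the $\al^{\f12}\cL^\infty+\cL^2$ dichotomy appearing in the target bound. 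Following Remark \ref{Rmk:difference1/2}, the relevant Case-2-type estimate gives $\frac{\al^{\f12}\cL^\infty+\cL^2}{\al^2\sin y_c}$-type control on $\partial_c\Lambda_{3,1}$, and I would push one derivative further to reach the second-order commutator.

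The main obstacle will be controlling the terms where a derivative lands on the lower limit $y_c$ of $\rmE_{1-j}(\om)$ while another derivative produces a factor $\partial_c^2\phi_1$ inside the surviving integral; these generate the most singular $1/\sin^k y_c$ weights, and I must check that the $J_j$ prefactor (which carries $u'(y_c)\rho(c)^2$, vanishing at the critical points) exactly compensates so that only a single power of $\sin y_c$ in the denominator remains after multiplying by the $\sin y_c$ in the statement. Here the cancellation near $y_c\to 0,\pi$ requires the rescaled functions $\widetilde{\phi_1},\widetilde{\widetilde{\phi_1}}$ of Proposition \ref{Prop:tphi_1} and the identity $y_c\partial_y\phi_1(0,c)=\partial_y\widetilde{\phi_1}(0,c)$, precisely as in Case 4 of Lemma \ref{Lem:Lambda_3,1 4,1}. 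I would treat the region $|y_c-j\pi|\le 1/\al$ separately via these rescalings and the region $|y_c-j\pi|\ge 1/\al$ by the direct bounds, then combine. The prefactor $(\al^{-2}+\rho)$ in the target reflects the two regimes of $J_j$ (size $\al^{-2}$ near the critical points versus $\rho$-controlled away from them), so I would organize the final estimate so that each commutator term is manifestly of the form $(\al^{-2}+\rho)\big(\al^{\f12}\cL^\infty+\cL^2\big)$ after factoring the stated $\|\om\|_{H^1}+\al\|\om\|_{L^2}$.
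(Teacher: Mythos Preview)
Your plan is essentially the paper's own argument: write the commutator via \eqref{7.7}--\eqref{7.8} and the integration-by-parts identity $-\rmE_j(\pa_y^2\om)=\om'(y_c)+\int_{y_c}^{j\pi}\pa_y\phi_1\,\pa_y\om\,dy$, decompose into terms $T_1,T_2,T_3$ according to how many derivatives hit $J_j/u'$, and estimate each piece with Lemma~\ref{Lem: J_j high}, \eqref{7.1}, \eqref{7.9}--\eqref{7.12}, and the $I_1,I_2,I_3$ bounds \eqref{eq:estI_1}--\eqref{eq:estI_3}.

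The one place your plan diverges is the treatment near the critical points. You propose to invoke the rescaled functions $\widetilde{\phi_1},\widetilde{\widetilde{\phi_1}}$ of Proposition~\ref{Prop:tphi_1} as in Case~4 of Lemma~\ref{Lem:Lambda_3,1 4,1}, but the paper does not do this here and does not need to. The worst term in $T_3$, namely $J_j\,u'(y_c)^2\int_{y_c}^{(1-j)\pi}\pa_c^2\phi_1\,\om\,dy$, is bounded directly by $C\sin y_c\,\|\om\|_{L^\infty}$ using only \eqref{7.11}--\eqref{7.12} and the first branch of Lemma~\ref{Lem: J_j high}; no splitting of the $y_c$-range is required. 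The factor $(\al^{-2}+\rho)$ in the statement then comes not from a dichotomy in the size of $J_j$ across regions, but from the elementary inequality $\sin y_c\le \al^{-1}+\al\rho$ applied to these $\sin y_c$-weighted pieces (combined with $\|\om\|_{L^\infty}\le C\al^{-1/2}(\|\om\|_{H^1}+\al\|\om\|_{L^2})$), while the $T_1,T_2$ contributions give the bare $\al^{-2}$ directly from Lemma~\ref{Lem: J_j high}. Your rescaling route would presumably also close, but the paper's device is shorter and avoids the case analysis.
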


\begin{proof}
We have
\begin{align*}
-\rmE_j(\pa_y^2\om)
&=\om'(y_c)+\int_{y_c}^{j\pi}\pa_y\phi_1(y,c)\pa_y\om dy.
\end{align*}
Then by \eqref{7.7} and \eqref{7.8},  we get
\begin{align*}
&\pa_{y_c}^2\Lambda_{3,1}(\om)-\Lambda_{3,1}(\pa_y^2\om)\\
&=\pa_{y_c}^2\Big(\f{J_{j}}{u'}\Big)(u''\rmE_{1-j}(\om)+u'\om)
+2\pa_{y_c}\Big(\f{J_{j}}{u'}\Big)\pa_{y_c}(u''\rmE_{1-j}(\om)+u'\om)\\
&\quad+\f{J_{j}}{u'}\Big(\pa_{y_c}^2(u''\rmE_{1-j}(\om)+u'\om)-(u''\rmE_{1-j}(\pa_y^2\om)+u'\pa_y^2\om)\Big)\bigg|_{j=0}^1\\
&=\pa_{y_c}^2\Big(\f{J_{j}}{u'}\Big)(u''\rmE_{1-j}(\om)+u'\om)
+2\pa_{y_c}\Big(\f{J_{j}}{u'}\Big)\Big(u'''\rmE_{1-j}(\om)+u'\om'+u''u'\int_{y_c}^{(1-j)\pi}\pa_c\phi_1(y,c)\om(y) dy\Big)\\
&\quad+\f{J_{j}}{u'}\Big(u\rmE_{1-j}(\om)+2u''\pa_y\om+u''\int_{y_c}^{(1-j)\pi}\pa_y\phi_1(y,c)\pa_y\om dy-u'''{\om}\\
&\quad+(u''^2-2u'^2)\int_{y_c}^{(1-j)\pi}\pa_c\phi_1(y,c)\om(y) dy+u'^2u''\int_{y_c}^{(1-j)\pi}\pa_c^2\phi_1(y,c)\om(y) dy\Big)\bigg|_{j=0}^1\\
&=T_1+T_2+T_3.
\end{align*}
By \eqref{eq:I_1+I_2+I_3}-\eqref{eq:estI_3} and Lemma \ref{Lem: J_j high}, we get
\beno
\sin y_cT_1=\al^{-2}(\|\om\|_{H^1}+\al\|\om\|_{L^2})(\al^{\f12}\cL^{\infty}+\cL^2).
\eeno
Using $u'''(y)=-\sin y$, \eqref{7.1}, \eqref{7.9}, \eqref{7.10}  and Lemma \ref{Lem: J_j high}, we deduce that
\beno
&&|\sin y_cT_2|\leq C\al^{-2}(\|\om\|_{H^1}+\al\|\om\|_{L^2})(\al^{\f12}\cL^{\infty}+\cL^2),
\eeno
By \eqref{7.1} and Lemma \ref{Lem: J_j high}, we get
\beno
&&\big|J_{j}\rmE_{1-j}(\om)\big|+|J_{j}\om|\leq C\al^{-2}\|\om\|_{L^{\infty}}\leq C\al^{-2}(\|\om\|_{H^1}+\al\|\om\|_{L^2}),\\
&&\big\|J_{j}\pa_y\om\big\|_{L^2}\leq C\al^{-2}\|\om\|_{H^1},
\eeno
and
\begin{align*}
\Big|J_{j}\int_{y_c}^{(1-j)\pi}\pa_y\phi_1(y,c)\pa_y\om dy\Big|
&\leq C|J_{j}|\int_{y_c}^{(1-j)\pi}\al \al^{\f12}|y-y_c|^{\f12}|\pa_y\om| dy\phi_1((1-j)\pi,c)\\
&\leq C\f{\sin y_c}{\al^{\f12}}\|\pa_y\om\|_{L^2}.
\end{align*}By \eqref{7.9},\eqref{7.10} and Lemma \ref{Lem: J_j high}, we get\begin{align*}
\Big|J_{j}\int_{y_c}^{(1-j)\pi}\pa_c\phi_1(y,c)\om(y) dy \Big|
&\leq C|J_{j}|\al\|\om\|_{L^{\infty}}\frac{\phi_1((1-j)\pi,c)}{|j\pi-y_c|}
\leq C\al^{-1}\|\om\|_{L^{\infty}}.
\end{align*}
By \eqref{7.11}, \eqref{7.12} and Lemma \ref{Lem: J_j high}, we get\begin{align*}
\Big|J_{j}u'^2\int_{y_c}^{(1-j)\pi}\pa_c^2\phi_1(y,c)\om(y) dy\Big|
&\leq C|J_{j}|\al^2\|\om\|_{L^{\infty}}{\phi_1((1-j)\pi,c)}{|(1-j)\pi-y_c|}\\
&\leq C\sin y_c\|\om\|_{L^{\infty}}.
\end{align*}
Notice that\begin{align*}
\|\om\|_{L^{\infty}}\leq C\al^{-\f12}(\|\om\|_{H^1}+\al\|\om\|_{L^2}),\ \ \ \ \ \sin y_c\leq \al^{-1}+\al\rho.
\end{align*}
Then we conclude that
\beno
\sin y_cT_3=(\al^{-2}+\rho)(\|\om\|_{H^1}+\al\|\om\|_{L^2})(\al^{\f12}\cL^{\infty}+\cL^2).
\eeno
This proves the lemma.
\end{proof}

Now we are in a position to prove the proposition.

\begin{proof}
{\bf Step 1. the odd case.} Let $\om\in H^2$ be odd and $\om(0)=\om(\pi)=0$. Direct calculation gives
\begin{align*}
&\pa_{y_c}^2\bbD(\om)-\bbD(\pa_y^2\om)\\
&=\rho\mathrm{II}_{1,1}(\om)\pa_{y_c}^2\Big(\f{u''}{(\rmA^2+\rmB^2)^{\f12}}\Big)+2\pa_{y_c}\big(\rho\mathrm{II}_{1,1}(\om)\big)\pa_{y_c}\Big(\f{u''}{(\rmA^2+\rmB^2)^{\f12}}\Big)+\f{ u''[\pa_{y_c}^2,\rho\mathrm{II}_{1,1}]\om}{(\rmA^2+\rmB^2)^{\f12}}\\
&\quad+\mathcal{L}_{0}(\om)\pa_{y_c}^2\Big(\f{\rho u''}{(\rmA^2+\rmB^2)^{\f12}}\Big)+2\pa_{y_c}\mathcal{L}_{0}(\om)\pa_{y_c}\Big(\f{\rho u''}{(\rmA^2+\rmB^2)^{\f12}}\Big)+\f{\rho u''[\pa_{y_c}^2,\mathcal{L}_{0}]\om}{(\rmA^2+\rmB^2)^{\f12}}\\
&\quad+\om\pa_{y_c}^2\Big(\f{\rho u'\mathrm{II}}{(\rmA^2+\rmB^2)^{\f12}}\Big)+2\pa_{y_c}\om\pa_{y_c}\Big(\f{\rho u'\mathrm{II}}{(\rmA^2+\rmB^2)^{\f12}}\Big)\\
&=I_1+I_2+I_3+I_4+I_5+I_6+I_7+I_8.
\end{align*}

By Lemma \ref{Lem: II_1,1odd} and Proposition \ref{Prop: A^2+B^2}, we get
\begin{align*}
\|\sin y_cI_1\|_{L^2}\leq C\|\sin y_c\mathrm{II}_{1,1}\|_{L^2}\Big\|\rho\pa_{y_c}^2\Big(\f{u''}{(\rmA^2+\rmB^2)^{\f12}}\Big)\Big\|_{L^{\infty}}\leq C\|\om\|_{H^1},
\end{align*}
and by Lemma \ref{Lem: II_1,1L^2} and Proposition \ref{Prop: A^2+B^2},
\begin{align*}
\|\sin y_cI_2\|_{L^2}\leq C\|\rho\mathrm{II}_{1,1}\|_{H^1}\Big\|\sin y_c\pa_{y_c}\Big(\f{u''}{(\rmA^2+\rmB^2)^{\f12}}\Big)\Big\|_{L^{\infty}}\leq C\|\om\|_{H^1},
\end{align*}
and by Lemma \ref{lem: commu_II_1,1} and Proposition \ref{Prop: A^2+B^2},
\beno
\|\sin y_cI_3\|_{L^2}\leq C\|\sin y_c[\pa_y^2,\rho\mathrm{II}_{1,1}]\om\|_{L^2}\leq C\|\om\|_{H^1},
\eeno
and by Lemma \ref{Lem:H^1II_1,2_periodic} and Proposition \ref{Prop: A^2+B^2},
\begin{align*}
\|\sin y_cI_4\|_{L^2}&\leq C\al^{-\f12}\|\sin y_c\mathcal{L}_0(\om)\|_{L^{\infty}}\Big\|\al^{\f12}\pa_{y_c}^2\Big(\f{\rho u''}{(\rmA^2+\rmB^2)^{\f12}}\Big)\Big\|_{L^2}\\
&\leq C\|\om\|_{H^1}\Big\|\f{\al^{\f12}}{(1+\al\sin y_c)}\Big\|_{L^2}\leq C\|\om\|_{H^1},\\
\|\sin y_cI_5\|_{L^2}&\leq C\al^{-\f12}\|\sin^2 y_c\pa_{y_c}\mathcal{L}_0(\om)\|_{L^{\infty}}\Big\|\f{\al^{\f12}}{\sin y_c}\pa_{y_c}\Big(\f{\rho u''}{(\rmA^2+\rmB^2)^{\f12}}\Big)\Big\|_{L^2}\\
&\leq C\|\om\|_{H^1}\Big\|\f{\al^{\f12}}{(1+\al\sin y_c)}\Big\|_{L^2}\leq C\|\om\|_{H^1}.
\end{align*}
By Lemma \ref{lem: commL_0}, we get
\beno
\|\sin y_cI_6\|_{L^2}\leq C\al^{-\f12}\Big\|\f{\al^{\f12}}{(1+\al\sin y_c)}\Big\|_{L^2}\|\sin^3 y_c[\pa_{y_c}^2,\mathcal{L}_0]\om\|_{L^{\infty}}\leq C(\|\om\|_{H^1}+\al\|\om\|_{L^2})
\eeno
By Lemma \ref{Lem: II} and Proposition \ref{Prop: A^2+B^2}, we get
\begin{align*}
\|\sin y_cI_7\|_{L^2}&\leq C\|\om/u'\|_{L^2}\Big\|\rho\pa_{y_c}^2\Big(\f{\rho u'\mathrm{II}}{(\rmA^2+\rmB^2)^{\f12}}\Big)\Big\|_{L^{\infty}}\leq C\|\om\|_{H^1},\\
\|\sin y_cI_8\|_{L^2}&\leq C\|\pa_y\om\|_{L^2}\Big\|\sin y_c\pa_{y_c}\Big(\f{\rho u'\mathrm{II}}{(\rmA^2+\rmB^2)^{\f12}}\Big)\Big\|_{L^{\infty}}\leq C\|\pa_y\om\|_{L^2}.
\end{align*}
Summing up, we conclude the odd case.\smallskip

{\bf Step 2. the even case.} Let $\om\in H^2$ be even, $\om'(0)=\om'(\pi)=0 $. Direct calculation gives
\begin{align*}
&\pa_{y_c}^2\bbD(\om)-\bbD(\pa_y^2\om)\\
&=\f{\rho u''}{(\rmA_1^2+\rmB_1^2)^{\f12}}[\pa_{y_c}^2,\rho\mathrm{II}_{1,1}]\om
+2\pa_{y_c}\Big(\f{\rho u''}{(\rmA_1^2+\rmB_1^2)^{\f12}}\Big)\pa_{y_c}\big(\rho\mathrm{II}_{1,1}(\om)\big)
+\pa_{y_c}^2\Big(\f{\rho u''}{(\rmA_1^2+\rmB_1^2)^{\f12}}\Big)\rho\mathrm{II}_{1,1}(\om)\\
&\quad+\f{\rho^2 u''}{(\rmA_1^2+\rmB_1^2)^{\f12}}[\pa_{y_c}^2,\mathcal{L}_{0}]\om
+2\pa_{y_c}\Big(\f{\rho^2 u''}{(\rmA_1^2+\rmB_1^2)^{\f12}}\Big)\pa_{y_c}\big(\mathcal{L}_{0}(\om)\big)
+\pa_{y_c}^2\Big(\f{\rho^2 u''}{(\rmA_1^2+\rmB_1^2)^{\f12}}\Big)\mathcal{L}_{0}(\om)\\
&\quad+\om\pa_{y_c}^2\Big(\f{\rho^2 u'\mathrm{II}}{(\rmA_1^2+\rmB_1^2)^{\f12}}\Big)+2\pa_{y_c}\Big(\f{\rho^2 u'\mathrm{II}}{(\rmA_1^2+\rmB_1^2)^{\f12}}\Big)\pa_{y_c}\om\\
&\quad+\pa_{y_c}^2\Big(\f{1}{(\rmA_1^2+\rmB_1^2)^{\f12}}\Big)\Lambda_{3,1}(\om)+\pa_{y_c}\Big(\f{1}{(\rmA_1^2+\rmB_1^2)^{\f12}}\Big)\pa_{y_c}\Lambda_{3,1}(\om)+\f{[\pa_{y_c}^2,\Lambda_{3,1}]\om}{(\rmA_1^2+\rmB_1^2)^{\f12}}\\
&=I_1'+I_2'+I_3'+I_4'+I_5'+I_6'+I_7'+I_8'+I_9'+I_{10}'+I_{11}'.
\end{align*}

By Proposition \ref{Prop: A_1^2+B_1^2} and Lemma \ref{lem: commu_II_1,1}, we get
\beno
\|\sin y_cI_1'\|_{L^2}\leq C\left\|\f{\rho u''}{(\rmA_1^2+\rmB_1^2)^{\f12}}\right\|_{L^{\infty}}\big\|\sin y_c[\pa_{y_c}^2,\rho\mathrm{II}_{1,1}]\om\big\|_{L^2}\leq C(\al\|\om\|_{L^2}+\|\pa_y\om\|_{L^2}),
\eeno
and by Proposition \ref{Prop: A_1^2+B_1^2} and Lemma \ref{Lem: II_1,1L^2},
\beno
&&\|\sin y_cI_2'\|_{L^2}\leq C\left\|\sin y_c\pa_{y_c}\Big(\f{\rho u''}{(\rmA_1^2+\rmB_1^2)^{\f12}}\Big)\right\|_{L^{\infty}}\big\|\pa_{y_c}\big(\rho\mathrm{II}_{1,1}(\om)\big)\big\|_{L^2}\leq C\|\om\|_{H^1},\\
&&\|\sin y_cI_3'\|_{L^2}\leq C\left\|\sin y_c\pa_{y_c}^2\Big(\f{\rho u''}{(\rmA_1^2+\rmB_1^2)^{\f12}}\Big)\right\|_{L^{\infty}}\big\|\rho\mathrm{II}_{1,1}(\om)\big\|_{L^2}\leq C\al\|\om\|_{L^2}.
\eeno
By Proposition \ref{Prop: A_1^2+B_1^2} and Lemma \ref{lem: commL_0}, we get
\beno
\|\sin y_cI_4'\|_{L^2}\leq C\left\|\f{\rho u''}{(\rmA_1^2+\rmB_1^2)^{\f12}}\right\|_{L^2}\|\sin^3 y_c[\pa_{y_c}^2,\mathcal{L}_{0}]\om\|_{L^{\infty}}\leq C(\al\|\om\|_{L^2}+\|\pa_y\om\|_{L^2}),
\eeno
and by Proposition \ref{Prop: A_1^2+B_1^2} and Lemma \ref{Lem:H^1II_1,2_periodic},
\begin{align*}
\|\sin y_cI_5'\|_{L^2}&\leq C\left\|\f{1}{\sin y_c}\pa_{y_c}\Big(\f{\rho^2 u''}{(\rmA_1^2+\rmB_1^2)^{\f12}}\Big)\right\|_{L^2}\big\|\sin^2 y_c\pa_{y_c}\big(\mathcal{L}_{0}(\om)\big)\|_{L^{\infty}}\\
&\leq C\big(\al^\f12\|\om\|_{L^\infty}+\|\om'\|_{L^2}\big)\le C\big(\al\|\om\|_{L^2}+\|\om'\|_{L^2}\big),
\end{align*}
and by Proposition \ref{Prop: A_1^2+B_1^2} and Lemma \ref{Lem:L^2L_k},
\begin{align*}
\|\sin y_cI_6'\|_{L^2}&\leq C\left\|\f{1}{\sin y_c}\pa_{y_c}^2\Big(\f{\rho^2 u''}{(\rmA_1^2+\rmB_1^2)^{\f12}}\Big)\right\|_{L^2}\big\|\rho\mathcal{L}_{0}(\om)\big\|_{L^{\infty}}\\
&\leq C\al\|\om\|_{L^2}.
\end{align*}
By Proposition \ref{Prop: A_1^2+B_1^2} and Lemma \ref{Lem: II}, we get
\beno
&&\|\sin y_cI_7'\|_{L^2}\leq C\|\om\|_{L^2}\left\|\sin y_c\pa_{y_c}^2\Big(\f{\rho^2 u'\mathrm{II}}{(\rmA_1^2+\rmB_1^2)^{\f12}}\Big)\right\|_{L^{\infty}}\leq C\al\|\om\|_{L^2},\\
&&\|\sin y_cI_8'\|_{L^2}\leq C\|\pa_y\om\|_{L^2}\left\|\sin y_c\pa_{y_c}\Big(\f{\rho^2 u'\mathrm{II}}{(\rmA_1^2+\rmB_1^2)^{\f12}}\Big)\right\|_{L^{\infty}}\leq C\|\pa_y\om\|_{L^2}.
\eeno
By Proposition \ref{Prop: A_1^2+B_1^2} and Remark \ref{Rmk:difference1/2}, we get
\beno
&&\sin y_cI_9'=\frac{(\cL^2+\al^{\f12}\cL^{\infty})}{(1+\al\sin y_c)^3}(\al\|\om\|_{L^2}+\|\pa_y\om\|_{L^2})=\cL^2,\\
&&\sin y_cI_{10}'=\frac{(\cL^2+\al^{\f12}\cL^{\infty})}{(1+\al\sin y_c)^3}(\al\|\om\|_{L^2}+\|\pa_y\om\|_{L^2})=\cL^2.
\eeno
By Proposition \ref{Prop: A_1^2+B_1^2} and Lemma \ref{Lem:commLambda_3,1}, we get
\beno
\sin y_cI_{11}'=\frac{(1+\al^2\rho)(\cL^2+\al^{\f12}\cL^{\infty})}{(1+\al\sin y_c)^3}(\al\|\om\|_{L^2}+\|\pa_y\om\|_{L^2})=\cL^2.
\eeno
Summing up, we conclude the even case.
\end{proof}

\section{Linear enhanced dissipation}\label{Enhanced}

\subsection{Decay estimates on the model without nonlocal term}

In this subsection, we consider a toy model without nonlocal term:
\ben\label{eq:Toymodel1}
\pa_t\omega-\nu \pa_y^2\omega-ia e^{-\nu t}(\cos y)\omega=0.
\een

\begin{lemma}\label{Lem:toymodel1}
Let $ 0<\nu<|a|,\ a\in\R$, and $\omega(t,y)$ solve \eqref{eq:Toymodel1} for $ t\geq0,\ y\in\bbT$. Then $\|\omega(t)\|_{L^2}\leq \|\omega(0)\|_{L^2}$ and
\beno
\|\sin y\omega(t)\|_{L^2}\leq C(\nu t^3a^2)^{-\frac{1}{2}}\|\omega(0)\|_{L^2}
\eeno
for $0<t\leq(\nu|a|)^{-\frac{1}{2}}$.
\end{lemma}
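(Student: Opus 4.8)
The plan is to treat the two assertions separately, the second being where the enhanced dissipation mechanism must be extracted. For the $L^2$ bound I would pair \eqref{eq:Toymodel1} with $\overline{\omega}$ and integrate over $\bbT$: the potential term contributes $\mathrm{Re}\big(iae^{-\nu t}\int_{\bbT}\cos y\,|\omega|^2\,dy\big)=0$, while the diffusion gives $-\nu\|\partial_y\omega\|_{L^2}^2$ after one integration by parts. Hence $\tfrac12\tfrac{d}{dt}\|\omega\|_{L^2}^2=-\nu\|\partial_y\omega\|_{L^2}^2\le0$, which yields $\|\omega(t)\|_{L^2}\le\|\omega(0)\|_{L^2}$ and, after integrating in time, the a priori bound $\nu\int_0^t\|\partial_y\omega\|_{L^2}^2\,ds\le\tfrac12\|\omega(0)\|_{L^2}^2$.

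For the weighted estimate I would first perform the change of unknown $\omega=e^{iab(t)\cos y}v$ with $b(t)=\frac{1-e^{-\nu t}}{\nu}$ (so $\dot b=e^{-\nu t}$), which is designed precisely to cancel the potential: a direct computation gives $\partial_t v=\nu(\partial_y-iab\sin y)^2 v=:\nu\mathcal D^2 v$. Since $|e^{iab\cos y}|=1$ we have $\|v\|_{L^2}=\|\omega\|_{L^2}$ and, crucially, $\|\sin y\,v\|_{L^2}=\|\sin y\,\omega\|_{L^2}$, so it suffices to bound the latter for this twisted heat flow. A warning is that the plain weighted energy does not see the mechanism: writing the dissipation as $\nu\|\mathcal D v\|_{L^2}^2=\nu\|\partial_y\omega\|_{L^2}^2$, one finds that the first–order transport term $-2i\nu ab\sin y\,\partial_y v$ exactly cancels the genuine damping $-\nu(ab)^2\sin^2 y\,v$ in $\tfrac{d}{dt}\|v\|_{L^2}^2$. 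The decay of $\|\sin y\,v\|_{L^2}$ therefore only emerges at the level of coupled second–order quantities.

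Accordingly, the core of the proof is a time–weighted hypocoercivity functional of the form $\Phi(t)=\|\omega\|_{L^2}^2+\alpha(t)\|\partial_y\omega\|_{L^2}^2+2\beta(t)M(t)+\gamma(t)\|\sin y\,\omega\|_{L^2}^2$, where $M(t)=\mathrm{Im}\int_{\bbT}\sin y\,\omega\,\overline{\partial_y\omega}\,dy$ is the cross (momentum) term. Guided by the target rates I would take $\alpha\sim\nu t$, $\gamma\sim\nu a^2 t^3$ and $\beta\sim\nu a t^2$, the last chosen so that $\beta^2\lesssim\alpha\gamma$ keeps $\Phi$ comparable to $\|\omega\|_{L^2}^2+\alpha\|\partial_y\omega\|_{L^2}^2+\gamma\|\sin y\,\omega\|_{L^2}^2$. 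The identity $\tfrac12\tfrac{d}{dt}\|\partial_y\omega\|_{L^2}^2=-\nu\|\partial_y^2\omega\|_{L^2}^2+ae^{-\nu t}M$, together with a companion evolution for $M$ (whose indefinite production $+ae^{-\nu t}\|\sin y\,\omega\|^2$ must be balanced against the dissipative terms $-\nu\|\partial_y\omega\|^2$ and $-\nu\|\partial_y^2\omega\|^2$ supplied by the higher functionals), then feeds into $\tfrac{d}{dt}\Phi$. The aim is to fix the constants so that every indefinite contribution is absorbed by Young's inequality into the negative dissipative terms, giving $\tfrac{d}{dt}\Phi\le0$ up to harmlessly integrable remainders, hence $\Phi(t)\lesssim\Phi(0)=\|\omega(0)\|_{L^2}^2$; reading off the $\gamma$–term then yields $\|\sin y\,\omega(t)\|_{L^2}^2\lesssim\gamma^{-1}\|\omega(0)\|_{L^2}^2\sim(\nu a^2t^3)^{-1}\|\omega(0)\|_{L^2}^2$.

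The main obstacle is exactly the sign bookkeeping in this last step: because of the exact cancellation noted above, the dissipation controlling $\|\sin y\,\omega\|$ lives in the interaction between diffusion and the large phase gradient $ab\sin y$, so one must track mixed terms carrying various powers of $ab$ and verify that their magnitudes are compatible with the time weights $\alpha,\beta,\gamma$. Restricting to $0<t\le(\nu|a|)^{-1/2}$, together with $0<\nu<|a|$, is what makes the estimates uniform: there $\nu t\le(\nu/|a|)^{1/2}\le1$, so $b(t)$ is comparable to $t$, $\dot b/b\lesssim 1/t$, and $ab\lesssim(|a|/\nu)^{1/2}$, keeping all the coefficient comparisons bounded and the differential inequality closed on the whole interval.
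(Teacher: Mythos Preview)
Your treatment of the $L^2$ bound is the same as the paper's. For the weighted estimate, however, you take a genuinely different route. You set up a Villani--Beck--Wayne hypocoercivity functional $\Phi(t)=\|\omega\|^2+\alpha(t)\|\partial_y\omega\|^2+2\beta(t)M(t)+\gamma(t)\|\sin y\,\omega\|^2$ with time weights $\alpha\sim\nu t$, $\beta\sim\nu a t^2$, $\gamma\sim\nu a^2 t^3$ and try to close $\tfrac{d}{dt}\Phi\le0$. The paper instead introduces a \emph{one-parameter family} of gauges $e^{i\gamma(t,s)u}\omega$ with $\gamma(t,s)=a\frac{e^{-\nu s}-e^{-\nu t}}{\nu}$, bounds $\partial_t\|\partial_y(e^{i\gamma u}\omega)\|^2\le\tfrac32\nu\gamma^2\|\omega\|^2$, and then \emph{integrates over the auxiliary starting time $s\in[0,t]$}. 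Expanding $\int_0^t\|i\gamma(t,s)\sin y\,\omega(t)+\partial_y\omega(t)\|^2\,ds$ as a quadratic in $(\sin y\,\omega,\partial_y\omega)$, the coefficient in front of $\|\sin y\,\omega(t)\|^2$ is the variance $\gamma_0(t)=\int_0^t\gamma(t,s)^2\,ds-\frac{1}{t}\big(\int_0^t\gamma(t,s)\,ds\big)^2\gtrsim a^2t^3$, while the right-hand side is controlled by the dissipation budget $\nu\int_0^t\|\partial_y\omega\|^2\le\tfrac12\|\omega(0)\|^2$ plus a lower-order term. This sidesteps entirely the sign bookkeeping you flag as the main obstacle: no Lyapunov inequality has to be closed, and no weights have to be balanced. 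Your approach is valid (it is essentially what Beck--Wayne did for the constant-coefficient model, and the $e^{-\nu t}$ factor is harmless on $t\le(\nu|a|)^{-1/2}$), but the paper's averaging trick is shorter and more robust, which matters because the argument is reused for the inhomogeneous and nonlocal variants in the lemmas that follow.
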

\begin{proof}
First of all, we have
\begin{align*}
\partial_t\|\omega\|_{L^2}^2=2{\rm Re}\langle \partial_t\omega,\omega\rangle=2{\rm Re}\langle \nu \pa_y^2\omega+ia e^{-\nu t}\cos y\omega,\omega\rangle=-2\nu\|\pa_y\omega\|_{L^2}^2\leq 0,
\end{align*}
which gives $\|\omega(t)\|_{L^2}\leq \|\omega(0)\|_{L^2}$.

Let $u(y)=-\cos y,\ \gamma(t,s)=a\f{e^{-\nu s}-e^{-\nu t}}{\nu}$. Then $\gamma(s,s)=0,$ $\partial_t\gamma=ae^{-\nu t}$, and
\beno
\partial_t(e^{i\gamma u}\omega)
=e^{i\gamma u}(\om_t+iu\gamma_t\om)
=\nu e^{i\gamma u}\pa_y^2\omega,
\eeno
from which, we infer that
\begin{align*}
\partial_t\|\pa_y(e^{i\gamma u}\omega)\|_{L^2}^2
&=2{\rm Re}\langle \partial_t\pa_y(e^{i\gamma u}\omega),\pa_y(e^{i\gamma u}\omega)\rangle\\
&=-2{\rm Re}\langle \partial_t(e^{i\gamma u}\omega),\pa_y^2(e^{i\gamma u}\omega)\rangle\\
&=-2{\rm Re}\langle \nu e^{i\gamma u}\pa_y^2\omega,\pa_y^2(e^{i\gamma u}\omega)\rangle\\
&=-2\nu {\rm Re}\langle  \pa_y^2\omega,e^{-i\gamma u}\pa_y^2(e^{i\gamma u}\omega)\rangle\\
&=-2\nu {\rm Re}\langle  \pa_y^2\omega,\pa_y^2\omega+2i\gamma u'\pa_y\omega+(i\gamma u''-\gamma^2 u'^2)\omega\rangle\\
&=-2\nu \|\pa_y^2\omega+i\gamma u'\pa_y\omega+{i}\gamma u''\omega/2\|_{L^2}^2
+2\nu\gamma^2 \| u'\pa_y\omega+ u''\omega/2\|_{L^2}^2\\
&\quad +2\nu\gamma^2{\rm Re}\langle  \pa_y^2\omega, u'^2\omega\rangle.
\end{align*}
For the last term, we get by integration by parts that
\begin{align*}
{\rm Re}\langle  \pa_y^2\omega, u'^2\omega\rangle
&=-{\rm Re}\langle  \pa_y\omega, \pa_y(u'^2\omega)\rangle\\
&=-{\rm Re}\langle  \pa_y\omega, u'^2\pa_y\omega+2u'u''\omega\rangle\\
&=-{\rm Re}\langle  u'\pa_y\omega, u'\pa_y\omega+2u''\omega\rangle\\
&=-\| u'\pa_y\omega+ u''\omega/2\|_{L^2}^2+\| u''\omega/2\|_{L^2}^2-{\rm Re}\langle  u'\pa_y\omega, u''\omega\rangle,
\end{align*}
where
\begin{align*}
-2{\rm Re}\langle  u'\pa_y\omega, u''\omega\rangle
&=-\int_{-\pi}^{\pi}u'u''(\pa_y\omega\overline{\omega}+\pa_y\overline{\omega}{\omega})dy\\
&=-\int_{-\pi}^{\pi}u'u''\pa_y|\omega|^2dy=\int_{-\pi}^{\pi}(u'u'')'|\omega|^2dy.
\end{align*}
Putting these identities above together and using the fact that
\beno
(u'')^2/2+(u'u'')'=3(u'')^2/2+u'u'''=3 (\cos y)^2/2-(\sin y)^2\leq 3/2,
\eeno
we deduce that
\begin{align*}
\partial_t\|\pa_y(e^{i\gamma u}\omega)\|_{L^2}^2
&\leq 2\nu\gamma^2\big(\| u''\omega/2\|_{L^2}^2-{\rm Re}\langle  u'\pa_y\omega, u''\omega\rangle\big)\\
&=\nu\gamma^2\int_{-\pi}^{\pi}\big((u'')^2/2+(u'u'')'\big)|\omega|^2dy\leq \frac{3}{2}\nu\gamma^2\|\omega\|_{L^2}^2.
\end{align*}
 Therefore, for $0<s<t $,
\begin{align}\label{eq: pa_yom1}
\|\pa_y(e^{i\gamma u}\omega)(t,s)\|_{L^2}^2
&\leq \|\pa_y(e^{i\gamma u}\omega)(s,s)\|_{L^2}^2
+\frac{3}{2}\nu\int_s^t\gamma(\tau,s)^2\|\omega(\tau)\|_{L^2}^2d\tau\\ \nonumber
&\leq\|\pa_y\omega(s)\|_{L^2}^2
+\frac{3}{2}\nu\|\omega(0)\|_{L^2}^2\int_s^t\gamma(\tau,s)^2d\tau.
\end{align}
Thanks to $\pa_y(e^{i\gamma u}\omega)=e^{i\gamma u}(i\gamma u'\omega+\pa_y\omega), $ $\|\pa_y(e^{i\gamma u}\omega)\|_{L^2}^2=\|i\gamma u'\omega+\pa_y\omega\|_{L^2}^2 $. Then by \eqref{eq: pa_yom1}, we get
\begin{align*}
\int_0^t\|i\gamma(t,s) u'\omega(t)+\pa_y\omega(t)\|_{L^2}^2ds
&=\int_0^t\|\pa_y(e^{i\gamma u}\omega)(t,s)\|_{L^2}^2ds\\
&\leq \int_0^t\|\pa_y\omega(s)\|_{L^2}^2ds+ \frac{3}{2}\nu\|\omega(0)\|_{L^2}^2\int_0^t\int_s^t\gamma(\tau,s)^2d\tau ds.
\end{align*}

On the other hand, let
\beno
\gamma_1(t)=\int_0^t\gamma(t,s)ds,\quad \gamma_2(t)=\int_0^t\gamma(t,s)^2ds,\quad \gamma_0(t)=\gamma_2(t)-\gamma_1(t)^2/t.
\eeno
We find that
\begin{align*}
&\int_0^t\|i\gamma(t,s) u'\omega(t)+\pa_y\omega(t)\|_{L^2}^2ds\\
&=\gamma_2(t)\|u'\omega(t)\|_{L^2}^2+2\gamma_1(t){\rm Re}\langle iu'\omega(t),\pa_y\omega(t)\rangle+t\|\pa_y\omega(t)\|_{L^2}^2\\
&=\gamma_0(t)\|u'\omega(t)\|_{L^2}^2+t\|i\gamma_1(t) u'\omega(t)/t+\pa_y\omega(t)\|_{L^2}^2.
\end{align*}
Moreover,
\begin{align*}
t\gamma_0(t)&
=t\gamma_2(t)-\gamma_1(t)^2\\
&=\int_0^t\int_0^t\gamma(t,s)^2dsd\tau-\int_0^t\int_0^t\gamma(t,s)\gamma(t,\tau)dsd\tau\\
&=\int_0^t\int_0^t(\gamma(t,s)^2/2+\gamma(t,\tau)^2/2-\gamma(t,s)\gamma(t,\tau))dsd\tau\\
&=\frac{1}{2}\int_0^t\int_0^t(\gamma(t,s)-\gamma(t,\tau))^2dsd\tau\\
&=\frac{1}{2}\int_0^t\int_0^t\gamma(\tau,s)^2dsd\tau
=\int_0^t\int_s^t\gamma(\tau,s)^2d\tau ds,
\end{align*}
and
\beno
 \int_0^t\|\pa_y\omega(s)\|_{L^2}^2ds=\frac{1}{2\nu}\big(\|\omega(0)\|_{L^2}^2-\|\omega(t)\|_{L^2}^2\big).
\eeno
Therefore, we obtain
\beno
\gamma_0(t)\|u'\omega(t)\|_{L^2}^2\leq \frac{1}{2\nu}(\|\omega(0)\|_{L^2}^2-\|\omega(t)\|_{L^2}^2)+\frac{3}{2}\nu t\gamma_0(t)\|\omega(0)\|_{L^2}^2,
\eeno
which gives
\beno
\|u'\omega(t)\|_{L^2}^2\leq \left({1}/({2\nu\gamma_0(t)})+{3\nu t}/{2}\right)\|\omega(0)\|_{L^2}^2.
\eeno

Now, we give a lower bound of $\gamma_0(t). $ As $\gamma(\tau,s)=\int_s^{\tau}ae^{-\nu t'} dt'\geq a(\tau-s)e^{-\nu t}$ for $0<s<\tau<t,$ we have
\beno
t\gamma_0(t)
=\int_0^t\int_s^t\gamma(\tau,s)^2d\tau ds\geq \int_0^t\int_s^ta^2(\tau-s)^2e^{-2\nu t}d\tau ds=a^2e^{-2\nu t}\frac{t^4}{12},
\eeno
which gives
\beno
\frac{1}{2\nu\gamma_0(t)}\leq \frac{1}{2\nu a^2}e^{2\nu t}\frac{12}{t^3}= \frac{6e^{2\nu t}}{\nu a^2t^3}.
\eeno
For $0<t\leq (\nu|a|)^{-\frac{1}{2}},\ 0<\nu<|a|,$ we have $\nu t\leq 1,\ \nu t\leq 1/(\nu a^2t^3). $ Then we conclude that
\beno
\|u'\omega(t)\|_{L^2}^2\leq (6e^2+2)\|\omega(0)\|_{L^2}^2/(\nu a^2t^3).
\eeno
This completes the proof.
\end{proof}

Now, we deal with the inhomogeneous equation.

\begin{lemma}\label{Lem:Inhomtoymodel}
Let $0<\nu<|a|,\ a\in\R$, and $\omega(t,y)$ solve
\ben\label{eq:toymodel_2}
\pa_t\omega-\nu \pa_y^2\omega-ia e^{-\nu t}(\cos y)\omega=f.
\een
Then we have
\beno
\|\sin y\omega(t)\|_{L^2}\leq \frac{C}{\sqrt{\nu t^3a^2}}\sup_{0\leq s\leq t}\|\omega(s)\|_{L^2}+\int_0^t\|\sin yf(s)\|_{L^2}ds
\eeno
for $0<t\leq(\nu|a|)^{-\frac{1}{2}}$.
\end{lemma}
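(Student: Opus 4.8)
The plan is to derive the estimate for the inhomogeneous equation \eqref{eq:toymodel_2} from the homogeneous estimate in Lemma \ref{Lem:toymodel1} via Duhamel's principle. Let $S(t,s)$ denote the solution operator of the homogeneous equation \eqref{eq:Toymodel1}, so that the solution of the homogeneous problem with data prescribed at time $s$ is $\omega(t)=S(t,s)\omega(s)$. Then the solution of \eqref{eq:toymodel_2} admits the representation
\beno
\omega(t)=S(t,0)\omega(0)+\int_0^t S(t,s)f(s)\,ds.
\eeno
The idea is to apply $\sin y$ to both sides and estimate each piece separately. The first term is controlled directly by Lemma \ref{Lem:toymodel1}, which yields the decay factor $(\nu t^3 a^2)^{-1/2}$; this accounts for the first term on the right-hand side of the claimed bound, with $\sup_{0\le s\le t}\|\omega(s)\|_{L^2}$ in place of $\|\omega(0)\|_{L^2}$ (indeed $\|\omega(0)\|_{L^2}$ suffices here, but the supremum gives a uniform statement convenient for the Duhamel term).

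The main work is the integral term. First I would bound the contribution of $\int_0^t S(t,s)f(s)\,ds$ crudely: rather than trying to extract the sharp decay weight from the propagator acting on $f$, I expect that the clean way to produce the $\int_0^t\|\sin y f(s)\|_{L^2}\,ds$ term is to commute $\sin y$ past the propagator and use that $S(t,s)$ is a contraction on $L^2$ (the $L^2$ contraction property $\|\omega(t)\|_{L^2}\le \|\omega(0)\|_{L^2}$ is established in the proof of Lemma \ref{Lem:toymodel1}). The key algebraic observation will be that the multiplication operator $\sin y=-u'(y)$ almost commutes with the generator $\nu\pa_y^2+ia e^{-\nu t}\cos y\,$: the potential term $\cos y$ commutes exactly with $\sin y$, and only the diffusion $\nu\pa_y^2$ produces a commutator. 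So I would write $\sin y\, S(t,s)f(s)=S(t,s)\big(\sin y\, f(s)\big)+[\sin y, S(t,s)]f(s)$ and argue that the commutator term, being of lower order and carrying a factor of $\nu$, is harmless on the short time scale $0<t\le (\nu|a|)^{-1/2}$.

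The step I expect to be the main obstacle is controlling this commutator $[\sin y, S(t,s)]$ so that it does not destroy the simple bound $\int_0^t\|\sin y f(s)\|_{L^2}\,ds$. One route is to track the quantity $\sin y\,\omega$ directly: setting $w=\sin y\,\omega$ where $\omega$ solves \eqref{eq:toymodel_2}, one computes that $w$ satisfies the same type of equation with a modified forcing $\sin y f+\nu(\text{terms involving }\pa_y\omega,\omega)$ coming from $\nu\pa_y^2(\sin y\,\omega)=\sin y\,\nu\pa_y^2\omega+\nu(2\cos y\,\pa_y\omega-\sin y\,\omega)$. The extra terms are order $\nu$ lower-order expressions in $\omega$, and on the interval $t\le(\nu|a|)^{-1/2}$ one has $\nu t\le 1$, so after applying Grönwall or a direct energy estimate these contributions are absorbed into the constant $C$. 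A cleaner alternative that I would try first is to avoid the commutator entirely by applying Lemma \ref{Lem:toymodel1} to each Duhamel slice $S(t,s)f(s)$ only when the weight is genuinely needed and otherwise using the plain $L^2$ contraction; the delicate point is matching the time-weight $(\nu t^3 a^2)^{-1/2}$, which is integrable-in-the-right-sense behavior near $t=0$, against the loss from integrating over $s\in[0,t]$. I anticipate that the final inhomogeneous bound follows once these two mechanisms—decay of the homogeneous part and near-commutation of the weight—are combined, with all $\nu$-dependent error terms controlled by the constraint $t\le(\nu|a|)^{-1/2}$.
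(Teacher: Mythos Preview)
Your overall strategy matches the paper's: decompose $\omega=\omega_1+\omega_2$ with $\omega_1$ solving the homogeneous problem and $\omega_2$ the inhomogeneous problem with zero data, apply Lemma~\ref{Lem:toymodel1} to $\omega_1$, and control $\|\sin y\,\omega_2\|_{L^2}$ directly. Your ``second alternative''---tracking $\sin y\,\omega_2$ via an energy estimate---is exactly what the paper does.

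There is, however, a real pitfall in your description of the commutator step. You write that $[\nu\partial_y^2,\sin y]$ produces ``order $\nu$ lower-order expressions in $\omega$'' which can be absorbed by Gr\"onwall. But the commutator contains the term $2\nu\cos y\,\partial_y\omega_2$, which carries a \emph{full} derivative and is not lower order; a naive Gr\"onwall on $\|\sin y\,\omega_2\|_{L^2}$ would require $\nu\int_0^t\|\partial_y\omega_2\|_{L^2}\,ds$, which you do not control by $\sup_s\|\omega(s)\|_{L^2}$. The paper's point is that in the \emph{energy} computation the integration by parts produces a favorable sign:
\[
2\nu\,\mathrm{Re}\langle\sin y\,\partial_y^2\omega_2,\sin y\,\omega_2\rangle
=-2\nu\|\sin y\,\partial_y\omega_2+\cos y\,\omega_2\|_{L^2}^2+2\nu\|\cos y\,\omega_2\|_{L^2}^2
\le 2\nu\|\omega_2\|_{L^2}^2,
\]
so the derivative term is absorbed by completing the square, not by being small. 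This yields
\[
\frac{d}{dt}\|\sin y\,\omega_2\|_{L^2}^2\le 2\nu\|\omega_2\|_{L^2}^2+2\|\sin y\,f\|_{L^2}\|\sin y\,\omega_2\|_{L^2}.
\]
The paper then bounds $\|\omega_2\|_{L^2}\le 2A(t)$ with $A(t)=\sup_{0\le s\le t}\|\omega(s)\|_{L^2}$ (this is where the supremum in the statement genuinely enters, not merely as a convenience), introduces an auxiliary parameter $b>0$ to convert the differential inequality for $\|\sin y\,\omega_2\|_{L^2}^2$ into one for $(\|\sin y\,\omega_2\|_{L^2}^2+b^2)^{1/2}$, and optimizes over $b$ to obtain $\|\sin y\,\omega_2(t)\|_{L^2}\le 4\sqrt{\nu t}\,A(t)+\int_0^t\|\sin y\,f\|_{L^2}\,ds$. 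Finally $\sqrt{\nu t}\le(\nu t^3a^2)^{-1/2}$ on the stated interval merges this with the $\omega_1$ bound. Your first alternative (commuting $\sin y$ with the propagator $S(t,s)$) runs into the same derivative obstruction and is strictly harder to close.
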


\begin{proof}
We  decompose $ \omega=\omega_1+\omega_2$, where
\begin{align*}
&\pa_t\omega_1-\nu \pa_y^2\omega_1-ia e^{-\nu t}(\cos y)\omega_1=0,\ \ \omega_1(0)=\omega(0),\\
&\pa_t\omega_2-\nu \pa_y^2\omega_2-ia e^{-\nu t}(\cos y)\omega_2=f,\ \ \omega_2(0)=0.
\end{align*}

By Lemma \ref{Lem:toymodel1}, we have $\|\omega_1(t)\|_{L^2}\leq \|\omega_1(0)\|_{L^2}=\|\omega(0)\|_{L^2}$, and
\beno
\|\sin y\omega_1(t)\|_{L^2}\leq C(\nu t^3a^2)^{-\frac{1}{2}}\|\omega_1(0)\|_{L^2}=C(\nu t^3a^2)^{-\frac{1}{2}}\|\omega(0)\|_{L^2}
\eeno
 for $0<t\leq(\nu|a|)^{-\frac{1}{2}}$. Let $A(t)=\sup\limits_{0\leq s\leq t}\|\omega(s)\|_{L^2}$.  Then
 \beno
 \|\omega_2(t)\|_{L^2}\leq \|\omega_1(t)\|_{L^2}+\|\omega(t)\|_{L^2}\leq 2A(t).
\eeno
For any $b>0$, we have
\begin{align*}
\f{d}{dt}\|\sin y\omega_2\|_{L^2}^2
&=2{\rm Re}\langle \sin y\partial_t\omega_2,\sin y\omega_2\rangle\\
&=2{\rm Re}\langle \sin y(\nu \pa_y^2\omega_2+ia e^{-\nu t}\cos y\omega_2+f),\sin y\ \omega_2\rangle\\
&=-2\nu\|\sin y\ \pa_y\omega_2+\cos y\ \omega_2\|_{L^2}^2+2\nu\|\cos y\ \omega_2\|_{L^2}^2+2{\rm Re}\langle \sin yf,\sin y\ \omega_2\rangle\\
&\leq 2\nu\|\omega_2\|_{L^2}^2+2\|\sin yf\|_{L^2}\|\sin y\omega_2\|_{L^2}\\
&\leq 2\big(\|\sin yf\|_{L^2}+4\nu A(t)^2/b\big)\big(\|\sin y\omega_2\|_{L^2}^2+b^2\big)^{\frac{1}{2}},
\end{align*}
therefore,
\beno
\f{d}{dt}\big(\|\sin y\omega_2\|_{L^2}^2+b^2\big)^{\frac{1}{2}}\leq \|\sin yf\|_{L^2}+4\nu A(t)^2/b,
\eeno
which implies that
\begin{align*}
\|\sin y\omega_2(t)\|_{L^2}
&\leq (\|\sin y\omega_2(t)\|_{L^2}^2+b^2)^{\frac{1}{2}}\\
&\leq b+\int_0^t(\|\sin yf(s)\|_{L^2}+4\nu A(s)^2/b)ds\\
&\leq b+4\nu t A(t)^2/b+\int_0^t\|\sin yf(s)\|_{L^2}ds.
\end{align*}
As it is true for any $b>0$, we minimize the right hand side to obtain
\beno
\|\sin y\omega_2(t)\|_{L^2}\leq 4\sqrt{\nu t} A(t)+\int_0^t\|\sin yf(s)\|_{L^2}ds.
\eeno
For $0<t\leq(\nu|a|)^{-\frac{1}{2}}$, we have $\sqrt{\nu t}\leq (\nu t^3a^2)^{-\frac{1}{2}}$. Then
\begin{align*}
\|\sin y\omega(t)\|_{L^2}
&\leq\|\sin y\omega_1(t)\|_{L^2}+\|\sin y\omega_2(t)\|_{L^2}\\ &\leq C(\nu t^3a^2)^{-\frac{1}{2}}\|\omega(0)\|_{L^2}
+4\sqrt{\nu t} A(t)+\int_0^t\|\sin yf(s)\|_{L^2}ds \\
&\leq C(\nu t^3a^2)^{-\frac{1}{2}}A(t)+\int_0^t\|\sin yf(s)\|_{L^2}ds.
\end{align*}
This gives our result.
\end{proof}

\subsection{Decay estimates on the model with nonlocal term}

\begin{lemma}
\label{lem: corss_term}
Let $ 0<\nu<|a|,\ |\al|>1,\ a,\al\in\R,$ and $\omega(t,y)$ solve
\ben\label{eq:toymodel-3}
\pa_t\omega-\nu \pa_y^2\omega-ia e^{-\nu t}\cos y(\omega-\psi)=0,\quad
-(\pa_y^2-\al^2)\psi=\omega.
\een
Then for $0<t\leq(\nu|a|)^{-\frac{1}{2}},\ \nu\al^2t<1,$
$$-\langle\psi(t),\omega(t)-\psi(t)\rangle\leq \f{C}{\nu t^3a^2}
\langle\omega(0),\omega(0)-\psi(0)\rangle.$$
\end{lemma}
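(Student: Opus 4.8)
The plan is to use the wave operator $\bbD$ to conjugate away the nonlocal term and reduce \eqref{eq:toymodel-3} to the scalar model \eqref{eq:toymodel_2}. Write $u(y)=-\cos y$ and set $W=\bbD(\omega)$. Since $\omega-\psi=(1+(\pa_y^2-\al^2)^{-1})\omega$, the intertwining identity in Proposition \ref{Prop: oper_D} gives $\bbD\big(\cos y\,(\omega-\psi)\big)=\cos y\,W$. Applying $\bbD$ to \eqref{eq:toymodel-3} and writing $\bbD(\pa_y^2\omega)=\pa_y^2 W+R$ with $R:=\bbD(\pa_y^2\omega)-\pa_y^2\bbD(\omega)$, I find that $W$ solves \eqref{eq:toymodel_2} with forcing $f=\nu R$, namely
\[
\pa_t W-\nu\pa_y^2 W-iae^{-\nu t}\cos y\,W=\nu R .
\]
Hence Lemma \ref{Lem:Inhomtoymodel} applies to $W$ and gives, for $0<t\le(\nu|a|)^{-1/2}$,
\[
\|\sin y\,W(t)\|_{L^2}\le \frac{C}{\sqrt{\nu t^3a^2}}\sup_{0\le s\le t}\|W(s)\|_{L^2}+\nu\int_0^t\|\sin y\,R(s)\|_{L^2}\,ds .
\]

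Next I would establish the monotonicity controlling both terms. Set $Q(t):=\langle\omega,\omega-\psi\rangle=\|W(t)\|_{L^2}^2$ and $M:=1+(\pa_y^2-\al^2)^{-1}$, a positive self-adjoint Fourier multiplier for $|\al|>1$ that commutes with $\pa_y^2$. Differentiating $Q$ and using $\pa_t\omega=\nu\pa_y^2\omega+iae^{-\nu t}\cos y\,M\omega$, the nonlocal term drops out because $\langle i\cos y\,M\omega,M\omega\rangle$ is purely imaginary ($\cos y$ being a real multiplier), leaving $\frac{d}{dt}Q=-2\nu\|\pa_y M^{1/2}\omega\|_{L^2}^2\le 0$. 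Thus $\|W(t)\|_{L^2}\le\|W(0)\|_{L^2}$, and the equivalence $(1-\al^{-2})\|\omega\|_{L^2}^2\le\|W\|_{L^2}^2$ of Proposition \ref{Prop: oper_D} yields $\sup_{0\le s\le t}\|\omega(s)\|_{L^2}\le C\|W(0)\|_{L^2}$. Integrating the same identity and comparing Fourier symbols also gives the spacetime bound $\int_0^t\|\pa_y\omega\|_{L^2}^2\,ds\le \frac{C}{\nu}\|W(0)\|_{L^2}^2$.

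Then I would bound the forcing integral. By Proposition \ref{Prop: commutator}, $\|\sin y\,R(s)\|_{L^2}\le C(|\al|\|\omega(s)\|_{L^2}+\|\pa_y\omega(s)\|_{L^2})$, so
\[
\nu\int_0^t\|\sin y\,R\|_{L^2}\,ds\le C\nu|\al|t\,\sup_{s}\|\omega\|_{L^2}+C\nu\sqrt t\,\Big(\int_0^t\|\pa_y\omega\|_{L^2}^2\,ds\Big)^{1/2}.
\]
Under the hypotheses $\nu\al^2t<1$ and $t\le(\nu|a|)^{-1/2}$ one has $\nu|\al|t=\sqrt{\nu\al^2t}\,\sqrt{\nu t}\le\sqrt{\nu t}$ and $\sqrt{\nu t}\le(\nu t^3a^2)^{-1/2}$ (the latter equivalent to $\nu t^2|a|\le1$). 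Combined with the two monotonicity bounds, every term is $\le C(\nu t^3a^2)^{-1/2}\|W(0)\|_{L^2}$, so $\|\sin y\,W(t)\|_{L^2}\le C(\nu t^3a^2)^{-1/2}\|W(0)\|_{L^2}$. Squaring and invoking Proposition \ref{Prop: oper_D}, i.e. $\|\sin y\,\bbD(\omega)\|_{L^2}^2\ge\|\pa_y\psi\|_{L^2}^2+(\al^2-1)\|\psi\|_{L^2}^2=\langle\psi,\omega-\psi\rangle$ together with $\|W(0)\|_{L^2}^2=\langle\omega(0),\omega(0)-\psi(0)\rangle$, then delivers the stated estimate.

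The main obstacle is the commutator forcing $\nu R$: a direct energy estimate for $\|\omega\|_{L^2}$ or $\|\pa_y\omega\|_{L^2}$ would see the large factor $|a|e^{-\nu t}$ and fail to close. The resolution is that the only monotone quantity is the $M$-weighted energy $Q=\|W\|_{L^2}^2$, for which the nonlocal term cancels exactly, and this single identity must be made to furnish \emph{simultaneously} the pointwise bound on $\|\omega\|_{L^2}$ and the dissipation bound on $\int_0^t\|\pa_y\omega\|_{L^2}^2$. Checking that the $\nu$-weighted time integral of the commutator, together with the window constraints $\nu\al^2t<1$ and $t\le(\nu|a|)^{-1/2}$, is absorbed precisely into the target rate $(\nu t^3a^2)^{-1/2}$ is the delicate bookkeeping; note that $\langle\psi,\omega-\psi\rangle\ge0$, so it is this upper bound on $\|\sin y\,\bbD(\omega(t))\|_{L^2}^2$ that carries the content.
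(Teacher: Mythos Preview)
Your argument is correct and follows essentially the same approach as the paper: conjugate by the wave operator $\bbD$ to reduce \eqref{eq:toymodel-3} to \eqref{eq:toymodel_2} with forcing $\nu[\bbD,\pa_y^2]\omega$, apply Lemma~\ref{Lem:Inhomtoymodel}, use the monotonicity of $\langle\omega,\omega-\psi\rangle$ (your $Q$) to control both $\sup_s\|W(s)\|_{L^2}$ and the dissipation $\int_0^t\|\pa_y\omega\|^2$, bound the commutator via Proposition~\ref{Prop: commutator}, and close with the window inequalities $\nu|\al|t\le(\nu t)^{1/2}\le(\nu t^3a^2)^{-1/2}$ and the lower bound $\|\sin y\,\bbD(\omega)\|_{L^2}^2\ge\langle\psi,\omega-\psi\rangle$ from Proposition~\ref{Prop: oper_D}. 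The only cosmetic difference is your use of the $M^{1/2}$ formalism in the energy identity, where the paper writes $\langle\pa_y\omega,\pa_y\omega-\pa_y\psi\rangle$ directly; these are the same quantity since $M$ commutes with~$\pa_y$.
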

\begin{proof}
By \eqref{eq:toymodel-3}, we have
\begin{align*}
\partial_t\langle\omega,\omega-\psi\rangle
&=\langle\omega_t,\omega-\psi\rangle
+\langle\omega,\omega_t\rangle
+\langle(\pa_y^2-\al^2)\psi,\psi_t\rangle\\
&=\langle\omega_t,\omega-\psi\rangle
+\langle\omega,\omega_t\rangle
-\langle\psi,\om_t\rangle\\
&=-2\nu\langle\pa_y\omega,\pa_y\omega-\pa_y\psi\rangle.
\end{align*}
By Proposition \ref{Prop: oper_D}, we have $\| \bbD(\om)\|_{L^2}^2=\langle\om,\om-\psi\rangle.$ Using the facts that
\beno
&&\|\pa_y\om\|_{L^2}^2=\|\pa_y^3\psi\|_{L^2}^2+\al^4\|\pa_y\psi\|_{L^2}^2+2\al^2\|\pa_y^2\psi\|_{L^2}^2\geq \al^4\|\pa_y\psi\|_{L^2}^2+\al^2\|\pa_y^2\psi\|_{L^2}^2,\\
&&-\langle \pa_y\om,\pa_y\psi\rangle=-\|\pa_y^2\psi\|_{L^2}^2-\al^2\|\pa_y\psi\|_{L^2}^2,
\eeno
we deduce that
\begin{align}\label{eq: part_of_K_2}
\langle\pa_y\omega,\pa_y\omega-\pa_y\psi\rangle
=\|\pa_y\omega\|_{L^2}^2-\langle \pa_y\om,\pa_y\psi\rangle
\geq (1-\al^{-2})\|\pa_y\omega\|_{L^2}^2.
\end{align}
Then we obtain $\pa_t\|\bbD(\om)(t)\|_{L^2}^2\leq 0$, thus  $ \|\bbD(\om)(t)\|_{L^2}\leq \| \bbD(\om)(0)\|_{L^2}$ and
\beno
2\nu\int_0^t \|\pa_y\omega(s)\|_{L^2}^2\leq \frac{\al^2}{\al^2-1}\|\bbD(\om)(0)\|_{L^2}^2.
\eeno

By Proposition \ref{Prop: oper_D}, $\bbD(\om)$ satisfies \eqref{eq:toymodel_2} with $f(s,y)=\nu[\bbD,\pa_y^2]\om$. Thus by Lemma \ref{Lem:Inhomtoymodel}, we know that for $0<t\leq (\nu |a|)^{-\f12}$,
\begin{align*}
\|\sin y\bbD(\om)(t)\|_{L^2}
\leq C(\nu t^3a^2)^{-\frac{1}{2}}\|\bbD(\om)(0)\|_{L^2}+\int_0^t\nu\|\sin y[\bbD,\pa_y^2]\omega(s)\|_{L^2}ds.\end{align*}
Again by Proposition \ref{Prop: oper_D}, we have \begin{align*}
\int_0^t\nu\|\sin y\ [\bbD,\pa_y^2]\omega(s)\|_{L^2}ds
&\leq \int_0^t\nu C(|\al|\|\om(s)\|_{L^2}+\|\pa_y\omega(s)\|_{L^2})ds\\&\leq \int_0^t\nu C|\al|\sqrt{\f{\al^2}{\al^2-1}}\|\bbD(\om)(s)\|_{L^2}ds+C\nu t^{\f12}\|\pa_y\omega\|_{L^2_t(L^2)}\\&\leq C\nu t|\al|\sqrt{\f{\al^2}{\al^2-1}}\|\bbD(\om)(0)\|_{L^2}
+C\nu t^{\f12}\sqrt{\f{\al^2/(2\nu)}{\al^2-1}}\|\bbD(\om)(0)\|_{L^2}.
\end{align*}
Therefore, for $0<t\leq (\nu |a|)^{-\f12},$
\beno
\|\sin y\bbD(\om)(t)\|_{L^2}\leq C\big((\nu t^3a^2)^{-\frac{1}{2}}+\nu t|\al|+(\nu t)^{\f12}\big)\|\bbD(\om)(0)\|_{L^2}.
\eeno
Thus, for $0<t\leq (\nu |a|)^{-\f12},\ \nu \al^2t< 1$, we have $ \nu t|\al|\leq(\nu t)^{\f12}\leq(\nu t^3a^2)^{-\frac{1}{2}}$. As $-\langle\psi,\omega\rangle=-\|\pa_y\psi\|_{L^2}^2-\al^2\|\psi\|_{L^2}^2$, then by Proposition \ref{Prop: oper_D},
\begin{align}\label{eq: K_3}
\langle\psi,\omega-\psi\rangle(t)
&=(\al^2-1)\|\psi\|_{L^2}^2+\|\pa_y\psi\|_{L^2}^2\\\nonumber
&\leq \|\sin y\ \bbD(\om)(t)\|_{L^2}^2\\\nonumber
&\leq C\|\bbD(\om)(0)\|_{L^2}^2/(\nu t^3a^2)
=C\langle\omega,\omega-\psi\rangle(0)/(\nu t^3a^2).\nonumber
\end{align}
The proof is completed.
\end{proof}

Let $-(\pa_y^2-\al^2)\widehat{\psi}=\widehat{\om}$.  We introduce
\beno
&&K_1(t,\al)=\langle\widehat{\om},\widehat{\om}-\widehat{\psi}\rangle(t,\al),\\
&&K_2(t,\al)=-\langle(\pa_y^2-\al^2)\widehat{\omega},\widehat{\omega}-\widehat{\psi}\rangle(t,\al),\\
&&K_3(t,\al)=\langle\widehat{\psi},\widehat{\omega}-\widehat{\psi}\rangle(t,\al).
\eeno

\begin{lemma}\label{lem:K-relation}
It holds that
\begin{align*}
&K_1=\|\bbD(\widehat{\om})\|_{L^2}^2\geq \al^2K_3,\\
&K_2=(\al^2-1)\|\widehat{\om}\|_{L^2}^2+\|\pa_y\widehat{\om}\|_{L^2}^2\geq \al^2K_1,\\
&K_3=(\al^2-1)\|\widehat{\psi}\|_{L^2}^2+\|\pa_y\widehat{\psi}\|_{L^2}^2\geq 0,\\
&K_2K_3\geq K_1^2.
\end{align*}
\end{lemma}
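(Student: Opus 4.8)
The plan is to reduce all four claims to elementary facts about a single nonnegative sequence by diagonalizing the operator $-\pa_y^2$ on the torus. First I would record the three identities. The identity $K_1=\|\bbD(\widehat\om)\|_{L^2}^2$ is immediate from Proposition \ref{Prop: oper_D}: since $-(\pa_y^2-\al^2)\widehat\psi=\widehat\om$ we have $(\pa_y^2-\al^2)^{-1}\widehat\om=-\widehat\psi$, so the property $\|\bbD(\om)\|_{L^2}^2=\langle\om,\om+(\pa_y^2-\al^2)^{-1}\om\rangle$ reads $\|\bbD(\widehat\om)\|_{L^2}^2=\langle\widehat\om,\widehat\om-\widehat\psi\rangle=K_1$. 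For $K_2$ and $K_3$ I would integrate by parts, writing $\widehat\om-\widehat\psi=-\pa_y^2\widehat\psi+(\al^2-1)\widehat\psi$, which follows from $\widehat\om=-\pa_y^2\widehat\psi+\al^2\widehat\psi$. Pairing this against $\widehat\psi$ gives $K_3=\|\pa_y\widehat\psi\|_{L^2}^2+(\al^2-1)\|\widehat\psi\|_{L^2}^2$, and pairing $(-\pa_y^2+\al^2)\widehat\om$ against $\widehat\om-\widehat\psi$, together with the self-adjointness identity $\langle(-\pa_y^2+\al^2)\widehat\om,\widehat\psi\rangle=\langle\widehat\om,(-\pa_y^2+\al^2)\widehat\psi\rangle=\|\widehat\om\|_{L^2}^2$, gives $K_2=\|\pa_y\widehat\om\|_{L^2}^2+(\al^2-1)\|\widehat\om\|_{L^2}^2$. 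Since $|\al|>1$, the expression for $K_3$ is manifestly nonnegative.

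Next I would expand $\widehat\om(t,\al,\cdot)=\sum_{n\in\Z}a_ne^{iny}$ in Fourier modes on $\mathbb{T}$; then $\widehat\psi=\sum_n\la_n^{-1}a_ne^{iny}$ with $\la_n=n^2+\al^2$. Setting $b_n=|a_n|^2\ge0$ and $\mu_n=\la_n-1=n^2+\al^2-1$, a direct computation of the three inner products yields, up to the common normalization of the basis,
\begin{align*}
K_1=\sum_n b_n\frac{\mu_n}{\la_n},\qquad K_2=\sum_n b_n\mu_n,\qquad K_3=\sum_n b_n\frac{\mu_n}{\la_n^2}.
\end{align*}
Because $|\al|>1$ we have $\la_n\ge\al^2>1$ and hence $\mu_n>0$, so every term above is nonnegative; this is also the place where the identity $K_1=\|\bbD(\widehat\om)\|_{L^2}^2$ can be cross-checked against the direct inner product.

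With these representations the remaining inequalities become termwise comparisons. The bound $K_2\ge\al^2K_1$ holds term by term since $b_n\mu_n\ge\al^2 b_n\mu_n/\la_n$ is equivalent to $\la_n\ge\al^2$; likewise $K_1\ge\al^2K_3$ reduces to the same inequality $\la_n\ge\al^2$ applied to $b_n\mu_n/\la_n\ge\al^2 b_n\mu_n/\la_n^2$. Finally, $K_2K_3\ge K_1^2$ is exactly the Cauchy--Schwarz inequality for the vectors with components $\sqrt{b_n\mu_n}$ and $\sqrt{b_n\mu_n}/\la_n$, since their inner product is $K_1$ while their squared norms are $K_2$ and $K_3$. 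I do not expect a genuine obstacle here: once the identities for $K_2$ and $K_3$ are verified by integration by parts and the spectral representation is in place, all four statements are elementary. The only points demanding care are the sign bookkeeping in $(\pa_y^2-\al^2)^{-1}\widehat\om=-\widehat\psi$ and the systematic use of $|\al|>1$ to guarantee positivity of the weights $\mu_n$.
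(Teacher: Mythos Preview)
Your proposal is correct. The identities for $K_1,K_2,K_3$ are obtained exactly as you describe, and once you pass to Fourier modes in $y$ the three inequalities $K_2\ge\al^2K_1$, $K_1\ge\al^2K_3$, $K_2K_3\ge K_1^2$ reduce to the termwise comparison $\la_n\ge\al^2$ and to Cauchy--Schwarz for sequences, precisely as you wrote.

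The paper takes a slightly different route: it stays at the function-space level throughout, without diagonalizing. For $K_1\ge\al^2K_3$ it chains the inequality $K_1\ge(1-\al^{-2})\|\widehat\om\|_{L^2}^2$ from Proposition~\ref{Prop: oper_D} with an expansion of $\|\widehat\om\|_{L^2}^2$ in terms of $\|\pa_y^2\widehat\psi\|^2,\|\pa_y\widehat\psi\|^2,\|\widehat\psi\|^2$ and regroups to land on $\al^2K_3$; for $K_2\ge\al^2K_1$ it splits $K_2=\langle\pa_y\widehat\om,\pa_y\widehat\om-\pa_y\widehat\psi\rangle+\al^2K_1$ and appeals to the earlier computation that the first term is nonnegative; for $K_2K_3\ge K_1^2$ it applies Cauchy--Schwarz directly to the product of the two expressions $(\al^2-1)\|\cdot\|^2+\|\pa_y\cdot\|^2$ evaluated at $\widehat\om$ and $\widehat\psi$, obtaining the cross term $\big((\al^2-1)\langle\widehat\om,\widehat\psi\rangle+\langle\pa_y\widehat\om,\pa_y\widehat\psi\rangle\big)^2=K_1^2$. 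Your Fourier-mode reduction is more elementary and makes the three inequalities visibly equivalent to a single spectral fact; the paper's argument has the minor virtue of reusing already-established inequalities from Proposition~\ref{Prop: oper_D} and equation~\eqref{eq: part_of_K_2} rather than recomputing them. Either way the content is the same.
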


\begin{proof}
By \eqref{eq: K_3} and Proposition \ref{Prop: oper_D}, we get \begin{align*}
\|\bbD(\widehat{\om})\|_{L^2}^2
&=K_1\geq (1-\al^{-2})\|\widehat{\om}\|_{L^2}^2\\
&=(1-\al^{-2})(\|\pa_y^2\widehat{\psi}\|_{L^2}^2+\al^2\|\pa_y\widehat{\psi}\|_{L^2}^2+\al^4\|\widehat{\psi}\|_{L^2}^2)\\
&\geq \al^2\big((\al^2-1)\|\widehat{\psi}\|_{L^2}^2+\|\pa_y\widehat{\psi}\|_{L^2}^2\big)
= \al^2K_3\geq 0 .
\end{align*}
It is easy to check that
\beno
K_2=-\langle(\pa_y^2-\al^2)\widehat{\om},\widehat{\om}-\widehat{\psi}\rangle
=\langle\pa_y\widehat{\om},\pa_y\widehat{\om}-\pa_y\widehat{\psi}\rangle+\al^2\langle\widehat{\om},\widehat{\om}-\widehat{\psi}\rangle,
\eeno
which along with \eqref{eq: part_of_K_2} gives $K_2\geq \al^2K_1\geq 0$.

Using the facts that
\beno
&&\|\widehat{\om}\|_{L^2}^2=\|\pa_y^2\widehat{\psi}\|_{L^2}^2+
\al^2\|\pa_y\widehat{\psi}\|_{L^2}^2+\al^4\|\widehat{\psi}\|_{L^2}^2,\\
&&\|\pa_y\widehat{\om}\|_{L^2}^2=\|\pa_y^3\widehat{\psi}\|_{L^2}^2+\al^2\|\pa_y^2\widehat{\psi}\|_{L^2}^2+\al^4\|\pa_y\widehat{\psi}\|_{L^2}^2,
\eeno
we can deduce that
\begin{align*}
K_2K_3&=\big((\al^2-1)\|\widehat{\om}\|_{L^2}^2+\|\pa_y\widehat{\om}\|_{L^2}^2\big)
\big((\al^2-1)\|\widehat{\psi}\|_{L^2}^2+\|\pa_y\widehat{\psi}\|_{L^2}^2\big)\\
&\ge\big((\al^2-1)\langle\widehat{\omega},\widehat{\psi}\rangle+\langle\pa_y\widehat{\omega},\pa_y\widehat{\psi}\rangle\big)^2
=\big(\langle\widehat{\omega},(\al^2-1)\widehat{\psi}-\pa_y^2\widehat{\psi}\rangle\big)^2\\
&= \langle\widehat{\omega},\widehat{\omega}-\widehat{\psi}\rangle^2=K_1^2.
\end{align*}
\end{proof}

\subsection{Proof of Theorem \ref{Thm:enhance}}

\begin{proof}
We first consider the case of $\delta<1$. Taking Fourier transform in $x$ to \eqref{eq:NS-L}, we obtain
\beno
\pa_t\widehat{\omega}+\mathcal{L}_{\nu}(\al,t)\widehat{\omega}=0,
\eeno
where
\beno
\mathcal{L}_{\nu}(\al,t)=\nu(-\pa_y^2+\al^2)-i\al a_0e^{-\nu t}\cos y\big(1+(\pa_y^2-\al^2)^{-1}\big).
\eeno
Let $-(\pa_y^2-\al^2)\widehat{\psi}=\widehat{\om}$.  Then we have
\begin{align*}
\partial_t\langle\widehat{\omega},\widehat{\omega}-\widehat{\psi}\rangle
&=\langle\widehat{\omega}_t,\widehat{\omega}\rangle
+\langle\widehat{\omega},\widehat{\omega}_t\rangle
-\langle\widehat{\psi},\widehat{\om}_t\rangle
-\langle\widehat{\om}_t, \widehat{\psi}\rangle\\
&=-2\nu\langle\pa_y\widehat{\omega},\pa_y\widehat{\omega}-\pa_y\widehat{\psi}\rangle
-2\nu\al^2\langle\widehat{\omega},\widehat{\omega}-\widehat{\psi}\rangle.
\end{align*}
This gives by Lemma \ref{lem:K-relation} that
\beno
\partial_t K_1=-2\nu K_2\leq -2\nu\al^2K_1,
\eeno
which implies that
\ben\label{eq:K_1K_2rough}
K_1(t,\al)\leq e^{-2\nu\al^2t}K_1(0,\al), \quad \int_{0}^T2\nu K_2(s,\al)ds=K_1(0,\al)-K_1(T,\al).
\een

Now fix $\al,$ consider the following cases.\smallskip

{\bf Case 1.} $\nu\geq a_0|\al|e^{-\tau}$ or $\nu|\al|^3\geq1. $\smallskip

Due to $|\al|>1,$ we have $ \nu^{\frac{1}{3}}+\nu^{\frac{1}{2}}\leq C\nu\al^2$. Then by Lemma \ref{lem:K-relation} and  \eqref{eq:K_1K_2rough}, we get
\beno
\al^2K_3(t,\al)\leq K_1(t,\al)\leq e^{-2\nu\al^2t}K_1(0,\al)\leq \f{Ce^{-\nu\al^2t}}{1+\nu t^3}K_1(0,\al)\le \f{Ce^{-c\sqrt{\nu}t}}{1+\nu t^3}K_1(0,\al).
\eeno

{\bf Case 2:} $\nu<a_0|\al|e^{-\tau}$ and $\nu|\al|^3<1. $ \smallskip

Take $ t_1=(\nu|\al|)^{-\frac{1}{2}},$ then $\nu t_1\al^2<1.$
Fix $0\leq t\leq \tau/\nu $, then $\big(e^{\nu\al^2s}\widehat{\omega}(t+s,\al,y),\ e^{\nu\al^2s}\widehat{\psi}(t+s,\al,y)\big) $ solves \eqref{eq:toymodel-3} with $a=a_0\al e^{-\nu t}.$
By our assumption, $0<\nu<|a|$. Then by Lemma \ref{lem: corss_term}, we obtain
\begin{align*}
&e^{2\nu\al^2t_1}K_3(t+t_1,\al)\\
&=\langle e^{\nu\al^2t_1}\widehat{\psi}(t+t_1,\al),e^{\nu\al^2t_1}\widehat{\omega}(t+t_1,\al)-e^{\nu\al^2s}\widehat{\psi}(t+t_1,\al)\rangle\\
&\leq \f{C}{\nu t_1^3a^2}\langle e^{\nu\al^2t_1}\widehat{\omega}(t,\al),e^{\nu\al^2t_1}\widehat{\omega}(t,\al)-e^{\nu\al^2t_1}\widehat{\psi}(t,\al)\rangle\\
&\leq \f{C}{\nu t_1^3a^2}K_1(t,\al)
\leq \f{C}{\nu t_1^3\al^2}K_1(t,\al)=CK_1(t,\al)t_1\nu.
\end{align*}
Thus, there exists an absolute constant $c_0\in(0,1)$  so that
\ben
c_0e^{c_0}K_3(t+t_1,\al)\leq K_1(t,\al)t_1\nu.\label{eq:K3-K1}
\een
Set $M=\max\limits_{0\leq t\leq \tau/\nu}e^{c_0t/t_1}K_1(t,\al)$. Then there exists $t_2\in[0,\tau/\nu]$ so that $e^{c_0t_2/t_1}K_1(t_2,\al)=M. $ Next we will show that $t_2\leq t_1$, otherwise $K_1=0$. If $t_2>t_1$, then $ \partial_t(e^{c_0t/t_1}K_1(t,\al))|_{t=t_2}\geq 0$.
Since
\begin{align*}
\partial_t(e^{c_0t/t_1}K_1(t,\al))
&=e^{c_0t/t_1}\big(c_0/t_1K_1(t,\al)+\partial_tK_1(t,\al)\big)\\
&=e^{c_0t/t_1}\big((c_0/t_1)K_1(t,\al)-2\nu K_2(t,\al)\big) ,
\end{align*}
we obtain
\ben
K_2(t_2,\al)\leq  \f{c_0}{2\nu t_1}K_1(t_2,\al).\label{eq:K2-K1}
\een
By \eqref{eq:K3-K1}, we get
\begin{align*}
c_0e^{c_0}K_3(t_2,\al)&\leq K_1(t_2-t_1,\al)t_1\nu\\
&\leq e^{-c_0(t_2-t_1)/t_1}M t_1\nu\\
&=e^{-c_0(t_2-t_1)/t_1}e^{c_0t_2/t_1}K_1(t_2,\al) t_1\nu=e^{c_0}K_1(t_2,\al) t_1\nu,
\end{align*}
which gives
\ben
K_3(t_2,\al)\leq K_1(t_2,\al) t_1\nu/c_0.\label{eq:K3-K1-1}
\een
It follows from \eqref{eq:K2-K1} and \eqref{eq:K3-K1-1} that
\beno
K_2(t_2,\al)K_3(t_2,\al)\leq \f12K_1(t_2,\al)^2.
\eeno
On the other hand,  $K_2K_3\geq K_1^2$ by Lemma \ref{lem:K-relation}. Then we conclude that $K_1(t_2,\al)=0,$ which implies $ M=0.$

Thus, we only need to deal with the case $t_2\leq t_1$. Then
\beno
M=e^{c_0t_2/t_1}K_1(t_2,\al)\leq e^{c_0} K_1(t_2,\al)\leq e^{c_0} K_1(0,\al).
\eeno
Therefore, for $0\leq t\leq \tau/\nu$,
\ben\label{eq:K_1est1}
K_1(t,\al)\leq e^{-c_0t/t_1}M \leq e^{c_0-c_0t/t_1}K_1(0,\al)=e^{c_0-c_0\sqrt{\nu|\al|}t}K_1(0,\al).
\een
This inequality is also true for {\bf Case 1} by \eqref{eq:K_1K_2rough}, since we have $\nu|\al|^3\geq C^{-1} $ or $\nu\al^2\geq C^{-1}\sqrt{\nu|\al|}$ in {\bf Case 1}.

As $\big(e^{\nu\al^2t}\widehat{\omega}(t,\al,y),\ e^{\nu\al^2t}\widehat{\psi}(t,\al,y)\big)$ solves \eqref{eq:toymodel-3} with $a=a_0\al$ and $|\al|>1$, we deduce from Lemma \ref{lem: corss_term} that
$0<t\leq (\nu|\al|)^{-\f12}$ and $\nu\al^2t<1$,
\beno
e^{2\nu\al^2t}K_3(t,\al)\leq \f{C}{\nu t^3a^2} K_1(0,\al)\leq \f{Ce^{-c_0t/(2t_1)}}{\nu t^3\al^2}  K_1(0,\al)
\eeno
for $0\leq t\leq t_1$. For $t_1\leq t\leq \tau/\nu $, by \eqref{eq:K3-K1}, we have
\beno
c_0e^{c_0}K_3(t,\al)\leq K_1(t-t_1,\al)t_1\nu\leq e^{c_0-c_0(t-t_1)/t_1}K_1(0,\al)t_1\nu,
\eeno
which gives
\begin{align*}
\al^2K_3(t,\al)
&\leq c_0^{-1}e^{c_0-c_0t/t_1}K_1(0,\al)t_1\nu\al^2\\
&=c_0^{-1}e^{c_0-c_0t/t_1}K_1(0,\al)t_1\nu/(t_1^2\nu)^2\\
&=c_0^{-1}e^{c_0-c_0t/t_1}K_1(0,\al)/(t_1^3\nu)
\leq Ce^{-c_0t/(2t_1)}K_1(0,\al)/(t^3\nu).
\end{align*}
Therefore, for $0\leq t\leq \tau/\nu$,
\beno
\al^2K_3(t,\al)\leq Ce^{-c_0t/(2t_1)} K_1(0,\al)/(\nu t^3).
\eeno
While by \eqref{eq:K_1est1},
\beno
\al^2K_3(t,\al)\leq K_1(t,\al)\leq e^{c_0-c_0t/t_1}K_1(0,\al).
\eeno
This shows that for $0\leq t\leq \tau/\nu$,
\begin{align*}
\al^2K_3(t,\al)\leq Ce^{-c_0t/(2t_1)} K_1(0,\al)/(1+\nu t^3)=\f{Ce^{-c_0\sqrt{\nu|\al|}t/2}}{1+\nu t^3} K_1(0,\al).
\end{align*}

Combining two cases, we deduce that there exists an absolute constant $c_1\in(0,1)$ so that for $0\leq t\leq \tau/\nu$,
\ben
&&K_1(t,\al)\leq Ce^{-2c_1\sqrt{\nu}t} K_1(0,\al),\\
\label{eq: K_3estK_1}
&&\al^2K_3(t,\al)\leq \f{Ce^{-2c_1\sqrt{\nu}t}}{1+\nu t^3} K_1(0,\al).
\een
By Proposition \ref{Prop: oper_D}, we have  for $0\leq t\leq \tau/\nu,$
\begin{align*}
\|\omega(t)\|_{L^2}^2
&=\sum_{\al\neq 0}\|\widehat{\omega}(t,\al)\|_{L^2}^2
\leq \sum_{\al\neq 0}\frac{\al^2}{\al^2-1}\|\bbD(\widehat{\omega})(t,\al)\|_{L^2}^2\\
&= \sum_{\al\neq 0}\frac{\al^2}{\al^2-1}K_1(t,\al)
\leq C\sum_{\al\neq 0}e^{-2c_1\sqrt{\nu}t}K_1(0,\al)\\
&\leq C\sum_{\al\neq 0}e^{-2c_1\sqrt{\nu}t}\|\widehat{\omega}(0,\al)\|_{L^2}^2\\
&=Ce^{-2c_1\sqrt{\nu}t}\|\omega_0\|_{L^2}^2.
\end{align*}
Using \eqref{eq: K_3}, we have for $0\leq t\leq \tau/\nu,$
\begin{align*}
\|V(t)\|_{\dot{H}_x^1L_y^2}^2
&=\sum_{\al\neq 0}\al^2(\al^2\|\widehat{\psi}(t,\al)\|_{L^2}^2+ \|\partial_y\widehat{\psi}(t,\al)\|_{L^2}^2)\\
&\leq \sum_{\al\neq 0}\frac{\al^4}{\al^2-1}K_3(t,\al)\\
&\leq C\sum_{\al\neq 0}\f{e^{-2c_1\sqrt{\nu}t}}{1+\nu t^3}K_1(0,\al)
\leq \f{Ce^{-2c_1\sqrt{\nu}t}}{1+\nu t^3}\|\omega_0\|_{L^2}^2.
\end{align*}
Therefore for $0\leq t\leq \tau/\nu,$
\beno
\|\omega(t)\|_{L^2}\leq Ce^{-c_1\sqrt{\nu}t}\|\omega_0\|_{L^2},\quad
\|V(t)\|_{\dot{H}_x^1L_y^2}\leq \frac{Ce^{-c_1\sqrt{\nu}t}}{\sqrt{1+\nu t^3}}\|\omega_0\|_{L^2}.
\eeno

For the case of $\d=1$, the proof is almost the same by making the orthogonal projection $P_{|\al|\geq 2}$, since $\mathcal{R}_{\al}$ has no eigenvalue and embedding eigenvalues for $|\al|\geq 2$ and Lemma \ref{lem: corss_term}-Lemma \ref{lem:K-relation}  hold for $|\al|\geq 2$.
\end{proof}

Let us conclude this section by introducing some space-time estimates of
the solution for the linearized Navier-Stokes equations, which will be used  in the proof of nonlinear enhanced dissipation.
\begin{lemma}\label{Lem: VL2Linfty}
Let $\delta<1$.
Under the same assumptions as in Theorem \ref{Thm:enhance}, we have
\begin{align*}
&\int_0^{\tau/\nu}\|\nabla\omega(t)\|_{L^2}^2dt\leq C\nu^{-1}\|\omega_0\|_{L^2}^2,\\
&\int_0^{\tau/\nu}\|\partial_x\omega(t)\|_{L^2}dt\leq C\nu^{-\f23}\|\omega_0\|_{L^2},\\
&\int_0^{\tau/\nu}\|V(t)\|_{L^{\infty}}^2dt\leq C(|\ln \nu|+1)\nu^{-\f13}\|\omega_0\|_{L^2}^2.
\end{align*}
\end{lemma}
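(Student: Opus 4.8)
The plan is to work mode by mode in $x$ and feed the quantities $K_1,K_2,K_3$ together with their bounds from the proof of Theorem \ref{Thm:enhance} into three different interpolation arguments. Throughout, $\widehat\om(t,\al,\cdot)$ is the $x$-Fourier coefficient, $-(\pa_y^2-\al^2)\widehat\psi=\widehat\om$, and I use Parseval in the form $\|\om\|_{L^2}^2\sim\sum_{\al\neq0}\|\widehat\om(t,\al,\cdot)\|_{L_y^2}^2$ along with $K_2(t,\al)=(\al^2-1)\|\widehat\om\|_{L_y^2}^2+\|\pa_y\widehat\om\|_{L_y^2}^2$, so that $\sum_{\al\neq0}K_2(t,\al)\sim\|\na\om(t)\|_{L^2}^2-\|\om(t)\|_{L^2}^2$.

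For the first estimate I integrate the per-mode identity $\pa_tK_1=-2\nu K_2$ from \eqref{eq:K_1K_2rough} in $t$ and sum in $\al$: since $K_1\ge 0$ and $K_1(0,\al)\le\|\widehat\om_0(\al,\cdot)\|_{L_y^2}^2$ by Proposition \ref{Prop: oper_D}, I get $2\nu\int_0^{\tau/\nu}\sum_\al K_2\,dt\le\sum_\al K_1(0,\al)\le C\|\om_0\|_{L^2}^2$, hence $\int_0^{\tau/\nu}\sum_\al K_2\,dt\le C\nu^{-1}\|\om_0\|_{L^2}^2$. Adding $\int_0^{\tau/\nu}\|\om\|_{L^2}^2\,dt\le C\nu^{-1/2}\|\om_0\|_{L^2}^2$, which follows from the enhanced $L^2$ decay of Theorem \ref{Thm:enhance} and $\nu<1$, produces the claimed bound on $\int\|\na\om\|_{L^2}^2$.

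For the second estimate I first move the time integral inside the $\al$-sum by Minkowski's integral inequality for the mixed norm $L^1_t\ell^2_\al$ (valid since $1\le 2$): $\int_0^{\tau/\nu}\|\pa_x\om\|_{L^2}\,dt\le\big(\sum_{\al\neq0}(|\al|\int_0^{\tau/\nu}\|\widehat\om(t,\al,\cdot)\|_{L_y^2}\,dt)^2\big)^{1/2}$. Each scalar time integral is controlled using \emph{two} single-mode decay rates: the enhanced rate of \eqref{eq:K_1est1}, giving $\|\widehat\om(t,\al,\cdot)\|_{L_y^2}\le Ce^{-c\sqrt{\nu|\al|}\,t}\|\widehat\om_0(\al,\cdot)\|_{L_y^2}$, and the diffusive rate of \eqref{eq:K_1K_2rough}, giving $\|\widehat\om(t,\al,\cdot)\|_{L_y^2}\le Ce^{-\nu\al^2t}\|\widehat\om_0(\al,\cdot)\|_{L_y^2}$. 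Retaining the larger rate $r(\al)=c\max(\sqrt{\nu|\al|},\nu\al^2)$ yields $|\al|\int_0^{\tau/\nu}\|\widehat\om\|_{L_y^2}dt\le C\tfrac{|\al|}{r(\al)}\|\widehat\om_0(\al,\cdot)\|_{L_y^2}$, and the elementary bound $\tfrac{|\al|}{\max(\sqrt{\nu|\al|},\nu\al^2)}\le C\nu^{-2/3}$ (the two branches balance at $|\al|\sim\nu^{-1/3}$) supplies a uniform-in-$\al$ factor $\nu^{-2/3}$. Squaring, summing, and Parseval then give $\int_0^{\tau/\nu}\|\pa_x\om\|_{L^2}dt\le C\nu^{-2/3}\|\om_0\|_{L^2}$; this is exactly the step where the enhanced, rather than merely diffusive, dissipation improves the exponent.

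For the third estimate I bound $\|V\|_{L^\infty_{x,y}}\le\sum_{\al\neq0}\|\widehat V(\al,\cdot)\|_{L^\infty_y}$ and apply the one-dimensional Agmon inequality in $y$, $\|\widehat V(\al,\cdot)\|_{L^\infty_y}\le C a_\al^{1/2}(a_\al+b_\al)^{1/2}$, where $a_\al=\|\widehat V(\al,\cdot)\|_{L_y^2}$ and $b_\al=\|\pa_y\widehat V(\al,\cdot)\|_{L_y^2}$. Here $b_\al\le\|\widehat\om(t,\al,\cdot)\|_{L_y^2}$, while $a_\al$ is controlled by $K_3$ via \eqref{eq: K_3estK_1}, namely $a_\al\le C|\al|^{-1}(1+\nu t^3)^{-1/2}e^{-c_1\sqrt\nu t}\|\widehat\om_0(\al,\cdot)\|_{L_y^2}$. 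The $a_\al$-contribution sums by Cauchy--Schwarz (using $\sum\al^{-2}<\infty$) to $C(1+\nu t^3)^{-1/2}e^{-c_1\sqrt\nu t}\|\om_0\|_{L^2}$, whose square integrates to $C\nu^{-1/3}\|\om_0\|_{L^2}^2$ since $\int_0^\infty(1+\nu t^3)^{-1}dt\sim\nu^{-1/3}$; this is where the power $\nu^{-1/3}$ enters and where the velocity decay factor coming from the inviscid-damping mechanism is essential. The cross term $\sum_\al a_\al^{1/2}b_\al^{1/2}$ is the main obstacle: after extracting a factor $|\al|^{-1/2}$ from $a_\al^{1/2}$ and applying Cauchy--Schwarz, one is left with the harmonic sum $\sum_{|\al|\lesssim\nu^{-1/3}}|\al|^{-1}\sim|\ln\nu|$, which is precisely the logarithm in the statement, while the tail $|\al|\gtrsim\nu^{-1/3}$ is summable through the diffusive rate and the Part 1 bound $\int\|\na\om\|_{L^2}^2\le C\nu^{-1}$. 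Balancing these two frequency ranges against the velocity decay should give $\int_0^{\tau/\nu}\|V\|_{L^\infty}^2dt\le C(|\ln\nu|+1)\nu^{-1/3}\|\om_0\|_{L^2}^2$; securing the sharp \emph{logarithmic} (rather than power) loss, by placing the frequency cutoff exactly at the enhanced/diffusive crossover $\nu^{-1/3}$, is the delicate point.
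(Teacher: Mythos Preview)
Your first estimate is correct and matches the paper's argument.

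The second estimate contains a genuine error: the Minkowski step is applied in the wrong direction. You claim
\[
\int_0^{\tau/\nu}\Big(\sum_{\al\neq0}h_\al(t)^2\Big)^{1/2}dt\;\le\;\Big(\sum_{\al\neq0}\Big(\int_0^{\tau/\nu}h_\al(t)\,dt\Big)^2\Big)^{1/2},
\]
with $h_\al(t)=|\al|\,\|\widehat\om(t,\al,\cdot)\|_{L^2_y}$, but Minkowski gives the \emph{reverse} inequality (moving the smaller exponent to the outside increases the norm). A two-mode counterexample is $h_1=\chi_{[0,1]}$, $h_2=\chi_{[1,2]}$, for which the left side is $2$ and the right side is $\sqrt2$. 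Consequently your per-mode $L^1_t$ bounds, though correct in themselves, cannot be assembled as written. The paper instead extracts a pointwise-in-$t$ bound that is \emph{uniform} in $\al$: from $|\al|^2e^{-2\nu\al^2t}\le C(\nu t)^{-1}$ and $|\al|^2e^{-2c_1\sqrt{\nu|\al|}\,t}\le C(\nu^2t^4)^{-1}$ one gets
\[
\|\partial_x\om(t)\|_{L^2}^2\le C\|\om_0\|_{L^2}^2\min\big\{(\nu t)^{-1},(\nu^2t^4)^{-1}\big\},
\]
and taking square roots and integrating in $t$ (the crossover is at $t\sim\nu^{-1/3}$) gives $C\nu^{-2/3}\|\om_0\|_{L^2}$ directly.

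For the third estimate your sketch has the right ingredients but is not complete: you never say how to pass from $\int(\sum_\al\cdots)^2dt$ to per-mode time integrals, and the cross-term treatment is left at ``should give''. The paper proceeds as follows. After $\|V\|_{L^\infty}\le\sum_\al\|\widehat V(\al,\cdot)\|_{L^\infty_y}$, it applies the triangle inequality in $L^2_t$ (this \emph{is} the correct-direction Minkowski, since $1\le2$) to obtain
\[
\int_0^{\tau/\nu}\|V\|_{L^\infty}^2dt\le\Big(\sum_{\al\neq0}\Big(\int_0^{\tau/\nu}\|\widehat V(t,\al,\cdot)\|_{L^\infty_y}^2\,dt\Big)^{1/2}\Big)^2.
\]
Per mode it uses two interpolation bounds, $\|\widehat V\|_{L^\infty_y}^2\le CK_3^{1/2}K_1^{1/2}$ and $\|\widehat V\|_{L^\infty_y}^2\le CK_3^{3/4}K_2^{1/4}$, yielding
\[
\int_0^{\tau/\nu}\|\widehat V(t,\al,\cdot)\|_{L^\infty_y}^2dt\le CK_1(0,\al)\min\Big\{\frac{1}{|\al|\nu^{1/3}},\,\frac{1}{|\al|^{3/2}\nu^{1/2}}\Big\},
\]
the second via H\"older in $t$ and the energy integral $\int K_2\,dt\le CK_1(0,\al)/\nu$. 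Splitting the $\al$-sum at $|\al|\sim\nu^{-1/3}$ and applying Cauchy--Schwarz produces the logarithm from $\sum_{0<|\al|\le\nu^{-1/3}}|\al|^{-1}$, while the tail $\sum_{|\al|>\nu^{-1/3}}|\al|^{-3/2}$ converges. Your Agmon splitting $a_\al^{1/2}(a_\al+b_\al)^{1/2}$ is essentially the paper's $K_3^{1/2}K_1^{1/2}$, but you are missing the second bound (the one involving $K_2$) that tames the high-frequency tail, as well as the correct order of operations: Minkowski first, then per-mode time integration.
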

\begin{proof}
By \eqref{eq:K_1K_2rough}, we have
\begin{align*}
\int_0^{\tau/\nu}\|\nabla\omega(t)\|_{L^2}^2dt
&=\int_0^{\tau/\nu}\sum_{\al\neq 0}\left(|\al|^2\|\widehat{\omega}(t,\al)\|_{L^2}^2+\|\partial_y\widehat{\omega}(t,\al)\|_{L^2}^2\right)dt
\\
&\leq \int_0^{\tau/\nu}\sum_{\al\neq 0}\frac{\al^2}{\al^2-1}K_2(t,\al)dt\\
&=\frac{1}{2\nu}\sum_{\al\neq 0}\frac{\al^2}{\al^2-1}(K_1(0,\al)-K_1(\tau/\nu,\al))\\
&\leq\frac{C}{\nu}\sum_{\al\neq 0}K_1(0,\al)\leq\frac{C}{\nu}\sum_{\al\neq 0}\|\widehat{\omega}(0,\al)\|_{L^2}^2=\frac{C}{\nu}\|\omega_0\|_{L^2}^2.
\end{align*}

Using the fact that $\pa_tK_1\leq -2\nu\al^2 K_1$, we have
\beno
|\al|^2K_1(t,\al)\leq |\al|^2e^{-2\nu\al^2t}K_1(0,\al)\leq CK_1(0,\al)/(\nu t)
\eeno
and \eqref{eq:K_1est1} implies that
\beno
|\al|^2K_1(t,\al)\leq |\al|^2Ce^{-2c_1\sqrt{\nu|\al|}t}K_1(0,\al)\leq CK_1(0,\al)/(\nu^2 t^4),
\eeno
from which, we infer that
\begin{align*}
\|\partial_x\omega(t)\|_{L^2}^2
&=\sum_{\al\neq 0}|\al|^2\|\widehat{\omega}(t,\al)\|_{L^2}^2
\leq \sum_{\al\neq 0}|\al|^2\frac{\al^2}{\al^2-1}K_1(t,\al)\\
&\leq \sum_{\al\neq 0}CK_1(0,\al)\min\{(\nu t)^{-1},(\nu^2 t^4)^{-1}\}\\
&\leq C\|\omega_0\|_{L^2}^2\min\{(\nu t)^{-1},(\nu^2 t^4)^{-1}\}.
\end{align*}
This gives
\begin{align*}
\int_0^{\tau/\nu}\|\partial_x\omega(t)\|_{L^2}dt
&\leq\int_0^{\tau/\nu}C\|\omega_0\|_{L^2}\min\{(\nu t)^{-\f12},(\nu t^2)^{-1}\}dt
\leq C\nu^{-\f23}\|\omega_0\|_{L^2}.
\end{align*}

By Minkowski inequality, we get
\begin{align*}
\int_0^{\tau/\nu}\|V(t)\|_{L^{\infty}}^2dt
&\leq\int_0^{\tau/\nu}\left(\sum_{\al\neq 0}\|\widehat{V}(t,\al,\cdot)\|_{L^{\infty}}\right)^2dt\leq
\left(\sum_{\al\neq 0}\left(\int_0^{\tau/\nu}\|\widehat{V}(t,\al,\cdot)\|_{L^{\infty}}^2dt\right)^{\f12}\right)^2,
\end{align*}
and by the interpolation,
\begin{align*}
\|\widehat{V}(t,\al,\cdot)\|_{L^{\infty}}^2
\leq& C\|\widehat{V}(t,\al,\cdot)\|_{L^{2}}\|\widehat{V}(t,\al,\cdot)\|_{H^{1}}\\
\leq& C\big(\al^2\|\widehat{\psi}(t,\al,\cdot)\|_{L^2}^2+ \|\partial_y\widehat{\psi}(t,\al,\cdot)\|_{L^2}^2\big)^{\f12}\\
&\quad\times\big(\al^4\|\widehat{\psi}(t,\al,\cdot)\|_{L^2}^2+2\al^2\|\partial_y\widehat{\psi}(t,\al,\cdot)\|_{L^2}^2+ \|\partial_y^2\widehat{\psi}(t,\al,\cdot)\|_{L^2}^2\big)^{\f12}\\
&\leq CK_3(t,\al)^{\f12}K_1(t,\al)^{\f12}\leq
CK_3(t,\al)^{\f34}K_2(t,\al)^{\f14},
\end{align*}
from which and \eqref{eq: K_3estK_1}, \eqref{eq:K_1K_2rough},  we infer that
\begin{align*}
\int_0^{\tau/\nu}\|\widehat{V}(t,\al)\|_{L^{\infty}}^2dt\leq&\int_0^{\tau/\nu}CK_3(t,\al)^{\f12}K_1(t,\al)^{\f12}dt\\
\leq&\int_0^{\tau/\nu}\frac{CK_1(0,\al)}{|\al|\sqrt{1+\nu t^3}}dt\leq\frac{CK_1(0,\al)}{|\al|\nu^{\f13}},
\end{align*}
and
\begin{align*}
\int_0^{\tau/\nu}\|\widehat{V}(t,\al)\|_{L^{\infty}}^2dt&\leq\int_0^{\tau/\nu}CK_3(t,\al)^{\f34}K_2(t,\al)^{\f14}dt
\\
&\leq C\left(\int_0^{\tau/\nu}K_3(t,\al)dt\right)^{\f34}\left(\int_0^{\tau/\nu}K_2(t,\al)dt\right)^{\f14}\\
&\leq C\left(\int_0^{\tau/\nu}\frac{K_1(0,\al)}{\al^2(1+\nu t^3)}dt\right)^{\f34}\left(\frac{K_1(0,\al)-K_1(\tau/\nu,\al)}{2\nu}\right)^{\f14}\\
&\leq C\left(\frac{K_1(0,\al)}{\al^2\nu^{\f13}}\right)^{\f34}\left(\frac{K_1(0,\al)}{2\nu}\right)^{\f14}
\leq\frac{CK_1(0,\al)}{|\al|^{\f32}\nu^{\f12}}.
\end{align*}
Therefore,
\begin{align*}
\int_0^{\tau/\nu}\|V(t)\|_{L^{\infty}}^2dt
&\leq
\left(\sum_{\al\neq 0}\left(\int_0^{\tau/\nu}\|\widehat{V}(t,\al)\|_{L^{\infty}}^2dt\right)^{\f12}\right)^2\\&\leq
\left(\sum_{0<|\al|\leq \nu^{-\f13}}\left(\frac{CK_1(0,\al)}{|\al|\nu^{\f13}}\right)^{\f12}+\sum_{|\al|> \nu^{-\f13}}\left(\frac{CK_1(0,\al)}{|\al|^{\f32}\nu^{\f12}}\right)^{\f12}\right)^2\\&\leq
C\left(\sum_{\al\neq 0}K_1(0,\al)\right)\left(\sum_{0<|\al|\leq \nu^{-\f13}}\frac{1}{|\al|\nu^{\f13}}+\sum_{|\al|> \nu^{-\f13}}\frac{1}{|\al|^{\f32}\nu^{\f12}}\right)\\&\leq
C\left(\sum_{\al\neq 0}\|\widehat{\omega}(0,\al)\|_{L^2}^2\right)\left(\frac{1+|\ln \nu|}{\nu^{\f13}}+\frac{\nu^{\f16}}{\nu^{\f12}}\right)\\ &\leq C\|\omega_0\|_{L^2}^2(1+|\ln \nu|)/\nu^{\f13},
\end{align*}
which gives the third inequality.
\end{proof}

\section{Nonlinear enhanced dissipation}
\label{Nonlinear}

In this section, we prove Theorem \ref{Thm:NS}. We define  $W_2=\text{span}\{\sin y,\cos y\}$ and the non-shear space
\beno
X=\Big\{\omega\in L^2(\mathbb{T}_{\d}^2):\omega(x,y)=\sum\limits_{\al\neq0}\widehat{\omega}(\al,y)e^{i\al x}\Big\}.
\eeno
We denote by $P_2$ the orthogonal projection to $W_2={\rm span}\{\sin y, \cos y\},$
and $P_{\neq0}$ the orthogonal projection to $X$, and $P_0=I-P_{\neq0}$ the orthogonal projection to the shear space $X_0=\{\omega\in L^2(\mathbb{T}):\partial_x\omega=0\}.$

\subsection{Semigroup estimates}

Let $S(t,s)=S(t,s,a_0)$ be the solution operator of linearized equation \eqref{eq:NS-L}.
That is, $\om(t,y)=S(t,s)f(y)$ solves \eqref{eq:NS-L}  with $\om(s,y)=f(y)$, here we assume $t\geq s$.
Now Theorem \ref{Thm:enhance} and Lemma \ref{Lem: VL2Linfty} can be restated as follows
\begin{align*}
&\|S(t,0)f\|_{L^2}\leq Ce^{-c_1\sqrt{\nu}t}\|f\|_{L^2}\ \ \text{for}\ \ 0<t<\tau/\nu,\\
&\int_0^{\tau/\nu}\|\nabla S(t,0)f\|_{L^2}^2dt\leq C\nu^{-1}\|f\|_{L^2}^2,\\
&\int_0^{\tau/\nu}\|\partial_x S(t,0)f\|_{L^2}dt\leq C\nu^{-\f23}\|f\|_{L^2},\\
&\int_0^{\tau/\nu}\|\nabla\Delta^{-1}S(t,0)f\|_{L^{\infty}}^2dt\leq C(|\ln \nu|+1)\nu^{-\f13}\|f\|_{L^2}^2.
\end{align*}
Notice that $S(t,s,a_0)=S(t-s,0,a_0e^{-\nu s})$ and $a_0e^{-\nu s}$ has uniform positive upper and lower bounds. Then we also have for $0\leq s<t<\tau/\nu$,
\begin{align}
\label{eq: om enhance}
&\|S(t,s)f\|_{L^2}\leq Ce^{-c_1\sqrt{\nu}(t-s)}\|f(s)\|_{L^2},\\ \label{eq: omL^2L^2}
&\int_s^{\tau/\nu}\|\nabla S(t,s)f\|_{L^2}^2dt\leq C\nu^{-1}\|f(s)\|_{L^2}^2,\\
\label{eq: pa_xomL^1L^2}
&\int_s^{\tau/\nu}\|\partial_x S(t,s)f\|_{L^2}dt\leq C\nu^{-\f23}\|f(s)\|_{L^2},\\
\label{eq: VL^2L^infty}
&\int_s^{\tau/\nu}\|\nabla\Delta^{-1}S(t,s)f\|_{L^{\infty}}^2dt\leq C\nu^{-\gamma_1}\|f(s)\|_{L^2}^2,
\end{align}
where $ \gamma_1$ is a constant such that $\f13<\gamma_1<2\gamma-1$.\smallskip

Let  $f$ be the solution of the inhomogeneous linearized Navier-Stokes equations:
\begin{align}\label{eq:inhomLNS}
\pa_tf+\mathcal{L}_{\nu}(t)f=g.
\end{align}
Then we have
\beno
f(t)=S(t,0)f(0)+\int_0^tS(t,s)g(s)ds.
\eeno
Using \eqref{eq: om enhance}-\eqref{eq: VL^2L^infty} and the Minkowski inequality, we deduce that  for $0<t<\tau/\nu, $
\begin{align}
\label{eq: ILNSL^inftyL^2}
&\|f(t)\|_{L^2}\leq Ce^{-c_1\sqrt{\nu}t}\|f(0)\|_{L^2}+C\int_0^t\|g(s)\|_{L^2}ds,\\
\nonumber
&\nu^{\f12}\left(\int_0^{t}\|\nabla f(s)\|_{L^{2}}^2ds\right)^{\f12}+\nu^{\f23}\int_0^{t}\|\pa_x f(s)\|_{L^{2}}ds
+\nu^{\f{\gamma_1}2}\left(\int_0^{t}\|\nabla\Delta^{-1} f(s)\|_{L^{\infty}}^2ds\right)^{\f12} \\
&\leq C\|f(0)\|_{L^2}+C\int_0^t\|g(s)\|_{L^2}ds,\label{eq: ILNAL_t^2}
\end{align}
where $C, c_1$ are constants independent of $\nu, t$.

\subsection{Proof of Theorem \ref{Thm:NS}}
First of all, we have the following energy dissipation law:
\beno
&&\pa_t\|V\|_{L^2}^2=-2\nu\|\na V\|_{L^2}^2,\quad \pa_t\|\om\|_{L^2}^2=-2\nu\|\na \om\|_{L^2}^2,
\eeno
which gives
\begin{align}\label{eq: energy1}
\partial_t(\|\om\|_{L^2}^2-\|\na\psi\|_{L^2}^2)=-2\nu(\|\na \om\|_{L^2}^2-\|\om\|_{L^2}^2),
\end{align}
where $\psi=-\Delta^{-1}\omega.$ The key point is that if $\delta<1$, it holds that for $ \omega\in\{1\}^{\bot},$
\begin{align}\label{eq: uplowI-P_2}
&\|(I-P_2)\omega\|_{L^2}^2\geq \|\omega\|_{L^2}^2-\|\nabla\psi\|_{L^2}^2\geq C_0\|(I-P_2)\omega\|_{L^2}^2,\\
\label{eq: up na(I-P_2)}
&\|\nabla\omega\|_{L^2}^2-\|\omega\|_{L^2}^2\geq C_0\|\nabla(I-P_2)\omega\|_{L^2}^2,
\end{align}
 which can be easily proved by using Fourier transform.\smallskip

By \eqref{eq: energy1}-\eqref{eq: up na(I-P_2)}, we have
\begin{align}\label{eq: I-P_2 om L^2}
\|(I-P_2)\omega(t)\|_{L^2}^2&\leq C(\|\omega(t)\|_{L^2}^2-\|\nabla\psi(t)\|_{L^2}^2)\\ \nonumber
&\leq C(\|\omega(0)\|_{L^2}^2-\|\nabla\psi(0)\|_{L^2}^2)\leq C\|(I-P_2)\omega(0)\|_{L^2}^2\leq C\nu^{2\gamma},
\end{align}
for $0<t<\tau/\nu, $ and
\begin{align}\label{eq: na(I-P_2)om L^2L^2}
\int_0^{\tau/\nu}\|\nabla(I-P_2)\omega(t)\|_{L^2}^2dt
&\leq C\int_0^{\tau/\nu}(\|\nabla\omega(t)\|_{L^2}^2-\|\omega(t)\|_{L^2}^2)dt\\ \nonumber
&\leq C(\|\omega(0)\|_{L^2}^2-\|\nabla\psi(0)\|_{L^2}^2)/(2\nu)\nonumber\\
&\leq C\|(I-P_2)\omega(0)\|_{L^2}^2/(2\nu)
\leq C\nu^{2\gamma-1}.\nonumber
\end{align}

Now, we decompose $\omega=\omega_s+\omega_n,$ where
$\om_s=P_0\omega$ is the shear part and
$\om_n=P_{\neq0}\omega$. Correspondingly, $V=V_s+V_n,\ \psi=\psi_s+\psi_n .$ We denote $V_s=(V^1(t,y),0),\, V_n=(V^1_n,V^2_n).$ Then the equation \eqref{eq:NS-vorticity} can be written as
\begin{align}\label{eq: om_s}
&\partial_t\omega_s=\nu\Delta\omega_s-P_0(V_n\cdot\nabla\omega_n)
=\nu\pa_y^2\omega_s-\partial_yP_0(V_n^2\omega_n),\\
\label{eq: om_n}
&\partial_t\omega_n=\nu\Delta\omega_n-V_s^1\pa_x\om_n-V_n\cdot\nabla\om_s-P_{\neq 0}(V_n\cdot\nabla\omega_n).
\end{align}
For $P_2\omega=P_2\omega_s$, we have
\beno
\partial_tP_2\om
=\nu\Delta P_2\omega-\pa_yP_2(V_n^2\omega_n)=-\nu P_2\omega-\partial_yP_2(V_n^2\omega_n),
\eeno
therefore,
\beno
P_2\omega(t)
=e^{-\nu t} P_2\omega(0)-\int_0^te^{-\nu (t-s)}\partial_yP_2(V_n^2\omega_n)(s)ds,
\eeno
from which and \eqref{eq: I-P_2 om L^2},  we deduce that for $0<t<\tau/\nu,$
\begin{align}\label{eq: P2om est}
\|P_2\omega(t)-e^{-\nu t} P_2\omega(0)\|_{L^2}
&\leq\int_0^t\|\partial_yP_2(V_n^2\omega_n)(s)\|_{L^2}ds\\ \nonumber
&\leq\int_0^tC\|(V_n^2\omega_n)(s)\|_{L^1}ds\leq\int_0^tC\|V_n^2(s)\|_{L^2}\|\omega_n(s)\|_{L^2}ds\\ \nonumber
&\leq \int_0^tC\|\omega_n(s)\|_{L^2}^2ds
\leq \int_0^tC\|(I-P_2)\omega(s)\|_{L^2}^2ds\\ \nonumber
&\leq \int_0^tC\nu^{2\gamma}ds=Ct\nu^{2\gamma}.
\end{align}
In particular, we have
\beno
\|P_2\omega(t)-e^{-\nu t} P_2\omega(0)\|_{L^2}\leq C\tau\nu^{2\gamma-1},
\eeno
Thanks to $C_1^{-1}\leq \|P_2\omega(0)\|_{L^2}\leq C_1$, there exist $c_{2}\in(0,1),C>1$ so that if $0<\nu<c_{2}$, then
\ben\label{eq:p2-uniform}
C^{-1}\leq \|P_2\omega(t)\|_{L^2}\leq C\quad \text{for}\,\, 0<t<\tau/\nu.
\een

Now we choose $t_0\sim \nu^{-\f12}$ so that $Ce^{-c_1\sqrt{\nu}t_0}=\f14, $. Then it suffices to show that for $0\leq t'<t<\tau/\nu$,
\begin{align}\label{eq: induction est1}
\begin{split}
&\|\om_n(t)\|_{L^2}\leq C\|\om_n(t')\|_{L^2}\quad \text{if}\quad t-t'< t_0,\\
&\|\omega_n(t)\|_{L^2}\leq \f12\|\omega_n(t')\|_{L^2}\quad \text{if}\quad t-t'= t_0.
\end{split}
\end{align}
Indeed, Theorem \ref{Thm:NS} follows from \eqref{eq: induction est1} by using the following iteration
\begin{align*}
\|\omega_n(t)\|_{L^2}\leq C\|\omega_n(mt_0)\|_{L^2}\leq C\f{1}{2^m}\|\om_n(0)\|_{L^2}\leq Ce^{-c\sqrt{\nu}t}\|\om_n(0)\|_{L^2},
\end{align*}
with $m=[t/t_0]$ and $t_0\leq C\nu^{-\f12}$.\smallskip

By time translation and  \eqref{eq:p2-uniform}, we only need to prove that\eqref{eq: induction est1} for $t'=0.$ Let $P_2\om(0)=-a_0\sin (y+\th)$, where $a_0>0,\, \th \in [0,2\pi)$. Then $\|P_2\omega(0)\|_{L^2}/\|\sin y\|_{L^2}=a_0. $ By translation, we may assume $\th=0$. Thus,
\beno
P_2\omega(0) =-a_0\sin y,\quad P_2V(0)=(-a_0\cos y,0).
\eeno
Let
\beno
\om(t)=e^{-\nu t} P_2\omega(0)+\widetilde{\omega}(t),\quad
V(t)=e^{-\nu t} P_2V(0)+\widetilde{V}(t).
\eeno
We decompose $\widetilde{\omega}=\widetilde{\omega}_s+{\omega}_n$ and $\widetilde{V}=\widetilde{V}_s+{V}_n,$  where $\widetilde{\omega}_s=P_0\widetilde{\omega}$ and  $\widetilde{V}_s=P_0\widetilde{V}$. Then \eqref{eq: om_n} can be rewritten as
\begin{align}\label{eq: om_n Lform}
\pa_t\om_n+\mathcal{L}_{\nu}(t)\om_n=-\big(\widetilde{V}_s^1\partial_x\om_n+V_n\cdot\nabla\widetilde{\om}_s+P_{\neq 0}(V_n\cdot\nabla\omega_n)\big).
\end{align}
For $0<t\leq \min\{t_0,\tau/\nu\}$, we infer from \eqref{eq: P2om est}  that \begin{align}\label{eq: P_2om_s}
&\|P_2\widetilde{\omega}_s(t)\|_{L^2}=
\|P_2\omega(t)-e^{-\nu t} P_2\omega(0)\|_{L^2}
\leq Ct\nu^{2\gamma}
\leq Ct_0\nu^{2\gamma}\leq C\nu^{2\gamma-\f12}.
\end{align}
Using \eqref{eq: I-P_2 om L^2}, \eqref{eq: P_2om_s} and the fact that $(I-P_2)\widetilde{\om}_s(t)= P_0(I-P_2){\om}(t)$, we deduce that
\begin{align}\label{eq: V_s^1Linfty}
\|\widetilde{V}_s^1(t)\|_{L^{\infty}}
&\leq C\|\widetilde{\om}_s(t)\|_{L^2}
\leq C\|P_2\widetilde{\om}_s(t)\|_{L^2}+C\|(I-P_2){\om}(t)\|_{L^2}\\ \nonumber
&\leq C\nu^{2\gamma-\f12}+C\nu^{\gamma}
\leq C\nu^{\gamma}.
\end{align}
Using \eqref{eq: na(I-P_2)om L^2L^2} and \eqref{eq: P_2om_s}, we deduce that
\begin{align}\label{eq: na omL^2L^2}
\int_0^{t}\|\nabla\widetilde{\omega}(s)\|_{L^2}^2ds
&=\int_0^{t}\|\nabla P_2\widetilde{\omega}_s(s)\|_{L^2}^2ds+\int_0^{t}\|\nabla (I-P_2)\widetilde{\omega}_s(s)\|_{L^2}^2ds\\ \nonumber
&\leq C\int_0^{t}\| P_2\widetilde{\omega}_s(s)\|_{L^2}^2ds+\int_0^{t}\|\nabla (I-P_2){\omega}(s)\|_{L^2}^2ds\\ \nonumber
&\leq C\int_0^{\tau/\nu}\nu^{4\gamma-1}ds+ C\nu^{2\gamma-1}
\\
&=C\tau\nu^{4\gamma-2}+ C\nu^{2\gamma-1}
\leq C\nu^{2\gamma-1}.\nonumber
\end{align}

Let $f=\omega_n$ and
\beno
g=-\big(\widetilde{V}_s^1\partial_x\om_n +V_n\cdot\nabla\widetilde{\om}_s
+P_{\neq 0}(V_n\cdot\nabla\omega_n)\big)=g_1+g_2+g_3,
\eeno
Then $(f,g)$ solves \eqref{eq:inhomLNS}. We denote
\beno
&&A_1(t)=\nu^{\f12}\left(\int_0^{t}\|\nabla \omega_n(s)\|_{L^{2}}^2ds\right)^{\f12}
+\nu^{\f23}\int_0^{t}\|\partial_x \omega_n(s)\|_{L^{2}}ds
+\nu^{\f{\gamma_1}2}\left(\int_0^{t}\|V_n(s)\|_{L^{\infty}}^2ds\right)^{\f12},\\
&&A_2(t)=\int_0^t\|g(s)\|_{L^2}ds\le \sum_{j=1}^3\int_0^t\|g_j(s)\|_{L^2}ds.
\eeno
Then \eqref{eq: ILNAL_t^2} implies that
\begin{align}\label{eq: A_1A_2}
A_1(t)\leq C\|\omega_n(0)\|_{L^2}+CA_2(t).
\end{align}
By \eqref{eq: V_s^1Linfty}, we have
\begin{align}\label{eq: g_1est}
\int_0^t\|g_1(s)\|_{L^2}ds
&=\int_0^t\|\widetilde{V}_s^1(s)\partial_x\omega_n(s)\|_{L^2}ds
\leq \int_0^t\|\widetilde{V}_s^1(s)\|_{L^{\infty}}\|\partial_x\omega_n(s)\|_{L^2}ds\\ \nonumber
&\leq C\nu^{\gamma}\int_0^t\|\partial_x\omega_n(s)\|_{L^2}ds
\leq C\nu^{\gamma-\f23}A_1(t),
\end{align}
and by \eqref{eq: na omL^2L^2},
\begin{align}\label{eq: g_2est}
\int_0^t\|g_2(s)\|_{L^2}ds
&=\int_0^t\|V_n(s)\cdot\nabla\widetilde{\om}_s(s)\|_{L^2}ds
\leq \int_0^t\|V_n(s)\|_{L^{\infty}}\|\nabla\widetilde{\om}_s(s)\|_{L^2}ds\\ \nonumber
&\leq \left(\int_0^{t}\|V_n(s)\|_{L^{\infty}}^2ds\right)^{\f12} \left(\int_0^{t}\|\nabla\widetilde{\omega}_s(s)\|_{L^2}^2ds\right)^{\f12} \\ \nonumber
&\leq \nu^{-\f{\gamma_1}{2}}A_1(t)(C\nu^{2\gamma-1})^{\f12}
\leq C\nu^{\gamma-\f{\gamma_1+1}{2}}A_1(t).
\end{align}
Using \eqref{eq: na(I-P_2)om L^2L^2} and the fact that $\nabla{\omega}_n(t)= P_{\neq 0}\nabla(I-P_2){\omega}(t)$, we infer that
\begin{align}\label{eq: g_3est}
\int_0^t\|g_3(s)\|_{L^2}ds
&=\int_0^t\|P_{\neq 0}(V_n\cdot\nabla\omega_n)\|_{L^2}ds
\leq \int_0^t\|V_n(s)\|_{L^{\infty}}\|\nabla{\omega}_n(s)\|_{L^2}ds\\ \nonumber
&\leq \left(\int_0^{t}\|V_n(s)\|_{L^{\infty}}^2ds\right)^{\f12} \left(\int_0^{t}\|\nabla(I-P_2){\omega}(s)\|_{L^2}^2ds\right)^{\f12} \\ \nonumber
&\leq \nu^{-\f{\gamma_1}{2}}A_1(t)(C\nu^{2\gamma-1})^{\f12}
\leq C\nu^{\gamma-\f{\gamma_1+1}{2}}A_1(t).
\end{align}
It follows from \eqref{eq: A_1A_2}-\eqref{eq: g_3est} that
\begin{align*}
A_1(t)\leq
C\|\omega_n(0)\|_{L^2}+C(\nu^{\gamma-\f23}+\nu^{\gamma-\f{\gamma_1+1}{2}})A_1(t).
\end{align*}

As $ \gamma>\f23,\ \gamma_1<2\gamma-1,$ we can find $c_{2} \in (0,1)$ so that
$C(\nu^{\gamma-\f23}+\nu^{\gamma-\f{\gamma_1+1}{2}})\leq \f12 $ for $0<\nu<c_{2}$. Then we obtain
\beno
 A_1(t)\leq C\|\omega_n(0)\|_{L^2},\quad
 A_2(t)\leq C(\nu^{\gamma-\f23}+\nu^{\gamma-\f{\gamma_1+1}{2}})\|\omega_n(0)\|_{L^2}.
 \eeno
While, by \eqref{eq: ILNSL^inftyL^2}, we have
\begin{align}\label{eq: om_n26}
\|\om_n(t)\|_{L^2}
\leq Ke^{-c_1\sqrt{\nu}t}\|\om_n(0)\|_{L^2}+CA_2(t).
\end{align}
Therefore,
$\|\om_n(t)\|_{L^2}\leq C\|\omega_n(0)\|_{L^2}$ for $0<t<\min\{t_0,\tau/\nu\}$. If $t=t_0<\tau/\nu$, then we have
\begin{align*}
\|\om_n(t)\|_{L^2}&\leq Ce^{-c_1\sqrt{\nu}t_0}\|\om_n(0)\|_{L^2}+CA_2(t)\\
&\leq \f14\|\om_n(0)\|_{L^2}+C_(\nu^{\gamma-\f23}+\nu^{\gamma-\f{\gamma_1+1}{2}})\|\om_n(0)\|_{L^2}.
\end{align*}
Take $c_2$ small enough(if necessary) so that $C(\nu^{\gamma-\f23}+\nu^{\gamma-\f{\gamma_1+1}{2}})\leq \f14 $ for $0<\nu<c_{2}$. Thus,
$\|\omega_n(t)\|_{L^2}\leq \f12\|\omega_n(0)\|_{L^2} $ for $t=t_0<\tau/\nu$.\smallskip

This completes the proof of Theorem \ref{Thm:NS}.

\section*{Acknowledgement}
The authors thank Zhiwu Lin for profitable discussions.
Z. Zhang is partially supported by NSF of China under Grant 11425103.
W. Zhao is partially supported by China Postdoctoral Science Foundation under Grant 2017M610007.

\end{CJK*}

\end{document}